\documentclass[a4paper, DIV=12, 11pt]{amsart}
\usepackage[latin1]{inputenc}
\usepackage{amssymb, amsthm, amsmath, thmtools, mathtools}
\usepackage{amsfonts}
\usepackage[abbrev]{amsrefs} 

\usepackage{graphicx}
\usepackage{thmtools}
\usepackage{hyperref}
\usepackage[bottom, marginal]{footmisc}
\usepackage{todonotes}

\usepackage{tikz}
\usetikzlibrary{matrix}

\declaretheoremstyle[headfont=\normalfont]{normalhead}
\newtheorem{lemma}{Lemma}[section]
\newtheorem{theorem}[lemma]{Theorem}
\newtheorem{proposition}[lemma]{Proposition}
\newtheorem{corollary}[lemma]{Corollary}
\newtheorem{definition}[lemma]{Definition}
\newtheorem{remark}[lemma]{Remark}

\newtheorem{question}{Question}

\newcounter{mt}

\newtheorem{maintheorem}[mt]{Theorem}
\newtheorem{maincorollary}[mt]{Corollary}

\newtheorem*{acknowledgement}{Acknowledgement}

\newcommand{\R}{\mathbb{R}}
\newcommand{\C}{\mathbb{C}}
\newcommand{\U}{\mathrm{U}}

\DeclareMathOperator{\sign}{sign}
\DeclareMathOperator{\Val}{Val}

\DeclareMathOperator{\Conv}{Conv}
\DeclareMathOperator{\vol}{vol}
\DeclareMathOperator{\vsupp}{v-supp}
\DeclareMathOperator{\supp}{supp}

\DeclareMathOperator{\epi}{epi}

\DeclareMathOperator{\diam}{diam}

\DeclareMathOperator{\GL}{GL}

\DeclareMathOperator{\GW}{\mathrm{GW}}

\DeclareMathOperator{\Gr}{\mathrm{Gr}}

\DeclareMathOperator{\Sym}{\mathrm{Sym}}

\DeclareMathOperator{\nc}{\mathrm{nc}}

\DeclareMathOperator{\SO}{\mathrm{SO}}

\DeclareMathOperator{\MAVal}{\mathrm{MAVal}}

\DeclareMathOperator{\LV}{\mathrm{LV}}

\newcommand{\M}{\mathcal{M}}

\newcommand{\K}{\mathcal{K}}
\newcommand{\vsupploc}{\vsupp_{\mathrm{loc}}}
\newcommand{\supploc}{\mathrm{supp}_{\mathrm{loc}}}

\DeclareMathOperator{\Area}{\mathrm{Area}}

\numberwithin{equation}{section}

\author{Jonas Knoerr}
\title[Translation invariant area measures on convex bodies]{Translation invariant area measures on convex bodies}
\date{}

\newcommand{\Addresses}{{
		\bigskip
		\footnotesize
		
		Jonas Knoerr, \textsc{Institute of Discrete Mathematics and Geometry, TU Wien, Wiedner Hauptstrasse 8-10, 1040 Wien, Austria}\par\nopagebreak
		\textit{E-mail address}: \texttt{jonas.knoerr@tuwien.ac.at}
		
		\medskip
	}}
	
\makeatletter
\def\blfootnote{\xdef\@thefnmark{}\@footnotetext}
\makeatother

\makeindex
\begin{document}
\maketitle
\begin{abstract}
	We introduce the space of continuous and translation invariant area measures, which are measure-valued functionals on the space of convex bodies satisfying a certain locality condition. Our main result shows that the space of $\GL(n,\R)$-smooth area measures coincides with the space of area measures obtained by integration with respect to the normal cycle. We show how this result yields Hadwiger-type classification results for continuous area measures that are equivariant with respect to compact groups acting transitively on the unit sphere. In addition, we establish a general density criterion for invariant submodules and show that mixed area measures generate dense submodules with respect to suitable topologies on the space of continuous area measures. As a byproduct, we discuss how McMullen's Conjecture can be obtained directly from the representation of $\GL(n,\R)$-smooth translation invariant valuations on convex bodies in terms of integration with respect to the normal cycle. 
\end{abstract}
\blfootnote{2020 \emph{Mathematics Subject Classification}. 52B45, 52A20, 53C65, 52A39.\\
	\emph{Key words and phrases}. area measure, valuations on convex bodies.\\}


\section{Introduction}

Let $\mathcal{K}(\R^n)$ denote the space of convex bodies in $\R^n$, i.e. the set of all nonempty, compact, and convex subsets of $\R^n$ equipped with the Hausdorff metric. One way to encode the local geometry of the boundary of a convex body are the surface area measure $S_{n-1}(K)$ and the related mixed area measures, which feature prominently in various problems in integral and convex geometry. The surface area measure may be characterized as the unique nonnegative measure on the Euclidean unit sphere $S^{n-1}$ satisfying
\begin{align}
	\label{eq:definitionSurfacteArea}
	\frac{d}{dt}\Big|_0\vol(K+tL)=\int_{S^{n-1}}h_L dS_{n-1}(K)\quad\text{for}~L\in\K(\R^n),
\end{align}
where $h_L(y)=\sup_{x\in L}\langle x,y\rangle$, $y\in\R^n$, denotes the support function of $L\in\K(\R^n)$. The mixed area measure $S(K_1,\dots,K_{n-1})$ of $K_1,\dots,K_{n-1}\in \K(\R^n)$ is given by
\begin{align*}
	S(K_1,\dots,K_{n-1})=\frac{1}{(n-1)!}\frac{\partial^{n-1}}{\partial\lambda_1\dots\partial\lambda_{n-1}}\Big|_0 S_{n-1}\left(\sum_{j=1}^{n-1}\lambda_j K_j\right).
\end{align*}
In particular, $S_i(K):=S(K[i],B_1(0)[n-1-i])$ is called the $i$th intermediate area measure, where the brackets $[k]$ indicate that the relevant body is taken with multiplicity $k$ in the mixed area measure and $B_1(0)$ is the Euclidean unit ball. 

We may consider these functionals as maps $\K(\R^n)\rightarrow\M(S^{n-1})$ into the space of finite complex Borel measures on $S^{n-1}$. As measure-valued functionals, the intermediate area measures admit an axiomatic characterization in terms of their invariance and locality properties, as shown by Schneider \cite{SchneiderKinematischeBeruhrmaefur1975}. We postpone the discussion of his result to \autoref{section:LocalityCondition}, since his classification relies on slightly stronger properties than required for our purposes. The aim of this article is a characterization of certain measure-valued functionals on $\K(\R^n)$ that share more general properties of the surface area measure. More precisely, we consider the following space.
\begin{definition}\label{def:Area}
	We denote by $\Area(\R^n)$ the space of all maps $\Psi:\K(\R^n)\rightarrow \M(S^{n-1})$ with the following properties:
	\begin{enumerate}
		\item $\Psi$ is continuous with respect to the weak* topology.
		\item $\Psi$ is locally determined: If $K,L\in\K(\R^n)$ satisfy $h_K|_U=h_L|_U$ for an open set $U\subset S^{n-1}$, then the associated measures satisfy $\Psi(K)|_U=\Psi(L)|_U$.
		\item $\Psi$ is translation invariant: $\Psi(K+x)=\Psi(K)$ for all $K\in\K(\R^n)$ and $x\in\R^n$.
	\end{enumerate}
\end{definition}
Note that any finite Borel measure on $S^{n-1}$ is a Radon measure, so we may identify $\M(S^{n-1})\cong C(S^{n-1})'$ with the topological dual of the space of continuous functions on $S^{n-1}$ with respect to the topology of uniform convergence. The second property can be interpreted as a certain locality condition: It implies that the measures coincide on open sets such that the parts of the boundaries of the convex bodies with outer normals belonging to these open sets are the same, compare \autoref{section:preliminaries_convexbodies}.

We will call elements in $\Area(\R^n)$ continuous translation invariant \emph{area measures}, a terminology that will be justified by \autoref{maintheorem:finiteMixedArea} below. A priori, our notion of area measures differs slightly from other articles (compare \cite{AbardiaEvequozEtAlFlagareameasures2019,AbardiaEvequozBernigAdditivekinematicformulas2021,BernigDualareameasures2019,WannererIntegralgeometryunitary2014,Wannerermoduleunitarilyinvariant2014}), which usually emphasize that $\Psi:\K(\R^n)\rightarrow \M(S^{n-1})$ should be a measure-valued valuation, i.e. that
\begin{align*}
	\Psi(K\cup L)+\Psi(K\cap L)=\Psi(K)+\Psi(L)
\end{align*}
for all $K,L\in\K(\R^n)$ such that $K\cup L$ is convex. However, we will show in \autoref{theorem:valuationProperty} that a continuous and locally determined functional automatically satisfies the valuation property.  In particular, this relation to the theory of valuations on convex bodies directly implies several structural results. Let us denote by $\Area_k(\R^n)$ the subspace of all
$\Psi\in \Area(\R^n)$ such that $\Psi(tK)=t^k\Psi(K)$ for $t\ge 0$ and $K\in\mathcal{K}(\R^n)$.
\begin{maintheorem}\label{maintheorem:DirectClassificationResults}
	We have a direct sum decomposition
	\begin{align*}
		\Area(\R^n)=\bigoplus_{k=0}^{n-1}\Area_k(\R^n).
	\end{align*}
	Moreover, the following holds:
	\begin{enumerate}
		\item the map $\Psi\mapsto \Psi(0)$ establishes an isomorphism $\Area_0(\R^n)\cong\M(S^{n-1})$.
		\item $\Psi\in\Area_{n-1}(\R^n)$ if an only if there exists a function $\psi\in C(S^{n-1})$ such that $\Psi(K;\phi)=\int_{S^{n-1}}\phi\cdot\psi dS(K)$. This function is uniquely determined.
	\end{enumerate}
\end{maintheorem}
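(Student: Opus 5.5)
The plan is to reduce everything to McMullen's theory of translation invariant continuous valuations, using \autoref{theorem:valuationProperty}. For fixed $\phi\in C(S^{n-1})$, set $\mu_\phi(K):=\Psi(K;\phi)$. By weak* continuity, translation invariance, and the automatic valuation property, $\mu_\phi$ is a continuous translation invariant valuation on $\K(\R^n)$. McMullen's decomposition then gives
\[
\mu_\phi(tK)=\sum_{k=0}^n t^k\mu_{\phi,k}(K)\quad\text{for } t\ge0 .
\]
Each homogeneous component is recovered by a fixed Lagrange interpolation of the values at $t=0,1,\dots,n$, namely $\mu_{\phi,k}(K)=\sum_j c_{kj}\,\Psi(jK;\phi)$. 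We therefore define $\Psi_k(K):=\sum_j c_{kj}\Psi(jK)$. This is a finite linear combination of the maps $K\mapsto\Psi(jK)$, and each of these is continuous, locally determined (since $h_{jK}=jh_K$), and translation invariant. Hence $\Psi_k\in\Area_k(\R^n)$ and $\Psi=\sum_k\Psi_k$. Uniqueness of the decomposition follows from the uniqueness of polynomial coefficients in $t$.

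It remains to exclude $k=n$. Each $\mu_{\phi,n}$ is a multiple $c(\phi)\vol$. Since $\Psi_n$ is measure valued, $\phi\mapsto c(\phi)$ is a measure $\nu$ with $\Psi_n(K)=\vol(K)\,\nu$. We take $K$ to be a segment, so $\vol(K)=0$ and $\Psi_n(K)=0$, and compare with a large body $L$ whose support function agrees with $h_K$ on an open set $U$. Locality then forces $\nu|_U=0$. Concretely, let $K$ be a polytope face and $L$ a polytope having that face with the same normal cone, chosen so that the part of the sphere where $h_K$ and $h_L$ agree is open. The simplest version: for any $u$ in the interior of a normal cone of a vertex, $h$ is linear near $u$, so $\Psi_n(\{0\})|_U=\Psi_n(P)|_U$ for the vertex's normal cone $U$ of a full-dimensional polytope $P$. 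This gives $\vol(P)\nu|_U=0$. Covering $S^{n-1}$ by such open cones up to null sets, and then using rotations of $P$, yields $\nu=0$.

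Part (1) follows by the same vertex argument. For $\Psi\in\Area_0$, the function $K\mapsto\Psi(K)$ is constant. Indeed, for any $K$, $\Psi(K)=\Psi(tK)\to\Psi(\{0\})$ as $t\to0$ by continuity and homogeneity of degree $0$. Hence $\Psi\mapsto\Psi(0)$ is injective. For surjectivity, any measure $\mu$ gives the constant functional $K\mapsto\mu$, which lies in $\Area_0(\R^n)$.

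For part (2), take $\Psi\in\Area_{n-1}$. Each $\mu_\phi$ is then $(n-1)$-homogeneous, so McMullen's characterization gives $\mu_\phi(K)=\int f_\phi\,dS_{n-1}(K)$, with $f_\phi\in C(S^{n-1})$ unique modulo linear functions. The key step, and the main obstacle, is to show that $f_\phi=\phi\psi$ for a single $\psi$. I would first normalize $f_\phi$ to be orthogonal to linear functions, which makes $\phi\mapsto f_\phi$ linear. Locality then enters as follows. If $\phi$ is supported in a small open cap $U$, then $\Psi(K;\phi)$ depends only on $h_K|_U$. Taking polytopes with facet normals outside $\supp\phi$ shows that $f_\phi$ integrates to zero against every such surface area measure, and so $f_\phi$ restricted away from $\supp\phi$ is linear. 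After the normalization, this should make $\phi\mapsto f_\phi$ a local operator. Continuity (via Peetre) then suggests a multiplication operator.

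The cleanest route is to evaluate on simplices, where $S_{n-1}$ consists of point masses at the facet normals. This gives $\Psi(P)=\sum_i \vol_{n-1}(F_i)\,\psi(u_i)\delta_{u_i}$ for some function $\psi$, because by locality the measure $\Psi(P)$ near $u_i$ depends only on the local data there. Here $\psi(u)$ is read off from a polytope with a facet of normal $u$ and unit area, using $(n-1)$-homogeneity and additivity in facet area. Continuity of $\psi$ follows from weak* continuity under rotating the normal, and density of polytopes extends the formula to all $K$. Uniqueness of $\psi$ follows by testing on polytopes with arbitrary normals.
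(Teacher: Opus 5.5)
Your decomposition argument (Lagrange inversion of $t\mapsto\Psi(tK)$), the exclusion of degree $n$ via Hadwiger plus your vertex-cone argument, and part (1) are correct and essentially the paper's proof. For part (2), your final ``simplex route'' is also the paper's strategy. However, the key formula $\Psi(P)=\sum_i\vol_{n-1}(F_i)\psi(u_i)\delta_{u_i}$ does not follow from the reason you give. Locality only says that near a direction $v$ the measure $\Psi(P)$ agrees with $\Psi(F(P,v))$, because $h_P=h_{F(P,v)}$ on a neighborhood of $v$. It does not explain why $\Psi(P)$ carries no mass away from the facet normals. The atomicity of $S_{n-1}(P)$ does not transfer to $\Psi(P)$ by itself. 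In fact, your reasoning never uses the degree $n-1$ at this point, so it would apply verbatim to $\Psi\in\Area_k(\R^n)$ with $k<n-1$. There $\Psi(P)$ is in general not atomic (e.g.\ $\Psi=S_k$).

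The missing step is that $\Psi$ vanishes on every convex body of dimension at most $n-2$. One way to see this is McMullen's decomposition applied to the restriction of $K\mapsto\Psi(K;\phi)$ to a lower-dimensional subspace, which is what the paper uses. Another is the representation $\Psi(K;\phi)=\int f_\phi\,dS_{n-1}(K)$ you quoted, since $S_{n-1}(K)=0$ for such $K$. Combined with locality, this gives two facts. First, $\Psi(P)$ vanishes near every direction that is not a facet normal. Second, for $(n-1)$-dimensional $F\subset u^\perp$, the measure $\Psi(F)$ is concentrated on $\{\pm u\}$ (first for polytopes, then by approximation).

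Only then is the atom at $u$ a usable function of the facet. Take $\phi\in C(S^{n-1})$ equal to $1$ near $u$ and $0$ near $-u$. Then $F\mapsto\Psi(F;\phi)$ is a continuous, translation invariant, $(n-1)$-homogeneous valuation on $\K(u^\perp)$. Hadwiger's characterization on $u^\perp$ then gives $\psi(u)\vol_{n-1}(F)$. Your ``additivity in facet area'' needs exactly this, because cutting a facet creates $(n-2)$-dimensional intersections whose contribution must vanish. Alternatively, once concentration is known, McMullen's formula gives the atom directly as $\vol_{n-1}(F)\bigl(f_\phi(u)+f_\phi(-u)\bigr)$.

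Two smaller points. First, the formula must be proved for all polytopes, not only simplices, before you invoke density; the locality argument does this verbatim. Second, drop the Peetre paragraph. Normalizing $f_\phi$ to be orthogonal to linear functions does not make $\phi\mapsto f_\phi$ local, because the linear function that $f_\phi$ equals off $\supp\phi$ is in general not the one removed by that normalization.
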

The first part is based on McMullen's homogeneous decomposition for translation invariant valuations \cite{McMullenValuationsEulertype1977}, while (2) can be seen as an analog of McMullen's characterization of $(n-1)$-homogeneous, continuous, and translation invariant valuations on $\K(\R^n)$ \cite{McMullenContinuoustranslationinvariant1980}.\\

The main result of this article is a description of the components of intermediate degree of homogeneity in terms of a classification of a dense subspace. We are going to consider several natural topologies on $\Area(\R ^n)$, however, for the introduction we focus on the following: For $\phi\in C(S^{n-1})$, consider the semi-norm on $\Area(\R^n)$ defined by
\begin{align*}
	\Psi\mapsto \sup_{K\subset B_1(0)}|\Psi(K;\phi)|,
\end{align*}
where $\phi\mapsto \Psi(K;\phi)$ denotes the integration functional on $C(S^{n-1})$ induced by $\Psi(K)$. We call the topology on $\Area(\R^n)$ determined by these semi-norms for all $\phi\in C(S^{n-1})$ the uniform weak* topology.\\

Let us define an action of the general linear group $\GL(n,\R)$ on $C(S^{n-1})$ by 
\begin{align*}
	[g\cdot \phi](y)=|g^{T}v| \phi\left(\frac{g^{T}v}{|g^{T}v|}\right),\quad v\in S^{n-1},
\end{align*}
for $\phi\in C(S^{n-1})$, $g\in\GL(n,\R)$. In other words, we identify elements in $C(S^{n-1})$ with $1$-homogeneous functions on the dual space of $\R^n$. We obtain a representation $\pi$ of $\GL(n,\R)$ on $\Area(\R^n)$ given by
\begin{align*}
	[\pi(g)\Psi](K;\phi)=\Psi(g^{-1}K;g^{-1}\cdot \phi).
\end{align*}
For example,  Eq.~\eqref{eq:definitionSurfacteArea} shows that 
\begin{align}
	\label{eq:GroupActionSurfaceArea}
	\pi(g)S_{n-1}=|\det(g)|^{-1}S_{n-1}.
\end{align}
Let us call $\Psi\in\Area(\R^n)$ $\GL(n,\R)$-smooth if the map
\begin{align*}
	\GL(n,\R)&\rightarrow\Area(\R^n)\\
	g&\mapsto \pi(g)\Psi
\end{align*}
is smooth with respect to the uniform weak* topology. It is well known that the $\GL(n,\R)$-smooth vectors of a representation form a dense subspace under suitable completeness and continuity conditions on the representation. This also applies to the representation of $\GL(n,\R)$ on $\Area(\R^n)$, compare the discussion and references in \autoref{section:ActionGroup}.\\

In order to describe this space, we need a different notion of smoothness for elements in $\Area(\R^n)$ considered in \cite{BernigDualareameasures2019,WannererIntegralgeometryunitary2014,Wannerermoduleunitarilyinvariant2014}. Let $\nc(K)$ denote the normal cycle of $K\in\K(\R^n)$ (see \autoref{section:ConstructionSmoothArea} for details). This is an $(n-1)$-dimensional Lipschitz submanifold of the sphere bundle $\R^n\times S^{n-1}$ and inherits a natural orientation from a chosen orientation of $\R^n$ (which we take to be the standard orientation for simplicity). In particular, $\nc(K)$ may be considered as an $(n-1)$-dimensional integral current on $\R^n\times S^{n-1}$. Let $\Omega^{l}(\R^n\times S^{n-1})^{tr}$ denote the space of smooth differential $l$-forms on $\R^n\times S^{n-1}$ that are invariant with respect to translations. If $\pi_2:\R^n\times S^{n-1}\rightarrow S^{n-1}$ denotes the projection onto the second factor, we may associate to any $K\in\K(\R^n)$, $\omega\in \Omega^{n-1}(\R^n\times S^{n-1})^{tr}$ the measure $\Phi_\omega(K)\in \M(S^{n-1})$ defined by
\begin{align*}
	\Phi_\omega(K;\phi):=\int_{\nc(K)}\pi_2^*\phi \wedge\omega\quad\text{for}~\phi\in C(S^{n-1}).
\end{align*}
Functionals of this form were called \emph{smooth area measures} in \cite{BernigDualareameasures2019,WannererIntegralgeometryunitary2014,Wannerermoduleunitarilyinvariant2014}, which is motivated by a corresponding description of the space of $\GL(n,\R)$-smooth translation invariant valuations on $\K(\R^n)$ \cite{AleskerTheoryvaluationsmanifolds.2006,HofstaetterKnoerrLocalizationvaluationsAleskers2025}, but is at its core a purely synthetic notion. Our next result shows that this coincides with our representation theoretic definition.

\begin{maintheorem}
	\label{maintheorem:IntegralRepresentation}
	Let $\Psi\in\Area(\R^n)$. The following are equivalent:
	\begin{enumerate}
		\item $\Psi$ is a $\GL(n,\R)$-smooth area measure.
		\item There exists a differential form $\omega\in \Omega^{n-1}(\R^n\times S^{n-1})^{tr}$ such that $\Psi=\Phi_\omega$.
	\end{enumerate}
\end{maintheorem}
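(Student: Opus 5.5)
The direction (2)$\Rightarrow$(1) should be essentially formal, while (1)$\Rightarrow$(2) is where the work lies. I will reduce (1)$\Rightarrow$(2) to the known description of $\GL(n,\R)$-smooth translation invariant valuations (Alesker's theorem as cited above \cite{AleskerTheoryvaluationsmanifolds.2006,HofstaetterKnoerrLocalizationvaluationsAleskers2025}). The bridge between area measures and valuations is that, by \autoref{theorem:valuationProperty}, every $\Psi\in\Area(\R^n)$ is a measure-valued valuation. Hence for each $\phi\in C(S^{n-1})$ the functional $K\mapsto\Psi(K;\phi)$ is a continuous translation invariant valuation.

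\textbf{The direction (2)$\Rightarrow$(1).} For $g\in\GL(n,\R)$, the diffeomorphism $G(x,v)=(gx,g^{-T}v/|g^{-T}v|)$ of the sphere bundle maps $\nc(K)$ to $\nc(gK)$. A change of variables then gives $\pi(g)\Phi_\omega=\Phi_{\omega_g}$, where $\omega_g$ is a pullback of $\omega$ twisted by the smooth homogeneity factor $|g^{T}v|$ coming from the action on $\phi$. Since $g\mapsto\omega_g$ is smooth into $\Omega^{n-1}(\R^n\times S^{n-1})^{tr}$, which is a space of forms on the compact factor $S^{n-1}$ tensored with $\Lambda\R^n$, it remains to bound $\Phi_\omega$. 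The mass of $\nc(K)$ is uniformly bounded for $K\subset B_1(0)$, so $\sup_{K\subset B_1(0)}|\Phi_\omega(K;\phi)|\le C\|\phi\|_\infty\|\omega\|_{C^0}$. This continuity estimate turns smoothness of $g\mapsto\omega_g$ into smoothness of $g\mapsto\pi(g)\Phi_\omega$ in the uniform weak* topology.

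\textbf{The direction (1)$\Rightarrow$(2).} Let $\Psi$ be $\GL(n,\R)$-smooth and fix $\phi\in C^\infty(S^{n-1})$. I first want $\mu_\phi(K):=\Psi(K;\phi)$ to be a $\GL(n,\R)$-smooth valuation; this is not immediate, since $g$ acts on $\phi$ as well. To handle it, I will use the Gårding-type density of smooth vectors and the compatibility of the two actions, writing $\mu_\phi(g^{-1}K)=[\pi(g)\Psi](K;g\cdot\phi)$, which is jointly smooth in $g$. Alesker's theorem then gives $\mu_\phi(K)=\int_{\nc(K)}\tau_\phi$ (plus a volume term) for some translation invariant form $\tau_\phi$.

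The central difficulty is to choose these forms so that they depend on $\phi$ as $\pi_2^*\phi\wedge\omega$ with a single $\omega$. For this I will use locality. If $\supp\phi\subset U$, then $\mu_\phi$ depends only on $h_K|_U$, so $\tau_\phi$ can be localized on the sphere bundle over $U$; this is where the localization results of \cite{HofstaetterKnoerrLocalizationvaluationsAleskers2025} enter. Combined with a partition of unity, this yields locally defined forms $\omega_U$ on $\R^n\times U$ with $\Psi(K;\phi)=\int_{\nc(K)}\pi_2^*\phi\wedge\omega_U$ for $\supp\phi\subset U$. These have to be glued to a global $\omega$, using the kernel theorem for $\nc$-integration (Bernig–Bröcker) to control the ambiguity on overlaps. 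The ambiguity consists of exact forms and multiples of the contact form, so it can be corrected by a Čech-type argument.

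An alternative that avoids gluing is to work globally from the start. One views $\Psi$ as a continuous linear map $C(S^{n-1})\to\Val(\R^n)$ that is smooth, realizes it as a smooth vector of $\GL(n,\R)$ acting on $\Val\widehat\otimes C^\infty(S^{n-1})'$, and applies the Alesker–Casselman–Wallach surjectivity $\Omega^{n-1}(\R^n\times S^{n-1})^{tr}\to\Val^{sm}$ with parameters.

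I expect this passage from the individual valuations $\mu_\phi$ to a single differential form $\omega$, compatible with multiplication by $\pi_2^*\phi$, to be the main obstacle. My first attempt will be the local gluing route built on the localization theorem.
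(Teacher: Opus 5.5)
\paragraph{What works.} Your direction (2)$\Rightarrow$(1) is fine. The twisted pullback $\omega_g=\sign(\det g)\,|g^{T}v|^{-1}G_{g^{-1}}^*\omega$ satisfies $\pi(g)\Phi_\omega=\Phi_{\omega_g}$. Combined with the mass bound of \autoref{proposition:boundSmoothArea}, this is essentially the paper's mechanism; the paper routes it through $\tau\mapsto\Phi_{i_T\tau}$ on $\mathcal{J}^n(\R^n)^{tr}$ to get an untwisted equivariance, see \autoref{corollary:ConstructionSmoothAreaNormalCycle}. Your check that each $\mu_\phi$, $\phi\in C^\infty(S^{n-1})$, is a $\GL(n,\R)$-smooth valuation is also correct. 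It follows directly from $\mu_\phi(g^{-1}\cdot)=[\pi(g)\Psi](\cdot;g\cdot\phi)$, with no density argument needed.

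\paragraph{The gap.} It lies in (1)$\Rightarrow$(2), exactly where you flag it, and the proposed remedy does not close it. Alesker's theorem gives, for each $\phi$, \emph{some} form $\tau_\phi$ with $\mu_\phi=\int_{\nc(\cdot)}\tau_\phi$. But $\mu_\phi$ determines $\tau_\phi$ only modulo the kernel of $\omega\mapsto\int_{\nc(\cdot)}\omega$, and besides $\langle\alpha,d\alpha\rangle$ this kernel contains all exact forms. Nothing forces a choice $\phi\mapsto\tau_\phi$ of the shape $\pi_2^*\phi\wedge\omega_U$ with $\omega_U$ independent of $\phi$. Localizing $\tau_\phi$ over $U$ controls where the form lives, not how it depends on $\phi$, and a partition of unity does not change that. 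So your sentence producing forms $\omega_U$ with $\Psi(K;\phi)=\int_{\nc(K)}\pi_2^*\phi\wedge\omega_U$ for $\supp\phi\subset U$ is the theorem itself for test functions supported in $U$, asserted rather than proved. The ``with parameters'' variant has the same problem. You would obtain a representative depending distributionally on a second sphere, with no mechanism to collapse it onto the diagonal in the form $\pi_2^*\phi\wedge\omega$.

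Two side remarks on the gluing step:
\begin{itemize}
\item The ambiguity for area measures is only $\langle\alpha,d\alpha\rangle$ (compare \autoref{lemma:KernelCNC}). Exact forms are not in the kernel of $\omega\mapsto\Phi_\omega$, since $\int_{\nc(K)}\pi_2^*\phi\, d\eta=-\int_{\nc(K)}d\pi_2^*\phi\wedge\eta$ need not vanish.
\item No gluing is needed at all. Write $\Psi=\sum_j\psi_j\bullet\Psi$ for a partition of unity; each summand is a global area measure, so representing forms can simply be added.
\end{itemize}

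\paragraph{The missing idea.} You need a regularity theorem that is itself measure-valued, so that the $\pi_2^*\phi\wedge$ structure comes out automatically. The paper obtains it as follows.
\begin{enumerate}
\item Each $\psi_j\bullet\Psi$ is again $\GL(n,\R)$-smooth (\autoref{corollary:moduleProductSmoothVectors}) and has $\vsupploc$ in an open hemisphere.
\item Such area measures are identified, via $K\mapsto\Psi(h_K(\cdot,-1))$, with compactly supported measure-valued local functionals on $\Conv(\R^{n-1},\R)$ (\autoref{theorem:isomorphismToLocalFunctionals}).
\item Translations of $\R^{n-1}$ correspond to the unipotent subgroup $H(x)\subset\GL(n,\R)$ (\autoref{lemma:equivarianceT}), so the local functional is smooth (\autoref{corollary:EquivalenceSmoothnessUnderT}).
\item \autoref{theorem:equivalenceSmoothnessLV} then represents it as $D(f)[1_{\pi^{-1}(B)}\sum_i\pi^*\phi_i\wedge\tau_i]$.
\item Pulling back along $Q$ (\autoref{proposition:relation_differentialCycleNormalCycle}) gives the form on the sphere bundle.
\end{enumerate}
For your scalar-valued route to work, you would need a substitute for this step. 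For example, you could show that a canonical invariant of $\mu_\phi$, such as the Rumin differential of $\tau_\phi$, depends on $\phi$ through a differential operator of exactly the form it has when $\tau_\phi=\pi_2^*\phi\wedge\omega$. Nothing in your proposal supplies such an argument.
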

Since $\GL(n,\R)$-smooth area measures are dense in $\Area(\R^n)$ with respect to the uniform weak* topology, this result can be used to extend results from area measures given by integration with respect to the normal cycle to arbitrary continuous area measures. As an example, we consider certain injectivity results for moment maps original established in \cite{BernigDualareameasures2019} (compare \autoref{section:momentMaps}). More importantly, it can be used to obtain Hadwiger-type classification results for certain compact subgroups.
	\begin{maincorollary}\label{maincorollary:InvariantAreaMeasures}
	Let $G\subset \SO(n)$ be a compact subgroup that operates transitively on $S^{n-1}$. If $\Psi:\K(\R^n)\rightarrow\M(S^{n-1})$ is 
	\begin{enumerate}
		\item translation invariant,
		\item $G$-equivariant: $\Psi(gK;gB)=\Psi(K;B)$ for $K\in\K(\R^n)$, $B\subset S^{n-1}$ Borel set, $g\in G$,
		\item continuous with respect to the weak* topology,
		\item locally determined,
	\end{enumerate}
	then exists a $G$-invariant differential form $\omega\in\Omega^{n-1}(\R^n\times S^{n-1})^{tr}$ such that $\Psi(K;B)=\int_{\nc(K)\cap \pi_2^{-1}(B)}\omega$. In particular, the space $\Area(\R^n)^G$ of $G$-equivariant continuous area measures is finite dimensional and every element is $\GL(n,\R)$-smooth.
\end{maincorollary}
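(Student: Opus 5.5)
The plan is to reduce Corollary~\ref{maincorollary:InvariantAreaMeasures} to Theorem~\ref{maintheorem:IntegralRepresentation}. The key points are that $G$-equivariant area measures form a finite-dimensional space, and that finite-dimensional subrepresentations consist of smooth vectors.

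First, conditions (1)--(4) say exactly that $\Psi\in\Area(\R^n)$. Condition (2) translates into $\pi(g)\Psi=\Psi$ for $g\in G\subset\SO(n)$. For orthogonal $g$ we have $|g^Tv|=1$, so $g\cdot\phi=\phi\circ g^T=\phi\circ g^{-1}$, and $G$-equivariance of the measures is the same as $\pi(g)$-invariance. Thus $\Psi\in\Area(\R^n)^G$.

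The main step is to show that $\Area(\R^n)^G$ is finite dimensional. By the homogeneous decomposition of Theorem~\ref{maintheorem:DirectClassificationResults} it suffices to do this degree by degree, since $\pi(g)$ commutes with dilations. I would use the averaging operator $P=\int_G\pi(g)\,dg$, which is a continuous projection onto $\Area_k(\R^n)^G$ in the uniform weak* topology. Smooth vectors are dense, and $P$ maps $\GL(n,\R)$-smooth vectors to $\GL(n,\R)$-smooth vectors (convolving with a compact group commutes appropriately with the differentiation along $\GL(n,\R)$). Hence $P$ applied to $\Phi_\omega$ equals $\Phi_{\bar\omega}$, where $\bar\omega$ is the $G$-average of $\omega$. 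So the smooth $G$-invariant area measures are exactly $\{\Phi_\omega:\omega\in\Omega^{n-1}(\R^n\times S^{n-1})^{tr,G}\}$.

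Since $G$ is transitive on $S^{n-1}$, a translation and $G$-invariant form is determined by its value at a single point $(0,v_0)$. There it lies in $\Lambda^{n-1}(\R^n\oplus T_{v_0}S^{n-1})^{*}$ invariant under the stabilizer, which is finite dimensional. So the smooth $G$-invariant elements form a finite-dimensional space $F$. This space is dense in $\Area_k(\R^n)^G$, because $P$ is continuous and maps a dense set onto a dense set. A finite-dimensional subspace is closed in the Hausdorff locally convex topology, so $\Area(\R^n)^G=F$.

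Consequently $\Psi=\Phi_\omega$ for a $G$-invariant $\omega$, and every element is $\GL(n,\R)$-smooth. The formula $\Psi(K;B)=\int_{\nc(K)\cap\pi_2^{-1}(B)}\omega$ follows from the definition of $\Phi_\omega$ by approximating $1_B$ by continuous functions, since the current $\nc(K)$ has finite mass. I expect the obstacle to be the density/closedness argument: one must check that the uniform weak* topology is Hausdorff (clear, via the seminorms) and that $P$ is well defined and continuous, using the vector-valued integral over the compact group $G$.
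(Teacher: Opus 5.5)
Your proposal is correct and follows essentially the paper's route: density of $\GL(n,\R)$-smooth vectors in the trivial $G$-isotypic component, finite dimensionality of its smooth part via \autoref{maintheorem:IntegralRepresentation} and the transitivity of $G$, and closedness of finite-dimensional subspaces. The paper packages this as $G$-admissibility of all of $\Area(\R^n)$, using Warner's density theorem and Frobenius reciprocity for $\mathcal{J}^n(\R^n)^{tr}$, whereas your explicit averaging of $\omega$ also makes clear why the form can be chosen $G$-invariant.

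One correction concerns the step you flagged yourself. The estimate
$\sup_{K\in C}|[P\Psi](K;\phi)|\le\sup_{K\in G\cdot C,\,g\in G}|\Psi(K;\phi\circ g)|$
involves the compact family $\{\phi\circ g:g\in G\}$, not a single test function. So $P$ is evidently continuous for the uniform compact topology, or from it into the uniform weak* topology, but not a priori for the uniform weak* topology itself. Since smooth vectors are also dense in the uniform compact topology, run the density argument there instead.
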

The compact subgroups that operate transitively and effectively on some unit sphere belong to one of the infinite series
\begin{align*}
	\SO(n), \U(n), \mathrm{SU}(n), \mathrm{Sp}(n),\mathrm{Sp}(n)\U(1),\mathrm{Sp}(n)\mathrm{Sp}(1)
\end{align*}
or are one of the exceptional cases
\begin{align*}
	\mathrm{G}_2,\mathrm{Spin}(7),\mathrm{Spin}(9),
\end{align*}
compare \cite{BorelSomeremarksLie1949a,MontgomerySamelsonTransformationgroupsspheres1943}. In combination with a description of the kernel of the normal cycle due to Bernig and Bröcker \cite{BernigBroeckerValuationsmanifoldsRumin2007}, \autoref{maincorollary:InvariantAreaMeasures} provides a direct way to obtain complete characterization results of all continuous $G$-equivariant area measures. For $G=\SO(n)$, it is easy to check that this recovers Schneider's result from \cite{SchneiderKinematischeBeruhrmaefur1975} (compare \autoref{theorem:SchneiderArea} below) in slightly improved form, while the case $G=\U(n)$ (acting on $\C^n$) follows from results by Wannerer \cite{WannererIntegralgeometryunitary2014,Wannerermoduleunitarilyinvariant2014} and Bernig and Fu \cite{BernigFuHermitianintegralgeometry2011}. Similarly, for the groups $\mathrm{SU}(n)$, $\mathrm{Sp}(2)\mathrm{Sp}(1)$, and $\mathrm{Spin}(9)$, the relevant differential forms can be found in \cite{BernigHadwigertypetheorem2009,BernigSolanesKinematicformulasquaternionic2017,KotrbatyWannererIntegralgeometryoctonionic2025}.\\

$\GL(n,\R)$-smooth area measures also admit the following description in terms of mixed area measures.
\begin{maintheorem}
	\label{maintheorem:finiteMixedArea}
	There exist ellipsoids $\mathcal{E}_j$, $1\le j\le N:=\binom{n+1}{2}+1$, such that for every $\GL(n,\R)$-smooth area measure $\Psi\in \Area_k(\R^n)$ there exist smooth functions $\phi_\alpha\in C^\infty(S^n)$ for $\alpha\in\mathbb{N}^N$ with $|\alpha|=n-k-1$ such that
	\begin{align*}
		\Psi(K;B)=\sum_{\alpha\in\mathbb{N}^N,|\alpha|=n-k-1}\int_{B}\phi_\alpha dS(K[k],\mathcal{E}_1[\alpha_1],\dots,\mathcal{E}_{n-k-1}[\alpha_{n-k-1}]).
	\end{align*}
\end{maintheorem}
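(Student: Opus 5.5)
By \autoref{maintheorem:IntegralRepresentation}, a $\GL(n,\R)$-smooth $\Psi\in\Area_k(\R^n)$ is of the form $\Psi=\Phi_\omega$ for some $\omega\in\Omega^{n-1}(\R^n\times S^{n-1})^{tr}$. The plan is to compare $\Phi_\omega$ with the right-hand side of the claimed formula on bodies $K$ with smooth boundary and positive curvature. Both sides are weak* continuous, and such bodies are dense, so it suffices to prove equality for them. I read the statement with $\phi_\alpha\in C^\infty(S^{n-1})$ and $\mathcal{E}_1,\dots,\mathcal{E}_N$ in the mixed area measure.

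\textbf{Step 1: pointwise reduction.} For such $K$ the normal cycle is the graph $\{(\nabla h_K(y),y):y\in S^{n-1}\}$. Hence $\Phi_\omega(K)$ has a density $y\mapsto P_y(H_K(y))$ with respect to the spherical Lebesgue measure, where $H_K(y)$ is the restriction of $D^2h_K(y)$ to $y^\perp$. Translation invariance means $\omega$ has constant coefficients in $x$. By $k$-homogeneity only the component of bidegree $(k,n-1-k)$ survives, and $P_y$ is the linear functional $H\mapsto\langle\omega_y,\Lambda^kH\rangle$. Concretely, $P_y$ is a smooth section of functionals on the space spanned by $\{\Lambda^kH: H\in\Sym^2(y^\perp)\}$. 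In the language of double forms on $y^\perp$, this span is the space of symmetric double forms of type $(k,k)$. Similarly, with $m:=n-1-k$, the mixed area measure $S(K[k],L_1,\dots,L_m)$ of smooth bodies has density given by the mixed discriminant
\[
D\bigl(H_K[k],H_{L_1},\dots,H_{L_m}\bigr)=\bigl\langle \Lambda^kH_K\wedge H_{L_1}\wedge\dots\wedge H_{L_m}\bigr\rangle ,
\]
where the wedge is taken in the algebra of double forms and the bracket denotes the top-degree component. The task thus becomes pointwise linear algebra in $y$, followed by a smooth choice of coefficients.

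\textbf{Step 2: choice of ellipsoids.} For an ellipsoid $\mathcal{E}=A B_1(0)$ we have $h_{\mathcal{E}}(y)=|A^Ty|$, and $H_{\mathcal{E}}(y)$ is the compression to $y^\perp$ of $AA^T/|A^Ty|$. Choose positive definite matrices $A_jA_j^T$, $1\le j\le N-1=\binom{n+1}{2}$, forming a basis of the symmetric $n\times n$ matrices, and let $\mathcal{E}_N=B_1(0)$. Compression to $y^\perp$ is surjective onto $\Sym^2(y^\perp)$. Hence $\{H_{\mathcal{E}_j}(y)\}_j$ spans $\Sym^2(y^\perp)$ for every $y$, with scalings $|A_j^Ty|^{-1}$ that are smooth and nonvanishing. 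By polarization, the products $H^\alpha:=H_{\mathcal{E}_1}^{\wedge\alpha_1}\wedge\dots\wedge H_{\mathcal{E}_N}^{\wedge\alpha_N}$ with $|\alpha|=m$ span the symmetric double forms of type $(m,m)$ on $y^\perp$, since powers $Q^{\wedge m}$ of symmetric $Q$ span that space.

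\textbf{Step 3: duality and smooth coefficients.} The pairing $(\Lambda^kH,\,Q)\mapsto\langle\Lambda^kH\wedge Q\rangle$ between symmetric double forms of types $(k,k)$ and $(m,m)$ on the $(n-1)$-dimensional space $y^\perp$ is nondegenerate. This is the Hodge-star duality for double forms, and I would verify it using a basis diagonalizing $H$. So there are coefficients $c_\alpha(y)$ with
\[
P_y(H)=\sum_{|\alpha|=m}c_\alpha(y)\,\langle\Lambda^kH\wedge H^\alpha(y)\rangle .
\]
The linear map $(c_\alpha)\mapsto\sum_\alpha c_\alpha\langle\,\cdot\wedge H^\alpha(y)\rangle$ is surjective and depends smoothly on $y$ with constant rank. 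Therefore it admits a smooth right inverse, for instance via the Moore–Penrose pseudoinverse or a partition of unity. Taking $\phi_\alpha:=c_\alpha$, multiplied by the multinomial normalization of mixed area measures, yields equality of densities for smooth $K$, and continuity then gives the claim for all $K$.

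\textbf{Main obstacle.} I expect the main obstacle to be Step 3, specifically two points. First, identifying exactly which functionals $P_y$ arise, so that the contributions of $dx$-components transversal to $y^\perp$ and of the contact ideal are correctly discarded. Second, checking the nondegeneracy of the double-form pairing restricted to symmetric elements. I would handle the first by pulling $\omega$ back along $y\mapsto(\nabla h_K(y),y)$, using $d(\nabla h_K)=D^2h_K\,dy$.
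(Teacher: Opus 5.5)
\textbf{Step 2 is wrong, and this is a genuine gap.} For $\mathcal{E}=AB_1(0)$, $M=AA^T$, $s=|A^Ty|$ and $P$ the orthogonal projection onto $y^\perp$, the restricted Hessian is
\[
H_{\mathcal{E}}(y)=s^{-1}\bigl(PMP-s^{-2}(PMy)(PMy)^T\bigr).
\]
Equivalently, it is $a^{-1/2}$ times the Schur complement $C-a^{-1}bb^T$ of $a=\langle y,My\rangle$ in $M$, written in blocks with respect to $y^\perp\oplus\R y$. It is not the compression of $M/s$. This expression is not linear in $M$, so ``basis of $\Sym^2(\R^n)$ plus surjectivity of compression'' proves nothing.

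The claim is in fact false. Take $n=3$, $y=e_3$, and let $E_{ij}$ have entries $1$ at $(i,j)$ and $(j,i)$. Consider the positive definite basis
\[
\mathrm{diag}(2,1,1),\quad \mathrm{diag}(1,2,1),\quad \mathrm{diag}(1,1,2),\quad I+\tfrac12E_{13},\quad I+\tfrac12E_{23},\quad I+\tfrac14E_{12}+\tfrac12(E_{13}+E_{23}).
\]
Every $H_{\mathcal{E}_j}(e_3)$ is diagonal, and so is $H_{B_1(0)}(e_3)$. For $k=1$, the density of $\sum_j\phi_j\,dS(K,\mathcal{E}_j)$ at $e_3$ is $D\bigl(H_K(e_3),\sum_j\phi_j(e_3)H_{\mathcal{E}_j}(e_3)\bigr)$, and $D$ is a nondegenerate pairing on $\Sym^2(e_3^\perp)$. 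Hence $S(\cdot,\mathcal{E}')$, for any ellipsoid with non-diagonal $H_{\mathcal{E}'}(e_3)$, is not of the required form, and the theorem fails for this choice.

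The existence of spanning ellipsoids is a real ingredient; the paper imports it as \autoref{lemma:ExistenceEllipsoidsSpan}. You can prove it by taking $M_j=I+tX_j$ with $\{X_j\}$ a basis of $\Sym^2(\R^n)$ and $t$ small. Then $s\,H_{\mathcal{E}_j}(y)-H_{B_1(0)}(y)=t\,PX_jP+O(t^2)$ uniformly in $y$, and spanning is an open condition on the compact sphere.

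\textbf{Step 3 rests on a false identification.} For $2\le k\le n-3$, $\mathrm{span}\{\Lambda^kH:H\in\Sym^2(y^\perp)\}$ is not the space of all symmetric $(k,k)$ double forms. It is the proper subspace satisfying the first Bianchi identity; for $k=2$, $n=5$ it has dimension $20$ rather than $21$, the complement being $\Lambda^4$. Likewise, the $H^\alpha$ span only the corresponding $(m,m)$ subspace.

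So nondegeneracy of the pairing on all symmetric double forms does not give what you need. An element representing $P_y$ there need not lie in the span of the $H^\alpha$. Checking nondegeneracy on a basis diagonalizing $H$ tests the wrong statement. What you actually need is that the pairing $\mathrm{span}\{\Lambda^kH\}\times\mathrm{span}\{\Lambda^mQ\}\to\C$ is perfect. This is true, but it has to be proved. For instance, by Jacobi's complementary minor formula the Hodge dual of $\Lambda^mQ$ is $\pm\det(Q)\,\Lambda^k(Q^{-1})$ for invertible $Q$. The pairing then becomes the Euclidean inner product on $\mathrm{span}\{\Lambda^kH\}$, which is nondegenerate.

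With both repairs, your pointwise mixed-discriminant argument (plus a smooth right inverse) is a valid, more hands-on alternative to the paper's proof. The paper avoids this linear algebra entirely. It shows that the $C^\infty(S^{n-1})$-module generated by the iterated Lie derivatives $\mathcal{L}_{X_{h_{\mathcal{E}_{j_1}}}}\cdots\mathcal{L}_{X_{h_{\mathcal{E}_{j_m}}}}\pi_1^*\vol$ contains a $\GL(n,\R)$-invariant submodule, and concludes by \autoref{theorem:IrreducibilityDiffForms} (see \autoref{proposition:J_finitelyGeneratedEllipsoids}). It still relies on \autoref{lemma:ExistenceEllipsoidsSpan} for the choice of ellipsoids.
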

In particular, any continuous area measure can be approximated by mixed surface area measures modified by a smooth density. This may be interpreted as a version of McMullen's Conjecture for the space $\Area(\R^n)$, which concerns a corresponding approximation result for translation invariant valuations on convex bodies in terms of mixed volumes, compare \cite{McMullenContinuoustranslationinvariant1980}. This conjecture was proved by Alesker in \cite{AleskerDescriptiontranslationinvariant2001} as a consequence of the Irreducibility Theorem, which provides a representation theoretic description of the space of continuous and translation invariant valuations on convex bodies. We show a related result for the spaces $\Area_k(\R^n)$ exploiting the additional $C(S^{n-1})$-module structure on $\Area(\R^n)$ given as follows: For $\psi\in C(S^{n-1})$ and  $\Psi\in\Area(\R^n)$, define $\psi\bullet\Psi\in\Area(\R^n)$ by
\begin{align*}
	[\psi\bullet \Psi](K;\phi):=\Psi(K;\psi\cdot\phi)=\int_{S^{n-1}}\psi\cdot\phi d\Psi(K).
\end{align*}
Thus, \autoref{maintheorem:DirectClassificationResults} shows that $\Area_{n-1}(\R^n)$ is a free $C(S^{n-1})$ module of rank $1$, while \autoref{maintheorem:finiteMixedArea} shows that the space of smooth area measures is generated as a module over $C^\infty(S^{n-1})$ by a finite family of mixed area measures. In the general case, we have the following density result.
\begin{maintheorem}
	\label{maintheorem:IrreducibleModule}
	Let $W\subset \Area_k(\R^n)$ be a nontrivial $\GL(n,\R)$-invariant $C(S^{n-1})$-submodule. Then $W$ is dense in $\Area_k(\R^n)$ with respect to the uniform weak* topology.
\end{maintheorem}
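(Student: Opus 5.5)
The plan is to reduce the statement to an algebraic problem about smooth area measures. We use \autoref{maintheorem:IntegralRepresentation} together with the density of $\GL(n,\R)$-smooth vectors, and then solve that problem by a pointwise irreducibility argument.

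\textbf{Step 1: reduction to smooth vectors.} Let $\overline{W}$ be the closure of $W$ in the uniform weak* topology. It is again a $\GL(n,\R)$-invariant $C(S^{n-1})$-submodule, since both actions are continuous. The restricted representation on $\overline{W}$ satisfies the same completeness and continuity conditions as in \autoref{section:ActionGroup}. Hence its $\GL(n,\R)$-smooth vectors are dense in $\overline{W}$, and we may pick a nonzero $\GL(n,\R)$-smooth $\Psi\in\overline{W}\cap\Area_k(\R^n)$. By \autoref{maintheorem:IntegralRepresentation}, $\Psi=\Phi_\omega$ with $\omega\in\Omega^{n-1}(\R^n\times S^{n-1})^{tr}$, and by homogeneity we may take $\omega$ of bidegree $(k,n-1-k)$. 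Since smooth area measures are dense in $\Area_k(\R^n)$, it suffices to show that every smooth $\Phi_\eta\in\Area_k(\R^n)$ lies in $\overline{W}$.

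\textbf{Step 2: closure properties of $\overline{W}$.} The smooth vectors of $\overline{W}$ are stable under:
\begin{itemize}
\item the infinitesimal action of $\mathfrak{gl}(n,\R)$, since derivatives of $g\mapsto\pi(g)\Psi$ are limits of difference quotients in $\overline{W}$;
\item the $C^\infty(S^{n-1})$-module action;
\item the group action.
\end{itemize}
On the level of forms, these operations correspond to Lie derivatives of $\omega$ along the induced vector fields on $\R^n\times S^{n-1}$, multiplication by $\pi_2^*\psi$, and pullback. Translation invariant forms of bidegree $(k,n-1-k)$ are sections of a vector bundle $E\to S^{n-1}$. Its fiber at $u$ is $\Lambda^k(\R^n)^*\otimes\Lambda^{n-1-k}T_u^*S^{n-1}$. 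By the description of the kernel of $\omega\mapsto\Phi_\omega$ (Bernig--Bröcker), the space of smooth area measures is the space of smooth sections of a quotient bundle $F$. Contracting with the radial direction and using that $\nc(K)$ is Legendrian, the fiber $F_u$ identifies with a trace-free part of $\Lambda^kV^*\otimes\Lambda^{n-1-k}V$, where $V=u^\perp$, i.e.\ with a trace-free part of $\Lambda^kV^*\otimes\Lambda^kV^*\otimes\det V$. Localizing by multiplication with bump functions shows that $\overline{W}$ corresponds to a $C^\infty$-submodule of sections of $F$. Its pointwise values define a subspace $F'_u\subset F_u$ that is invariant under the stabilizer of $u$ in $\GL(n,\R)$, which contains $\GL(V)$. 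The subspace is nonzero for some $u$ because $\Psi\neq0$.

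\textbf{Step 3: pointwise irreducibility and globalization.} I would show that $F_u$ is an irreducible $\GL(V)$-module, so $F'_u=F_u$. Transitivity of $\GL(n,\R)$ on $S^{n-1}$ then gives $F'_u=F_u$ for all $u$. Given any section $\eta$ of $F$, choose locally finitely many elements of $\overline{W}$ whose values span $F_u$ near each point. Write $\eta$ locally as a $C^\infty$-combination of them, and glue with a partition of unity. Since $\overline{W}$ is a module, this gives $\Phi_\eta\in\overline{W}$.

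\textbf{Main obstacle.} I expect the main obstacle to be Step 3's identification of $F_u$ with an irreducible $\GL(V)$-representation. This means pinning down precisely which traces (multiples of the symplectic form and of the contact form) are killed by the normal cycle, and checking that the remaining trace-free part of $\Lambda^kV^*\otimes\Lambda^kV^*$ twisted by $\det$ is irreducible; if it is not, the argument needs refining. I would first try this via Bernig--Bröcker and primitive-form decompositions. A secondary point is to justify that the pointwise stabilizer action really is realized inside $\overline{W}$. This requires combining Lie derivatives with multiplication by functions vanishing at $u$ to remove the derivative terms.
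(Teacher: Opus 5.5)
Your proposal is correct and follows essentially the paper's route. The paper passes to the closure, uses density of smooth vectors (\autoref{proposition:DensitySmoothVectors}) to reduce to the smooth case (\autoref{theorem:IrreducibilityModuleSmoothCase}), and concludes by pointwise irreducibility under the stabilizer plus a partition of unity (\autoref{theorem:IrreducibilityDiffForms}); it works with the exactly equivariant model $\mathcal{J}^{k+1,n-1-k}(\R^n)^{tr}$ rather than your quotient by the Bernig--Br\"ocker ideal, but the two agree by \autoref{corollary:RelationJtoKernel}.

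Both obstacles you flag are settled in the paper. Your trace-free part is exactly the primitive part $P\Lambda^{k,n-1-k}(W\times W^*)^*$, which is irreducible by \autoref{proposition:JIrreducibleStablizer} and \autoref{proposition:symplecticIrreducible}. The secondary worry is moot: a stabilizer element fixes $(0,u)$ and acts on translation invariant forms there by the linear map $(dG_g)^*$, up to a positive scalar absorbed by the module structure, so no Lie derivatives are needed.
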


\subsection*{Plan of the article}
	The main contribution of this article is the description of $\GL(n,\R)$-smooth area measures in \autoref{maintheorem:IntegralRepresentation}. The proof of this result is based on a corresponding regularity result for a certain space of measure-valued functionals on convex functions recently established in \cite{KnoerrIntegralrepresentationpolynomial2026}. We will treat this as a "local version" of our main result and consequently apply a localization procedure to relate the two spaces of functionals, similar to the approach taken in \cite{HofstaetterKnoerrLocalizationvaluationsAleskers2025}. This requires a notion of support, which we introduce in \autoref{section:HomDecompArea} based on certain distributions introduced by Goodey and Weil \cite{GoodeyWeilDistributionsvaluations1984}, as well as some compatibility properties of the $C(S^{n-1})$-module structure with $\GL(n,\R)$-smooth vectors.\\
	This approach has one minor difficulty: It turns out that some of the constructions are only continuous with respect to a stronger topology on $\Area(\R^n)$ (called the \emph{uniform strong topology} below). However, with respect to this topology, the action of $\GL(n,\R)$ on $\Area(\R^n)$ does not have the necessary continuity properties required for certain arguments by approximation. Fortunately, this problem can be avoided by switching between different topologies on $\Area(\R^n)$, however, it does require some additional care in the arguments.\\
	
	The article is structured as follows. In \autoref{section:preliminaries}, we review the required background on convex bodies and valuations on convex bodies, in particular, the construction of the Goodey--Weil distributions. We also discuss the construction of area measures with the normal cycle, which requires some background from contact geometry, and show how this construction can be interpreted in a $\GL(n,\R)$-equivariant way. In addition, we prove an analog of \autoref{maintheorem:IrreducibleModule} for a certain space of differential forms.\\
	In \autoref{section:RelationValuationProperty}, we establish that a continuous locally determined functional is automatically a valuation, which we then use in \autoref{section:HomDecompArea} to obtain a proof of \autoref{maintheorem:DirectClassificationResults}. This gives rise to Goodey--Weil distributions for elements in $\Area_k(\R^n)$, and we modify these distributions to obtain a more suitable notion of support. The remaining subsections of \autoref{section:AreaGeneral} are used to discuss the different topologies required for the subsequent constructions and to verify various compatibility properties of these topologies with the action of $\GL(n,\R)$ and the $C(S^{n-1})$-module structure. Most of these results are a direct consequence of general properties of measure-valued functionals established in \cite{KnoerrPolynomiallocalfunctionals2025} and we refer to this article for a more in depth discussion.\\
	\autoref{section:CharcSmoothArea} contains the proof of \autoref{maintheorem:IntegralRepresentation}. We first show that a certain subspace of $\Area(\R^n)$ defined in terms of a support restriction can be identified with a space of local functionals on convex functions. This identification turns out to be equivariant with respect to the action of a certain subgroup of $\GL(n,\R)$, which we use to apply a regularity result from \cite{KnoerrIntegralrepresentationpolynomial2026}. As a consequence, we obtain \autoref{maincorollary:InvariantAreaMeasures}.\\
	\autoref{section:Applications} contains the remaining applications. First, we show that \autoref{maintheorem:IrreducibleModule} can be reduced to a stronger result for $\GL(n,\R)$-smooth area measures, which we obtain from the above mentioned analog result for differential forms. This result also allows for a rather direct proof of the representation of $\GL(n,\R)$-smooth area measures in terms of mixed area measures, compare \autoref{maintheorem:finiteMixedArea}. Interestingly, the same argument applies to $\GL(n,\R)$-smooth translation invariant valuations on convex bodies, which can be used to obtain McMullen's Conjecture without applying the Irreducibility Theorem \cite{AleskerDescriptiontranslationinvariant2001}. Since this is of independent interest, we include a short discussion in \autoref{section:FiniteComb}.\\
	In \autoref{section:LocalityCondition}, we discuss the relation of  the slightly stronger conditions used in Schneider's characterization of the intermediate area measures to out definition and establish some additional regularity properties for a subspace of $\Area(\R^n)$. Finally, we extend the injectivity results for moment maps obtained in \cite{BernigDualareameasures2019} to arbitrary continuous area measures.\\
	We close the article with some additional remarks and open questions in \autoref{section:remarks} .

	\begin{acknowledgement}
	This research was funded in whole or in part by the Austrian Science Fund (FWF), \href{https://www.doi.org/10.55776/PAT4205224}{10.55776/PAT4205224}.
\end{acknowledgement}

\section{Preliminaries}\label{section:preliminaries}
	Since every finite complex Borel measure on $S^{n-1}$ is a Radon measure, we will identify element in $\M(S^{n-1})$ with $C(S^{n-1})'$ without further comments. In particular, we will implicitly use that two such measures coincide on bounded Borel functions as soon as they coincide on continuous functions.\\
	Unless stated otherwise, all vector spaces are assumed to be complex vector spaces. If $V$ is a real vector space, we let $V_\C=V\otimes_\R\C$ denote its complexification.\\
	Throughout the article, we use $\omega_k$ to denote the $k$-dimensional volume of the Euclidean unit ball in $\R^k$.	

	\subsection{Convex bodies and functions}\label{section:preliminaries_convexbodies}
		As a general reference for convex bodies and functions, we refer to the monographs by Schneider \cite{SchneiderConvexbodiesBrunn2014} and Rockafellar \cite{RockafellarConvexanalysis1997}.
		Let us denote by $\Conv(\R^n,\R)$ the space of all convex functions $f:\R^n\rightarrow\R$. This space carries a natural (metrizable) topology induced by locally uniform convergence, which coincides with the topology induced by pointwise or epi-convergence convergence, see \cite{RockafellarConvexanalysis1997}*{Theorem~7.17}. For our purposes, the most important examples are support functions. 
		We collect the following properties of these functions.
		\begin{lemma}\label{lemma:propertiesSupportFunction}
			\begin{enumerate}
				\item $K_j\rightarrow K$ if and only if $h_{K_j}\rightarrow h_K$ locally uniformly on $\R^n$.
				\item $h_K\vee h_L=h_{\mathrm{conv}(K\cup L)}$, and if $K\cup L$ is convex, then $h_K\wedge h_L=h_{K\cap L}$, where $\vee$ and $\wedge$ denote the pointwise maximum and minimum.
				\item For $K\in\K(\R^n)$, $x\in\R^n$,
					\begin{align*}
						h_{K+x}(y)=h_K(y)+\langle y,x\rangle\quad\text{for}~y\in\R^n.
					\end{align*}
				\item For $K\in\K(\R^n)$, $g\in\GL(n,\R)$,
					\begin{align*}
						h_{gK}(y)=h_K(g^Ty)\quad \text{for}~y\in\R^n.
					\end{align*}
			\end{enumerate}
		\end{lemma}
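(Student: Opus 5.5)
The plan is to verify the four items directly from the definition $h_K(y)=\sup_{x\in K}\langle x,y\rangle$. Items (3) and (4) are one-line computations. Item (2) needs a short convexity argument, and item (1) is the only one requiring an actual estimate.

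For (3), compute
\begin{align*}
h_{K+x}(y)=\sup_{z\in K}\langle z+x,y\rangle=h_K(y)+\langle x,y\rangle.
\end{align*}
For (4), use $\langle gz,y\rangle=\langle z,g^Ty\rangle$ to get
\begin{align*}
h_{gK}(y)=\sup_{z\in K}\langle z,g^Ty\rangle=h_K(g^Ty).
\end{align*}

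For the first half of (2):
\begin{itemize}
\item Since $\langle\cdot,y\rangle$ is linear, its supremum over $\mathrm{conv}(K\cup L)$ equals its supremum over $K\cup L$.
\item That supremum is $\max(h_K(y),h_L(y))$.
\end{itemize}

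For the second half of (2), assume $K\cup L$ is convex.
\begin{itemize}
\item The inequality $h_{K\cap L}\le h_K\wedge h_L$ is trivial by monotonicity of support functions.
\item For the reverse inequality, fix $y$. Pick $a\in K$ and $b\in L$ attaining $h_K(y)$ and $h_L(y)$, and assume without loss of generality that $h_K(y)\le h_L(y)$.
\item If $a\in L$, we are done, since then $h_{K\cap L}(y)\ge\langle a,y\rangle=h_K(y)$.
\item Otherwise $a\in K\setminus L$. Consider the segment $[a,b]\subset K\cup L$. The sets $[a,b]\cap K$ and $[a,b]\cap L$ are closed, cover the segment, and are both nonempty.
\item Since a segment is connected, they must intersect at some point $c=(1-t)a+tb\in K\cap L$.
\item Then $\langle c,y\rangle\ge\min(\langle a,y\rangle,\langle b,y\rangle)=h_K(y)$, which gives the reverse inequality.
\end{itemize}
This connectedness step is the only mildly nontrivial point in (2).

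For (1), the key estimate is $|h_K(y)-h_L(y)|\le d_H(K,L)|y|$. It follows from $K\subset L+d_H(K,L)B_1(0)$, which gives $h_K\le h_L+d_H(K,L)|\cdot|$, together with the same bound with $K$ and $L$ exchanged. This yields the direction ``$K_j\to K$ implies uniform convergence on compacts''.

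For the converse, I would use the well-known identity
\begin{align*}
d_H(K,L)=\sup_{|y|=1}|h_K(y)-h_L(y)|.
\end{align*}
Its ``$\ge$'' inequality is the estimate above. For ``$\le$'', suppose $\sup_{S^{n-1}}|h_K-h_L|\le\varepsilon$. Then $h_K\le h_L+\varepsilon h_{B_1(0)}=h_{L+\varepsilon B_1(0)}$. Since a convex body is the intersection of its supporting halfspaces, this forces $K\subset L+\varepsilon B_1(0)$, and symmetrically $L\subset K+\varepsilon B_1(0)$.

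Uniform convergence on $S^{n-1}$ is implied by locally uniform convergence on $\R^n$, so this gives the converse direction of (1). The step where I expect the most care is this use of the halfspace representation; the paper may simply cite \cite{SchneiderConvexbodiesBrunn2014} for it.
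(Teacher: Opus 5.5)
Your proof is correct. The paper gives no proof of its own: it records these facts as standard, pointing to the monographs of Schneider and Rockafellar. Your arguments are the standard textbook ones found there: direct computation for (3) and (4), the segment/connectedness argument for $h_{K\cap L}=h_K\wedge h_L$, and, for (1), the identity $d_H(K,L)=\sup_{u\in S^{n-1}}|h_K(u)-h_L(u)|$ via the halfspace representation together with $1$-homogeneity.
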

		For $f\in\Conv(\R^n,\R)$ , the set 
		\begin{align}
			\label{eq:defSubgradient}
			\partial f(x_0)=\{v\in\R^n:f(x)\ge \langle v,x-x_0\rangle +f(x_0)\quad \text{for all}~x\in \R^n\}
		\end{align}
		is called the subdifferential of $f$ in $x_0$.  Note that this is a local notion for $f\in\Conv(\R^n,\R)$: If $v\in\R^n$ is not a subgradient of $f$ in $x_0$, then there exists $x\in\R^n$ such that $f(x)<f(x_0)+\langle v,x-x_0\rangle$. By convexity, for $t\in(0,1)$,
		\begin{align*}
			f(tx+(1-t)x_0)\le tf(x)+(1-t)f(x_0)<f(x_0)+t\langle v,x-x_0\rangle,
		\end{align*}
		so any of the points $tx+(1-t)x_0$, $t\in(0,1)$, may be used to check that $v$ is not a subgradient of $f$ in $x_0$. Thus, it is sufficient to check the inequality in Eq.~\eqref{eq:defSubgradient} on a neighborhood of $x_0$. In particular, if $f,h\in\Conv(\R^n,\R)$ coincide on a neighborhood of $x_0$, then $\partial f(x_0)=\partial h(x_0)$.\\

		For $K\in\K(\R^n)$ and $v\in\R^n$, we consider the support set
		\begin{align*}
			F(K,v):=\{x\in K: \langle x,v\rangle=h_K(v)\}.
		\end{align*}
		Note that this is a convex subset of the boundary of $K$ unless $v=0$. The support sets are related to the subdifferential of the support function in the following way.
		\begin{lemma}[\cite{SchneiderConvexbodiesBrunn2014}*{Theorem~1.7.4}]
			\label{lemma:SubdifferentialSupportFunction}
			For $K\in\K(\R^n)$ and $v\in\R^n\setminus\{0\}$, $\partial h_K(v)=F(K,v)$.
		\end{lemma}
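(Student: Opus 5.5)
We prove both inclusions $F(K,v)\subset\partial h_K(v)$ and $\partial h_K(v)\subset F(K,v)$ directly from the definitions, using only that $h_K$ is convex, positively $1$-homogeneous and sublinear. Fix $K\in\K(\R^n)$ and $v\ne0$.

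For the first inclusion, let $x\in F(K,v)$, so $x\in K$ and $\langle x,v\rangle=h_K(v)$. For every $y\in\R^n$ we have $h_K(y)\ge\langle x,y\rangle$ because $x\in K$. Hence
\begin{align*}
	h_K(y)\ge \langle x,y\rangle=\langle x,y-v\rangle+h_K(v),
\end{align*}
which is exactly the subgradient inequality in Eq.~\eqref{eq:defSubgradient} at $x_0=v$. Thus $x\in\partial h_K(v)$.

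For the converse, let $x\in\partial h_K(v)$, so that $h_K(y)\ge h_K(v)+\langle x,y-v\rangle$ for all $y$. We test this inequality with two choices of $y$.
\begin{enumerate}
	\item Taking $y=tv$ and using $h_K(tv)=th_K(v)$ for $t\ge0$ gives $(t-1)h_K(v)\ge (t-1)\langle x,v\rangle$. Choosing $t=2$ yields $h_K(v)\ge\langle x,v\rangle$, and choosing $t=0$ yields $h_K(v)\le \langle x,v\rangle$. Hence $\langle x,v\rangle=h_K(v)$.
	\item Substituting this equality back, the subgradient inequality becomes $h_K(y)\ge\langle x,y\rangle$ for all $y\in\R^n$.
\end{enumerate}

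The main point is to conclude $x\in K$ from the inequality in (2). Suppose $x\notin K$. Since $K$ is compact and convex, the separation theorem gives some $y$ with $\langle x,y\rangle>\sup_{z\in K}\langle z,y\rangle=h_K(y)$, contradicting (2). Therefore $x\in K$, and combined with (1) this gives $x\in F(K,v)$.

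Both inclusions hold, so $\partial h_K(v)=F(K,v)$.
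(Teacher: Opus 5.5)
Your proof is correct. The paper gives no argument of its own here and only cites Schneider's Theorem~1.7.4; your direct verification from the definitions is the standard elementary proof of that fact: use positive homogeneity at $y=2v$ and $y=0$ to get $\langle x,v\rangle=h_K(v)$, then use $h_K\ge\langle x,\cdot\rangle$ and strict separation to get $x\in K$.
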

		This implies in particular that for $K,L\in\K(\R^n)$ satisfying $h_K|_U=h_L|_U$ for an open subset $U\subset S^{n-1}$, we have $F(K,u)=F(L,u)$ for all $u\in U$. Here we used that the support functions are $1$-homogeneous, so the support functions actually coincide on an open neighborhood of $U$ in $\R^n$. 
		\begin{corollary}\label{corollary:RelationSubdiffNormalCycle}
			For $(x,v)\in\R^n\times \R^n$, the following are equivalent:
			\begin{enumerate}
				\item $x\in \partial h_K(v)$.
				\item $\langle v,x-y\rangle\ge 0$ for all $y\in K$.
			\end{enumerate}
		\end{corollary}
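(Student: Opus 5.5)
The corollary states that, for $K\in\K(\R^n)$ and $(x,v)\in\R^n\times\R^n$, $x\in\partial h_K(v)$ holds if and only if $\langle v,x-y\rangle\ge 0$ for all $y\in K$. I would prove it by reducing both conditions to the single statement $\langle x,v\rangle=h_K(v)$ with $x\in K$, and I would treat the cases $v\neq 0$ and $v=0$ separately.

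\emph{Case $v\neq0$.} By \autoref{lemma:SubdifferentialSupportFunction}, $\partial h_K(v)=F(K,v)$, so condition (1) means $x\in K$ and $\langle x,v\rangle=h_K(v)$. If these hold, then $\langle v,x\rangle=h_K(v)\ge\langle v,y\rangle$ for every $y\in K$, which is (2). Conversely, assume (2). Then $\langle v,x\rangle\ge\sup_{y\in K}\langle v,y\rangle=h_K(v)$.

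The step I expect to be the main obstacle is the reverse inequality together with $x\in K$. Condition (2) does not force $x\in K$: for any $x$ in the outward normal ray $x_0+tv$, $t>0$, over a point $x_0\in F(K,v)$, we get $\langle v,x-y\rangle\ge t|v|^2>0$. Such an $x$ satisfies (2) but does not lie in $F(K,v)=\partial h_K(v)$. So the equivalence as literally stated seems false unless $x\in K$ is implicitly assumed, as it is in the intended application to the normal cycle, where $(x,v)$ with $x\in K$. Under the implicit assumption $x\in K$, taking $y=x$ gives $\langle v,x\rangle\le h_K(v)$, hence equality, hence $x\in F(K,v)=\partial h_K(v)$. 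I would therefore state the corollary with the standing hypothesis $x\in K$.

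\emph{Case $v=0$.} Condition (2) holds trivially. For condition (1), directly from Eq.~\eqref{eq:defSubgradient}, $x\in\partial h_K(0)$ means $h_K(z)\ge\langle x,z\rangle$ for all $z\in\R^n$. By the separation theorem this is equivalent to $x\in K$. So the two conditions again agree under the hypothesis $x\in K$.
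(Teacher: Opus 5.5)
Your analysis is correct, and the point you flag is a genuine defect of the statement, and of the paper's proof, not of your argument. As written, (1)$\Rightarrow$(2) holds for all $(x,v)$, but (2)$\Rightarrow$(1) fails. Your outward-normal-ray example shows this; even more simply, take $K=\{0\}$ and $x=v=e_1$, where (2) holds but $\partial h_K(e_1)=\{0\}$. For $v=0$, condition (2) holds for every $x$, while $\partial h_K(0)=K$.

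The paper's proof of the converse passes from $\langle v,x\rangle\ge h_K(v)$ directly to $x\in F(K,v)$. This silently requires $x\in K$, both for membership in $F(K,v)$ and, via $y=x$, for the reverse inequality. That is exactly the step you isolate.

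With the hypothesis $x\in K$ added, your argument for $v\neq 0$ coincides with the paper's: the identity $\partial h_K(v)=F(K,v)$ gives one direction, and the supremum together with $y=x$ gives the other. Your separate treatment of $v=0$ via the separation theorem is slightly more careful than the paper, whose lemma $\partial h_K(v)=F(K,v)$ is stated only for $v\neq 0$.

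The later use of the corollary is unaffected. It is applied only to show that $(x,v)\in\nc(K)$ if and only if $x\in\partial h_K(v)$ for $v\in S^{n-1}$. There, $x\in\partial K\subset K$ is part of the definition of $\nc(K)$, and conversely $F(K,v)\subset\partial K$ for $v\neq 0$.
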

		\begin{proof}
			If $x\in \partial h_K(v)$, then \autoref{lemma:SubdifferentialSupportFunction} implies $h_K(v)=\langle v,x\rangle$, and so 
			\begin{align*}
				\langle v,x-y\rangle=h_K(v)-\langle v,y\rangle\ge 0\quad \text{for}~y\in K
			\end{align*}
			by the definition of the support function. Similarly, $\langle v,x-y\rangle\ge 0$ for all $y\in K$ implies that $\langle v,x\rangle \ge\sup_{y\in K} \langle v,y\rangle= h_K(v)$, so $x\in F(K,v)$ and we may apply \autoref{lemma:SubdifferentialSupportFunction} to obtain the result.
		\end{proof}
	\subsection{Valuations on convex bodies}\label{section:Valuations}
		For a Hausdorff topological vector space $F$, let us denote by $\Val(\R^n,F)$ the space of all continuous valuations $\mu:\K(\R^n)\rightarrow F$ that are translation invariant. We also consider the subspace  $\Val_k(\R^n,F)$ of $k$-homogeneous valuations, i.e the subset of all valuations $\mu\in\Val(\R^n,F)$ such that $\mu(tK)=t^k\mu(K)$ for $t\ge0$, $K\in\K(\R^n)$. We have the following homogeneous decomposition.
		\begin{theorem}[\cite{McMullenValuationsEulertype1977}]
			\label{theorem:McMullenHomDecomp}
			If $F$ is a Hausdorff topological vector space, then
			\begin{align*}
				\Val(\R^n,F)=\bigoplus_{k=0}^n \Val_k(\R^n,F).
			\end{align*}
		\end{theorem}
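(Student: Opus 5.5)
The statement is McMullen's homogeneous decomposition for translation invariant continuous valuations with values in a Hausdorff topological vector space $F$. I will follow McMullen's polynomiality argument, reducing to the scalar case by composing with continuous linear functionals where possible. The core claim is the following: for $\mu\in\Val(\R^n,F)$ and $K_1,\dots,K_m\in\K(\R^n)$, the map
\begin{align*}
	(\lambda_1,\dots,\lambda_m)\mapsto \mu(\lambda_1K_1+\dots+\lambda_mK_m),\quad \lambda_j\ge 0,
\end{align*}
is a polynomial of total degree at most $n$ with coefficients in $F$. Once this is established for $m=1$, we get $\mu(tK)=\sum_{k=0}^n t^k\mu_k(K)$. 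Choosing distinct values $t_0,\dots,t_n>0$ and inverting the Vandermonde matrix expresses each $\mu_k(K)$ as a fixed linear combination of the values $\mu(t_iK)$. Hence each $\mu_k$ is continuous, translation invariant, a valuation (since dilation preserves unions and intersections), and $k$-homogeneous. Uniqueness of the decomposition follows from the same Vandermonde argument applied to $\sum_k t^k\mu_k(K)=0$.

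To prove polynomiality, I would first treat polytopes. For a polytope $P$ and integer $N$, the dilate $NP$ can be dissected into translates of pieces built from the faces of $P$; equivalently, one uses the inclusion--exclusion principle for translation invariant valuations on polytopes. Combining simple valuations, the Hadwiger-type dissection into prisms $F+[0,x]$, and induction on dimension shows that $\mu(NP)$ is a polynomial in $N\in\mathbb{N}$ of degree at most $n$. Concretely, I would use McMullen's inductive step: the map $\lambda\mapsto\mu(P+\lambda S)$ for a segment $S$ satisfies $\mu(P+\lambda S)=\mu(P)+\lambda\cdot(\text{valuation on the projection of }P)$ in lower dimension. The vector-valued setting causes no problem here, because these arguments are purely algebraic and involve only finite additivity.

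Next, I would pass from integers to rationals via homogeneity considerations ($\mu(\tfrac{p}{q}P)$), so that $t\mapsto\mu(tP)$ agrees with a polynomial $\sum_{k} t^k c_k(P)$ on $\mathbb{Q}_{\ge0}$. Then I would extend to all $t\ge0$ using continuity of $\mu$ and the Hausdorff property of $F$, which guarantees uniqueness of limits. Finally, I would extend from polytopes to general $K$: approximate $K$ by polytopes $P_j\to K$ and note that the coefficients $c_k(P_j)$ are Vandermonde combinations of $\mu(t_iP_j)$, which converge. The limit polynomial then agrees with $\mu(tK)$.

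The main obstacle is the polytope polynomiality step in the vector-valued generality. There is no duality available, since $F$ need not be locally convex, so one cannot reduce to $\C$. The combinatorial inclusion--exclusion argument therefore has to be carried out directly in $F$. I expect this to go through verbatim, because it uses only additivity, translation invariance, and finite linear algebra. For the continuity extensions, only the Hausdorff property is used.
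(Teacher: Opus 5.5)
The paper does not prove this statement; it cites McMullen. You follow McMullen's route, and your analytic reductions are correct. They are what is needed to pass from the purely algebraic polytope result to continuous valuations with values in a Hausdorff space:
\begin{itemize}
\item defining $\mu_k(K)$ as fixed Vandermonde combinations of $\mu(t_0K),\dots,\mu(t_nK)$;
\item getting $\mu_k(\tfrac1q P)=q^{-k}\mu_k(P)$ by comparing coefficients;
\item passing from $\mathbb{N}$ to $\mathbb{Q}_{\ge 0}$ to $[0,\infty)$, and from polytopes to $\K(\R^n)$, using only continuity and uniqueness of limits.
\end{itemize}
You are also right that no duality is needed.

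The gap is in the step you yourself call the main obstacle. The only concrete mechanism you give is the segment identity: $\mu(P+\lambda S)-\mu(P)$ is $\lambda$ times a valuation of the projection of $P$. This is true, but it only controls Minkowski addition of segments. Iterating it gives polynomiality of $\lambda\mapsto\mu(\lambda Z)$ for zonotopes $Z$, and says nothing about $\mu(NT)$ for a simplex $T$. Limits of zonotopes are centrally symmetric, so approximation cannot repair this when $n\ge 2$. The appeals to simple valuations and prism dissections do not give a way to dilate either.

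You need an identity that actually dilates. For example, let $T=\mathrm{conv}(\{0\}\cup F)$ with $F$ a polytope in an affine hyperplane $\{h=1\}$. Then $(N+1)T=NT\cup(NF+T)$ and $NT\cap(NF+T)=NF$. Hence $\mu((N+1)T)-\mu(NT)=\psi(NF)$, where $\psi(Q):=\mu(Q+T)-\mu(Q)$.

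Since $(Q_1\cap Q_2)+T=(Q_1+T)\cap(Q_2+T)$ whenever $Q_1\cup Q_2$ is convex, $\psi$ restricts to a translation invariant valuation on polytopes in the linear hyperplane $\{h=0\}$. By translation invariance, $\psi(NF)=\psi(N(F-x_0))$ for any $x_0\in F$. The induction hypothesis in dimension $n-1$ then makes $N\mapsto\psi(NF)$ a polynomial of degree at most $n-1$. Summing these differences shows that $N\mapsto\mu(NT)$ is a polynomial of degree at most $n$.

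For a general polytope $P$, triangulate it. Dilation commutes with unions and intersections, so the inclusion--exclusion principle for valuations on polytopes applies. This is a nontrivial but purely algebraic theorem, valid for values in any abelian group. It writes $\mu(NP)$ as an integer combination of the $\mu(N\sigma)$ for the simplices $\sigma$ of the triangulation. Each of these is handled by the pyramid argument in its affine hull.

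With this replacing the segment step, your outline becomes a complete proof, and it does work verbatim in $F$.
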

		This has the following consequence: If $F$ is a locally convex vector space, then we equip $\Val(\R^n,F)$ with the topology of uniform convergence on compact subset in $\K(\R^n)$. By the Blaschke Selection Theorem, a subset of $\K(\R^n)$ is compact if and only if it is closed and bounded in the Hausdorff metric (with respect to any Euclidean structure on $\R^n$). \autoref{theorem:McMullenHomDecomp} implies that the topology of $\Val(\R^n,F)$ is generated by the semi norms
		\begin{align*}
			\|\mu\|_F:=\sup_{K\subset L} |\mu(K)|_F
		\end{align*}
		for all continuous semi norms $|\cdot|_F$ on $F$ and a fixed full dimensional convex body $L\in\K(\R^n)$. We will use the unit ball with respect to the chosen Euclidean structure on $\R^n$ for the rest of the article.\\		
		
		For $\mu\in \Val_k(\R^n,F)$, \autoref{theorem:McMullenHomDecomp} allows us to define the polarization $\bar{\mu}:\K(\R^n)^k\rightarrow F$ by
		\begin{align*}
			\bar{\mu}(K_1,\dots,K_k)=\frac{1}{k!}\frac{\partial^k}{\partial\lambda_1\dots\partial\lambda_k}\Big|_0\mu\left(\sum_{j=1}^k\lambda_jK_j\right).
		\end{align*}
		Then $\bar{\mu}$ is in particular Minkowski additive in each argument. More explicitly, $\bar{\mu}$ can be expressed in the following way:
		\begin{align*}
			\bar{\mu}(K_1,\dots,K_k)=\frac{1}{k!}\sum_{j=1}^k(-1)^{j+k}\sum_{i_1<\dots<i_j}\mu\left(K_{i_1}+\dots+K_{i_j}\right).
		\end{align*}
		If $|\cdot|_F$ is a semi-norm on $F$, this implies for any $L\in\K(\R^n)$
		\begin{align}\label{eq:estiamtePolarization}
			\sup_{K_1,\dots,K_j\subset L}|\bar{\mu}(K_1,\dots,K_k)|_F\le C_k\sup_{K\subset L}|\mu(K)|_F
		\end{align}
		for a constant $C_k$ independent of the semi-norm $|\cdot|_F$, $L$, and $\mu$.\\
		
		If $\phi\in C^2(S^{n-1})$, then it is easy to see that the function $\|\phi\|_{C^2(S^{n-1})}h_{B_1(0)}+\phi$ is the support function of a convex body $K_\phi\in\mathcal{K}(\R^n)$, compare \cite{SchneiderConvexbodiesBrunn2014}*{Lemma 1.7.8}, i.e. $\phi$ is a difference of two support functions of convex bodies contained in a ball with radius $2\|\phi\|_{C^2(S^{n-1})}$. We extend $\bar{\mu}$ to a multilinear function on differences of support functions as follows: For $\phi_j=h_{K_j}-h_{L_j}$ for convex bodies $K_j,L_j\in \K(\R^n)$, $1\le j\le k$, we set
		\begin{align}\label{eq:formulaMultlinExtensionPolarization}
			\tilde{\mu}(\phi_1,\dots,\phi_k)=&\sum_{j=0}^k\frac{(-1)^{k+j}}{k!(k-l)!}\sum_{\sigma\in S_k}\bar{\mu}(K_{\sigma(1)},\dots,K_{\sigma_{j}},L_{\sigma(j+1)},\dots,L_{\sigma(k)}),
		\end{align}
		where $S_k$ denotes the symmetric group of $k$ elements. If $F$ is a locally convex vector space, then $\tilde{\mu}:C^2(S^{n-1})^k\rightarrow F$ extends to a distribution with values in the completion $\bar{F}$ of $F$. This was first shown for real-valued valuations by Goodey and Weil \cite{GoodeyWeilDistributionsvaluations1984} (we also refer to \cite{AleskerIntroductiontheoryvaluations2018}*{Section~7}), but extends to arbitrary locally convex vector spaces.
		\begin{theorem}\label{theorem:ExistenceGW}
			Let $F$ be a locally convex vector space. For every $\mu\in\Val_k(\R^n,F)$ there exists a unique distribution $\GW(\mu):C^\infty((S^{n-1})^k)\rightarrow\bar{F}$ such that
			\begin{align*}
				\GW(\mu)[h_{K_1}\otimes\dots\otimes h_{K_k}]=\bar{\mu}(K_1,\dots,K_k),
			\end{align*}
			for all smooth convex bodies $K_1,\dots,K_k\in\K(\R^n)$ with strictly positive curvature. 
		\end{theorem}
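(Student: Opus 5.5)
We follow the Goodey--Weil scheme (as in \cite{AleskerIntroductiontheoryvaluations2018}*{Section~7}), now with values in a general locally convex space $F$. Set $\tilde\mu(\phi_1,\dots,\phi_k)$ for $\phi_j\in C^2(S^{n-1})$ by Eq.~\eqref{eq:formulaMultlinExtensionPolarization}. The aim is a continuous multilinear map $C^\infty(S^{n-1})^k\to\bar F$, which we then extend to $C^\infty((S^{n-1})^k)$ via the Schwartz kernel theorem.

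First we check that $\tilde\mu$ is well defined and multilinear. Since $\bar\mu$ is Minkowski additive in each argument, $h_K-h_L=h_{K'}-h_{L'}$ gives $K+L'=K'+L$. Hence $\bar\mu(K,\dots)-\bar\mu(L,\dots)=\bar\mu(K',\dots)-\bar\mu(L',\dots)$, so $\tilde\mu$ is independent of the chosen representation. Multilinearity over $\R$ follows from additivity together with homogeneity $\bar\mu(tK,\dots)=t\bar\mu(K,\dots)$ for $t\ge0$.

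Next comes continuity, which is the key estimate. For $\phi\in C^2(S^{n-1})$ we write $\phi=h_{K_\phi}-h_{R B_1(0)}$ with $R=\|\phi\|_{C^2}$, where both bodies lie in a ball of radius $2R$. For a continuous seminorm $|\cdot|_F$, homogeneity and Eq.~\eqref{eq:estiamtePolarization} applied with $L=B_1(0)$ give
\begin{align*}
|\tilde\mu(\phi_1,\dots,\phi_k)|_F\le C\prod_{j=1}^k\|\phi_j\|_{C^2(S^{n-1})}\sup_{K\subset B_1(0)}|\mu(K)|_F .
\end{align*}
This step is the main point. Scaling each argument by $2\|\phi_j\|_{C^2}$ keeps all bodies in $B_1(0)$, so the estimate is uniform. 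Thus $\tilde\mu$ is a jointly continuous $k$-linear map $C^2(S^{n-1})^k\to F$, and in particular it is continuous on $C^\infty(S^{n-1})^k$. We complexify by linearity.

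For the extension, we use that $C^\infty(S^{n-1})$ is nuclear Fréchet, so $C^\infty((S^{n-1})^k)$ is the completed projective tensor product $C^\infty(S^{n-1})^{\hat\otimes_\pi k}$. A continuous multilinear map into a locally convex space induces a continuous linear map on the projective tensor product, and this extends uniquely into the completion $\bar F$; this is the vector-valued kernel theorem. Applied to $\tilde\mu$, it yields $\GW(\mu)$. It satisfies the required identity because for smooth bodies with positive curvature $h_{K_j}|_{S^{n-1}}$ is smooth, and $\tilde\mu(h_{K_1},\dots,h_{K_k})=\bar\mu(K_1,\dots,K_k)$ (take $L_j=\{0\}$).

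For uniqueness, the differences of support functions of smooth, strictly positively curved bodies make up all of $C^\infty(S^{n-1})$: for smooth $\phi$, $\phi+R h_{B_1(0)}$ is such a support function when $R$ is large. So a distribution satisfying the identity is determined on $h_{K_1}\otimes\dots\otimes h_{K_k}$, hence on the algebraic tensor product. That tensor product is dense in $C^\infty((S^{n-1})^k)$, and the distribution is continuous, so it is unique.

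The main obstacle is the extension step. Once uniform boundedness on $C^2$ is established it is formal, but it needs care that the projective tensor topology coincides with the $C^\infty$ topology on the product, which is where nuclearity is used.
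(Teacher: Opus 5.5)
Your proposal is correct and follows the route the paper indicates. You form the multilinear extension $\tilde\mu$ of the polarization, derive the $C^2$-bound of \autoref{corollary:estimateGW} from Eq.~\eqref{eq:estiamtePolarization} by writing each $\phi\in C^2(S^{n-1})$ as a difference of support functions of bodies in a ball of radius $2\|\phi\|_{C^2(S^{n-1})}$, and then apply the Schwartz kernel theorem. Phrasing that last step via nuclearity of $C^\infty(S^{n-1})$ and the universal property of $\hat\otimes_\pi$ handles the $\bar F$-valued case cleanly, and your uniqueness argument (density of the algebraic tensor product) supplies a detail the paper leaves implicit.
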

		
		\begin{remark}
			This implies in particular that $\GW(\mu)[\phi_1\otimes\dots\otimes \phi_k]\in F$ for all $\phi_1,\dots,\phi_k\in C^\infty(S^{n-1})$. 
		\end{remark}
		The main ingredient in the proof of \autoref{theorem:ExistenceGW} is the Schwartz Kernel Theorem (compare \cite{GaskproofSchwartzskernel1961}) combined with the following estimate, which follows directly from Eq.~\eqref{eq:estiamtePolarization} and Eq.~\eqref{eq:formulaMultlinExtensionPolarization} together with the observation that any $\phi\in C^2(S^{n-1})$ is a difference of support functions of convex bodies contained in the ball of radius $2\|\phi\|_{C^2(S^{n-1})}$.
		\begin{corollary}\label{corollary:estimateGW}
			There exists a constant $C>0$ such that for every $\mu\in\Val_k(\R^n,F)$ and every semi-norm $|\cdot|_F$ on $F$,
			\begin{align*}
				|\GW(\mu)[\phi_1\otimes\dots\otimes\phi_k]|_F\le  C\|\mu\|_F \prod_{j=1}^k \|\phi_j\|_{C^2(S^{n-1})}. 
			\end{align*}
		\end{corollary}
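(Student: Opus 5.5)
The estimate is obtained by combining the two earlier bounds: first reduce everything to polarizations of $\mu$ evaluated on bodies in a fixed ball, then control these through Eq.~\eqref{eq:estiamtePolarization}. Since both sides of the claimed inequality are multilinear in $(\phi_1,\dots,\phi_k)$, it suffices to treat $\phi_j\neq 0$ with $\|\phi_j\|_{C^2(S^{n-1})}=1$; the general case then follows by scaling each $\phi_j$. Homogeneity of the right-hand side is immediate, while the left-hand side scales multilinearly by \autoref{theorem:ExistenceGW}.

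For normalized $\phi_j$, I write $\phi_j=h_{K_j}-h_{L_j}$. Here $K_j=K_{\phi_j}$ is the body with support function $\|\phi_j\|_{C^2}h_{B_1(0)}+\phi_j$, and $L_j=B_1(0)$, so both bodies lie in $B_2(0)$. I would then pick $K_j$ and $L_j$ smooth with strictly positive curvature. This can be arranged by adding $\epsilon B_1(0)$ to both bodies, and also to the ball, slightly enlarging the radius, after which $K_j$ has a smooth support function whose Hessian on the sphere is positive definite. By the multilinearity of $\GW(\mu)$ on smooth functions, $\GW(\mu)[\phi_1\otimes\dots\otimes\phi_k]$ is then the combination in Eq.~\eqref{eq:formulaMultlinExtensionPolarization} of the values $\bar\mu(K_{\sigma(1)},\dots,L_{\sigma(k)})$. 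This uses \autoref{theorem:ExistenceGW} for smooth bodies with positive curvature.

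The number of terms in that combination depends only on $k$. By Eq.~\eqref{eq:estiamtePolarization} applied with $L=B_3(0)$, each term is bounded by $C_k\sup_{K\subset B_3(0)}|\mu(K)|_F=C_k3^k\|\mu\|_F$, using $k$-homogeneity of $\mu$ and translation invariance for bodies not centered at the origin. Summing over the terms gives the estimate with a constant depending only on $n$ and $k$, and taking the maximum over $k\le n$ gives a single constant $C$.

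The only delicate point is making sure the bodies can be taken smooth with positive curvature, so that \autoref{theorem:ExistenceGW} applies directly and no continuity argument is needed. The $\epsilon$-ball trick handles this, since $h_{K_\phi}+\epsilon$ remains a support function of a body with smooth positively curved boundary whenever $\phi$ is smooth, and the radius bound changes only by $\epsilon$. If one wants the bound for arbitrary $C^2$ functions $\phi_j$, it extends by density, since both sides are continuous in the $C^2$ norm.
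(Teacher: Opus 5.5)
Your proposal is correct and matches the paper's argument: write each $\phi_j$ as a difference of support functions of bodies in a ball of radius comparable to $\|\phi_j\|_{C^2(S^{n-1})}$, expand multilinearly into finitely many polarization values, and bound these by Eq.~\eqref{eq:estiamtePolarization} together with $k$-homogeneity. Adding $\epsilon B_1(0)$ so that the bodies are smooth with strictly positive curvature, where the defining identity of \autoref{theorem:ExistenceGW} applies, is a harmless refinement of a step the paper leaves implicit.
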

		The valuation property has the following consequence for the support of these distributions.
		\begin{proposition}[\cite{AleskerP.McMullensconjecture2000}*{Proposition 3.3}]
			\label{proposition:SupportValuations}
			For $\mu\in\Val_k(\R^n,F)$, the support of $\GW(\mu)$ is contained in the diagonal in $(S^{n-1})^k$.
		\end{proposition}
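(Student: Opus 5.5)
The plan is to reduce to a pair of functions with disjoint supports and then use the valuation property via the lattice operations on support functions. By the Schwartz Kernel Theorem and \autoref{theorem:ExistenceGW}, $\GW(\mu)$ is determined by its values on tensor products $\phi_1\otimes\dots\otimes\phi_k$ of smooth functions, and these are dense in $C^\infty((S^{n-1})^k)$. It therefore suffices to show the following: if $\phi_1,\dots,\phi_k\in C^\infty(S^{n-1})$ and $\supp\phi_i\cap\supp\phi_j=\emptyset$ for some $i\neq j$, then $\GW(\mu)[\phi_1\otimes\dots\otimes\phi_k]=0$. Indeed, every point off the diagonal has a product neighborhood $U_1\times\dots\times U_k$ with two of the $U_j$ disjoint. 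Moreover, test functions supported there can be approximated in $C^\infty$ by finite sums of such tensors, using a partition of unity and the density of tensor products. By symmetry and multilinearity we may take $i=1$, $j=2$ and fix the remaining arguments.

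Next we realize $\phi_1,\phi_2$ through convex bodies. After adding a large multiple $c\,h_{B_1(0)}$, we write $\phi_1=h_{K}-c h_B$ and $\phi_2=h_L-c h_B$ with $K,L$ smooth with positive curvature. Here $B=B_1(0)$, and $h_K=h_B$ outside $\supp\phi_1$, $h_L=h_B$ outside $\supp\phi_2$ (after scaling $B$ by $c$). Using multilinearity, the term $\GW(\mu)[\phi_1\otimes\phi_2\otimes\cdots]$ expands into
\[
\bar\mu(K,L,\dots)-\bar\mu(K,B,\dots)-\bar\mu(B,L,\dots)+\bar\mu(B,B,\dots),
\]
with the other arguments also expressed via smooth bodies. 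The key identity we aim for is $h_K+h_L=h_{K\vee L}+h_{K\wedge L}$ pointwise. Because the supports are disjoint, $\max(h_K,h_L)$ and $\min(h_K,h_L)$ each coincide with $h_K$ or $h_L$ away from a neighborhood of the supports and with $h_B$ elsewhere. This requires arranging $h_K\ge h_B$ and $h_L\ge h_B$, i.e.\ $\phi_1,\phi_2\ge 0$, which we may assume by splitting each $\phi_j$ into a difference of nonnegative smooth functions with the same support property. Under these arrangements, $\max(h_K,h_L)=h_K+h_L-h_B$ is the support function of $\mathrm{conv}(K\cup L)$ and $\min(h_K,h_L)=h_B$. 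By \autoref{lemma:propertiesSupportFunction}(2), $K\cup L$ is then convex, with $K\cap L=B$ and $K\cup L=\mathrm{conv}(K\cup L)=:M$.

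We then apply the valuation property to obtain $\mu(K\cup L)+\mu(B)=\mu(K)+\mu(L)$. For the polarized version we need the same identity with Minkowski summands added in the other slots: replace $K,L,B$ by $K+P$, $L+P$, $B+P$ for $P=\sum\lambda_j K_j$. The union $(K+P)\cup(L+P)=M+P$ is again convex with intersection $B+P$, by the same support function computation, since adding $h_P$ preserves pointwise max and min. Hence
\[
\mu(M+P)+\mu(B+P)=\mu(K+P)+\mu(L+P)
\]
for all such $P$. Since $h_M=h_K+h_L-h_B$, as a function on difference-of-support-functions this says $\tilde\mu$ evaluated with $\phi_1+\phi_2$ equals the sum of the evaluations with $\phi_1$ and with $\phi_2$ separately. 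The degree-$k$ polynomial $t\mapsto\mu(tB'+\dots)$ structure then lets us extract the mixed term. Concretely, we apply the identity with $K,L$ replaced by the bodies for $s\phi_1$, $t\phi_2$, $s,t\ge0$ small. The function $F(s,t)=\mu(\text{body of }s\phi_1+t\phi_2+P)$ then satisfies $F(s,t)+F(0,0)=F(s,0)+F(0,t)$, which forces $\partial_s\partial_t F=0$. By the multilinear expansion, the coefficient of $st$ (after also differentiating in the $\lambda_j$) is a nonzero multiple of $\GW(\mu)[\phi_1\otimes\phi_2\otimes\cdots]$, which therefore vanishes.

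The main obstacle is the convexity bookkeeping in the middle step. We need the max of two support functions, each perturbing $h_B$ on disjoint regions, to be a support function itself. This holds because it equals the convex function $h_B+s\phi_1+t\phi_2$ once $c$ is large relative to the $C^2$ norms. We also need the polynomiality in $(s,t,\lambda)$ on the cone of admissible parameters to justify differentiating. Both follow from \autoref{theorem:McMullenHomDecomp} applied to Minkowski combinations of smooth bodies.
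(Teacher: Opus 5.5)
The paper gives no proof of its own here; it quotes the result from Alesker. Your route is the standard argument for this fact: reduce to $\GW(\mu)[\phi_1\otimes\cdots\otimes\phi_k]$ with $\supp\phi_1\cap\supp\phi_2=\emptyset$ and $\phi_1,\phi_2\ge 0$, apply the valuation property to $K,L$ with $h_K=h_{cB_1(0)}+s\phi_1$ and $h_L=h_{cB_1(0)}+t\phi_2$ (shifted by $P=\sum_j\lambda_jK_j$), and read off the coefficient of $st\lambda_3\cdots\lambda_k$ in the polynomial identity $F(s,t)+F(0,0)=F(s,0)+F(0,t)$. The reduction to tensor products, the splitting into nonnegative parts with separated supports, the polynomiality, and the coefficient extraction are all fine.

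The one step not justified as written is the convexity of $K\cup L$, which is exactly what the valuation property needs. \autoref{lemma:propertiesSupportFunction}(2) only gives the implication from convexity of $K\cup L$ to $h_K\wedge h_L=h_{K\cap L}$. Knowing that $h_K\wedge h_L=h_{cB_1(0)}$ is a support function does not, by that lemma, make $K\cup L$ convex. The claim is true in your situation but needs an argument, for instance the following. Suppose $z\in M\setminus(K\cup L)$, where $h_M=h_K\vee h_L$. Then $A_K=\{u\in S^{n-1}:\langle z,u\rangle>h_K(u)\}$ and $A_L=\{u\in S^{n-1}:\langle z,u\rangle>h_L(u)\}$ are nonempty and open. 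They are disjoint because $\langle z,u\rangle\le h_M(u)=\max(h_K(u),h_L(u))$. Their union is $\{u\in S^{n-1}:\langle z,u\rangle>\min(h_K(u),h_L(u))\}=\{u\in S^{n-1}:\langle z,u\rangle>c\}$, an open spherical cap and hence connected, which is a contradiction. For the shifted bodies you need nothing further: $(K+P)\cup(L+P)=(K\cup L)+P$ is convex, and $(K+P)\cap(L+P)=cB_1(0)+P$ follows from your support function bound. With this inserted, the proof is complete.
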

		
		The following notion was introduced in \cite{Knoerrsupportduallyepi2021}*{Definition~6.1}. For $\mu\in\Val(\R^n,F)$, let $\mu=\sum_{k=0}^n\mu_k$ be its homogeneous decomposition and define its \emph{vertical support} to be the set
		\begin{align*}
			\vsupp\mu:=\bigcup_{k=1}^n \Delta_k^{-1}(\supp\GW(\mu_k)),
		\end{align*}
		where $\Delta_k:S^{n-1}\rightarrow (S^{n-1})^k$ denotes the diagonal embedding. This set admits the following more intrinsic characterization.		
		\begin{proposition}[\cite{Knoerrsupportduallyepi2021}*{Proposition 6.14}]
			\label{proposition:characterization_vsupp}
			Let $\mu\in\Val(\R^n,F)$. The vertical support is minimal (with respect to inclusion) among all closed sets $A\subset S^{n-1}$ with the following property: If $K,L\in\K(\R^n)$ are two convex bodies with $h_K=h_L$ on a neighborhood of $A$, then $\mu(K)=\mu(L)$.
		\end{proposition}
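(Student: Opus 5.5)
We prove that $\vsupp\mu$ is minimal among closed sets $A$ with the stated property in two steps: first $\vsupp\mu$ has the property, then every closed $A$ with the property contains $\vsupp\mu$. Both steps reduce to homogeneous components.

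\textbf{Reduction.} Let $\mu=\sum_k\mu_k$ be the decomposition from \autoref{theorem:McMullenHomDecomp}. Suppose $h_K=h_L$ on a neighborhood of $A$. Since support functions are $1$-homogeneous, $h_{tK}=h_{tL}$ on the same neighborhood for every $t\ge0$. If $A$ has the property, then $\mu(tK)=\mu(tL)$ for all $t\ge 0$. Comparing coefficients of the polynomial $t\mapsto\sum_k t^k\mu_k(K)$ gives $\mu_k(K)=\mu_k(L)$ for every $k$. Hence $A$ has the property for $\mu$ if and only if it has it for every $\mu_k$, and we may assume $\mu\in\Val_k(\R^n,F)$ with $k\ge1$. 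The case $k=0$ is trivial, since $\mu_0$ is constant.

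\textbf{Step 1: $\vsupp\mu$ has the property.} Set $A=\Delta_k^{-1}(\supp\GW(\mu))$ and take $K,L$ with $h_K=h_L$ on an open neighborhood $U$ of $A$.
\begin{enumerate}
\item Approximate $K$ and $L$ by smooth bodies with positive curvature so that the support functions still agree on a slightly smaller neighborhood $U'\supset A$. Concretely, convolve $h_K$ and $h_L$ on $\mathrm{SO}(n)$ with the same mollifier supported near the identity and add $\varepsilon h_{B_1(0)}$. This approximation is compatible with continuity of $\mu$.
\item By polynomiality, $\mu(K)=\bar\mu(K,\dots,K)=\GW(\mu)[h_K^{\otimes k}]$, and similarly for $L$.
\item Write $h_K^{\otimes k}-h_L^{\otimes k}=\sum_{j} h_L^{\otimes j}\otimes(h_K-h_L)\otimes h_K^{\otimes (k-j-1)}$.
\item Each summand vanishes on a neighborhood of $\Delta_k(S^{n-1})\cap\supp\GW(\mu)$. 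Indeed, a point $(u,\dots,u)$ of the support has $u\in A$, and $h_K-h_L$ vanishes near $u$. By \autoref{proposition:SupportValuations} the support lies on the diagonal, so each summand vanishes on a neighborhood of $\supp\GW(\mu)$ and is annihilated by $\GW(\mu)$.
\end{enumerate}

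\textbf{Step 2: minimality.} Let $A$ be closed with the property and suppose $u\in\vsupp\mu\setminus A$. Choose a neighborhood $V$ of $u$ with $\bar V\cap A=\emptyset$.
\begin{enumerate}
\item For smooth $\phi$ supported in $V$ and smooth strictly convex $K$, the function $h_K+t\phi$ is the support function of a body $K_t$ for small $t$, and $h_{K_t}=h_K$ near $A$. The property then gives $\mu(K_t)=\mu(K)$.
\item Differentiating the polynomial $t\mapsto\mu(K_t)$ repeatedly yields $\GW(\mu)[(h_K+t\phi)^{\otimes k}]$ constant in $t$. Hence $\GW(\mu)[\phi^{\otimes j}\otimes h_K^{\otimes(k-j)}]_{\mathrm{sym}}=0$ for all $j\ge1$.
\item Every smooth function is a difference of such support functions, and the symmetric distribution $\GW(\mu)$ is determined by its values on symmetric tensors. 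By polarization, $\GW(\mu)$ vanishes on $\psi_1\otimes\cdots\otimes\psi_k$ whenever some $\psi_i$ is supported in $V$.
\item Since the support is diagonal, products of test functions with at least one factor supported in $V$ suffice, by density of tensor products near the diagonal point $(u,\dots,u)$. Hence $(u,\dots,u)\notin\supp\GW(\mu)$, a contradiction.
\end{enumerate}

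\textbf{Main obstacle.} Step 1, item 4 is the delicate point: a distribution may fail to annihilate a test function that merely vanishes on its support, if the distribution has positive order. Vanishing on a neighborhood of the support is what is needed, so the neighborhood $U'$ obtained after smoothing must be preserved, and the tensor bookkeeping must be carried out with care.
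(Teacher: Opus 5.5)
Your proof is correct. The paper gives no argument of its own for this proposition; it only quotes it from Knoerr's earlier work on dually epi-translation invariant valuations. Your write-up is therefore a self-contained proof, and it establishes the stronger fact that $\vsupp\mu$ is the minimum, i.e.\ it is contained in every admissible closed set. The passage back from the homogeneous components to $\mu$ is routine: admissibility is monotone in $A$, each $\Delta_k^{-1}(\supp\GW(\mu_k))$ is closed, and a set is admissible for $\mu$ exactly when it is admissible for every $\mu_k$.

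Two spots in Step 2 are phrased loosely but are easy to make precise.

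\emph{Item 3.} You only need the coefficient of $t^1$. Together with the symmetry of $\GW(\mu)$, it gives $\GW(\mu)[\phi\otimes h_K^{\otimes(k-1)}]=0$ for every smooth body $K$ with strictly positive curvature. Then polarize the symmetric $(k-1)$-linear form $(\psi_2,\dots,\psi_k)\mapsto\GW(\mu)[\phi\otimes\psi_2\otimes\cdots\otimes\psi_k]$, with $\phi$ held fixed. This works because the support functions of such bodies form a cone that is closed under addition and spans $C^\infty(S^{n-1})$. Saying that $\GW(\mu)$ "is determined by its values on symmetric tensors" is not quite the statement you are using.

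\emph{Item 4.} The diagonal support plays no role here. Tensors $\psi_1\otimes\cdots\otimes\psi_k$ with $\psi_i\in C_c^\infty(V)$ span a dense subspace of $C_c^\infty(V^k)$: multiply a tensor approximation by $\chi^{\otimes k}$, where $\chi$ is a cutoff. Hence $\GW(\mu)$ vanishes on $V^k$ directly, and $(u,\dots,u)\notin\supp\GW(\mu)$.
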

		Note that the vertical support of a $0$-homogeneous valuation is empty by definition. For continuous area measures, this leads to a counter intuitive notion of support, and we introduce a variation of this notion in \autoref{section:HomDecompArea}.
		
		The following characterization was originally obtained by Hadwiger \cite{HadwigerVorlesungenuberInhalt1957} for real-valued valuations, but the argument extends to the following more general case (compare also \cite{McMullenWeaklycontinuousvaluations1983}).
		\begin{theorem}
			\label{theorem:HadwigerVolume}
			Let $F$ be a Hausdorff topological vector space. If $\mu\in \Val_n(\R^n,F)$, then there exists $\nu\in F$ with $\mu(K)=\vol(K)\nu$ for every $K\in\K(\R^n)$.
		\end{theorem}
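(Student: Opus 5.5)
The statement to prove is \autoref{theorem:HadwigerVolume}: every $\mu\in\Val_n(\R^n,F)$ has the form $\mu=\vol(\cdot)\,\nu$ for some $\nu\in F$. I would follow Hadwiger's classical argument, which only uses continuity, translation invariance and additivity, and never uses scalar-specific tools. The argument therefore runs verbatim in a Hausdorff topological vector space.

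\textbf{Step 1: cubes.} Set $\nu:=\mu([0,1]^n)$. Let $C_m=[0,1/m]^n$. The unit cube $[0,1]^n$ decomposes into $m^n$ translates of $C_m$. Applying the valuation property repeatedly along hyperplane cuts, and using the inclusion–exclusion (simple valuation) property discussed below, gives $\nu=m^n\mu(C_m)$. Homogeneity of degree $n$ gives the same identity directly: $\mu(C_m)=m^{-n}\mu([0,1]^n)$. Gluing translates of $C_m$ then gives $\mu(P)=\vol(P)\nu$ for every box $P$ with rational side lengths. By continuity, this extends to all boxes parallel to the axes, and then to all finite unions of such boxes with disjoint interiors. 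For this last extension I use the inclusion–exclusion principle for valuations on $\K(\R^n)$. That principle holds for $F$-valued continuous valuations, because Groemer's extension argument is purely combinatorial and linear.

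\textbf{Step 2: $\mu$ is simple.} The key point is that $\mu$ vanishes on lower-dimensional bodies. If $\dim K\le n-1$, then $K$ lies in a hyperplane $H$. Take an orthogonal unit vector $e$ and a segment $I=[0,\varepsilon e]$. For a body $K\subset H$, the cylinder $K+I$ can be cut into $m$ pieces $K+[j\varepsilon e/m,(j+1)\varepsilon e/m]$, which intersect in translates of $K$. Homogeneity of degree $n$ applied to $K$ itself gives $\mu(tK)=t^n\mu(K)$. I would instead restrict $\mu$ to $H$: the map $K\mapsto\mu(K)$ on $\K(H)$ is a translation invariant continuous valuation on an $(n-1)$-dimensional space that is $n$-homogeneous. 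By McMullen's decomposition (\autoref{theorem:McMullenHomDecomp}) applied in $H$, only degrees $0,\dots,n-1$ occur there, so this restriction vanishes. Hence $\mu$ is simple.

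\textbf{Step 3: general bodies.} Since $\mu$ is simple, $\mu(P_1\cup\dots\cup P_r)=\sum\mu(P_i)$ for boxes with disjoint interiors. Let $K$ be a convex body and $\varepsilon>0$. I approximate $K$ from inside and outside by unions of grid cubes of side $1/m$: the union $A_m$ of cubes contained in $K$ and the union $B_m$ of cubes meeting $K$. Neither union is convex, so I cannot apply continuity to them directly. This is the expected main obstacle, since no monotonicity is available in a vector space. To get around it, I use a direct dissection argument. Fix a large box $Q\supset K$. For a polytope $P$, I would show $\mu(P)=\vol(P)\nu$ by dissecting $Q$ and $P$. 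The simplest route is the classical reduction: an $n$-homogeneous simple translation invariant valuation is determined on polytopes via scissors-congruence with boxes.

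Concretely, I would prove the reduction through these steps:
\begin{enumerate}
\item A simplex can be dissected, up to translations and lower-dimensional pieces, into pieces that reassemble to a prism. This uses the fact that translation invariant simple valuations are invariant under the dissections in Hadwiger's theory.
\item Each prism is handled by the continuity and Cavalieri argument along one coordinate, which reduces the problem to dimension $n-1$.
\item Induction on dimension applies to the base, viewed with the valuation $\mu(\,\cdot\,+I)$. This is an $(n-1)$-homogeneous simple valuation on $H$ in the base variable, and it is linear in the length of $I$.
\end{enumerate}

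This yields $\mu(P)=\vol(P)\nu$ for all polytopes. Continuity of both sides in the Hausdorff metric, together with density of polytopes, gives the identity for all $K\in\K(\R^n)$. The delicate part is to carry out the induction in step (iii) carefully, namely the Cavalieri-type reduction with values in $F$. This is where Hadwiger's original argument has to be checked for linearity only.
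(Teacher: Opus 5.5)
\paragraph{Where the proposal fails.} Step 2 is correct: McMullen's decomposition applied on a hyperplane shows that $\mu$ is simple. Steps (ii)--(iii) of Step 3 plausibly give $\mu=\vol(\cdot)\,\nu$ on prisms. Step 1 uses simplicity before Step 2 establishes it, but that is only an ordering issue. The genuine gap is Step 3(i): the translation dissection of a simplex into a prism that you invoke does not exist.

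For $n=2$ a prism is a parallelogram. Let $\ell$ denote length. Then $P\mapsto \ell(F(P,u))-\ell(F(P,-u))$ is a simple translation invariant valuation on polygons (Hadwiger's invariant). It vanishes on every parallelogram, but equals $\ell(F(T,u))>0$ for a triangle $T$ with an edge of outer normal $u$. Simple translation invariant valuations are unchanged under translation dissections, so no such dissection can exist. In $\R^3$, by Dehn's invariant, a regular tetrahedron is not scissors congruent to a prism even when rotations are allowed.

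Consequently, simplicity, translation invariance and the correct values on prisms do not determine $\mu$ on simplices. The $n$-homogeneity must be used again exactly at this point, whereas your outline uses it only inside the prism induction.

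\paragraph{How to close the gap.} Use a dilation argument. Take $\Delta=\mathrm{conv}\{0,x_1,\dots,x_n\}$ and the parallelotope $C=\sum_i[0,x_i]$, and let $\lambda_i$ be coordinates with respect to $x_1,\dots,x_n$.
\begin{enumerate}
\item For $m\in\mathbb{N}$, the hyperplanes $\{\lambda_i\in\mathbb{Z}\}$ and $\{\sum_i\lambda_i\in\mathbb{Z}\}$ dissect $m\Delta$ into translates of the convex pieces $Q_k:=C\cap\{k\le\sum_i\lambda_i\le k+1\}$, $0\le k\le n-1$. Exactly $\binom{m+n-1-k}{n}$ translates of $Q_k$ occur.
\item Simplicity, inclusion--exclusion on polytopes and $n$-homogeneity give $m^n\mu(\Delta)=\sum_{k=0}^{n-1}\binom{m+n-1-k}{n}\mu(Q_k)$ for all $m\in\mathbb{N}$.
\item Both sides are $F$-valued polynomials in $m$ agreeing at infinitely many points. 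A Vandermonde argument, valid in any vector space, lets you compare the coefficients of $m^n$.
\item This yields $n!\,\mu(\Delta)=\sum_k\mu(Q_k)$. The $Q_k$ dissect $C$, so the right side equals $\mu(C)$, which is $\vol(C)\,\nu=n!\vol(\Delta)\,\nu$ by your result for prisms.
\item Triangulating polytopes, using simplicity, and passing to general bodies by continuity then completes the proof.
\end{enumerate}
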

		
		\subsection{Contact geometry on $\R^n\times S^{n-1}$}
		We refer to \cite{CannasdaSilvaLecturessymplecticgeometry2001} for an introduction to contact and symplectic geometry.\\
		
		Let us denote by $\pi_1$ and $\pi_2$ the natural projections from $\R^n\times S^{n-1}$ onto $\R^n$ and $S^{n-1}$ respectively. We have a natural $1$-form $\alpha$ on $\R^n\times S^{n-1}$ given by
		\begin{align*}
			\alpha|_{(x,v)}=\langle v,d\pi_1(\cdot)\rangle\quad \text{in}~(x,v)\in\R^n\times S^{n-1}.
		\end{align*}
		This equips $\R^n\times S^{n-1}$ with the structure of a contact manifold, i.e. the restriction of the form $d\alpha$ to the contact distribution $H$ given by
		\begin{align*}
			H_{(x,v)}:=\ker \alpha|_{(x,v)}\quad \text{in}~(x,v)\in\R^n\times S^{n-1}
		\end{align*}
		is nondegenerate, and consequently defines a symplectic form. This implies that there exists a unique smooth vector field $T$, called the Reeb vector field, such that
		\begin{align*}
			&i_T\alpha=1, &&i_Td\alpha=0.
		\end{align*}
		In particular, we have the decomposition $T_{(x,v)}(\R^n\times S^{n-1})^*\cong \R T|_{(x,v)}\oplus \ker\alpha|_{(x,v)}$ for all $(x,v)\in\R^n\times S^{n-1}$. Note that this implies that any form $\omega$ that satisfies $\alpha\wedge \omega=0$ is a multiple of $\alpha$, since in this case $0=i_T(\alpha\wedge\omega)=\omega-\alpha\wedge i_T\omega$.\\
		We require the Lefschetz decomposition for forms on symplectic vector spaces.		
			\begin{proposition}[\cite{HuybrechtsComplexgeometry2005}*{Proposition~1.2.30}]
				\label{proposition:LefschetzDecomposition}
				Let $(W,\omega_s)$ be a symplectic vector space of dimension $2n$ and let $L:\Lambda^*W^*\rightarrow \Lambda^*W^*$, $\tau\mapsto \omega_s\wedge\tau$ be the Lefschetz operator. For $0\le k\le n$ let $P^k:=\{\tau\in\Lambda^kW^*:L^{n-k+1}\tau=0\}$ denote the space of primitive forms. Then the following holds:
				\begin{enumerate}
					\item There exists a direct sum decomposition $\Lambda^kW^*=\bigoplus_{i\ge 0}L^{i}P^{k-2i}$.
					\item $L^{n-k}:\Lambda^kW^*\rightarrow\Lambda^{2n-k}W^*$ is an isomorphism.
				\end{enumerate}
			\end{proposition}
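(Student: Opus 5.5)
The Lefschetz decomposition is standard linear algebra, so the plan is to reduce everything to $\mathfrak{sl}_2$-representation theory. Besides $L$, I introduce the dual operator $\Lambda$, defined as the adjoint of $L$ with respect to the inner product on $\Lambda^*W^*$ induced by a compatible complex structure $J$ and metric $g=\omega_s(\cdot,J\cdot)$. I also introduce the counting operator $H=\sum_k (k-n)\,\mathrm{id}_{\Lambda^kW^*}$.

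\textbf{Step 1.} The first step is to verify the commutation relations
\begin{align*}
[L,\Lambda]=H,\qquad [H,L]=2L,\qquad [H,\Lambda]=-2\Lambda.
\end{align*}
The last two are immediate from degrees, since $L$ raises degree by $2$ and $\Lambda$ lowers it by $2$. For the first, I choose a symplectic basis $e_1,\dots,e_n,f_1,\dots,f_n$ with dual basis such that $\omega_s=\sum_i e^i\wedge f^i$. Then $L=\sum_i \epsilon(e^i)\epsilon(f^i)$ and $\Lambda=\sum_i \iota(f_i)\iota(e_i)$, and the identity reduces to the canonical anticommutation relations $\{\epsilon(a),\iota(b)\}=a(b)$. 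This bookkeeping is the only computational point, and I expect it to be the main obstacle, although it is routine. The computation factors over the $n$ two-dimensional blocks $\langle e_i,f_i\rangle$, and on each block it is a direct check on the four-dimensional space $\Lambda^*\R^2$.

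\textbf{Step 2.} With these relations, $\Lambda^*W^*$ is a finite dimensional $\mathfrak{sl}_2(\C)$-representation (after complexification if needed), so it decomposes into irreducibles. Each irreducible has a lowest weight vector $\tau$ with $\Lambda\tau=0$, sitting in some degree $k$ with $H$-weight $k-n\le 0$. The module it generates is spanned by $\tau, L\tau,\dots,L^{n-k}\tau$, and $L^{n-k+1}\tau=0$. Conversely, the standard $\mathfrak{sl}_2$ computation shows that for $\tau\in\Lambda^kW^*$ the conditions $\Lambda\tau=0$ and $L^{n-k+1}\tau=0$ are equivalent, with $k\le n$ in either case. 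Hence the primitive forms $P^k$ are exactly the lowest weight vectors.

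\textbf{Step 3.} Summing over irreducibles gives $\Lambda^kW^*=\bigoplus_{i\ge0}L^iP^{k-2i}$. The sum is direct because the summands lie in distinct isotypic strings, indexed by the degree of their lowest weight vectors. This proves (1).

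\textbf{Step 4.} For (2), note that in each irreducible of highest weight $m$, $L^{j}$ maps the weight $-j$ space isomorphically onto the weight $+j$ space. This follows from the explicit formula $\Lambda L^{j}\tau = j(m-j+1)L^{j-1}\tau$ on lowest weight vectors, whose coefficients are nonzero in the relevant range. Taking $j=n-k$, the space $\Lambda^kW^*$ has weight $k-n$ and $\Lambda^{2n-k}W^*$ has weight $n-k$, so $L^{n-k}$ is an isomorphism for $k\le n$. As a consistency check, both spaces have dimension $\binom{2n}{k}$.
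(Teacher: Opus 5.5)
Your proposal is correct and is essentially the standard argument of the cited source (Huybrechts, Prop.~1.2.30), which the paper quotes without proof: form the $\mathfrak{sl}_2$-triple $(L,\Lambda,H)$ from a compatible metric, identify $\{\tau\in\Lambda^kW^*:L^{n-k+1}\tau=0\}$ with $\ker\Lambda$ in degree $k\le n$, and read off both statements from the structure of finite-dimensional $\mathfrak{sl}_2$-modules. One detail in Step 1 should be made explicit: the symplectic basis must also be $g$-orthonormal (e.g.\ take $Je_i=f_i$), since otherwise the adjoint of $L$ is not $\sum_i\iota(f_i)\iota(e_i)$; alternatively, define $\Lambda$ directly by that formula, since the argument never actually uses the metric.
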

		
		In our case, the symplectic vector space is of a specific form: By construction, 
		\begin{align*}
			H_{(x,v)}=\ker \alpha|_{(x,v)}=v^\perp\oplus v^\perp\quad \text{in}~(x,v)\in\R^n\times S^{n-1},
		\end{align*}
		and it is easy to check that $d\alpha$ is given by
		\begin{align}
			\label{eq:dAlphaContactPlane}
			d\alpha|_{H_{(x,v)}}((\xi_1,v_1),(\xi_2,v_2))=\langle v_1,\xi_2\rangle-\langle v_2,\xi_1\rangle\quad \text{for}~\xi_1,\xi_2,v_1,v_2\in v^\perp.
		\end{align}
		Recall that for a real vector space $W$, $W\times W^*$ is naturally a symplectic vector space with respect to the symplectic form
		\begin{align*}
			\omega_s((w_1,\eta_1),w_2,\eta_2)=\eta_1(w_2)-\eta_2(w_1),\quad w_1,w_2\in W, \eta_1,\eta_2\in W^*.
		\end{align*}
		If $W$ carries a Euclidean structure, the induced isomorphism $W\cong W^*$ equips $W\times W$ with a natural symplectic form. For $W=v^\perp$, this is precisely the form given in Eq.~\eqref{eq:dAlphaContactPlane}. Let us denote by $\Lambda^{k,l}(W\times W^*)^*$ and $\Lambda^{k,l}(W\times W)^*$ the space of forms of bidegree $(k,l)$ induced by the product structure and let $P\Lambda^{k,l}(W\times W^*)^*$ and $P\Lambda^{k,l}(W\times W)^*$ the subspace of primitive $(k,l)$-forms with respect to the respective symplectic structures. Note that $W\times W^*$ carries an operation of $\GL(W)$ via the diagonal action.
			\begin{proposition}[\cite{KnoerrMongeAmpereoperators2024}*{Corollary~6.8}]\label{proposition:symplecticIrreducible}
				Let $W$ be an $n$-dimensional real vector space. The space $P\Lambda^{k,n-k}(W\times W^*)$ is an irreducible representation of $\GL(n,\R)$.
			\end{proposition}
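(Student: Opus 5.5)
The plan is to reduce the statement to a multiplicity-free decomposition of a tensor product of exterior powers, using the Lefschetz decomposition from \autoref{proposition:LefschetzDecomposition}. It suffices to prove irreducibility after complexification, since a real representation whose complexification is irreducible is itself irreducible. So we work with $V:=W_\C$ and $G=\GL(n,\C)$; an algebraic (holomorphic) $G$-representation restricted to the Zariski dense subgroup $\GL(n,\R)$ has the same invariant subspaces.

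\textbf{Step 1: reformulate primitivity.} As a $\GL(W)$-module, $\Lambda^{k,n-k}(W\times W^*)=\Lambda^kW\otimes\Lambda^{n-k}W^*$. The Lefschetz operator $L$ is wedging with the canonical element $\omega_s\in W^*\otimes W\subset\Lambda^{1,1}$. Here the total degree is $n$, the middle degree of the $2n$-dimensional space. By \autoref{proposition:LefschetzDecomposition}, the primitive forms are exactly $\ker L$, and $L:\Lambda^{n-2}\to\Lambda^{n+2}$ is an isomorphism. Since $L$ preserves bidegree shifts by $(1,1)$, we get
\begin{align*}
P\Lambda^{k,n-k}=\ker\left(L:\Lambda^kW\otimes\Lambda^{n-k}W^*\to\Lambda^{k+1}W\otimes\Lambda^{n-k+1}W^*\right).
\end{align*}
Equivalently, it is the complement of $L(\Lambda^{k-1}W\otimes\Lambda^{n-k-1}W^*)$, and this restriction of $L$ is injective. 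Everything here is $\GL(W)$-equivariant, so $P\Lambda^{k,n-k}$ is an equivariant direct summand with
\begin{align*}
\dim P\Lambda^{k,n-k}=\binom nk^2-\binom n{k-1}^2.
\end{align*}
By the symmetry $W\leftrightarrow W^*$ we may assume $2k\le n$.

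\textbf{Step 2: decompose.} Use $\Lambda^{n-k}W^*\cong\Lambda^kW\otimes\det W^{-1}$. Then, up to the one-dimensional twist by $\det^{-1}$, the source is $\Lambda^kV\otimes\Lambda^kV$ and the image of $L$ is $\Lambda^{k-1}V\otimes\Lambda^{k-1}V\otimes\det V^{-1}\cdot(\text{twist})$. Pieri's rule gives the multiplicity-free decomposition
\begin{align*}
\Lambda^kV\otimes\Lambda^kV=\bigoplus_{j=0}^{k}S_{(2^j,1^{2k-2j})}V,
\end{align*}
where all the partitions are admissible because $2k\le n$. Likewise $\Lambda^{k-1}V\otimes\Lambda^{k-1}V=\bigoplus_{j=0}^{k-1}S_{(2^j,1^{2k-2-2j})}V$. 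After the determinant twists, which add a full column, these match the source summands $j'=j+1$. The first check will be whether the columns line up this way. By injectivity of $L$ and multiplicity-freeness, the image of $L$ is exactly the sum of those $k$ summands, so $\ker L$ is the single remaining irreducible summand. As a consistency check, the dimensions should agree with Step 1.

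\textbf{Alternative route and main obstacle.} The main obstacle is this bookkeeping: pinning down exactly which isotypic components lie in the image of $L$ once the determinant twists and the identification of $\omega_s$ are accounted for. If that becomes messy, I would instead argue with highest weights. The vector $e_1\wedge\dots\wedge e_k\otimes e^*_{k+1}\wedge\dots\wedge e^*_n$ has disjoint index sets, so it is primitive; it is also a highest weight vector for the Borel subgroup. The goal would then be to show that $\ker L$ contains no other highest weight vector. Comparing dimensions with the Weyl dimension formula for that highest weight, against $\binom nk^2-\binom n{k-1}^2$, would then conclude.
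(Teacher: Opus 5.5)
The first paragraph of your proposal (complexify, then use that an algebraic representation has the same invariant subspaces under the Zariski-dense subgroup $\GL(n,\R)$) is exactly what the paper does. The paper gives no proof of its own here: it imports the $\GL(n,\C)$ statement from the cited Corollary~6.8 and descends by algebraicity. Replacing that citation with a Lefschetz-plus-Pieri computation is a legitimate self-contained route. However, the bookkeeping in Steps~1 and~2 is wrong, and as written the argument asserts an isomorphism of representations that does not exist.

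The source of the error is that $\Lambda^{n-k-1}W^*$ has dimension $\binom{n}{k+1}$ and is isomorphic to $\Lambda^{k+1}W\otimes(\det W)^{-1}$, not to a twist of $\Lambda^{k-1}W$. Hence
\begin{align*}
	\dim P\Lambda^{k,n-k}=\binom nk^2-\binom n{k-1}\binom n{k+1},
\end{align*}
not $\binom nk^2-\binom n{k-1}^2$. For $n=3$, $k=1$ your formula gives $8$, whereas $\Lambda^1W\otimes\Lambda^2W^*\cong(\Sym^2V\oplus\Lambda^2V)\otimes(\det V)^{-1}$ and the image of $L$ is $W^*\cong\Lambda^2V\otimes(\det V)^{-1}$, leaving $6$. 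Likewise, the domain of $L$ is $\Lambda^{k-1}V\otimes\Lambda^{k+1}V\otimes(\det V)^{-1}$.

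Your matching ``$j'=j+1$ after determinant twists'' cannot hold. A twist by $\det^m$ adds $m$ to every entry of a highest weight. The highest weights $(2^j,1^{2k-2-2j},0,\dots,0)$ and $(2^{j+1},1^{2k-2j-2},0,\dots,0)$ both end in $0$ (as $2k\le n$), so only $m=0$ is possible, but their sizes $2k-2$ and $2k$ differ. That matching would also leave $\Lambda^{2k}V\otimes(\det V)^{-1}$ as the primitive part. This is already false for $n=2$, $k=1$: there it is one-dimensional, while $P\Lambda^{1,1}$ consists of the traceless endomorphisms of $W$. Your proposed dimension consistency check would have exposed this.

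The repair is short. Both $\Lambda^kW\otimes\Lambda^{n-k}W^*$ and $\Lambda^{k-1}W\otimes\Lambda^{n-k-1}W^*$ carry the same twist $(\det V)^{-1}$, and for $2k\le n$ Pieri's rule gives
\begin{align*}
	\Lambda^{k-1}V\otimes\Lambda^{k+1}V=\bigoplus_{j=0}^{k-1}S_{(2^j,1^{2k-2j})}V,\qquad \Lambda^{k}V\otimes\Lambda^{k}V=\bigoplus_{j=0}^{k}S_{(2^j,1^{2k-2j})}V.
\end{align*}
So the summands match with the same index $j$. Since $L$ is injective and equivariant and the target is multiplicity free, its image is exactly the sum over $0\le j\le k-1$. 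Therefore $P\Lambda^{k,n-k}=\ker L\cong S_{(2^k)}V\otimes(\det V)^{-1}$, which is irreducible.

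Its highest weight $(1^k,(-1)^{n-k})$ is precisely the weight of your vector $e_1\wedge\dots\wedge e_k\otimes e_{k+1}^*\wedge\dots\wedge e_n^*$. So your highest-weight route also works once the dimension is corrected. The submodule generated by that vector is irreducible of dimension
\begin{align*}
	\dim S_{(2^k)}\C^n=\frac{1}{n+1}\binom{n+1}{k}\binom{n+1}{k+1}=\binom nk^2-\binom n{k-1}\binom n{k+1},
\end{align*}
hence it equals $\ker L$, and there is no need to rule out other highest weight vectors separately. A minor point: in Step~1 the isomorphism $\Lambda^{n-2}\to\Lambda^{n+2}$ is $L^2$, not $L$; what you actually use is that $L$ is injective on $\Lambda^{n-2}$.
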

			\begin{remark}
				This result is stated in \cite{KnoerrMongeAmpereoperators2024} for $W=\R^n$ and a complex linear extension of this action to a representation of $\GL(n,\C)$. Since the action is algebraic, it is already determined by the action of $\GL(n,\R)$, so this defines an irreducible representation of $\GL(n,\R)$ (compare also the proof of \cite{KnoerrMongeAmpereoperators2024}*{Theorem 1.3}). 
			\end{remark}
			
			In our case, we have an action of $\GL(n,\R)$ on $\R^n\times S^{n-1}$ by $G_g(x,v)=(gx,\frac{g^{-T}v}{|g^{-T}v|})$ for $g\in\GL(n,\R)$. In particular, for $g\in \GL(n,\R)$,
			\begin{align}\label{eq:groupActionAlpha}
				G_g^*\alpha|_{(x,v)}=|g^{-T}v|^{-1}\cdot \alpha|_{(x,v)}\quad\text{for}~(x,v)\in\R^n\times S^{n-1},
			\end{align}			
			so $\GL(n,\R)$ operates on $\R^n\times S^{n-1}$ by contactomorphisms. Moreover,
			\begin{align*}
				G_g^*d\alpha|_{(x,v)}=|g^{-T}v|^{-1}\cdot d\alpha|_{(x,v)}-\frac{1}{|g^{-T}v|^2}\langle g^{-T}v,\pi_2(\cdot)\rangle \wedge \alpha.
			\end{align*}
			Let $\GL(n,\R)_v\subset\GL(n,\R)$ be the stabilizer of $v\in S^{n-1}$, i.e. the subgroup of all $g\in\GL(n,\R)$ such that $\frac{g^{-T}v}{|g^{-T}v|}=v$. Then $G_g^*d\alpha|_{(x,v)}=|g^{-T}v|^{-1}\cdot d\alpha$ since the form $\langle \frac{g^{-T}v}{|g^{-T}v|},\pi_2(\cdot)\rangle=\langle v,\pi_2(\cdot)\rangle$ vanishes on $T_{(x,v)}(\R^n\times S^{n-1})\cong \R^n\oplus v^\perp$.\\
			
			We will use this action to obtain a suitable representation on a space of differential form corresponding to the space in \autoref{proposition:symplecticIrreducible}. Let us consider the bundle $J^{k,n-k}$ over $\R^n\times S^{n-1}$ with fibers
			\begin{align*}
				J^{k,n-k}|_{(x,v)}:=\{\tau\in \Lambda^k(T_{x}\R^n)^*\otimes \Lambda^{n-k}(T_v S^{n-1})^*:\alpha\wedge\tau=d\alpha\wedge\tau=0\}.
			\end{align*}
			Then it is easy to check that this defines a $\GL(n,\R)$-equivariant bundle. In particular, $J^{k,n-k}|_{(0,v)}$ is closed under the induced action of the subgroup $\GL(n,\R)_v$ of $\GL(n,\R)$.
			\begin{lemma}\label{lemma:Kernel_J_to_Area}
				Let $\tau\in J^{k,n-k}|_{(x,v)}$. If $i_T\tau$ belongs to the ideal generated by $\alpha$ and $d\alpha$, then $\tau=0$.
			\end{lemma}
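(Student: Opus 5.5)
The plan is to turn both conditions into statements about a single form on the contact plane $H_{(x,v)}$, and then apply the Lefschetz decomposition (\autoref{proposition:LefschetzDecomposition}) to the symplectic space $(H_{(x,v)},d\alpha|_{H})$, which has dimension $2(n-1)$. The bidegree $(k,n-k)$ plays no role; only the relations $\alpha\wedge\tau=0$ and $d\alpha\wedge\tau=0$ are used.

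\emph{Step 1 (factor out $\alpha$).} Since $\alpha\wedge\tau=0$, the remark preceding \autoref{proposition:LefschetzDecomposition} gives
\begin{align*}
\tau=\alpha\wedge i_T\tau .
\end{align*}
Set $\sigma:=i_T\tau$, an $(n-1)$-form at $(x,v)$. Then $i_T\sigma=i_Ti_T\tau=0$. Because $T_{(x,v)}(\R^n\times S^{n-1})=\R T\oplus H_{(x,v)}$, the form $\sigma$ is therefore determined by its restriction $\sigma|_H\in\Lambda^{n-1}H^*$. The same reasoning shows that a form is zero as soon as it vanishes on $H$ and is annihilated by $i_T$.

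\emph{Step 2 (primitivity).} We have $0=d\alpha\wedge\tau=\alpha\wedge d\alpha\wedge\sigma$. Contracting with $T$ and using $i_T\alpha=1$, $i_Td\alpha=0$ and $i_T\sigma=0$ gives
\begin{align*}
0=d\alpha\wedge\sigma-\alpha\wedge d\alpha\wedge i_T\sigma=d\alpha\wedge\sigma .
\end{align*}
Restricting to $H$, this says $L(\sigma|_H)=0$, where $L$ is the Lefschetz operator of $d\alpha|_H$. Since $\dim H=2(n-1)$ and $\sigma|_H$ has degree $n-1$, the defining condition $L^{(n-1)-(n-1)+1}\sigma|_H=0$ holds, so $\sigma|_H\in P^{n-1}$ is primitive.

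\emph{Step 3 (use the hypothesis).} By assumption $i_T\tau=\sigma=\alpha\wedge\beta+d\alpha\wedge\gamma$ for some forms $\beta,\gamma$. Restricting to $H$ kills the $\alpha$-term, so
\begin{align*}
\sigma|_H=L(\gamma|_H)\in L\Lambda^{n-3}H^* .
\end{align*}
By part (1) of \autoref{proposition:LefschetzDecomposition}, $\Lambda^{n-1}H^*=P^{n-1}\oplus\bigoplus_{i\ge1}L^iP^{n-1-2i}$. The image $L\Lambda^{n-3}H^*$ lies in the summands with $i\ge1$, so it meets $P^{n-1}$ only in $0$. Hence $\sigma|_H=0$. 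Together with $i_T\sigma=0$ from Step 1, this gives $\sigma=0$, and therefore $\tau=\alpha\wedge\sigma=0$.

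The main point needing care is the bookkeeping in Steps 1–2. One has to check that $\sigma=i_T\tau$ satisfies $i_T\sigma=0$, so that it is determined by its restriction to $H$. One also has to check that $d\alpha\wedge\tau=0$ really yields $d\alpha\wedge\sigma=0$ on $H$, and not merely modulo $\alpha$. Both follow from contracting with the Reeb field $T$ as above. The remaining argument is linear algebra on the symplectic vector space $H_{(x,v)}$.
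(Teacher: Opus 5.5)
Your proof is correct and is essentially the paper's argument: factor $\tau=\alpha\wedge i_T\tau$, show that $d\alpha\wedge i_T\tau$ vanishes on the contact plane $H$, and use the Lefschetz decomposition on $H$ to eliminate the $d\alpha$-part of $i_T\tau$. The differences are cosmetic. You use part (1) of \autoref{proposition:LefschetzDecomposition} (a primitive $(n-1)$-form in the image of $L$ is zero) where the paper uses part (2) (injectivity of $L^2$ on $\Lambda^{n-3}H^*$), and you finish via $i_T\sigma=0$ and $\sigma|_H=0$ rather than by showing that $i_T\tau$ is a multiple of $\alpha$.
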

			\begin{proof}
				Since $\alpha\wedge\tau=0$, we have $0=i_T(\alpha\wedge \tau)=\tau-\alpha\wedge i_T\tau$. If $i_T\tau=\alpha\wedge \omega_1+d\alpha\wedge \omega_2$, we obtain the following for the restriction to the contact distribution $H$:
				\begin{align*}
					0=i_T(d\alpha\wedge \tau)|_H =d\alpha\wedge  i_T\tau|_H=d\alpha^2\wedge\omega_2|_H.
				\end{align*}
				Due to the Lefschetz decomposition in \autoref{proposition:LefschetzDecomposition}, this implies $\omega_2|_H=0$, so $\alpha\wedge \omega_2=0$, which implies that $\omega_2$ is a multiple of $\alpha$. Thus $i_T\tau$ is a multiple of $\alpha$, which shows $\tau=\alpha\wedge i_T\tau=0$.
			\end{proof}
		
			We will be mainly interested in smooth sections of the bundle defined above, which corresponds to the space
			\begin{align*}
				\mathcal{J}^{k,n-k}(\R^n)^{tr}:=\{\omega\in \Omega^{k,n-k}(\R^n\times S^{n-1})^{tr}:\alpha\wedge\omega=d\alpha\wedge\omega =0\}.
			\end{align*}
			These spaces have been previously considered in \cite{BernigDualareameasures2019}. The following is a version of the Lefschetz decomposition for forms of degree $n-1$ on $\R^n\times S^{n-1}$.
				\begin{lemma}\label{lemma:primitivityContactPlane}
				If $\omega\in \Omega^{n-1}(\R^n\times S^{n-1})^{tr}$, then there exists a form $\xi\in \Omega^{n-3}(\R^n\times S^{n-1})^{tr}$ such that the restriction of $\omega-d\alpha\wedge\xi$ to the contact distribution is primitive, i.e. such that
				\begin{align*}
					d\alpha\wedge(\omega-d\alpha\wedge\xi)=\alpha\wedge\zeta
				\end{align*}
				for some $\zeta\in \Omega^n(\R^n\times S^{n-1})^{tr}$.
			\end{lemma}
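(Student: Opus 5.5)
The plan is to work pointwise on the contact distribution and then choose $\xi$ smoothly and translation invariantly. At each point $(x,v)$, restrict $\omega$ to $H_{(x,v)}$. This space has dimension $2(n-1)$ and carries the symplectic form $d\alpha|_H$. The degree is $n-1$, which is half the dimension of $H$. In \autoref{proposition:LefschetzDecomposition}, with $2m=2(n-1)$ and $k=n-1=m$, we get $\Lambda^{n-1}H^*=P^{n-1}\oplus L\Lambda^{n-3}H^*$. Here $P^{n-1}$ consists of the forms $\tau$ with $L\tau=0$. We may therefore write
\begin{align*}
	\omega|_H=\omega_0+d\alpha|_H\wedge\xi_H, \qquad d\alpha|_H\wedge\omega_0=0,
\end{align*}
where $\xi_H\in\Lambda^{n-3}H^*$ is uniquely determined. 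Uniqueness holds because $L:\Lambda^{n-3}H^*\to\Lambda^{n-1}H^*$ is injective by part (2) of \autoref{proposition:LefschetzDecomposition}: the map $L^{2}$ from degree $n-3$ to degree $n+1$ is an isomorphism.

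Next I extend $\xi_H$ to a form on the full tangent space. Using the splitting $T^*=\R\alpha\oplus H^*$ induced by the Reeb field $T$, I define $\xi$ to be the unique form with $i_T\xi=0$ and $\xi|_H=\xi_H$. Then $\omega-d\alpha\wedge\xi$ restricts to $\omega_0$ on $H$. Hence $d\alpha\wedge(\omega-d\alpha\wedge\xi)$ vanishes on $H$. An $n$-form $\eta$ whose restriction to $H$ vanishes satisfies $\eta=\alpha\wedge i_T\eta$. To see this, note that $\eta-\alpha\wedge i_T\eta$ is annihilated by $i_T$ and restricts to $0$ on $H$, so it must be zero. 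Setting $\zeta:=i_T\big(d\alpha\wedge(\omega-d\alpha\wedge\xi)\big)$ gives the claimed identity.

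The only real point to check is regularity and invariance. The map $\omega|_H\mapsto\xi_H$ is given by the inverse of an isomorphism of vector bundles, so it is smooth: the inverse of $L$ restricted onto its image, composed with the smooth projection onto the image of $L$ along the primitive forms. The data $\alpha$, $d\alpha$ and $T$ are all invariant under translations $(x,v)\mapsto(x+y,v)$. Indeed, $\alpha=\langle v,d\pi_1\rangle$ does not depend on $x$, so $T$, which is determined by $\alpha$, is translation invariant as well. By uniqueness of the decomposition, $\xi$ is therefore translation invariant whenever $\omega$ is, and so $\xi\in\Omega^{n-3}(\R^n\times S^{n-1})^{tr}$.

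If $n<3$, the space $\Lambda^{n-3}$ is zero and the claim reduces to $d\alpha\wedge\omega|_H=0$ for degree reasons. In that case $d\alpha\wedge\omega$ has degree $n+1>2(n-1)=\dim H$, so it vanishes on $H$ automatically.
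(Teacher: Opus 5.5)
Your proof is correct and follows the paper's argument: a pointwise Lefschetz decomposition on $H$ produces $\xi$, so $d\alpha\wedge(\omega-d\alpha\wedge\xi)$ vanishes on $H$ and hence equals $\alpha\wedge\zeta$ with $\zeta=i_T\big(d\alpha\wedge(\omega-d\alpha\wedge\xi)\big)$. You also spell out details the paper leaves implicit, namely the extension via $i_T\xi=0$, smoothness, and translation invariance; the only slip is that $\eta=d\alpha\wedge(\omega-d\alpha\wedge\xi)$ is an $(n+1)$-form rather than an $n$-form, which is harmless because your argument for $\eta=\alpha\wedge i_T\eta$ works in every degree.
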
	
			\begin{proof}
				By the Lefschetz decomposition applied pointwise to the contact distribution $H$, we find a smooth differential form $\xi\in \Omega^{n-3}(\R^n\times S^{n-1})^{tr}$ such that $d\alpha\wedge(\omega-d\alpha\wedge\xi)|_H=0$. In particular, 
				\begin{align*}
					\alpha\wedge d\alpha\wedge(\omega-d\alpha\wedge\xi)=0,
				\end{align*}
				Thus, $d\alpha\wedge(\omega-d\alpha\wedge\xi)$ is a multiple of $\alpha$, i.e. the claim follows by setting $\zeta:=i_T(d\alpha\wedge(\omega-d\alpha\wedge\xi))$. 
			\end{proof}
			
			Let $I_{k,n-1-k}=\langle \alpha,d\alpha\rangle\cap \Omega^{k,n-1-k}(\R^n\times S^{n-1})^{tr}$, where $\langle \alpha,d\alpha\rangle$ denotes the ideal generated by $\alpha$ and $d\alpha$.			\begin{corollary}\label{corollary:RelationJtoKernel}
				The maps
				\begin{align*}
					\mathcal{J}^{k+1,n-1-k}(\R^n)^{tr}&\rightarrow \Omega^{k,n-1-k}(\R^n\times S^{n-1})^{tr}/I_{k,n-1-k}\\
					\tau&\mapsto [i_T\tau],\\
					J^{k+1,n-1-k}|_{(0,v)}&\rightarrow P\Lambda^{k,n-1-k}(v^\perp\times v^\perp)^*\\
					\tau&\mapsto i_T\tau
				\end{align*}
				are isomorphisms.
			\end{corollary}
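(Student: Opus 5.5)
The plan is to prove the pointwise statement first (the second map) and then obtain the global statement by working pointwise with smooth dependence and translation invariance.

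\emph{Pointwise isomorphism.} Fix $(0,v)$. Injectivity follows from \autoref{lemma:Kernel_J_to_Area}: if $i_T\tau=0$, then $i_T\tau$ lies in the ideal, so $\tau=0$.

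For the image, let $\tau\in J^{k+1,n-1-k}|_{(0,v)}$. Since $\alpha\wedge\tau=0$, we have $\tau=\alpha\wedge i_T\tau$. The form $i_T\tau$ lies in the $i_T$-kernel, which is identified with $\Lambda^*H^*$. Here $T=(v,0)$ at $(0,v)$, so $i_T$ lowers the $x$-degree by one, and $i_T\tau$ has bidegree $(k,n-1-k)$ on $v^\perp\times v^\perp$. The condition $d\alpha\wedge\tau=0$ gives $\alpha\wedge d\alpha\wedge i_T\tau=0$, hence $d\alpha\wedge i_T\tau|_H=0$. With $\dim H=2(n-1)$ and $i_T\tau$ of degree $n-1$, this is exactly primitivity in the sense of \autoref{proposition:LefschetzDecomposition}, where $L^{(n-1)-(n-1)+1}=L$.

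For surjectivity, take a primitive $\sigma\in P\Lambda^{k,n-1-k}(v^\perp\times v^\perp)^*$, extend it by $i_T\sigma=0$, and set $\tau:=\alpha\wedge\sigma$. Since $\alpha=\langle v,dx\rangle$ has bidegree $(1,0)$, $\tau$ has bidegree $(k+1,n-1-k)$. Clearly $\alpha\wedge\tau=0$. Also $d\alpha\wedge\tau=\pm\alpha\wedge d\alpha\wedge\sigma=0$, because $d\alpha\wedge\sigma$ vanishes on $H$ and is $i_T$-free, hence is zero. Finally, $i_T\tau=\sigma$.

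\emph{Global map.} Injectivity is again \autoref{lemma:Kernel_J_to_Area}, applied pointwise. For surjectivity, take any $\omega\in\Omega^{k,n-1-k}(\R^n\times S^{n-1})^{tr}$. First replace $\omega$ by $\omega-\alpha\wedge i_T\omega$, which does not change its class, so that $i_T\omega=0$. Then apply \autoref{lemma:primitivityContactPlane} to subtract $d\alpha\wedge\xi$ and make the restriction to $H$ primitive.

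The main obstacle is preserving bidegrees: we need $\xi$ to have bidegree $(k-1,n-2-k)$. To handle this, perform the Lefschetz decomposition on $v^\perp\times v^\perp$ with the bigrading. Since $d\alpha|_H$ has bidegree $(1,1)$, the Lefschetz decomposition respects the bigrading, so the primitive projection of a bihomogeneous form is bihomogeneous. The construction can be chosen smooth and translation invariant, because it is given by the $\SO(n)$-equivariant linear algebra depending smoothly on $v$.

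With the representative $\sigma$ chosen as $i_T$-free and primitive, define $\tau=\alpha\wedge\sigma$. The pointwise computation above shows that $\tau\in\mathcal{J}^{k+1,n-1-k}(\R^n)^{tr}$ and $i_T\tau=\sigma$, which gives surjectivity. Well-definedness of the map is immediate, so both maps are isomorphisms.
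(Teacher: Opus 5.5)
Your proposal is correct and follows essentially the same route as the paper. Injectivity comes from \autoref{lemma:Kernel_J_to_Area}, and surjectivity comes from using the Lefschetz decomposition (\autoref{lemma:primitivityContactPlane}) to find a representative $\omega-d\alpha\wedge\xi$ that is primitive on the contact distribution, then setting $\tau=\alpha\wedge(\omega-d\alpha\wedge\xi)$. Your explicit check that the Lefschetz decomposition on $v^\perp\times v^\perp$ respects the bigrading, so that $\xi$ can be taken of bidegree $(k-1,n-2-k)$, fills in a point the paper's proof asserts without justification.
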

			\begin{proof}
				We will only consider the first map, the argument for the second is similar. If $[i_T\tau]=0$, then $i_T\tau$ is contained in the ideal generated by $\alpha$ and $d\alpha$, and so $\tau=0$ by \autoref{lemma:Kernel_J_to_Area}. If $\omega\in \Omega^{k,n-1-k}(\R^n\times S^{n-1})^{tr}$, then there exist differential forms $\xi_1\in \Omega^{k-1,n-2-k}(\R^n\times S^{n-1})^{tr}$, $\xi_2\in \Omega^{k,n-k}(\R^n\times S^{n-1})^{tr}$ such that 
				\begin{align*}
					d\alpha\wedge(\omega-d\alpha\wedge\xi_1)=\alpha\wedge \xi_2.
				\end{align*}
				by \autoref{lemma:primitivityContactPlane}. Thus the form $\tau:=\alpha\wedge(\omega-d\alpha\wedge\xi_1)$ satisfies $d\alpha\wedge\tau=0$ and $\alpha\wedge\tau=0$, so $\tau\in\mathcal{J}^{k+1,n-1-k}(\R^n)^{tr}$ and
				\begin{align*}
					i_T\tau=\omega-d\alpha\wedge\xi_1-\alpha\wedge(i_T\omega-d\alpha\wedge i_T\xi_1),
				\end{align*}
				i.e. $i_T\tau-\omega\in I_{k,n-1-k}$. The claim follows. 
			\end{proof}

			The next result is a contact geometric analog of \autoref{proposition:symplecticIrreducible}.
			\begin{proposition}\label{proposition:JIrreducibleStablizer}
				For $v\in S^{n-1}$ let $G_v$ denote the subgroup of $\GL(n,\R)$ of all elements that satisfy $g^{-T}v=v$  and that preserve the decomposition $\R^n=\R v\oplus v^\perp$. Then $J^{k,n-k}|_{(0,v)}$ is an irreducible representation of $G_v\cong \GL(v^\perp)$.
			\end{proposition}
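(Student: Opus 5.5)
Via the second isomorphism in \autoref{corollary:RelationJtoKernel}, we reduce to \autoref{proposition:symplecticIrreducible}. For $\tau\in J^{k,n-k}|_{(0,v)}$, the map $\tau\mapsto i_T\tau$ identifies $J^{k,n-k}|_{(0,v)}$ with $P\Lambda^{k-1,n-k}(v^\perp\times v^\perp)^*$, the primitive forms on the contact plane $H_{(0,v)}=v^\perp\oplus v^\perp$. If this isomorphism intertwines the action of $G_v$ on $J^{k,n-k}|_{(0,v)}$ with the natural action of $\GL(v^\perp)$ on $v^\perp\times (v^\perp)^*\cong v^\perp\times v^\perp$, up to a character, then irreducibility transfers directly from \autoref{proposition:symplecticIrreducible} with $W=v^\perp$ (of dimension $n-1$, bidegree $(k-1,n-k)$). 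Twisting by a one-dimensional character does not affect irreducibility.

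The key steps are as follows.

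First, I identify $G_v$. An element $g$ preserving $\R v\oplus v^\perp$ with $g^{-T}v=v$ has the form $g=\mathrm{id}_{\R v}\oplus A$ with $A\in\GL(v^\perp)$; indeed $g^Tv=v$ forces $gv=v$ on the line. This gives $G_v\cong\GL(v^\perp)$. Such $g$ fixes $(0,v)$ under $G_g$ and satisfies $|g^{-T}v|=1$. By Eq.~\eqref{eq:groupActionAlpha} and the subsequent formula for $G_g^*d\alpha$, we get $G_g^*\alpha=\alpha$ and $G_g^*d\alpha=d\alpha$ at $(0,v)$.

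Second, I compute the differential of $G_g$ at $(0,v)$. On $T_{(0,v)}=\R^n\oplus v^\perp$ it is $(\xi,w)\mapsto (g\xi, g^{-T}w)$, because the normalization has vanishing derivative in the direction $w\perp v$ when $|g^{-T}v|=1$ and $g^{-T}w\perp v$. On the contact plane $v^\perp\oplus v^\perp$ this is $(A\xi, A^{-T}w)$, which is precisely the diagonal action of $\GL(v^\perp)$ on $W\times W^*$ under the Euclidean identification $W^*\cong W$. The Reeb vector at $(0,v)$ is $T=(v,0)$, and it is fixed by $dG_g$ since $gv=v$. Hence $G_g^*$ commutes with $i_T$, and the map $\tau\mapsto i_T\tau$ is $G_v$-equivariant, exactly with no character. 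The bidegree is preserved because $g$ respects the product splitting.

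Third, I apply \autoref{corollary:RelationJtoKernel}: $J^{k,n-k}|_{(0,v)}\cong P\Lambda^{k-1,n-k}(v^\perp\times v^\perp)^*$ equivariantly. \autoref{proposition:symplecticIrreducible} for $W=v^\perp$ then gives irreducibility under $\GL(W)$. One should double-check that the symplectic form on $W\times W$ from Eq.~\eqref{eq:dAlphaContactPlane} matches the canonical one on $W\times W^*$ under this identification. It does, as noted in the paper, so the primitive subspaces correspond.

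The main obstacle is the bookkeeping in the second step: confirming that the action on the sphere factor is $A^{-T}$ rather than $A$, and that this matches the dual action, so the identification $v^\perp\cong (v^\perp)^*$ is genuinely $\GL(v^\perp)$-equivariant with $A$ acting on the second factor by $A^{-T}$. One must also handle the edge degrees $k=0$ or $k=n$, where the space is zero or one-dimensional and the statement is trivial.
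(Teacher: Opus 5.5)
Your proposal is correct and follows essentially the same route as the paper. You identify $J^{k,n-k}|_{(0,v)}$ with $P\Lambda^{k-1,n-k}(v^\perp\times v^\perp)^*$ via $\tau\mapsto i_T\tau$, and check $G_v$-equivariance from the invariance of $\alpha$ and of $T|_{(0,v)}=(v,0)$. You then compute $dG_g(w_1,w_2)=(gw_1,g^{-T}w_2)$ on the contact plane to match the diagonal action on $W\times W^*$, and conclude with \autoref{proposition:symplecticIrreducible}. Your explicit description of $G_v$ as $\mathrm{id}_{\R v}\oplus A$, your identification of the Reeb vector, and your treatment of the degenerate degrees are details the paper leaves implicit.
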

			\begin{proof}
				Note that $\alpha|_{(0,v)}$ is invariant under the action of $\GL(n,\R)_v$ due to Eq.~\eqref{eq:groupActionAlpha}, so it is in particular invariant under $G_v$. In addition, the vector $T|_{(0,v)}$ is invariant under $G_v$ since this group preserves the decomposition $\R^n=\R v\oplus v^\perp$ and fixes $v$. In particular, 
				\begin{align*}
						J^{k+1,n-1-k}|_{(0,v)}&\rightarrow P\Lambda^{k,n-1-k}(v^\perp\times v^\perp)^*\\
					\tau&\mapsto i_T\tau
				\end{align*}
				is bijective and commutes with the action of $G_v$ on both spaces. We thus need to show that $P\Lambda^{k,n-1-k}(v^\perp\times v^\perp)^*$ is an irreducible representation of $G_v$. Let $W=v^\perp$ with the natural representation of $\GL(W)$. We claim that $P\Lambda^{k,n-1-k}(v^\perp\times v^\perp)^*$ is isomorphic to $P\Lambda^{k,n-1-k}(W\times W^*)^*$ as a representation of $\GL(W)\cong G_v$. In order to see this, note that for $g\in G_v$, the action induced on $H_{(0,v)}\cong v^\perp\oplus v^\perp$ is given by
				\begin{align*}
					(w_1,w_2)\mapsto dG_g (w_1,w_2)=&\frac{d}{dt}\Big|_0 \left(g(0+tw_1),\frac{g^{-T}(v+tw_2)}{|g^{-T}(v+tw_2)|}\right)\\
					=&(gw_1,g^{-T}w_2),
				\end{align*}
				where we used that $\frac{d}{dt}|_0|g^{-T}(v+tw_2)|=\langle g^{-T}v,g^{-T}w_2\rangle=0$ if $g$ preserves the decomposition $\R v\oplus v^\perp$. Thus  $P\Lambda^{k,n-1-k}(v^\perp\times v^\perp)^*$ is isomorphic to $P\Lambda^{k,n-1-k}(W\times W^*)^*$ as a representation of $G_v\cong \GL(W)$. Since the latter is irreducible under the action of $\GL(W)$ due to \autoref{proposition:symplecticIrreducible}, the claim follows.
			\end{proof}
			
			\begin{theorem}\label{theorem:IrreducibilityDiffForms}
			For $1\le k\le n$, let $A\subset \mathcal{J}^{k,n-k}(\R^n)^{tr}$ be a nontrivial $\GL(n,\R)$-invariant $C^\infty(S^{n-1})$-submodule. Then $A=\mathcal{J}^{k,n-k}(\R^n)^{tr}$.
		\end{theorem}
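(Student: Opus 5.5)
Since the forms in $\mathcal{J}^{k,n-k}(\R^n)^{tr}$ are translation invariant, I will identify each one with a smooth section of the bundle $J^{k,n-k}$ restricted to $\{0\}\times S^{n-1}$. In other words, $\mathcal{J}^{k,n-k}(\R^n)^{tr}\cong\Gamma(S^{n-1},E)$, where $E_v:=J^{k,n-k}|_{(0,v)}$. Under this identification the $C^\infty(S^{n-1})$-module structure becomes pointwise multiplication of sections. The group $\GL(n,\R)$ acts on sections by pullback along $G_g$; this is compatible with the module structure up to the twist $\phi\mapsto\phi\circ G_g$. The strategy is to show that the fiber of $A$ at a single point is all of $E_v$, to spread this to all points using transitivity, and then to use a partition of unity.

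\emph{Step 1: fiberwise nontriviality.} For each $v\in S^{n-1}$ let $A_v:=\{\omega|_{(0,v)}:\omega\in A\}\subset E_v$. Since $A$ is nontrivial, there is some $\omega\in A$ and some $v_0$ with $\omega|_{(0,v_0)}\neq0$, so $A_{v_0}\neq0$. Now take $g\in G_{v_0}$. Then $G_g$ fixes $(0,v_0)$, and $G_g^*\omega\in A$ has value $g\cdot(\omega|_{(0,v_0)})$ at $(0,v_0)$. Hence $A_{v_0}$ is a nonzero $G_{v_0}$-invariant subspace of $E_{v_0}$, and \autoref{proposition:JIrreducibleStablizer} gives $A_{v_0}=E_{v_0}$. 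Because $\GL(n,\R)$ (indeed $\SO(n)$) acts transitively on $S^{n-1}$, and $A$ is invariant, we get $A_v=E_v$ for every $v$.

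\emph{Step 2: local generation.} Fix $v$ and choose a basis $e_1,\dots,e_m$ of $E_v$. By Step 1 there are $\omega_1,\dots,\omega_m\in A$ with $\omega_i|_{(0,v)}=e_i$. Since $E$ is a smooth vector bundle (a smooth subbundle, as the conditions $\alpha\wedge\tau=d\alpha\wedge\tau=0$ are of constant rank by the $\GL(n,\R)$-equivariance and transitivity), these sections stay linearly independent, hence form a frame of $E$, on some open neighborhood $U_v$ of $v$. Consequently any smooth section $\sigma$ supported in $U_v$ can be written as $\sigma=\sum_i\phi_i\omega_i$ with $\phi_i\in C^\infty(S^{n-1})$: we obtain smooth coefficients on $U_v$ by inverting the frame and extend them by zero, which works because $\supp\sigma\subset U_v$.

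\emph{Step 3: globalization.} By compactness, finitely many $U_{v_1},\dots,U_{v_r}$ cover $S^{n-1}$. Take a subordinate partition of unity $\chi_j$ and, for an arbitrary $\sigma\in\mathcal{J}^{k,n-k}(\R^n)^{tr}$, write $\sigma=\sum_j\chi_j\sigma$. Each $\chi_j\sigma$ lies in $A$ by Step 2, and since $A$ is a submodule (in particular closed under finite sums), $\sigma\in A$.

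The main obstacle is making precise that $\mathcal{J}^{k,n-k}(\R^n)^{tr}$ really is the full space of smooth sections of a smooth vector bundle $E$ of constant rank. This is what guarantees that local frames exist and that pointwise values determine the forms. Constant rank follows from equivariance, since $G_g$ maps fibers isomorphically onto fibers. I would also check that evaluating the translation-invariant forms along $\{0\}\times S^{n-1}$ loses no information and that the identification respects the group actions; both should be routine.
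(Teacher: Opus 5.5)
Your proof is correct and follows essentially the same route as the paper. Irreducibility of the fiber $J^{k,n-k}|_{(0,v)}$ under the stabilizer (\autoref{proposition:JIrreducibleStablizer}) forces $A_v$ to be the full fiber, and transitivity extends this to every $v\in S^{n-1}$; local frames together with a partition of unity then give every $\tau\in\mathcal{J}^{k,n-k}(\R^n)^{tr}$ as an element of $A$, and your remarks on constant rank and on extending the frame coefficients by zero just make explicit details the paper leaves implicit.
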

		\begin{proof}
			By construction, we have a well defined and surjective map
			\begin{align*}
				\mathcal{J}^{k,n-k}(\R^n)^{tr}&\rightarrow J^{k,n-k}|_{(0,v)}\\
				\tau&\mapsto \tau|_{(0,v)}
			\end{align*}
			which commutes with the action of $\GL(n,\R)_v$ on both spaces. Since $J^{k,n-k}|_{(0,v)}$ is an irreducible representation of $\GL(n,\R)_v$ by \autoref{proposition:JIrreducibleStablizer}, the image of $A$ under this map is either $0$ or coincides with $J^{k,n-k}|_{(0,v)}$. Let $A_v$ denote the image of $A$ under this map. Since we are only considering translation invariant forms and $\GL(n,\R)$ operates transitively on $S^{n-1}$, it is easy to check that $A$ is trivial if $A_v=0$ for some $v\in S^{n-1}$. Thus this map is surjective for all $v\in S^{n-1}$. In particular, for any point $v\in S^{n-1}$ there exist elements $\omega^j_v\in A$, $1\le j\le \dim J^{k,n-k}|_{(0,v)}$ such that their images under this map form a basis of $J^{k,n-k}|_{(0,v)}$. Using a local trivialization, we obtain a neighborhood $U_v$ of $v$ such that the images of $\omega^j_v$, $1\le j\le \dim J^{k,n-k}|_{(0,v)}$, form a basis for each point in this neighborhood. If $\tau\in \mathcal{J}^{k,n-k}(\R^n)^{tr}$ is an arbitrary element, choose a partition of unity subordinate to the cover $U_v$ of $S^{n-1}$. Since $S^{n-1}$ is compact, we thus obtain $v_1\dots,v_N\in S^{n-1}$ and $\phi_i\in C^\infty_c(U_{v_i})$ such that $\sum_{i=1}^N\phi_i=1$. For each $1\le i\le N$, we obtain smooth sections $\eta^j_i\in C^\infty_c(U_i)$ such that $\sum_{j}\eta^j_i\omega^j_{v_i}|_{(0,v)}=\phi_i\tau|_{(0,v)}$ for all $v\in U_i$. Since $\tau$ is translation invariant, this implies
			\begin{align*}
				\tau=\sum_{i=1}^N\phi_i\tau=\sum_{i=1}^N\sum_{j}\eta^j_i\omega^j_{v_i}.
			\end{align*}
			Because $\omega^j_{v_i}\in A$ by assumption and as $A$ is a $C^\infty(S^{n-1})$ module, this implies $\tau\in A$.
		\end{proof}

		\subsection{Construction of area measures with the normal cycle}
	\label{section:ConstructionSmoothArea}
		In this section, we discuss a construction of continuous area measures from \cite{WannererIntegralgeometryunitary2014,Wannerermoduleunitarilyinvariant2014}, which is based on the properties of the normal cycle. We refer to \cite{AleskerFuTheoryvaluationsmanifolds.2008}*{Section 2} for a general background on the normal cycle and only collect the properties we need in this article. As a set, the normal cycle of $K\in\mathcal{K}(\R^n)$ is given by	
		\begin{align}
			\label{eq:DefNormalCycle}
			\nc(K)=\{(x,v)\in\R^n\times S^{n-1}:x\in\partial K, \langle v,x-y\rangle\ge 0~\text{for all}~y\in K\}.
		\end{align}
		As discussed in the introduction, it is naturally an $(n-1)$-dimensional Lipschitz submanifold, and any orientation on $\R^n$ induces a natural orientation of $\nc(K)$, which we fix to be the standard orientation of $\R^n$ for simplicity. For any $\omega\in\Omega^{n-1}(\R^n\times S^{n-1})$, we obtain a map $\Phi_\omega:\K(\R^n)\rightarrow \M(S^{n-1})$ by setting
		\begin{align*}
			\Phi_\omega(K;\phi)=\int_{\nc(K)}\pi_2^*\phi \wedge\omega\quad\text{for}~\phi\in C(S^{n-1}),
		\end{align*}
		where $\pi_2:\R^n\times S^{n-1}\rightarrow S^{n-1}$ denotes the natural projection. Note that \autoref{corollary:RelationSubdiffNormalCycle} implies that $(x,v)\in \nc(K)$ if and only if $x\in\partial h_K(v)$.  Since the subdifferentials of $h_K$ and $h_L$ agree on any open set $U\subset S^{n-1}$ such that $h_K|_U=h_L|_U$ (compare the discussion in \autoref{section:preliminaries_convexbodies}), we directly obtain the following.		
		\begin{corollary}
			For every $\omega\in \Omega^{n-1}(\R^n\times S^{n-1})$, the map $\Phi_\omega:\K(\R^n)\rightarrow\M(S^{n-1})$ is locally determined.
		\end{corollary}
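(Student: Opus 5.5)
The claim follows almost directly from the set-theoretic description of the normal cycle, combined with the locality of the subdifferential. Let $U\subset S^{n-1}$ be open and suppose $h_K|_U=h_L|_U$. We must show $\Phi_\omega(K)|_U=\Phi_\omega(L)|_U$. By the Riesz identification this means $\Phi_\omega(K;\phi)=\Phi_\omega(L;\phi)$ for every $\phi\in C(S^{n-1})$ with $\supp\phi\subset U$; it is enough to check this for $\phi\in C_c(U)$, since Radon measures on $U$ are determined by these functions.

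First, I compare the two normal cycles over $U$. By \autoref{corollary:RelationSubdiffNormalCycle} and Eq.~\eqref{eq:DefNormalCycle}, $(x,v)\in\nc(K)$ if and only if $v\in S^{n-1}$ and $x\in\partial h_K(v)$. Here the condition $x\in\partial K$ is automatic, because $\partial h_K(v)=F(K,v)\subset\partial K$ for $v\neq0$ by \autoref{lemma:SubdifferentialSupportFunction}. Since $h_K$ and $h_L$ are $1$-homogeneous, they agree on the open cone $\{tu:t>0,u\in U\}$, which is a neighborhood of every $v\in U$ in $\R^n$. The locality of the subdifferential discussed in \autoref{section:preliminaries_convexbodies} then gives $\partial h_K(v)=\partial h_L(v)$ for all $v\in U$. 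Hence
\begin{align*}
	\nc(K)\cap\pi_2^{-1}(U)=\nc(L)\cap\pi_2^{-1}(U),
\end{align*}
an equality of sets.

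Second, I upgrade this set equality to an equality of currents on the open set $\R^n\times U$. This is the only step needing care, and I expect it to be the main obstacle. The normal cycle as an integral current is integration over the Lipschitz manifold $\nc(K)$ with its canonical orientation. That orientation is determined locally by the set itself; for example, via the bi-Lipschitz parametrization $(x,v)\mapsto x+v$ onto the boundary of the parallel body $K+B_1(0)$, oriented as the boundary of a domain in $\R^n$. Over $\pi_2^{-1}(U)$ the two sets coincide, so these parametrizations coincide on the corresponding open pieces of $\partial(K+B_1(0))$ and $\partial(L+B_1(0))$, and the orientations agree. Alternatively, one can cite the standard fact from \cite{AleskerFuTheoryvaluationsmanifolds.2008} that the normal cycle restricted to an open set depends only on the intersection with that set.

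Finally, for $\phi\in C_c(U)$ the form $\pi_2^*\phi\wedge\omega$ is supported in $\R^n\times U$, and on $\nc(K)$ it has compact support because $\nc(K)$ is compact. Therefore
\begin{align*}
	\Phi_\omega(K;\phi)=\int_{\nc(K)\cap\pi_2^{-1}(U)}\pi_2^*\phi\wedge\omega=\int_{\nc(L)\cap\pi_2^{-1}(U)}\pi_2^*\phi\wedge\omega=\Phi_\omega(L;\phi),
\end{align*}
which proves the corollary.
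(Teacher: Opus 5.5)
Your proposal is correct and follows the paper's own argument: the paper also derives the corollary directly from the equivalence between $(x,v)\in\nc(K)$ and $x\in\partial h_K(v)$, together with the locality of subdifferentials, so that $\nc(K)$ and $\nc(L)$ coincide over $U$. Your second step, matching the orientations via the parametrization onto $\partial(K+B_1(0))$, makes explicit a point the paper leaves implicit; that direct argument is sound, so you do not need the appeal to a general fact from the literature.
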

		
		It follows from \cite{WannererIntegralgeometryunitary2014}*{Lemma 2.4.} that $\Phi_\omega:\mathcal{K}(\R^n)\rightarrow \M(S^{n-1})$ is continuous in the weak* topology, which relies on the fact that the mass of the normal cycle along a convergent sequence of convex bodies is bounded. We require a quantitative estimate for this bound, which we will obtain from certain uniform estimates of $\Phi_\omega$ in terms of the surface area measures $S_k$, $0\le k\le n-1$.\\
		
	Consider the Lipschitz--Killing forms $\kappa_k$ defined for $0\le k\le n-1$ by
	\begin{align*}
		\kappa_k:=\frac{1}{k!(n-1-k)!}\sum_{\sigma\in S_{n}}\sign(\sigma)v_{\sigma(1)}dx_{\sigma(2)}\dots dx_{\sigma(k+1)}\wedge dv_{\sigma(k+2)}\dots dv_{\sigma(n)},
	\end{align*}
	where $x_j$, $1\le j\le n$, denote orthonormal coordinates on $\R^n$ and $v_j$, $1\le j\le n$, the induced coordinate functions on $S^{n-1}$. Then
	\begin{align*}
		S_k(K)=&\frac{1}{(n-k)\omega_{n-k}}\Phi_{\kappa_k}(K),&&V_k(K)=S_k(K;S^{n-1})
	\end{align*} 
	are the  $k$th intermediate area measure and $k$th intrinsic volume, respectively, compare \cite{FuAlgebraicintegralgeometry2014}*{Section~2.1}. We equip $\Lambda^{k,l}(T_{(x,v)}\R^n\times S^{n-1})^*$ with the operator norm induced from the inner product on $\R^n$ and we denote this norm by $|\cdot|$. Let $\Omega^{k,l}(\R^n\times S^{n-1})$ denote the space of $(k,l)$-forms on $\R^n\times S^{n-1}$.
	\begin{proposition}\label{proposition:boundSmoothArea}
		For $0\le k\le n-1$ and any $\omega\in \Omega^{k,n-1-k}(\R^n\times S^{n-1})$, we have for $K\in\mathcal{K}(\R^n)$ and $\phi\in C(S^{n-1})$,
		\begin{align}
			\label{eq:estimateTV}
			|\Phi_\omega(K;\phi)|\le (n-k)\omega_{n-k}\sup_{x\in K,v\in S^{n-1}}|\omega|_{(x,v)}| \int_{S^{n-1}} |\phi| dS_k(K).
		\end{align}
		In particular, if $\omega$ is translation invariant, then  
		\begin{align*}
			|\Phi_\omega(K;\phi)|\le (n-k)\omega_{n-k} \|\omega\|_\infty\|\phi\|_\infty V_k(K).
		\end{align*}
	\end{proposition}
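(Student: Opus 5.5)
We bound the integrand pointwise on the normal cycle and compare it with the integrand for $\kappa_k$. The starting point is that the normal cycle is an $(n-1)$-rectifiable set, and integrating a form over it means integrating against its approximate unit tangent $(n-1)$-vector $\vec{\nc}(K)$ with respect to $\mathcal{H}^{n-1}$. Since $\omega$ has bidegree $(k,n-1-k)$, at a point $(x,v)\in\nc(K)$ it only sees the component of $\vec{\nc}(K)$ of bidegree $(k,n-1-k)$.

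The plan is to use the standard local structure of $\nc(K)$: at $\mathcal{H}^{n-1}$-almost every point there is an orthonormal basis $e_1,\dots,e_{n-1}$ of $v^\perp$ and principal curvatures $\kappa_i\in[0,\infty]$ such that
\begin{align*}
\vec{\nc}(K)=\bigwedge_{i=1}^{n-1}\frac{(e_i,\kappa_ie_i)}{\sqrt{1+\kappa_i^2}}.
\end{align*}
These are generalized curvatures, obtained for instance by approximating $K$ with $K+\epsilon B_1(0)$ or from Zähle's description of the normal cycle. Expanding the wedge product, the $(k,n-1-k)$-component is
\begin{align*}
\sum_{|I|=k}\prod_{i\notin I}\kappa_i\prod_{i}(1+\kappa_i^2)^{-1/2}\, e_I\wedge (0,e)_{I^c},
\end{align*}
where $e_I\wedge(0,e)_{I^c}$ is a unit simple $(n-1)$-vector. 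It follows that
\begin{align*}
|\langle\omega,\vec{\nc}(K)\rangle|\le|\omega|_{(x,v)}|\,\sigma_k,
\end{align*}
with $\sigma_k=\sum_{|I|=k}\prod_{i\notin I}\kappa_i\prod_i(1+\kappa_i^2)^{-1/2}\ge0$. The same computation applied to $\kappa_k$ should show that $\langle\kappa_k,\vec{\nc}(K)\rangle=\sigma_k$ exactly. This uses that in the basis $v,e_1,\dots,e_{n-1}$ the Lipschitz--Killing form evaluates to $1$ on each $e_I\wedge(0,e)_{I^c}$ (the normalization $1/(k!(n-1-k)!)$ is designed to cancel the permutations), and that it has nonnegative sign for the chosen orientation.

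Combining these, I get
\begin{align*}
|\Phi_\omega(K;\phi)|\le\sup_{\nc(K)}|\omega|\int_{\nc(K)}|\pi_2^*\phi|\,\sigma_k\,d\mathcal{H}^{n-1}.
\end{align*}
The integral on the right equals $\Phi_{\kappa_k}(K;|\phi|)=(n-k)\omega_{n-k}\int|\phi|\,dS_k(K)$. Since $\pi_1(\nc(K))\subset\partial K\subset K$, the supremum over $\nc(K)$ is at most the supremum over $x\in K$, $v\in S^{n-1}$, which gives \eqref{eq:estimateTV}. For translation invariant $\omega$ the supremum is $\|\omega\|_\infty$; bounding $|\phi|\le\|\phi\|_\infty$ and using $S_k(K;S^{n-1})=V_k(K)$ gives the second inequality.

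The main obstacle is justifying the a.e. tangent description with the correct orientation and sign, so that $\kappa_k$ restricts to a nonnegative density equal to the sum of the coefficients. If that is hard to do directly, a fallback is to prove the estimate for smooth bodies with positive curvature, where the description is classical, and then pass to general $K$ by approximation. In that approximation $\Phi_\omega(K_j;\phi)\to\Phi_\omega(K;\phi)$ by the cited continuity (Lemma 2.4 of Wannerer), and $\int|\phi|\,dS_k(K_j)\to\int|\phi|\,dS_k(K)$ by weak continuity of $S_k$. The supremum over $K_j$ converges because $K_j\to K$ in the Hausdorff metric and $\omega$ is continuous, taking $K_j$ inside a shrinking neighborhood of $K$.
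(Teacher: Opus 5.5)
Your proof is correct, and your fallback is exactly the paper's proof; your main route is genuinely different.

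\textbf{The paper's route.} The paper only treats smooth bodies with strictly positive Gauss curvature. There $\nc(K)$ is the graph of $v\mapsto(\nu_K^{-1}(v),v)$, with tangent spaces spanned by $(\lambda_ie_i,e_i)$, $\lambda_i>0$. Expanding $\omega$ on these vectors by bidegree gives the same comparison you use, $|\omega(\cdot)|\le|\omega|\,\kappa_k(\cdot)$ with $\kappa_k(\cdot)=\sum_{|I|=k}\prod_{i\in I}\lambda_i$. General $K$ is then handled by continuity of both sides in $K$.

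\textbf{Your route.} You run that comparison for arbitrary $K$ directly on the approximate unit tangent $(n-1)$-vector $\xi$ of $\nc(K)$, as supplied by Z\"ahle's generalized curvatures. This gives the $\mathcal{H}^{n-1}$-a.e. density inequality $|\langle\omega,\xi\rangle|\le|\omega|\,\langle\kappa_k,\xi\rangle$, so the supremum may even be restricted to $\nc(K)$. It also avoids needing continuity of $K\mapsto\Phi_\omega(K)$. The paper takes that continuity from Wannerer's lemma, which itself rests on a mass bound for normal cycles of the type the paper deduces from this proposition. The price is the heavier structure theorem together with its orientation convention. That convention is the one making $\Phi_{\kappa_k}(K)$ a positive measure, so it matches the paper's normalization of $S_k$.

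\textbf{Two bookkeeping points.} First, write the coefficient of the $I$-term as $\prod_{i\in I}(1+\kappa_i^2)^{-1/2}\prod_{i\notin I}\kappa_i(1+\kappa_i^2)^{-1/2}$, so that $\kappa_i=\infty$ is meaningful. Second, keep the unit $(n-1)$-vectors in the index order produced by the expansion, on which $\kappa_k$ equals $+1$. Reordering them as $e_I\wedge(0,e)_{I^c}$ introduces the sign of the shuffle.
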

	\begin{proof}
		If $K$ is a smooth convex body with strictly positive Gauss curvature, then $\nc(K)$ is the graph of the map 
		\begin{align*}
			G_K:S^{n-1}&\rightarrow \nc(K)\\
			v&\mapsto (\nu_K^{-1}(v),v),
		\end{align*}
		where $\nu_K:\partial K\rightarrow S^{n-1}$ denotes the Gauss map, and 
		\begin{align*}
			\Phi_\omega(K;\phi)=\int_{S^{n-1}} \phi(v)G_K^*\omega.
		\end{align*}
		Since the differential of $\nu_K^{-1}$ is self-adjoint and positive definite, we find an orthonormal basis $e_1,\dots,e_{n-1}$ of $v^\perp$ and $\lambda_1,\dots,\lambda_{n-1}>0$ such that the tangent space of $\nc(K)$ in $(\nu_K^{-1}(v),v)$ is spanned by the vectors 
		\begin{align*}
			(\lambda_1 e_1,e_1),\dots,(\lambda_{n-1} e_{n-1},e_{n-1})\in T_{(\nu_K^{-1}(v),v)}(\R^n\times S^{n-1})\cong\R^n\times v^\perp.
		\end{align*}  
		Without loss of generality, we may assume that $v,e_1,\dots,e_{n-1}$ forms an oriented orthonormal basis of $\R^n$. Then the pullback of $\omega$ to $S^{n-1}$ by $G_K$ is given in $v\in S^{n-1}$ by
		\begin{align*}
			\omega|_{(\nu^{-1}_K(v),v)}((\lambda_1 e_1,e_1),\dots,(\lambda_{n-1} e_{n-1},e_{n-1}))\vol_{S^{n-1}}.
		\end{align*}
		If we set $\bar{e}_j=(e_j,0)$ and $\bar{f}_j=(0,e_j)$, we have, as $\omega$ is of bidgree $(k,n-1-k)$,
		\begin{align*}
			&\omega|_{(\nu^{-1}_K(v),v)}((\lambda_1 e_1,e_1),\dots,(\lambda_{n-1} e_{n-1},e_{n-1}))\\
			=&\omega|_{(\nu^{-1}_K(v),v)}(\lambda_1 \bar{e}_1+\bar{f}_1,\dots,\lambda_1 \bar{e}_{n-1}+\bar{f}_{n-1})\\
			=&\frac{1}{k!(n-1-k)!}\sum_{\pi\in S_{n-1}}\sign(\pi)\lambda_{\pi_{1}}\dots\lambda_{\pi_k}\omega|_{(\nu^{-1}_K(v),v)}(\bar{e}_{\pi_1},\dots,\bar{e}_{\pi_k},\bar{f}_{\pi_{k+1}},\dots,\bar{f}_{\pi_{n-1}}).
		\end{align*}
		Since
		\begin{align*}
			|\sign(\pi)\omega|_{(\nu^{-1}_K(v),v)}(\bar{e}_{\pi_1},\dots,\bar{e}_{\pi_k},\bar{f}_{\pi_{k+1}},\dots,\bar{f}_{\pi_{n-1}})|\le|\omega|_{(\nu^{-1}_K(v),v)}|
		\end{align*}
		and
		\begin{align*}
			\kappa_k|_{(\nu^{-1}_K(v),v)}(\bar{e}_{\pi_1},\dots,\bar{e}_{\pi_k},\bar{f}_{\pi_{k+1}},\dots,\bar{f}_{\pi_{n-1}})=\sign(\pi),
		\end{align*}
		we obtain, using that $\kappa_k$ is also of bidegree $(k,n-1-k)$,
		\begin{align*}
			&|\omega|_{(\nu^{-1}_K(v),v)}((\lambda_1 e_1,e_1),\dots,(\lambda_{n-1} e_{n-1},e_{n-1}))|\\
			\le &|\omega|_{(\nu^{-1}_K(v),v)}|\cdot \kappa_k|_{(\nu^{-1}_K(v),v)}((\lambda_1 e_1,e_1),\dots,(\lambda_{n-1} e_{n-1},e_{n-1})).
		\end{align*}
		Using this estimate, it is now easy to see that Eq.~\eqref{eq:estimateTV} holds for every smooth convex body and every $\phi\in C(S^{n-1})$. Since both sides are continuous in $K$ for $\phi\in C(S^{n-1})$ fixed, the inequality holds for all convex bodies, which shows the claim.
	\end{proof}
	Recall that the mass of an $m$-current $T$ on a Riemannian manifold $U$ is defined by
	\begin{align*}
		M(T)=\sup\{|T(\omega)|:\omega\in \Omega_c^m(U),\|\omega\|_\infty\le 1\}.
	\end{align*}
	The previous result implies the following estimate for the mass of the normal cycle in terms of the intrinsic volumes.
		\begin{corollary}
		For $K\in\mathcal{K}(\R^n)$, $M(\nc(K))\le \sum_{j=0}^{n-1}(n-j)\omega_{n-j}V_j(K)$.
	\end{corollary}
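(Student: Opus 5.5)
The plan is to bound the action of the current $\nc(K)$ on an arbitrary compactly supported $(n-1)$-form by applying \autoref{proposition:boundSmoothArea} to each bidegree component separately. Let $\omega\in\Omega_c^{n-1}(\R^n\times S^{n-1})$ with $\|\omega\|_\infty\le 1$, and decompose it into bidegrees,
\begin{align*}
	\omega=\sum_{j=0}^{n-1}\omega_j,\qquad \omega_j\in\Omega^{j,n-1-j}(\R^n\times S^{n-1}).
\end{align*}
The action of the current is then
\begin{align*}
	\nc(K)(\omega)=\int_{\nc(K)}\omega=\sum_{j=0}^{n-1}\Phi_{\omega_j}(K;1),
\end{align*}
where we take $\phi\equiv1$, so that $\pi_2^*\phi=1$. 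Forms of bidegree $(j,n-1-j)$ with $j\geq n$ vanish because the fiber $S^{n-1}$ has dimension $n-1$, so this list of components is complete.

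For each $j$, \autoref{proposition:boundSmoothArea} with $\phi\equiv 1$ gives
\begin{align*}
	|\Phi_{\omega_j}(K;1)|\le (n-j)\omega_{n-j}\sup_{x\in K,v}|\omega_j|_{(x,v)}|\, S_j(K;S^{n-1}),
\end{align*}
and by definition $S_j(K;S^{n-1})=V_j(K)$. The general inequality in the proposition only requires $\omega_j$ to be smooth, not translation invariant, so it applies to our compactly supported form; only the supremum over $x\in K$ enters.

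It then remains to show that $|\omega_j|_{(x,v)}|\le|\omega|_{(x,v)}|\le 1$ pointwise, using the operator norm. Here we use the orthogonal splitting $T_{(x,v)}=\R^n\oplus v^\perp$. The component $\omega_j$ evaluated on orthonormal vectors of the form $\bar e_{i_1},\dots,\bar e_{i_j},\bar f_{i_{j+1}},\dots$ agrees with $\omega$ evaluated on the same vectors, since the other bidegree components vanish on such tuples. In the proof of \autoref{proposition:boundSmoothArea}, only evaluations on exactly such split tuples were used, so the bound holds with $|\omega|$ in place of $|\omega_j|$. If one prefers to use $|\omega_j|$ directly, one needs the projection onto a bidegree to be norm-nonincreasing for the chosen operator norm.

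Summing over $j$ and taking the supremum over $\omega$ yields $M(\nc(K))\le\sum_{j=0}^{n-1}(n-j)\omega_{n-j}V_j(K)$. The only delicate point is this comparison of norms between $\omega_j$ and $\omega$; I expect it to be handled by the remark that the estimate in the proof only tests $\omega$ on split orthonormal frames.
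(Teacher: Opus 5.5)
Your argument is correct and is essentially how the paper obtains this corollary: the paper gives no separate proof and presents it as an immediate consequence of \autoref{proposition:boundSmoothArea}, which you apply with $\phi\equiv 1$ to each bidegree component of $\omega$ before summing via $V_j(K)=S_j(K;S^{n-1})$. Your remark settles the comparison of $|\omega_j|$ with $|\omega|$, which the paper leaves implicit: the proof of the proposition only evaluates the form on orthonormal split tuples $\bar e_{\pi_1},\dots,\bar e_{\pi_j},\bar f_{\pi_{j+1}},\dots,\bar f_{\pi_{n-1}}$, and on these $\omega_j$ and $\omega$ agree, so rerunning that proof with $|\omega|$ suffices.
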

		
	The space of differential forms that vanish on all normal cycles was described by Bernig and Bröcker in \cite{BernigBroeckerValuationsmanifoldsRumin2007}. The following result is essentially contained in the proof of  \cite{BernigBroeckerValuationsmanifoldsRumin2007}*{Theorem 1}, however, since it is not stated explicitly, we present the relevant part of the proof for the convenience of the reader.  
	\begin{lemma}[\cite{BernigBroeckerValuationsmanifoldsRumin2007}]
		\label{lemma:KernelCNC}
		Let $\omega\in \Omega^{n-1}(\R^n\times S^{n-1})$. If $\Phi_\omega(K)=0$ for all $K\in\mathcal{K}(\R^n)$, then $\omega$ belongs to the ideal generated by $\alpha$ and $d\alpha$.
	\end{lemma}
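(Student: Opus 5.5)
The plan is to test $\Phi_\omega$ against smooth convex bodies with strictly positive curvature, where the normal cycle is a graph over $S^{n-1}$, and extract pointwise conditions on $\omega$. First I reduce modulo the ideal. By the pointwise Lefschetz decomposition (as in \autoref{lemma:primitivityContactPlane}, but without translation invariance), I write $\omega=\alpha\wedge\eta+d\alpha\wedge\xi+\omega_0$, where $\omega_0$ has no $\alpha$-component, $\omega_0|_H$ is primitive, and $d\alpha\wedge\omega_0$ is a multiple of $\alpha$. Since $\alpha$ and $d\alpha$ vanish on every normal cycle (the normal cycle is Legendrian), $\Phi_\omega=\Phi_{\omega_0}$. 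It therefore suffices to show that $\omega_0|_H=0$ at every point $(x,v)$. This gives $\alpha\wedge\omega_0=0$, so $\omega_0$ is a multiple of $\alpha$, and $\omega$ lies in the ideal.

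Next I localize. Since $\Phi_{\omega}(K;\phi)=0$ for every $\phi\in C(S^{n-1})$, the density $G_K^*\omega_0$ vanishes identically on $S^{n-1}$ for every smooth $K$ with positive curvature. Fix $(x,v)$ and a Legendrian $(n-1)$-plane $E\subset H_{(x,v)}$ that is transversal to the vertical space $0\oplus v^\perp$. Such an $E$ is the graph $\{(Aw,w):w\in v^\perp\}$ of a symmetric map $A$ on $v^\perp$. If $A$ is positive definite, I can build a smooth convex body $K$ with $(x,v)\in\nc(K)$ whose reverse Weingarten map at $v$ equals $A$. To do this, I prescribe the support function near $v$ by a second-order Taylor polynomial and glue it to a large ball, or simply take a suitable ellipsoid translated so that $\nu_K^{-1}(v)=x$. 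Vanishing of the density at $v$ then gives $\omega_0|_{(x,v)}(E)=0$ for all such $E$.

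Then I extend to all Legendrian planes by algebra. The map $A\mapsto\omega_0(\text{graph}(A))$ is a polynomial in the entries of the symmetric matrix $A$; it is the sum over minors times components of $\omega_0$, exactly as in the expansion in \autoref{proposition:boundSmoothArea}. A polynomial vanishing on the open cone of positive definite matrices vanishes on all symmetric $A$. So $\omega_0$ vanishes on every Legendrian plane transverse to the vertical space, and these planes form a dense open subset of the Lagrangian Grassmannian of $H_{(x,v)}$. By continuity, $\omega_0$ vanishes on all Lagrangian subspaces.

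The main obstacle is the final linear-algebra step: a primitive $(n-1)$-form on a $2(n-1)$-dimensional symplectic space that vanishes on all Lagrangian planes must be zero. Equivalently, the primitive forms of middle degree pair nondegenerately with the span of the Plücker vectors of the Lagrangian planes. I would try to prove this by showing that the annihilator of the Lagrangian Plücker vectors in $\Lambda^{n-1}H^*$ is exactly $d\alpha\wedge\Lambda^{n-3}H^*$. The inclusion ``$\supseteq$'' is clear. For equality, I would compare dimensions: the Lagrangian Plücker vectors span the kernel of contraction with the symplectic form in $\Lambda^{n-1}H$, which has the dimension of the primitive part. Alternatively, I would use $\mathrm{Sp}$-irreducibility of the primitive middle-degree forms, which reduces the claim to exhibiting one primitive form that is nonzero on some Lagrangian plane. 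Once this is in place, $\omega_0|_H=0$, and the lemma follows.
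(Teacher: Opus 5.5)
Your proposal is correct and follows the paper's proof. Both arguments test $\Phi_\omega$ on smooth bodies of positive curvature, realize every positive Lagrangian graph $\{(Aw,w)\}$ as a tangent plane of a normal cycle, and conclude that $\omega|_H$ is a multiple of $d\alpha$. The paper cites \cite{AbardiaEvequozEtAlFlagareameasures2019}*{Lemma~2.4} for this last linear-algebra step, and your polynomial extension combined with $\mathrm{Sp}$-irreducibility of primitive middle-degree forms correctly supplies it. A witness for that route is $e_1^*\wedge\dots\wedge e_{n-1}^*$ for a symplectic basis, which is primitive and nonzero on the Lagrangian spanned by $e_1,\dots,e_{n-1}$.

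Two small corrections. The planes $\{(Aw,w)\}$ are the Lagrangians transverse to the horizontal space $v^\perp\oplus 0$, not the vertical one. Your ``compare dimensions'' variant only restates the spanning claim it is meant to prove, so rely on the irreducibility route to close the argument.
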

	\begin{proof}
		Note first that the normal cycle of a convex body vanishes on multiples of $\alpha$ and $d\alpha$.	Let $\omega\in \Omega^{n-1}(\R^n\times S^{n-1})$ be a form such that $\Phi_\omega(K)=0$ for all $K\in\mathcal{K}(\R^n)$. If $K$ is a smooth convex body with strictly positive curvature, then, as in the proof of \autoref{proposition:boundSmoothArea}, the tangent spaces of $\nc(K)$ are of the form
		\begin{align*}
			T_{(x,v)}\nc(K)=\mathrm{span}(\lambda_1 e_1,e_1),\dots,(\lambda_{n-1}e_{n-1},e_{n-1})\subset v^\perp\oplus v^\perp,
		\end{align*}
		where $\lambda_1,\dots,\lambda_{n-1}>0$ and $e_1,\dots e_{n-1}$ are an orthonormal basis of $v^\perp$. Let us call subspaces of $T_{(x,v)}(\R^n\times S^{n-1})\cong \R^n\times v^\perp$ of this form strictly positive isotropic subspaces. It is not difficult to see that for every such subspace $E$ there is a smooth convex body $K$ with strictly positive curvature such that $E= T_{(x,v)}\nc(K)$. In particular, if $\Phi_\omega$ vanishes on all smooth convex bodies, this implies that the restriction of $\omega$ to all strictly positive isotropic subspaces vanishes. Note that these subspaces belong to the contact distribution. It now follows from \cite{AbardiaEvequozEtAlFlagareameasures2019}*{Lemma 2.4} that the restriction of $\omega$ to the contact distribution is a multiple of $d\alpha$. After subtracting a multiple of $d\alpha$, we may thus assume that the restriction of $\omega$ to the contact distribution vanishes. This implies that $\alpha\wedge \omega=0$, and consequently, $\omega=\alpha\wedge i_T\omega$ is a multiple of $\alpha$.
	\end{proof}
	
	If $\omega\in \Omega^{n-1}(\R^n\times S^{n-1})^{tr}$ is translation invariant, then $\Phi_\omega$ is also translation invariant, so we obtain a well defined element in $\Area(\R^n)$. However, the map $\omega\mapsto \Phi_\omega$ does not behave well with respect to the natural operation of $\GL(n,\R)$ on both spaces. In order to circumvent this problem, consider the space
	\begin{align*}
		\mathcal{J}^n(\R^n)^{tr}:=\{\tau\in \Omega^{n}(\R^n\times S^{n-1})^{tr}:\alpha\wedge\tau=d\alpha\wedge\tau =0\}.
	\end{align*}
	Using the notation from the previous section, we have
	\begin{align*}
		\mathcal{J}^n(\R^n)^{tr}=\bigoplus_{k=0}^{n-1}\mathcal{J}^{k+1,n-1-k}(\R^n)^{tr}.
	\end{align*}
	We are going use the map $\omega\mapsto \Phi_\omega$ to construct a $\GL(n,\R)$-equivariant map 
	\begin{align*}
		\mathcal{J}^{n}(\R^n)\rightarrow \Area(\R^n)
	\end{align*}
	as follows: If $\phi\in C^1(S^{n-1})$, we may consider its $1$-homogeneous extension $\tilde{\phi}\in C^1(\R^n\setminus\{0\})$. In particular, its gradient $\nabla\tilde{\phi}:\R^n\setminus\{0\}\rightarrow \R^n$ is $0$-homogeneous and thus induces a well-defined continuous map
	\begin{align*}
		\bar{d}\phi: S^{n-1}\rightarrow \R^n.
	\end{align*}
	Recall that the action of $g\in \GL(n,\R)$ on $\phi\in C(S^{n-1})$ is given by 
	\begin{align*}
		[g\cdot \phi](v)=|g^{T}v|\phi\left(\frac{g^{T}v}{|g^{T}v|}\right)\quad \text{for}~v\in S^{n-1}.
	\end{align*}
	In particular, $\bar{d}(g\cdot \phi)=g(\bar{d}\phi)$. Using the product structure of $\R^n\times S^{n-1}$, we consider $\bar{d}\phi$ as as a vector field $X_\phi$ on $\R^n\times S^{n-1}$. If we choose  orthonormal coordinates $(x_1,\dots,x_n)$ on $\R^n$ and induced coordinates $(v_1,\dots,v_n)$ on $S^{n-1}$, then 
	\begin{align*}
		X_\phi|_{(x,v)}=\sum_{j=1}^n \frac{\partial \tilde{\phi}(v)}{\partial v_j} \frac{\partial }{\partial x_j}\Big|_{(x,v)}.
	\end{align*}
	In particular, $i_{X_\phi}\alpha=\pi_2^*\phi$ on $\R^n\times S^{n-1}$ since $\tilde{\phi}$ is $1$-homogeneous. Given $\tau\in \mathcal{J}^n(\R^n)^{tr}$, we consider the map $K\mapsto \Phi_{i_T\tau}(K)$. Since $\tau\in \mathcal{J}^n(\R^n)^{tr}$, $\tau=\alpha\wedge i_T\tau$, so for $\phi\in C^1(S^{n-1})$, we have 
	\begin{align*}
		i_{X_\phi}\tau=i_{X_\phi}(\alpha\wedge i_T\tau)=\pi_2^*\phi\wedge i_T\tau-\alpha\wedge i_{X_\phi}i_T\tau.
	\end{align*}
	Since the normal cycle vanishes on multiples of $\alpha$, this shows
	\begin{align*}
		\Phi_{i_T\tau}(K;\phi)=\int_{\nc(K)}i_{X_\phi}\tau
	\end{align*}
	for all $\phi\in C^1(S^{n-1})$. Since $(G_{g^{-1}})_*\nc(K)=\sign(\det g)\nc(g^{-1}K)$ for $g\in \GL(n,\R)$, this implies
	\begin{align*}
		\Phi_{i_T\tau}(g^{-1}K;g^{-1}\cdot\phi)=&\int_{\nc(g^{-1}K)}i_{X_{g^{-1}\cdot\phi}}\tau=\sign(\det g)\int_{(G_{g^{-1}})_*\nc(K)}i_{X_{g^{-1}\cdot\phi}}\tau.
	\end{align*}
	 Since $\bar{d}(g\cdot \phi)=g(\bar{d}\phi)$, we have $X_{g^{-1}\cdot\phi}=(G_{g^{-1}})_*X_{\phi}$, and consequently,
		\begin{align*}
		\Phi_{i_T\tau}(g^{-1}K;g^{-1}\cdot\phi)=&\sign(\det g)\int_{\nc(K)}G_{g^{-1}}^*\left(i_{X_{g^{-1}\cdot\phi}}\tau\right)\\
		=&\sign(\det g)\int_{\nc(K)}i_{X_{\phi}}G_{g^{-1}}^*\tau\\
		=&\sign(\det g)\Phi_{i_T(G_{g^{-1}}^*\tau)}(K;\phi).
	\end{align*}
	This implies the following.
	\begin{corollary}\label{corollary:equivConstructionAreaSm}
		For $0\le k\le n-1$, the map
		\begin{align*}
			\mathcal{J}^{n}(\R^n)^{tr}&\rightarrow \Area(\R^n)\\
			\tau&\mapsto \Phi_{i_T\tau}
		\end{align*}
		is well-defined, injective, and $\GL(n,\R)$-equivariant in the following sense: For $\tau\in\mathcal{J}^n(\R^n)^{tr}$ and $g\in\GL(n,\R)$, 
		\begin{align*}
			\Phi_{i_T(G_g^{-1})^*\tau}=\sign(\det g) \pi(g)\Phi_{i_T\tau}.
		\end{align*}.
	\end{corollary}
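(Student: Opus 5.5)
The three claims (well-definedness, injectivity, equivariance) will be treated in turn, and most of the work has already been done in the computation preceding the statement.

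\emph{Well-definedness.} Since $\tau$ is translation invariant, so is $i_T\tau\in\Omega^{n-1}(\R^n\times S^{n-1})^{tr}$. The Reeb field is given by $T|_{(x,v)}=(v,0)$ and is therefore translation invariant as well. Consequently $\Phi_{i_T\tau}$ is translation invariant. It is locally determined by the corollary following Eq.~\eqref{eq:DefNormalCycle}, and it is weak* continuous by \cite{WannererIntegralgeometryunitary2014}*{Lemma 2.4}. Alternatively, continuity follows from the uniform bound in \autoref{proposition:boundSmoothArea} combined with weak convergence of normal cycles. Hence $\Phi_{i_T\tau}\in\Area(\R^n)$.

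\emph{Injectivity.} Suppose $\Phi_{i_T\tau}=0$. Then $\Phi_{i_T\tau}(K)=0$ for every $K$, so \autoref{lemma:KernelCNC} shows that $i_T\tau$ lies in the ideal generated by $\alpha$ and $d\alpha$. We decompose $\tau=\sum_k\tau_k$ with $\tau_k\in\mathcal{J}^{k+1,n-1-k}(\R^n)^{tr}$. Here $T$ is purely horizontal and $\alpha$, $d\alpha$ have bidegrees $(1,0)$ and $(1,1)$, so the ideal is compatible with the bigrading. Therefore each $i_T\tau_k$ lies in $I_{k,n-1-k}$. Because the conditions $\alpha\wedge\tau_k=0$ and $d\alpha\wedge\tau_k=0$ hold pointwise, $\tau_k|_{(x,v)}\in J^{k+1,n-1-k}|_{(x,v)}$. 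Applying \autoref{lemma:Kernel_J_to_Area} pointwise then gives $\tau_k=0$. Alternatively, the first isomorphism of \autoref{corollary:RelationJtoKernel} gives the same conclusion.

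\emph{Equivariance.} For $\phi\in C^1(S^{n-1})$ the computation above yields
$$\Phi_{i_T\tau}(g^{-1}K;g^{-1}\cdot\phi)=\sign(\det g)\,\Phi_{i_T(G_{g^{-1}}^*\tau)}(K;\phi).$$
Its ingredients are the identity $\Phi_{i_T\tau}(K;\phi)=\int_{\nc(K)}i_{X_\phi}\tau$, the relation $X_{g^{-1}\cdot\phi}=(G_{g^{-1}})_*X_\phi$, and the transformation rule for the normal cycle. We also need to check that $G_{g^{-1}}^*\tau$ again lies in $\mathcal{J}^n(\R^n)^{tr}$. This holds because $G_g$ commutes with translations up to a translation, and because Eq.~\eqref{eq:groupActionAlpha} together with the formula for $G_g^*d\alpha$ shows that $\alpha\wedge\cdot$ and $d\alpha\wedge\cdot$ annihilate the pullback. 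By definition of $\pi$, the left-hand side equals $[\pi(g)\Phi_{i_T\tau}](K;\phi)$. Since $C^1(S^{n-1})$ is dense in $C(S^{n-1})$ and both sides are measures, the identity $\sign(\det g)\pi(g)\Phi_{i_T\tau}=\Phi_{i_T G_{g^{-1}}^*\tau}$ holds on all of $C(S^{n-1})$, which is the stated formula.

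\emph{Main obstacle.} The step needing the most care is justifying the two transformation rules used in the computation. The first is $(G_{g^{-1}})_*\nc(K)=\sign(\det g)\nc(g^{-1}K)$ as currents, including the orientation sign. I would check it on smooth strictly convex bodies using the graph parametrization and then pass to general $K$ by continuity. The second is $X_{g^{-1}\cdot\phi}=(G_{g^{-1}})_*X_\phi$, which follows from $\bar{d}(g\cdot\phi)=g\,\bar{d}\phi$ since $G_g$ acts on the $\R^n$-factor linearly by $g$.
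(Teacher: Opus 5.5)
Your proposal is correct and follows the paper's proof essentially step for step. Well-definedness comes from translation invariance of $i_T\tau$ together with the locality and continuity of $\Phi_\omega$; injectivity comes from \autoref{lemma:KernelCNC} followed by \autoref{lemma:Kernel_J_to_Area}; and equivariance comes from the preceding computation for $\phi\in C^1(S^{n-1})$, extended to $C(S^{n-1})$ by density. Your extra checks, namely the bidegree decomposition and the verification that $G_{g^{-1}}^*\tau$ remains in $\mathcal{J}^n(\R^n)^{tr}$, are details the paper leaves implicit and are valid, though not strictly needed.
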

	\begin{proof}
		Since $\tau$ is translation invariant, so is $i_T\tau$, and consequently, $\Phi_{i_T\tau}$ defines an element of $\Area(\R^n)$ by the previous discussion.\\
		If $\tau\in\mathcal{J}^n(\R^n)^{tr}$ lies in the kernel of this map, then $i_T\tau$ belongs to the ideal generated by $\alpha$ and $d\alpha$ by \autoref{lemma:KernelCNC}, and so $\tau=0$ by \autoref{lemma:Kernel_J_to_Area}. \\ It remains to see that it commutes with the natural operation of $\GL(n,\R)$ on both spaces, i.e. that \begin{align*}
			\Phi_{i_T\tau}(g^{-1}K;g^{-1}\cdot\phi)=\sign(\det g)\Phi_{i_T(G_{g^{-1}}^*\tau)}(K;\phi)
		\end{align*}
		for all $K\in\mathcal{K}(\R^n)$ and $\phi\in C(S^{n-1})$. For $\phi\in C^1(S^{n-1})$, this follows from the previous calculation. Since $\Phi_{i_T\tau}(K)$ is a measure, it thus holds for arbitrary continuous functions by approximation.\\
		
	\end{proof}

\section{Continuous area measures}\label{section:AreaGeneral}
	\subsection{Relation to valuations on convex bodies}
	\label{section:RelationValuationProperty}
		\begin{theorem}
			\label{theorem:valuationProperty}
			Let $\Psi:\K(\R^n)\rightarrow\M(S^{n-1})$ be continuous and locally determined. Then $\Psi$ is a valuation.
		\end{theorem}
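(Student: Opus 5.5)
Let $K,L\in\K(\R^n)$ with $K\cup L$ convex. By \autoref{lemma:propertiesSupportFunction}, $h_{K\cup L}=h_K\vee h_L$ and $h_{K\cap L}=h_K\wedge h_L$. We want to show $\Psi(K\cup L)+\Psi(K\cap L)=\Psi(K)+\Psi(L)$ as measures, and we do this by decomposing the sphere.

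Set $U_1:=\{u\in S^{n-1}:h_K(u)>h_L(u)\}$, $U_2:=\{u:h_K(u)<h_L(u)\}$ and $Z:=\{u:h_K(u)=h_L(u)\}$. On the open set $U_1$ we have $h_{K\cup L}=h_K$ and $h_{K\cap L}=h_L$, so local determination gives $\Psi(K\cup L)|_{U_1}=\Psi(K)|_{U_1}$ and $\Psi(K\cap L)|_{U_1}=\Psi(L)|_{U_1}$. The identity therefore holds on $U_1$, and symmetrically on $U_2$. The same reasoning gives the identity on the interior of $Z$, where all four support functions coincide. What remains is the possibly large closed set $\partial Z$, and this set is the main obstacle.

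To handle $\partial Z$ we use continuity to perturb the configuration into general position. Fix a small $\varepsilon>0$ and compare $K$ with $K_\varepsilon:=K+\varepsilon B_1(0)$, or alternatively translate $L$ slightly. The difficulty is that the union of the perturbed bodies must stay convex. A more robust route is as follows. Since $K\cup L$ is convex and $K\cap L\neq\emptyset$, for a point $x\in K\cap L$ the dilates $K_t:=x+(1+t)(K-x)$ and $L_t:=x+(1+t)(L-x)$ still have convex union $x+(1+t)(K\cup L-x)$, and $K_t\cap L_t$ is the corresponding dilate of $K\cap L$. This alone does not separate the support functions on $Z$, so we instead choose a family $(K_t,L_t)\to(K,L)$ as $t\to 0$ with $K_t\cup L_t$ convex and such that, for all $t$ outside a countable set, the set $\partial\{h_{K_t}=h_{L_t}\}$ is null for the relevant measures. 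One convenient choice is $K_t=K+tM$ and $L_t=L+tM$ for a fixed body $M$. Then $h_{K_t}-h_{L_t}=h_K-h_L$, so $Z$ does not move, which shows that this choice does not help by itself.

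The cleaner approach is measure-theoretic. Any finite measure charges at most countably many of the disjoint level sets $\{h_K-h_L=s\}$. Consider $L^s:=L+sB$ for $s\ge 0$ and replace $L$ by $L\cap(K\cup L)$; more simply, use $K^s:=\mathrm{conv}(K\cup(L\cap K)^{s})$-type constructions, which are more awkward. The first step is then to reduce to polytopes, or more generally to pairs where the boundary of $Z$ has zero measure, using continuity of $\Psi$ together with the density of such configurations. The case of general convex bodies follows from this by approximating $(K,L)$ with $K\cup L$ convex by pairs of polytopes $(P_j,Q_j)$ with $P_j\cup Q_j$ convex and $P_j\cap Q_j\to K\cap L$; this approximation step is itself a known result for the valuation property.

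For this reduction I would try the following directly. Apply the argument to the test functions $\phi\in C(S^{n-1})$ and to the four measures $\mu_1=\Psi(K\cup L)$, $\mu_2=\Psi(K\cap L)$, $\mu_3=\Psi(K)$, $\mu_4=\Psi(L)$. The signed measure $\nu=\mu_1+\mu_2-\mu_3-\mu_4$ is supported in $\partial Z\subset Z$. Now replace $L$ by $L_s:=\mathrm{conv}(L\cup sK')$-style homotheties so that the sets $Z_s$ become pairwise disjoint as $s$ varies. Then $\nu_s$ charges $Z_s$ only for countably many $s$, and we conclude by continuity as $s\to0$. Constructing such a family while keeping $K\cup L_s$ convex is the crux, and it is the step I expect to be the main obstacle.
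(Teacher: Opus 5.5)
Your first step is correct and agrees with the paper: by locality the identity holds on the open set $U_1\cup U_2\cup\mathrm{int}\,Z$, so $\nu=\Psi(K\cup L)+\Psi(K\cap L)-\Psi(K)-\Psi(L)$ is concentrated on $\partial Z$. The gap is that the proposal never disposes of $\partial Z$, and the mechanisms you sketch for it would fail.

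\begin{itemize}
\item The countability argument needs one \emph{fixed} finite measure and a family of disjoint sets. In any family $(K,L_s)$ the measures $\Psi(L_s)$, $\Psi(K\cup L_s)$, $\Psi(K\cap L_s)$ change with $s$. Only $\Psi(K)$ is controlled, so nothing forces $\nu_s(Z_s)=0$ for most $s$.
\item Reducing to polytopes does not make $\partial Z$ negligible. For $P=[0,1]^2$ and $Q=P+e_1$ in $\R^2$ one has $h_P-h_Q=-\langle\cdot,e_1\rangle$. Hence $Z=\partial Z=\{\pm e_2\}$, and these are facet normals charged by $S_1(P)$.
\item The appeal to a ``known'' polytopal approximation with convex unions is not an argument.
\end{itemize}
So the crux you flag at the end is genuinely missing.

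The missing idea is that you do not need $\partial Z$ to be null; you can perturb so that the bad set disappears. The construction you set aside as awkward does exactly this once it is applied to \emph{both} bodies. Let $M=K\cap L$, which is nonempty since $K\cup L$ is connected. Set $K_\epsilon=\mathrm{conv}(K\cup(M+B_\epsilon(0)))$ and $L_\epsilon=\mathrm{conv}(L\cup(M+B_\epsilon(0)))$, i.e.\ $h_{K_\epsilon}=\max(h_K,h_K\wedge h_L+\epsilon)$ and $h_{L_\epsilon}=\max(h_L,h_K\wedge h_L+\epsilon)$. These bodies have the following properties.
\begin{itemize}
\item $K_\epsilon\cup L_\epsilon=\mathrm{conv}(K\cup L\cup(M+B_\epsilon(0)))$ is convex, because every point of this hull is a convex combination of a point of $K\cup L$ and a point of the convex set $M+B_\epsilon(0)$.
\item $K_\epsilon\cap L_\epsilon=M+B_\epsilon(0)$, and the four bodies converge to $K$, $L$, $K\cup L$, $K\cap L$.
\item $h_{K_\epsilon}-h_{L_\epsilon}=\sign(h_K-h_L)\,(|h_K-h_L|-\epsilon)_+$. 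This vanishes on the open set $\{|h_K-h_L|<\epsilon\}\supset Z$ and is $\ge0$ or $\le0$ on a neighborhood of every other point.
\end{itemize}
Consequently $\mathrm{int}\{h_{K_\epsilon}\ge h_{L_\epsilon}\}$ and $\mathrm{int}\{h_{K_\epsilon}\le h_{L_\epsilon}\}$ cover $S^{n-1}$. Locality and a partition of unity then give the valuation identity for $(K_\epsilon,L_\epsilon)$ tested against every $\phi\in C(S^{n-1})$. Weak* continuity lets $\epsilon\to0$. This is the paper's proof.
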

		\begin{proof}
			Let $K,L\in\K(\R^n)$ be two convex bodies such that $K\cup L$ is convex and consider the sets
			\begin{align*}
				&U=\mathrm{int}\{v\in S^{n-1}: h_K(v)\ge h_L(v)\}, &&W=\mathrm{int}\{v\in S^{n-1}: h_K(v)\le h_L(v)\}.
			\end{align*}
			Then $h_{K\cup L}|_U=h_K|_U$ and $h_{K\cap L}|_U=h_L|_U$, compare \autoref{lemma:propertiesSupportFunction}. If $\phi\in C(S^{n-1})$ satisfies $\supp\phi\subset U$, this implies
			\begin{align*}
				&\Psi(K;\phi)=\Psi(K\cup L;\phi), &&
				\Psi(L;\phi)=\Psi(K\cap L;\phi),
			\end{align*}
			and thus
			\begin{align}
				\label{eq:ValuationProperty}
				\Psi(K;\phi)+\Psi(L;\phi)=\Psi(K\cup L;\phi)+\Psi(K\cap L;\phi).
			\end{align}
			The same reasoning applies if $\supp\phi\subset W$. In particular, if the sets $U,W$ cover $S^{n-1}$, then we may use a partition of unity to see that Eq.~\eqref{eq:ValuationProperty} holds for all $\phi\in C(S^{n-1})$.\\
			If this is not the case, then we define for $\epsilon>0$ convex bodies $K_\epsilon,L_\epsilon\in \K(\R^n)$ by
			\begin{align*}
				&h_{K_\epsilon}=\max(h_K,h_K\wedge h_L+\epsilon), &&h_{L_\epsilon}=\max(h_L,h_K\wedge h_L+\epsilon),
			\end{align*}
			i.e. $K_\epsilon=\mathrm{conv}(K,K\cap L+B_\epsilon(0))$, $L_\epsilon=\mathrm{conv}(L,K\cap L+B_\epsilon(0))$, compare \autoref{lemma:propertiesSupportFunction}. Then  $K_\epsilon\cap L_\epsilon =K\cap L+B_{\epsilon}(0)$ and
			\begin{align*}
				K_\epsilon\cup L_\epsilon =\mathrm{conv}(K\cup L,K\cap L+B_{\epsilon}(0)).
			\end{align*}
			In particular, $K_\epsilon$, $L_\epsilon$, $K_\epsilon\cap L_\epsilon$, and $\K_\epsilon\cup L_\epsilon$ are convex and converge to $K,L,K\cap L$, and $K\cup L$ respectively. By construction, the open sets
			\begin{align*}
				&U_\epsilon=\mathrm{int}\{v\in S^{n-1}: h_{K_\epsilon}(v)\ge h_{L_\epsilon}(v)\}, &&W_\epsilon=\mathrm{int}\{v\in S^{n-1}: h_{K_\epsilon}(v)\le h_{L_\epsilon}(v)\}
			\end{align*}
			cover $S^{n-1}$, so the previous discussion and the continuity of $\Psi$ imply for $\phi\in C(S^{n-1})$,
			\begin{align*}
				\Psi(K;\phi)+\Psi(L;\phi)=&\lim\limits_{\epsilon\rightarrow0}\Psi(K_\epsilon;\phi)+\Psi(L_\epsilon;\phi)\\
				=&\lim\limits_{\epsilon\rightarrow0}\Psi(K_\epsilon\cup L_\epsilon;\phi)+\Psi(K_\epsilon\cap L_\epsilon;\phi)\\
				=&\Psi(K\cup L;\phi)+\Psi(K\cap L;\phi),
			\end{align*}
			which shows the claim.
		\end{proof}

		\begin{remark}
			In general, a locally determined functional $\Psi:\K(\R^n)\rightarrow \M(S^{n-1})$ does not need to to be a valuation. Consider the case $n=2$ and $\Psi$ given for $v_0\in S^1$ fixed by
			\begin{align*}
				\Psi(K)=\begin{cases}
					\delta_{v_0},& h_K~\text{differentiable in}~v_0\in S^1,\\
					0, & \text{else}.
				\end{cases}
			\end{align*}
			This defines a local functional, however, for $u\in v^\perp$, $u\ne 0$, the intervals $[0,1]u$, $[-1,0]u$ satisfy
			\begin{align*}
				\Psi([-1,0]u)+\Psi([0,1]u)=0\ne \delta_{v_0}+0=\Psi(0)+\Psi([-1,1]u).
			\end{align*}
			This is essentially the same example as \cite{KnoerrPolynomiallocalfunctionals2025}*{Example~4.1}.
		\end{remark}

	\subsection{Homogeneous decomposition and Goodey--Weil distributions}\label{section:HomDecompArea}
	By \autoref{theorem:valuationProperty}, $\Area(\R^n)$ may be considered as a subspace of $\Val(\R^n,\M(S^{n-1}))$. In this section, we use this relation to prove \autoref{maintheorem:DirectClassificationResults}. The first part is contained in the following result.
	\begin{theorem}
		\label{theorem:homDecompArea}$\Area(\R^n)=\bigoplus_{k=0}^{n-1}\Area_k(\R^n)$
	\end{theorem}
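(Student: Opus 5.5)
By \autoref{theorem:valuationProperty}, every $\Psi\in\Area(\R^n)$ is a continuous translation invariant valuation with values in $F=\M(S^{n-1})$ equipped with the weak* topology, which is a Hausdorff (locally convex) topological vector space. \autoref{theorem:McMullenHomDecomp} therefore gives a decomposition $\Psi=\sum_{k=0}^n\Psi_k$ with $\Psi_k\in\Val_k(\R^n,\M(S^{n-1}))$. Two things remain: each $\Psi_k$ must itself lie in $\Area(\R^n)$, and the component $\Psi_n$ must vanish.

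For the first point, I would use the standard recovery of the components from dilations. Since $\Psi(tK)=\sum_{k=0}^n t^k\Psi_k(K)$ for $t\ge0$, choosing distinct $t_0,\dots,t_n>0$ and inverting the Vandermonde matrix expresses each $\Psi_k(K)$ as a fixed linear combination $\sum_j c_{kj}\Psi(t_jK)$. Continuity in the weak* topology and translation invariance pass directly to such combinations. For locality, note that $h_{tK}=t\,h_K$, so $h_K|_U=h_L|_U$ implies $h_{t_jK}|_U=h_{t_jL}|_U$. Hence $\Psi(t_jK)|_U=\Psi(t_jL)|_U$ for every $j$, and so $\Psi_k(K)|_U=\Psi_k(L)|_U$. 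Thus each $\Psi_k\in\Area_k(\R^n)$. The sum is direct because homogeneous components of distinct degrees are linearly independent, again by the same Vandermonde argument.

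For the vanishing of $\Psi_n$, \autoref{theorem:HadwigerVolume} gives a measure $\nu\in\M(S^{n-1})$ with $\Psi_n(K)=\vol(K)\nu$, and $\Psi_n$ is locally determined by the previous paragraph. Let $U\subset S^{n-1}$ be open and choose $v\in S^{n-1}$ with $-v\notin\overline{U}$. Consider $K=B_1(0)$ and the cap $L=B_1(0)\cap\{x:\langle x,v\rangle\ge 1-\delta\}$, where $\delta\in(0,1)$ is small enough that the normal cone of $L$ at its flat facet avoids $U$; this is possible because that cone is the cap of directions near $-v$ shrinking to $\{-v\}$ as $\delta\to0$. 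More precisely, the normals of $L$ are of two kinds: the normal $-v$ of the flat facet, and the normals along the spherical part. The spherical part of $\partial L$ consists of points $x$ with $\langle x,v\rangle\ge1-\delta$, whose outer normal is $x$ itself, so these normals lie in a small cap around $v$. I would rather arrange things so that the normals in $U$ see only the spherical part: then for $u\in U$ we have $F(L,u)=F(K,u)=\{u\}$ and $h_L=h_K$ near $u$.

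Making this work for a general open $U$ is awkward, so I would localize instead: cover $S^{n-1}$ by small open caps $U$ and handle each one separately. For a small cap $U$ around $v$, take $L=\mathrm{conv}(\{x\in B_1(0):\langle x,v\rangle\ge 1-\delta\})$ with $\delta$ chosen so that $U$ lies inside the set of normals of the spherical part, and so that every point whose normal lies in $U$ also lies in $L$. Then $h_L=h_{B_1(0)}$ on $U$, and locality yields
\[
\vol(L)\,\nu|_U=\vol(B_1(0))\,\nu|_U .
\]
Since $\vol(L)<\vol(B_1(0))$, this forces $\nu|_U=0$. Covering $S^{n-1}$ by such caps gives $\nu=0$, and hence $\Psi_n=0$.

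The main obstacle is this geometric step: producing $K,L$ with different volumes whose support functions agree on a given open set. The cap construction does this, since the support function of the cap of the ball coincides with $h_{B_1(0)}$ on directions whose tangency point lies inside the cap.
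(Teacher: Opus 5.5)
Your proof is correct and follows the paper's argument: view $\Area(\R^n)$ inside $\Val(\R^n,\M(S^{n-1}))$ via the automatic valuation property, apply McMullen's decomposition, and recover each component as a fixed Vandermonde combination of dilates so that it stays locally determined. The degree-$n$ part is then removed with Hadwiger's theorem, as in the paper. Your one addition is that you spell out the step the paper calls easy, namely that $\vol(K)\nu$ is not locally determined when $\nu\neq 0$: comparing $B_1(0)$ with the cap $L$, whose support functions agree on the open set $\{u:\langle u,v\rangle>1-\delta\}$, does this correctly, though you should delete the abandoned first attempt in that paragraph.
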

	\begin{proof}
		By \autoref{theorem:valuationProperty}, $\Area(\R^n)\subset \Val(\R^n,\M(S^{n-1}))$, so for every $\Psi\in \Area(\R^n)$ there exist $\Psi_j\in \Val_j(\R^n,\M(S^{n-1}))$ such that $\Psi=\sum_{j=0}^n\Psi_j$ by \autoref{theorem:McMullenHomDecomp}. In particular, for $t\ge 0$ and $K\in\K(\R^n)$
		\begin{align*}
			\Psi(tK)=\sum_{j=0}^nt^j\Psi_j(K).
		\end{align*}
		We may plug in $t=0,\dots,n$ and use the inverse of the Vandermonde matrix to obtain constants $c_{ij}\in \R$ independent of $K\in\K(\R^n)$ such that
		\begin{align*}
			\Psi_i(K)=\sum_{j=0}^n c_{ij}\Psi(j K)
		\end{align*}
		for every $K\in\K(\R^n)$. Since $K\mapsto \Psi(j K)$ is locally determined, this shows that $\Psi_i$ is locally determined, i.e. $\Psi_i\in \Area_i(\R^n)$. We thus obtain the decomposition 
		\begin{align*}
			\Area(\R^n)=\bigoplus_{k=0}^{n}\Area_k(\R^n).
		\end{align*}
		It remains to see that $\Area_n(\R^n)=0$. For this, we use Hadwiger's volume characterization in \autoref{theorem:HadwigerVolume} to see that any $\Psi\in \Area_n(\R^n)\subset \Val_n(\R^n,\M(S^{n-1}))$ is of the form 
		\begin{align*}
			\Psi(K)=\vol(K)\nu
		\end{align*}
		for some $\nu\in \M(S^{n-1})$, where $\vol$ denotes the Lebesgue measure on $\R^n$. It is easy to see that this is not locally determined if $\nu\ne 0$. Thus $\Psi=0$, which completes the proof.
	\end{proof}
	Note that elements in $\Area_0(\R^n)$ are constant, so $\Area_0(\R^n)\cong \M(S^{n-1})$ via the map $\Psi\mapsto\Psi(0)$. Thus \autoref{maintheorem:DirectClassificationResults} is a consequence of \autoref{theorem:homDecompArea} and the following result.
	
	\begin{theorem}\label{theorem:characterizationTopDegree}
		For every $\Psi\in\Area_{n-1}(\R^n)$ there exists a unique $\phi\in C(S^{n-1})$ such that
		\begin{align*}
			\Psi=\phi\bullet S
		\end{align*}
	\end{theorem}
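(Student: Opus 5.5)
We use the Goodey--Weil distribution of $\Psi$, viewed as an $\M(S^{n-1})$-valued valuation homogeneous of degree $n-1$, together with locality.

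By \autoref{theorem:valuationProperty}, $\Psi\in\Val_{n-1}(\R^n,\M(S^{n-1}))$. \autoref{theorem:ExistenceGW} then gives a distribution $\GW(\Psi)$ on $(S^{n-1})^{n-1}$ with values in the completion of $\M(S^{n-1})$ (weak* topology), supported on the diagonal by \autoref{proposition:SupportValuations}. I will extract the density from the $(n-1)$-homogeneous structure as follows. For $\phi\in C(S^{n-1})$ define the scalar valuation $\mu_\phi(K):=\Psi(K;\phi)$. This is a continuous translation invariant $(n-1)$-homogeneous real-valued valuation. By McMullen's characterization of $\Val_{n-1}$, there is a function $f_\phi\in C(S^{n-1})$, unique up to adding a linear function, with
\begin{align*}
	\mu_\phi(K)=\int_{S^{n-1}}f_\phi\, dS_{n-1}(K).
\end{align*}
Locality of $\Psi$ now enters. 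If $\supp\phi\subset U$ with $U$ open, then $\mu_\phi(K)=\mu_\phi(L)$ whenever $h_K=h_L$ near $\overline U$. The vertical support of $\mu_\phi$ is therefore contained in $\supp\phi$ by \autoref{proposition:characterization_vsupp}. Testing against perturbations of $h_K$ supported away from $\supp\phi$ (first-variation formula) shows that $f_\phi$ can be chosen to vanish outside $\supp\phi$, since $f_\phi$ is then linear on each component of the complement.

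Next I show that $f_\phi=\phi\cdot\psi$ for a fixed $\psi$, i.e.\ that $\phi\mapsto f_\phi$ is a $C(S^{n-1})$-module map. The plan is to show that $\mu_\phi(K)$ depends only on the restriction of the measure $S_{n-1}(K)$ near $\supp\phi$, and that multiplying $\phi$ by $\chi$ multiplies the density by $\chi$. Concretely, fix $v\in S^{n-1}$ and a small cap $U$ around $v$. Use $\Psi$ restricted to $U$: for $\phi$ supported in $U$, $\Psi(K)|_U$ depends only on $h_K$ near $U$ and is $(n-1)$-homogeneous, continuous, and translation invariant. Polarizing, $\GW(\Psi)[\phi_1\otimes\dots\otimes\phi_{n-1}]$ restricted to $U$ is a local multilinear operator in $\phi_1,\dots,\phi_{n-1}$. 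Since the Goodey--Weil distribution has order at most $2$ in each variable (\autoref{corollary:estimateGW}) and vanishes on linear functions, the measure $\Psi(K)|_U$ is given by a continuous function multiplied by $\det(\nabla^2 h_K+h_K\,\mathrm{id})$ for smooth $K$. Comparing with the representation of $\mu_\phi$ via $f_\phi$ for $\phi$ ranging over bump functions at $v$ will show that the measure $\Psi(K)$ has a density relative to $S_{n-1}(K)$ on $U$, given by a continuous function $\psi_U$ independent of $K$. Patching these on overlaps, which is consistent by uniqueness, gives a global $\psi$ with $\Psi(K;\phi)=\int\phi\psi\,dS_{n-1}(K)$ for smooth strictly convex $K$. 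Continuity of both sides extends this to all $K$.

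A simpler route for this step, which I will try first, is to define $\psi$ via McMullen's result applied to $\mu_\phi$ with $\phi$ a partition of unity. One picks $f_{\phi_i}$ supported in $\supp\phi_i$ and sets $\psi:=\sum_i f_{\phi_i}/\phi_i$ wherever this makes sense. The difficulty is that $f_\phi/\phi$ may be discontinuous and that the linear ambiguity must be fixed locally. The main obstacle is therefore proving the module property $f_{\chi\phi}=\chi f_\phi$ modulo linear functions. I would settle it by evaluating on bodies $K$ whose surface area measure near $v$ is a point mass, namely polytopes with a facet of normal $v$. Locality gives $\Psi(P)|_U=c(P)\delta_v$ for a polytope $P$ whose only normal in $U$ is $v$. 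Translation invariance and homogeneity make $c(P)=\psi(v)\,\mathrm{vol}_{n-1}(F(P,v))$, since $\Psi(P)|_U$ is a translation invariant continuous simple valuation of the facet. This defines $\psi(v)$. Continuity of $\psi$ follows from weak* continuity, and the formula then holds on polytopes and by density on all $K$. Uniqueness follows by testing on polytopes: such facet point masses determine $\psi$ pointwise.
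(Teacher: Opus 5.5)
Your final argument in the last paragraph (locality reducing $\Psi(P)$ near a facet normal $v$ to $\Psi(F(P,v))$, Hadwiger's theorem on $v^\perp$ applied to $F\mapsto\Psi(F;\chi)$ to obtain $\psi(v)\vol_{n-1}$, continuity of $\psi$ from weak* continuity, and density of polytopes) is essentially the paper's proof. The Goodey--Weil/McMullen detour before it, including the unjustified claim that $\Psi(K)|_U$ is a continuous function times $\det(\nabla^2h_K+h_K\,\mathrm{id})$, can be dropped; the one step you should make explicit is that $\Psi$ vanishes on bodies of dimension at most $n-2$ (apply \autoref{theorem:McMullenHomDecomp} to the restriction of $\Psi$ to such a subspace), since this is what forces $\Psi(P)$ to vanish near every non-facet normal and hence to be a sum of point masses at facet normals.
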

	\begin{proof}
		If $v_0\in S^{n-1}$ and $P\subset v_0^\perp$ is a polytope, then any $v\in S^{n-1}\setminus\{\pm v_0\}$ has a neighborhood $U_v$ such that $h_P=h_{P'}$ on this neighborhood for an at most $(n-2)$-dimensional face $P'$ of $P$, compare \cite{KnoerrIntegralrepresentationpolynomial2026}*{Lemma~3.5}. In particular,
		\begin{align*}
			\Psi(P)|_{U_v}=\Psi(P')|_{U_v}=0
		\end{align*}
		due to the homogeneous decomposition in \autoref{theorem:McMullenHomDecomp}, so the measure $\Psi(P)$ is concentrated on $\{ v_0,-v_0\}$. By continuity, this holds for any $(n-1)$-dimensional convex body. Hadwiger's characterization in \autoref{theorem:HadwigerVolume} thus implies for $K\subset v_0^\perp$
		\begin{align*}
			\Psi(K)=[\phi(v_0)\delta_v+\phi(-v_0)\delta_{-v}]\vol_{n-1}(K)
		\end{align*}
		for a function $\phi:S^{n-1}\rightarrow \C$. For $v\in S^{n-1}$, let $B_v\subset v^\perp$ denote the $(n-1)$-dimensional unit ball. Then $\Psi(B_v)=[\phi(v)\delta_v+\phi(-v)\delta_{-v}]\omega_{n-1}$, and by integrating $\psi\in C(S^{n-1})$ with $\psi=1$ on a neighborhood of $v$ and $\psi=0$ on a neighborhood of $-v$, it is easy to see that $\phi$ is continuous (as $\Psi$ is continuous in the weak* topology). In particular, $\tilde{\Psi}:=\Psi-\phi\bullet S$ vanishes on all lower dimensional polytopes. If $P\in\K(\R^n)$ is a full dimensional polytope and $v\in S^{n-1}$, then there exists a neighborhood $W_v$ of $v$ such that $h_P=h_{P'}$ on $W_v$ for an $(n-1)$-dimensional face of $P$, compare \cite{KnoerrIntegralrepresentationpolynomial2026}*{Lemma~3.5}. Since $\Psi$ is locally determined, this shows 
		\begin{align*}
			\tilde{\Psi}(P)|_{W_v}=\tilde{\Psi}(P')|_{W_v}=0.
		\end{align*}
		As the sets $W_v$, $v\in S^{n-1}$, cover $S^{n-1}$, this shows $\tilde{\Psi}(P)=0$ for every polytope, and therefore $\tilde{\Psi}$ vanishes identically on $\K(\R^n)$ by continuity, which shows $\Psi=\phi\bullet S$.
	\end{proof}

	Since $\Area_k(\R^n)\subset \Val_k(\R^n,\M(S^{n-1}))$, we may consider the Goodey--Weil distribution of $\Psi$ from \autoref{section:Valuations}. However, it will be more natural to consider a variation of this construction. First, it follows from the Banach-Steinhaus Theorem that the following defines a semi-norm on $\Area(\R^n)$ (compare also \cite{KnoerrPolynomiallocalfunctionals2025}*{Lemma 3.2}):
	\begin{align*}
		\|\Psi\|=\sup_{K\subset B,\|\phi\|_\infty\le 1}|\Psi(K;\phi)|.
	\end{align*}
	This allows for the following localized version of the Goodey--Weil distributions.
	\begin{corollary}
		\label{corollary:modifiedGWArea}
		For every $\Psi\in\Area_k(\R^n)$ there exists a unique distribution $\widehat{\GW}(\Psi):C^\infty((S^{n-1})^{k+1})\rightarrow \C$ such that
		\begin{align}
			\label{eq:defPropertyGWtilde}
			\widehat{\GW}(\Psi)[h_{K_1}\otimes\dots\otimes h_{K_k}\otimes\phi]=[\bar{\Psi}(K_1,\dots,K_k)](\phi).
		\end{align}
		for all smooth convex bodies $K_1,\dots,K_k\in\K(\R^n)$ with strictly positive Gauss curvature, and $\phi\in C^\infty(S^{n-1})$. Moreover, there is a constant $C>0$ such that
		\begin{align*}
			|\widehat{\GW}(\mu)[\phi_1\otimes\dots\otimes \phi_{k+1}]|\le C\|\Psi\|\prod_{j=1}^k \|\phi_j\|_{C^2(S^{n-1})}\cdot \|\phi_{k+1}\|_\infty.
		\end{align*}
	\end{corollary}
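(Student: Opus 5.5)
The plan is to apply the Goodey--Weil construction of \autoref{theorem:ExistenceGW} to $\Psi$, viewed as a $\M(S^{n-1})$-valued valuation via \autoref{theorem:valuationProperty}. We then compose the result with evaluation on test functions, and use the Schwartz Kernel Theorem once more to absorb the extra variable.

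\textbf{Step 1: multilinear distribution.} By \autoref{theorem:valuationProperty} and \autoref{theorem:homDecompArea}, $\Psi\in\Val_k(\R^n,\M(S^{n-1}))$, where $\M(S^{n-1})=C(S^{n-1})'$ carries the weak* topology. This is a locally convex space with semi-norms $|\nu|_\phi=|\nu(\phi)|$. For fixed $\phi\in C(S^{n-1})$, the scalar valuation $\mu_\phi:=\Psi(\cdot\,;\phi)\in\Val_k(\R^n,\C)$ has Goodey--Weil distribution $\GW(\mu_\phi)$. The map $\tilde\mu_\phi(\phi_1,\dots,\phi_k)=[\tilde\Psi(\phi_1,\dots,\phi_k)](\phi)$ is multilinear on $C^2(S^{n-1})^k$, by Eq.~\eqref{eq:formulaMultlinExtensionPolarization}. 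By \autoref{corollary:estimateGW} applied to $\mu_\phi$,
\begin{align*}
|[\tilde\Psi(\phi_1,\dots,\phi_k)](\phi)|\le C\sup_{K\subset B}|\Psi(K;\phi)|\prod_{j=1}^k\|\phi_j\|_{C^2}\le C\|\Psi\|\,\|\phi\|_\infty\prod_{j=1}^k\|\phi_j\|_{C^2}.
\end{align*}
Here $\|\Psi\|<\infty$ holds by the Banach--Steinhaus remark preceding the corollary.

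\textbf{Step 2: extension to $(k+1)$ variables.} Define the $(k+1)$-linear form $T(\phi_1,\dots,\phi_k,\phi):=[\tilde\Psi(\phi_1,\dots,\phi_k)](\phi)$ on $C^\infty(S^{n-1})^{k+1}$. By Step 1 it is jointly continuous, indeed bounded by $C\|\Psi\|\prod\|\phi_j\|_{C^2}\|\phi\|_\infty$, which is a continuous semi-norm product on the Fréchet space $C^\infty$. The Schwartz Kernel Theorem, in its multilinear form as used for \autoref{theorem:ExistenceGW}, then yields a unique distribution $\widehat{\GW}(\Psi)$ on $(S^{n-1})^{k+1}$ with $\widehat{\GW}(\Psi)[\phi_1\otimes\dots\otimes\phi_{k+1}]=T(\phi_1,\dots,\phi_{k+1})$.

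\textbf{Step 3: identification and uniqueness.} For smooth $K_j$ with positive curvature, $h_{K_j}\in C^\infty$ and $\tilde\Psi(h_{K_1},\dots,h_{K_k})=\bar\Psi(K_1,\dots,K_k)$, which gives Eq.~\eqref{eq:defPropertyGWtilde}. The estimate is exactly the bound from Step 1. For uniqueness, note first that differences of support functions of smooth bodies with strictly positive curvature exhaust $C^\infty(S^{n-1})$: for $\phi\in C^\infty$, take $\phi=(\phi+R h_{B})-Rh_B$ with $R$ large. Hence Eq.~\eqref{eq:defPropertyGWtilde} together with multilinearity determines the distribution on all elementary tensors, and these span a dense subspace of $C^\infty((S^{n-1})^{k+1})$.

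\textbf{Main obstacle.} The delicate point is Step 2. We need the bound to be jointly continuous, so that the kernel theorem applies. We also need the $\phi$-slot, which a priori lives only in $C(S^{n-1})$, to be handled correctly. Restricting that slot to $C^\infty$ and using $\|\phi\|_\infty\le\|\phi\|_{C^0}$ resolves this.
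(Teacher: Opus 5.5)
Your proposal is correct and follows essentially the same route as the paper: you apply the Goodey--Weil estimate of \autoref{corollary:estimateGW} to the scalar valuations $K\mapsto\Psi(K;\phi)$, bound $\sup_{K\subset B_1(0)}|\Psi(K;\phi)|\le\|\Psi\|\,\|\phi\|_\infty$, and then invoke the Schwartz Kernel Theorem on the resulting $(k+1)$-linear form. Your uniqueness argument is a step the paper leaves implicit: you write smooth functions as differences of support functions of smooth bodies with strictly positive curvature and then use density of elementary tensors.
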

	\begin{proof}
		For $\phi_{k+1}\in C(S^{n-1})$ and $\Psi\in \Area_k(\R^n)$, consider $\Psi[\phi_{k+1}]\in\Val(\R^n,\C)$ given by $\Psi[\phi_{k+1}](K)=\Psi(K;\phi_{k+1})$. Then 
		\begin{align*}
			|\GW(\Psi[\phi_{k+1}])[\phi_1\otimes\dots\otimes\phi_k]|\le C\|\Psi[\phi_{k+1}]\|\cdot \prod_{j=1}^k\|\phi_j\|_{C^2(S^{n-1})}
		\end{align*}
		for a constant $C>0$ independent of $\Psi$ and $\phi_j$, $1\le j\le k+1$, by \autoref{corollary:estimateGW}. Since
		\begin{align*}
			\|\Psi[\phi_{k+1}]\|=\sup_{K\subset B_1(0)}	|\Psi[\phi_{k+1}](K)|\le \|\Psi\|\cdot\|\phi_{k+1}\|_\infty,
		\end{align*}
		we obtain
		\begin{align*}
			|\GW(\Psi[\phi_{k+1}])[\phi_1\otimes\dots\otimes\phi_k]|\le C\|\Psi\|\prod_{j=1}^k \|\phi_j\|_{C^2(S^{n-1})}\cdot \|\phi_{k+1}\|_\infty.
		\end{align*} 
		In particular, the Schwartz Kernel Theorem implies that the map 
		\begin{align*}
			(\phi_1,\dots,\phi_{k+1})\mapsto \GW(\Psi[\phi_{k+1}])[\phi_1\otimes\dots\otimes\phi_k]
		\end{align*}
		extends to the desired distribution $\widehat{\GW}(\Psi)$. Due to the defining property of $\GW(\Psi)$ in \autoref{theorem:ExistenceGW}, it is now easy to see that $\widehat{\GW}(\Psi)$ satisfies Eq.~\eqref{eq:defPropertyGWtilde}.
	\end{proof}
	\begin{remark}
	 For $\Psi\in\Area_0(\R^n)\cong \M(S^{n-1})$, we set $\widehat{\GW}(\Psi):=\Psi(0)$.
	\end{remark}
	Similar to the Goodey--Weil distributions, the valuation property combined with the locality properties of $\Psi\in\Area_k(\R^n)$ implies certain support restrictions. Recall that $\Delta_k:\R^n\rightarrow (\R^n)^k$ denotes the diagonal embedding.
	\begin{proposition}
		\label{proposition:SupportGWArea}
		Let $1\le k\le n-1$ and $\Psi\in\Area_k(\R^n)$. Then $\supp\widehat{\GW}(\Psi)=\Delta^{k+1}(\vsupp\Psi)$. 
	\end{proposition}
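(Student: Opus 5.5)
We prove the two inclusions separately. Throughout, $\Psi\in\Area_k(\R^n)$ is viewed via \autoref{theorem:valuationProperty} as an element of $\Val_k(\R^n,\M(S^{n-1}))$, so that $\vsupp\Psi=\Delta_k^{-1}(\supp \GW(\Psi))$. The two distributions are related by
\begin{align*}
	\GW(\Psi)[f](\phi)=\widehat{\GW}(\Psi)[f\otimes\phi],\qquad f\in C^\infty((S^{n-1})^k),\ \phi\in C^\infty(S^{n-1}).
\end{align*}
This identity holds for $f=h_{K_1}\otimes\dots\otimes h_{K_k}$ by \eqref{eq:defPropertyGWtilde} and \autoref{theorem:ExistenceGW}. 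It extends to all $f$ because both sides are continuous in $f$ (\autoref{corollary:estimateGW} and \autoref{corollary:modifiedGWArea}) and linear combinations of such tensor products are dense.

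\emph{Step 1: $\supp\widehat{\GW}(\Psi)$ is contained in the diagonal of $(S^{n-1})^{k+1}$.}

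First fix $\phi_{k+1}$ and apply \autoref{proposition:SupportValuations} to the scalar valuation $\Psi[\phi_{k+1}]$. This shows that the support lies in $\{(v_1,\dots,v_{k+1}):v_1=\dots=v_k\}$.

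It remains to force $v_{k+1}=v_1$, and here locality enters. Locality passes to the polarization: if $h_{K_1}=h_{K_1'}$ on an open set $V\subset S^{n-1}$, then every Minkowski sum in the explicit formula for $\bar\Psi$ has the same support function on $V$. Hence
\begin{align*}
	\bar\Psi(K_1,K_2,\dots,K_k)|_V=\bar\Psi(K_1',K_2,\dots,K_k)|_V.
\end{align*}
Now take $\phi_1\in C^\infty(S^{n-1})$ and $\phi_{k+1}$ with disjoint supports. Write
\begin{align*}
	\phi_1=h_{K}-h_{L},\qquad h_L=C h_{B_1(0)},\quad h_K=Ch_{B_1(0)}+\phi_1,
\end{align*}
with $C$ large, so that $K$ is smooth with strictly positive curvature. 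Then $h_K=h_L$ on the open set $S^{n-1}\setminus\supp\phi_1$, which contains $\supp\phi_{k+1}$. Let $K_2,\dots,K_k$ be smooth bodies of strictly positive curvature. By multilinearity,
\begin{align*}
	\widehat{\GW}(\Psi)[\phi_1\otimes h_{K_2}\otimes\dots\otimes h_{K_k}\otimes\phi_{k+1}]=[\bar\Psi(K,K_2,\dots)-\bar\Psi(L,K_2,\dots)](\phi_{k+1})=0.
\end{align*}
Writing arbitrary smooth $\phi_2,\dots,\phi_k$ as differences of such support functions and using multilinearity and density gives $\widehat{\GW}(\Psi)[\phi_1\otimes\dots\otimes\phi_{k+1}]=0$ whenever $\supp\phi_1\cap\supp\phi_{k+1}=\emptyset$. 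Hence no point with $v_1\neq v_{k+1}$ lies in the support. Together with the first observation, $\supp\widehat{\GW}(\Psi)=\Delta^{k+1}(A)$ for some closed $A\subset S^{n-1}$.

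\emph{Step 2: $A\subset\vsupp\Psi$.} Let $v\notin\vsupp\Psi$. Then $\GW(\Psi)$ vanishes on test functions supported in $U^k$ for some open neighborhood $U$ of $v$. By the identity above, $\widehat{\GW}(\Psi)$ vanishes on $f\otimes\phi$ for all $f$ supported in $U^k$ and all $\phi$. Such tensor products span a dense subspace of the test functions supported in $U^{k+1}$, so $\widehat{\GW}(\Psi)$ vanishes there and $v\notin A$.

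\emph{Step 3: $\vsupp\Psi\subset A$.} Let $v\notin A$, and choose a neighborhood $U$ of $v$ with $\widehat{\GW}(\Psi)=0$ on test functions supported in $U^{k+1}$. Let $V$ be a smaller open neighborhood with $\bar V\subset U$, take $f$ supported in $V^k$, and take any $\phi$. Decompose $\phi=\phi'+\phi''$ by a partition of unity, with $\supp\phi'\subset U$ and $\supp\phi''\cap\bar V=\emptyset$. The term $\widehat{\GW}(\Psi)[f\otimes\phi']$ vanishes by the choice of $U$. The term $\widehat{\GW}(\Psi)[f\otimes\phi'']$ vanishes by Step 1, since $f\otimes\phi''$ is supported away from the diagonal. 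Hence $\GW(\Psi)[f]=0$ as a measure, so $v\notin\vsupp\Psi$.

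The main obstacle is Step 1, specifically transferring the locality of $\Psi$ to the polarization $\bar\Psi$ and then to $\widehat{\GW}(\Psi)$ on arbitrary smooth (non-support-function) test functions. This is handled by the explicit polarization formula together with the representation of smooth functions as differences $Ch_{B_1(0)}+\phi_1-Ch_{B_1(0)}$, which coincide exactly off $\supp\phi_1$.
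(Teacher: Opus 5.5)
Your proof is correct and follows essentially the same route as the paper. Alesker's diagonal-support result handles the first $k$ variables, locality of $\Psi$ forces $v_{k+1}=v_1$, and the relation $\widehat{\GW}(\Psi)[f\otimes\phi]=\GW(\Psi)[f](\phi)$ matches the diagonal with $\vsupp\Psi$; your Steps 2 and 3 are the two halves of the paper's argument. The only differences are minor: you transfer locality through the inclusion--exclusion formula for $\bar\Psi$ with $\phi_1=(Ch_{B_1(0)}+\phi_1)-Ch_{B_1(0)}$ rather than differentiating $\lambda\mapsto\Psi(K_\lambda;\phi)$ for $h_{K_\lambda}=h_{B_1(0)}+\sum_j\lambda_j\phi_j$, and in Step 3 you should take $f$ to be a product of smooth functions, so that $\GW(\Psi)[f]$ is an honest measure, and conclude for general $f$ by density.
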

	\begin{proof}
		If $(v_1,\dots,v_{k+1})\in (S^{n-1})^{k+1}$ belongs to $\supp\widehat{\GW}(\Psi)$, then for any neighborhood $U_j$ of $v_j$ in $S^{n-1}$ there exists $\phi_j\in C^\infty(S^{n-1})$ with $\supp\phi_j\subset U_j$ such that $\widehat{\GW}(\Psi)[\phi_1\otimes\dots\phi_{k+1}]\ne 0$. This implies in particular that $\GW(\Psi)[\phi_1\otimes\dots\otimes\phi_k]\ne0$ and since this holds for every neighborhood of $v_j$, $1\le j\le k$, we obtain that $(v_1,\dots,v_{k})\in \supp\GW(\Psi)=\Delta^{k}(\vsupp\Psi)$. Thus $v_1=\dots=v_k=:v$ due to \autoref{proposition:SupportValuations}. If $v_{k+1}\ne v$, then we find disjoint neighborhoods $U$ of $v_{k+1}$ and $W$ of $v$. If $\phi\in C^\infty(S^{n-1})$ is supported on $U$ and $\phi_1,\dots,\phi_k\in C^\infty(S^{n-1})$ are supported on $W$, then the functions $f_\lambda=h_{B_1(0)}+\sum_{j=1}^k\lambda_j \phi_j$ and $h_{B_1(0)}$ coincide on $U$. For $|\lambda_j|$ small enough, $f_\lambda$ is the support function of a convex body $K_\lambda$. Since $\Psi$ is locally determined, this implies
		\begin{align*}
			\widehat{\GW}(\Psi)[\phi_1\otimes\dots\otimes \phi_{k+1}]
			=&\frac{1}{k!}\frac{\partial^k}{\partial\lambda_1\dots\partial\lambda_k}\Big|_0\Psi\left(K_\lambda;\phi\right)\\
			=&\frac{1}{k!}\frac{\partial^k}{\partial\lambda_1\dots\partial\lambda_k}\Big|_0\Psi\left(B_1(0);\phi\right)=0.
		\end{align*}
		Thus $(v,v_{k+1})\notin \supp\widehat{\GW}(\Psi)$. This shows that $\supp\widehat{\GW}(\Psi)\subset\Delta^{k+1}(\vsupp\Psi)$.\\
		On the other hand, if $v\in \vsupp\Psi$, then for any neighborhood $U\subset S^{n-1}$ of $v$, we have $\phi_j\in C_c(U)$, $1\le j\le k$, such that $\GW(\Psi)[\phi_1,\dots,\phi_k]\ne 0$. With the same argument as above, the measure $\GW(\Psi)[\phi_1,\dots,\phi_k]\in \M(S^{n-1})$ is supported on $U$. As it does not vanish identically, we find $\phi_{k+1}\in C^\infty(S^{n-1})$ with $\supp\phi_{k+1}\subset U$ such that $\widehat{\GW}(\Psi)[\phi_1\otimes\dots\otimes \phi_{k+1}]\ne 0$. Since this holds for any neighborhood $U$ of $v$, $(v,\dots,v)\in \supp \widehat{\GW}(\Psi)$.
	\end{proof}
	
	If $\Psi=\sum_{k=0}^{n-1}\Psi_k$ is the decomposition of $\Psi\in\Area(\R^n)$ into its homogeneous components, then we define the \emph{local vertical support} to be the set 
	\begin{align*}
		\vsupploc\Psi=\bigcup_{k=0}^{n-1}\Delta_{k+1}^{-1}(\supp\widehat{\GW}(\Psi_k)).
	\end{align*}
	\begin{proposition}\label{proposition:characterization_localVsupp}
		For $\Psi\in \Area(\R^n)$, the local vertical support is the unique minimum among all closed subsets $A\subset S^{n-1}$ that satisfy the following properties:
		\begin{enumerate}
			\item $\supp \Psi(K)\subset A$ for all $K\in\K(\R^n)$,
			\item If $h_K=h_L$ on an open neighborhood of $A$, then $\Psi(K)=\Psi(L)$.
		\end{enumerate}
	\end{proposition}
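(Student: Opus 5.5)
The plan is to prove two things: that $A_0:=\vsupploc\Psi$ satisfies (1) and (2), and that every closed set $A$ with (1) and (2) contains $A_0$. Together these make $A_0$ the unique minimum. Since the homogeneous components are $\Psi_k(K)=\sum_j c_{kj}\Psi(jK)$ (proof of \autoref{theorem:homDecompArea}), both properties pass from $\Psi$ to each $\Psi_k$. Conversely, they pass from the $\Psi_k$ back to the sum. So I will work with a single $\Psi\in\Area_k(\R^n)$ and the set $A_k=\Delta_{k+1}^{-1}(\supp\widehat{\GW}(\Psi))$. For $k=0$ we have $A_0=\supp\Psi(0)$, and (1), (2) and minimality are immediate because $\Psi$ is constant. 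For $k\ge1$, \autoref{proposition:SupportGWArea} gives $A_k=\vsupp\Psi$.

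\emph{Minimality.} Let $A$ be closed with (1) and (2), and suppose $v\notin A$. Choose a neighborhood $U$ of $v$ with $\overline U\cap A=\emptyset$, and take $\phi_1,\dots,\phi_{k+1}$ supported in $U$. For small $\lambda$, the function $h_{B_1(0)}+\sum\lambda_j\phi_j$ is the support function of some $K_\lambda$, and it agrees with $h_{B_1(0)}$ near $A$. By (2), $\Psi(K_\lambda)=\Psi(B_1(0))$. Differentiating in $\lambda$ as in the proof of \autoref{proposition:SupportGWArea} gives $\widehat{\GW}(\Psi)[\phi_1\otimes\dots\otimes\phi_{k+1}]=0$. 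Condition (1) is not even needed here. Hence $(v,\dots,v)\notin\supp\widehat{\GW}(\Psi)$, and therefore $A_k\subset A$.

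\emph{Property (2) for $A_k$.} This is \autoref{proposition:characterization_vsupp} applied to the $\M(S^{n-1})$-valued valuation $\Psi$: its vertical support is $\Delta_k^{-1}(\supp\GW(\Psi))$. By the argument in \autoref{proposition:SupportGWArea}, this equals $A_k$ for $k\ge1$, since $\GW(\Psi)[\phi_1\otimes\dots\otimes\phi_k]=\widehat{\GW}(\Psi)[\phi_1\otimes\dots\otimes\phi_k\otimes\cdot]$. One point needs checking. \autoref{proposition:characterization_vsupp} is stated for $F$-valued valuations with $F$ locally convex. Taking $F=\M(S^{n-1})$ with the weak* topology and testing against each $\phi$ reduces it to the scalar valuations $\Psi[\phi]$. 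The vertical supports of these are contained in $\vsupp\Psi$, because $\GW(\Psi[\phi])=\widehat{\GW}(\Psi)[\cdot\otimes\phi]$.

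\emph{Property (1) for $A_k$.} This is the step I expect to be the main obstacle. Fix $K$ and $\phi\in C^\infty(S^{n-1})$ supported in the open set $S^{n-1}\setminus A_k$; the goal is $\Psi(K;\phi)=0$. First I take $K$ smooth with positive curvature, and then pass to general $K$ by continuity and approximation. By homogeneity, $\Psi(K;\phi)=\bar\Psi(K,\dots,K)(\phi)=\widehat{\GW}(\Psi)[h_K\otimes\dots\otimes h_K\otimes\phi]$. The distribution $\widehat{\GW}(\Psi)$ is supported on $\Delta^{k+1}(A_k)$. This set is disjoint from $(S^{n-1})^k\times\supp\phi$, because any point of it has last coordinate in $A_k$. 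The test function $h_K^{\otimes k}\otimes\phi$ vanishes on a neighborhood of the support, so the pairing is zero. Two details must hold: smooth test functions are legitimate here, which is exactly the defining identity \eqref{eq:defPropertyGWtilde}, and the density argument in $\phi$ goes through, which it does since $\Psi(K)$ is a measure. This completes the proof. The only delicate point is the identification of vertical supports in the vector-valued case, which I would handle through the scalar reduction above.
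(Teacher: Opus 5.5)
Your proposal is correct. The reduction to homogeneous components, the minimality argument via locality and differentiation of $\Psi(K_\lambda;\phi_{k+1})$, property (2) via the characterization of the vertical support (with your scalar reduction), and property (1) via the diagonal support of $\widehat{\GW}(\Psi)$ all go through. The paper only points to the analogous argument in \cite{KnoerrPolynomiallocalfunctionals2025}*{Corollary~4.17}, and your write-up is essentially that argument carried out with the tools set up just before the statement (\autoref{proposition:SupportGWArea}, \autoref{proposition:characterization_vsupp} and Eq.~\eqref{eq:defPropertyGWtilde}).
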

	\begin{proof}
		This follows with the same argument as \cite{KnoerrPolynomiallocalfunctionals2025}*{Corollary~4.17}
	\end{proof}

	\subsection{Three topologies on $\Area(\R^n)$}
	Let us consider $\Area(\R^n)$ as a subspace of $C(\K(\R^n),\M(S^{n-1}))$, where we consider $\M(S^{n-1})=C(S^{n-1})'$ with the weak* topology. Spaces of this type were considered in \cite{KnoerrPolynomiallocalfunctionals2025}*{Section 3.2}, and we will summarize the relevant results needed for the constructions in this article in the following subsections.\\
	
	First, for any compact subset $C\subset \K(\R^n)$ and $B\subset C(S^{n-1})$ bounded, the following semi-norm is finite on $\Area(\R^n)$ by \cite{KnoerrPolynomiallocalfunctionals2025}*{Lemma~3.2}:
	\begin{align*}
		\|\Psi\|_{(C;B)}=\sup_{K\in C, \phi\in B}|\Psi(K;\phi)|.
	\end{align*}
	If $\mathcal{B}$ is a family of bounded sets covering $C(S^{n-1})$, the corresponding families of semi-norms equip $\Area(\R^n)$ with the structure of a locally convex vector space. We will consider the following three families.
	\begin{definition}
		\begin{enumerate}
			\item If $\mathcal{B}$ is the family of all finite subsets, we call the induced topology the uniform weak* topology.
			\item If $\mathcal{B}$ is the family of all relatively compact subsets, we call the induced topology the uniform compact topology.
			\item If $\mathcal{B}$ is the family of all bounded subsets, we call the induced topology the uniform strong topology.
		\end{enumerate}
	\end{definition}
	We refer to \cite{KnoerrPolynomiallocalfunctionals2025}*{Section 3.2} for a more in depth discussion of the properties of these three topologies.\\
	As in the case of translation invariant valuations, the homogeneous decomposition in \autoref{theorem:homDecompArea} has the following consequence.
	\begin{corollary}\label{corollary:AreaBanachSpace}
		$\Area(\R^n)$ is a Banach space with respect to the uniform strong topology. The topology is induced by the norm
		\begin{align*}
			\|\Psi\|=\sup_{K\subset B,\|\phi\|_\infty\le 1}|\Psi(K;\phi)|.
		\end{align*}
	\end{corollary}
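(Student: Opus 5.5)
The plan has two parts. First show that the norm $\|\cdot\|$ induces the uniform strong topology. Then show that $\Area(\R^n)$ is complete with respect to it.

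\emph{Step 1: the norm induces the topology.} By definition, the uniform strong topology is generated by the semi-norms $\|\Psi\|_{(C;B)}$, where $C\subset\K(\R^n)$ is compact and $B\subset C(S^{n-1})$ is bounded.

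\begin{itemize}
\item Every compact $C$ is contained in $\{K:K\subset rB_1(0)\}$ for some $r\ge 1$, and every bounded $B$ lies in a ball of radius $s$ in $C(S^{n-1})$.
\item Decompose $\Psi=\sum_{k=0}^{n-1}\Psi_k$ by \autoref{theorem:homDecompArea}. Homogeneity gives $\sup_{K\subset rB_1(0)}|\Psi_k(K;\phi)|=r^k\sup_{K\subset B_1(0)}|\Psi_k(K;\phi)|$.
\item The components $\Psi_k$ can be recovered linearly from $\Psi$ through the Vandermonde argument: $\Psi_k(K)=\sum_j c_{kj}\Psi(jK)$. Here one must be careful, since $jK$ need not lie in $B_1(0)$. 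To handle this, apply the same argument with the dilates $t_jK$ for distinct $t_j\in(0,1]$, which keeps all bodies inside $B_1(0)$. This yields $\sup_{K\subset B_1(0),\|\phi\|_\infty\le1}|\Psi_k(K;\phi)|\le c\|\Psi\|$.
\end{itemize}

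Combining these gives $\|\Psi\|_{(C;B)}\le c\,r^{n-1}s\,\|\Psi\|$. Conversely, $\|\cdot\|$ is itself one of the defining semi-norms, with $C=\{K\subset B_1(0)\}$ (compact by Blaschke selection) and $B$ the unit ball of $C(S^{n-1})$. Finiteness of $\|\Psi\|$ was already noted via Banach--Steinhaus. Definiteness holds because $\|\Psi\|=0$ forces $\Psi=0$ on bodies in $B_1(0)$, hence everywhere by the above homogeneity bound.

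\emph{Step 2: completeness.} Let $(\Psi_j)$ be a Cauchy sequence for $\|\cdot\|$. By Step 1 it is uniformly Cauchy on $C\times B$ for every compact $C$ and bounded $B$.

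\begin{itemize}
\item For fixed $K$, the measures $\Psi_j(K)$ are Cauchy in total variation norm, since that norm equals $\sup_{\|\phi\|\le1}|\cdot|$. As $\M(S^{n-1})$ is a Banach space, they converge to some $\Psi(K)$.
\item The convergence is uniform on compact sets of bodies and uniform over bounded sets of $\phi$.
\item Weak* continuity passes to the limit. For fixed $\phi$, the map $K\mapsto\Psi(K;\phi)$ is a locally uniform limit of continuous functions, and the weak* topology on $\M(S^{n-1})$ is exactly pointwise convergence in $\phi$.
\item Translation invariance passes to pointwise limits directly.
\item Local determination also passes to the limit: if $h_K|_U=h_L|_U$, then $\Psi_j(K)|_U=\Psi_j(L)|_U$ for every $j$, and restriction to $U$ is continuous in total variation.
\end{itemize}

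Hence $\Psi\in\Area(\R^n)$, and $\|\Psi_j-\Psi\|\to0$ by the usual uniform Cauchy argument.

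\emph{Main obstacle.} The only delicate point is the norm comparison in Step 1, namely controlling the homogeneous components by $\|\Psi\|$ using only bodies inside the unit ball. Everything else is routine, as in \cite{KnoerrPolynomiallocalfunctionals2025}.
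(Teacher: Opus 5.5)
Your proposal is correct and follows essentially the same approach as the paper: the norm and its equivalence with the uniform strong topology come from the homogeneous decomposition, and your choice of dilation factors $t_j\in(0,1]$ in the Vandermonde argument, so that all bodies stay in $B_1(0)$, is exactly the detail needed there. The one difference is that the paper obtains completeness by citing Lemma~3.9 of \cite{KnoerrPolynomiallocalfunctionals2025}, whereas you verify it directly via total-variation limits, checking that weak* continuity, translation invariance and locality pass to the limit; this makes your argument self-contained.
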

	\begin{proof}
		The homogeneous decomposition implies that this defines a norm on $\Area(\R^n)$ and it is easy to check that the induced topology coincides with the uniform strong topology. The relevant completeness property of this topology can be obtained from \cite{KnoerrPolynomiallocalfunctionals2025}*{Lemma~3.9}.
	\end{proof}
	
	\subsection{The action of the general linear group}
	\label{section:ActionGroup}

	The following is a special case of \cite{KnoerrPolynomiallocalfunctionals2025}*{Lemma~3.12 and Proposition~3.13}.
	\begin{lemma}\label{lemma:continuityPropertiesGroupAction}
		The map
		\begin{align*}
			\GL(n,\R)\times \Area(\R^n)&\rightarrow\Area(\R^n)\\
			(g,\Psi)&\mapsto \pi(g)\Psi
		\end{align*}
		is separately continuous with respect to the uniform weak* topology and jointly continuous with respect to the uniform compact topology.
	\end{lemma}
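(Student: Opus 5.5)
The plan is to verify directly the two continuity statements for the map $(g,\Psi)\mapsto \pi(g)\Psi$, with $[\pi(g)\Psi](K;\phi)=\Psi(g^{-1}K;g^{-1}\cdot\phi)$. Two facts about the ingredients will be used throughout. First, the action $(g,\phi)\mapsto g\cdot\phi$ on $C(S^{n-1})$ is jointly continuous for the sup norm: $g\cdot\phi$ is $\phi$ extended $1$-homogeneously and composed with $g^T$, and $\phi$ is uniformly continuous. Second, $(g,K)\mapsto gK$ is jointly continuous on $\GL(n,\R)\times\K(\R^n)$. Consequently, if $g$ ranges over a compact neighborhood $N$ of $g_0$ and $K$ over a compact set $C\subset\K(\R^n)$, then $N^{-1}C$ is compact. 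Likewise, if $\mathcal{B}\subset C(S^{n-1})$ is relatively compact, then $N^{-1}\cdot\mathcal{B}$ is relatively compact, since it is the image of the compact set $N\times\overline{\mathcal{B}}$ under a continuous map.

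\emph{Separate continuity in the uniform weak* topology.} For fixed $g$, the map $\pi(g)$ is continuous. Indeed, $\|\pi(g)\Psi\|_{(C;\{\phi\})}=\|\Psi\|_{(g^{-1}C;\{g^{-1}\cdot\phi\})}$, and $g^{-1}C$ is compact. For fixed $\Psi$, we must show that $g\mapsto\Psi(g^{-1}K;g^{-1}\cdot\phi)$ converges to $\Psi(g_0^{-1}K;g_0^{-1}\cdot\phi)$ uniformly in $K\in C$. We split the difference as
\begin{align*}
\Psi(g^{-1}K;g^{-1}\cdot\phi-g_0^{-1}\cdot\phi)+\big[\Psi(g^{-1}K;g_0^{-1}\cdot\phi)-\Psi(g_0^{-1}K;g_0^{-1}\cdot\phi)\big].
\end{align*}
The first term is bounded by $\|\Psi\|_{(N^{-1}C;B_\infty)}\cdot\|g^{-1}\cdot\phi-g_0^{-1}\cdot\phi\|_\infty$, where $B_\infty$ is the sup-norm unit ball. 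This bound is finite because $\|\Psi\|_{(N^{-1}C;B_\infty)}$ is one of the finite seminorms from \cite{KnoerrPolynomiallocalfunctionals2025}*{Lemma~3.2}, which rests on Banach--Steinhaus. The bound tends to $0$ as $g\to g_0$. For the second term, the function $K'\mapsto\Psi(K';\psi)$ with $\psi$ fixed is continuous on $\K(\R^n)$, hence uniformly continuous on the compact set $N^{-1}C$. Combined with the uniform convergence $g^{-1}K\to g_0^{-1}K$ for $K\in C$, this makes the second term small uniformly in $K\in C$.

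\emph{Joint continuity in the uniform compact topology.} Let $g\to g_0$ and $\Psi\to\Psi_0$. We write
\begin{align*}
\pi(g)\Psi-\pi(g_0)\Psi_0=\pi(g)(\Psi-\Psi_0)+(\pi(g)\Psi_0-\pi(g_0)\Psi_0).
\end{align*}
For a seminorm $\|\cdot\|_{(C;\mathcal{B})}$ with $\mathcal{B}$ relatively compact, the first term is bounded by $\|\Psi-\Psi_0\|_{(N^{-1}C;N^{-1}\cdot\mathcal{B})}$. This is again a uniform compact seminorm by the preliminary remarks, so it tends to $0$ uniformly in $g\in N$. For the second term we need the analogue of the previous paragraph uniformly over $\phi\in\mathcal{B}$. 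The first piece is handled by the uniform estimate $\sup_{\phi\in\mathcal{B}}\|g^{-1}\cdot\phi-g_0^{-1}\cdot\phi\|_\infty\to0$; this is an equicontinuity statement and follows from Arzel\`a--Ascoli, since $\mathcal{B}$ is relatively compact. The second piece requires the map $(K',\psi)\mapsto\Psi_0(K';\psi)$ to be jointly continuous on $N^{-1}C\times\overline{g_0^{-1}\cdot\mathcal{B}}$ with $\psi$ in the sup-norm topology, so that it is uniformly continuous on this compact set. Joint continuity follows from the bound $|\Psi_0(K';\psi)-\Psi_0(K'';\psi')|\le\|\Psi_0\|_{(N^{-1}C;B_\infty)}\|\psi-\psi'\|_\infty+|\Psi_0(K';\psi')-\Psi_0(K'';\psi')|$.

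\emph{Main obstacle.} The delicate step is the uniform-in-$\phi$ control in the joint continuity argument. It needs equicontinuity of relatively compact families, which is exactly why the argument fails for the uniform strong topology, where $\mathcal{B}$ is only bounded. In the weak* case one also has to keep track that joint continuity is \emph{not} claimed there. The reason is that without a compact family of test functions, the term $\pi(g)(\Psi-\Psi_0)$ cannot be controlled uniformly in $g$ by a weak* seminorm, since $g^{-1}\cdot\phi$ moves $\phi$ out of any finite set.
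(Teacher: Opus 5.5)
Your argument is correct. The paper gives no direct verification here. It quotes the lemma as a special case of Lemma~3.12 and Proposition~3.13 of \cite{KnoerrPolynomiallocalfunctionals2025}, which treat general spaces of weak*-continuous measure-valued maps with a group acting compatibly on the arguments and on the test functions.

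Your proof is a self-contained instance of that mechanism. It uses:
\begin{enumerate}
\item the Banach--Steinhaus bound $\|\Psi\|_{(N^{-1}C;B_\infty)}<\infty$;
\item joint continuity of $(g,K)\mapsto gK$ and of $(g,\phi)\mapsto g\cdot\phi$;
\item compactness of $N^{-1}C$ and relative compactness of $N^{-1}\cdot\mathcal{B}$;
\item uniform continuity of $(K,\psi)\mapsto\Psi_0(K;\psi)$ on compact sets.
\end{enumerate}
The neighborhood bookkeeping for joint continuity is also right. The bound $\|\Psi-\Psi_0\|_{(N^{-1}C;N^{-1}\cdot\mathcal{B})}$ is uniform in $g\in N$, so a product neighborhood works even though the uniform compact topology is not metrizable.

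The citation buys generality: one statement covers a whole class of spaces of measure-valued functionals. Your version makes explicit exactly where relative compactness of the test-function family enters. It thereby pinpoints the step that breaks down for the uniform strong topology, where orbit maps are indeed not continuous in general.

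One minor simplification: you do not need Arzel\`a--Ascoli for the uniform estimate $\sup_{\phi\in\mathcal{B}}\|g^{-1}\cdot\phi-g_0^{-1}\cdot\phi\|_\infty\to0$. It follows directly from uniform continuity of the jointly continuous map $(h,\phi)\mapsto h\cdot\phi$ on the compact set $N^{-1}\times\overline{\mathcal{B}}$.
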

	
	For $\Psi\in \Area(\R^n)$, the map 
	\begin{align}\label{eq:orbitMap}
		\begin{split}
			\GL(n,\R)&\mapsto \Area(\R^n)\\
			g&\mapsto \pi(g)\Psi
		\end{split}
	\end{align}
	is in general not continuous with respect to the uniform strong topology. If this map is continuous, then we call $\Psi$ \emph{strongly $\GL(n,\R)$-continuous}, and we denote by $\Area^0(\R^n)$ the subspace of strongly $\GL(n,\R)$-continuous area measures. Then $\Area^0(\R^n)$ is a closed subspace of $\Area(\R^n)$ with respect to the uniform strong topology, compare \cite{KnoerrPolynomiallocalfunctionals2025}*{Proposition~3.19}.
		\begin{remark}\label{remark:NotStronglyCont}
		For $0\le k\le n-2$, there exist $k$-homogeneous area measures that are not strongly $\GL(n,\R)$-continuous: For a ${k+1}$-dimensional subspace $E\in \Gr_{k+1}(\R^n)$ let $i_E:E\rightarrow\R^n$ and $\pi_E:\R^n\rightarrow E$ denote the natural inclusion and orthogonal projection. Then
		\begin{align*}
			\Psi(K;\phi):=\int_{S(E)}(i_E^*\phi )dS_k(\pi_E(K))
		\end{align*}
		defines an element of $\Area_k(\R^n)$ that does not belong to $\Area^0(\R^n)$. 	For $k=n-1$, every element is strongly $\GL(n,\R)$-continuous, compare \autoref{corollary:topDegreeStronglyCont} below.
		
	\end{remark}
	It is easy to see that $\Area^0(\R^n)=\bigoplus_{k=0}^{n-1}\Area_k^0(\R^n)$ for the corresponding spaces of $k$-homogeneous strongly $\GL(n,\R)$-continuous area measures. The following is a consequence of \cite{KnoerrPolynomiallocalfunctionals2025}*{Corollary~3.20}
	\begin{proposition}\label{proposition:Area0BanachRep}
		The map
		\begin{align*}
			\GL(n,\R)\times \Area^0(\R^n)&\rightarrow\Area^0(\R^n)\\
			(g,\Psi)&\mapsto \pi(g)\Psi
		\end{align*}
		is continuous with respect to the uniform strong topology. In particular, $\Area^0(\R^n)$ is a continuous Banach representation of $\GL(n,\R)$.
	\end{proposition}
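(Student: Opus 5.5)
The plan is to deduce Proposition~\ref{proposition:Area0BanachRep} from the general result \cite{KnoerrPolynomiallocalfunctionals2025}*{Corollary~3.20}, together with Corollary~\ref{corollary:AreaBanachSpace}. By that corollary the uniform strong topology on $\Area(\R^n)$ is given by the single norm $\|\Psi\|=\sup_{K\subset B,\|\phi\|_\infty\le1}|\Psi(K;\phi)|$, and $\Area^0(\R^n)$ is closed in it. Hence $\Area^0(\R^n)$ is a Banach space, and only joint continuity of the action needs proof.

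\textbf{Local uniform boundedness.} First I will show that for every compact $Q\subset\GL(n,\R)$ there is $C_Q$ with $\|\pi(g)\Psi\|\le C_Q\|\Psi\|$ for $g\in Q$. Fix $K\subset B_1(0)$ and $\|\phi\|_\infty\le1$. Then $g^{-1}K\subset R B_1(0)$ with $R=\sup_{g\in Q}\|g^{-1}\|$, and $\|g^{-1}\cdot\phi\|_\infty\le \sup_{g\in Q}\|g^{-T}\|\,\|\phi\|_\infty$. Write $\Psi=\sum_k\Psi_k$ using Theorem~\ref{theorem:homDecompArea}. Homogeneity gives $\sup_{L\subset RB}|\Psi_k(L;\psi)|=R^k\sup_{L\subset B}|\Psi_k(L;\psi)|$. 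The projections $\Psi\mapsto\Psi_k$ are bounded in $\|\cdot\|$ via the Vandermonde formula $\Psi_i(K)=\sum_j c_{ij}\Psi(jK)$, again combined with homogeneity. Together these yield the bound $C_Q$.

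\textbf{Joint continuity.} Let $g_j\to g$ and $\Psi_j\to\Psi$ in $\Area^0(\R^n)$. I split
\begin{align*}
\|\pi(g_j)\Psi_j-\pi(g)\Psi\|\le \|\pi(g_j)(\Psi_j-\Psi)\|+\|\pi(g_j)\Psi-\pi(g)\Psi\|.
\end{align*}
The first term is at most $C_Q\|\Psi_j-\Psi\|\to0$, where $Q$ is a compact neighborhood of $g$. The second term tends to $0$ because $\Psi$ is strongly $\GL(n,\R)$-continuous, which is the definition of $\Area^0(\R^n)$.

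\textbf{Invariance and conclusion.} It remains to check that $\pi(h)$ preserves $\Area^0(\R^n)$. This holds because $g\mapsto\pi(g)\pi(h)\Psi=\pi(gh)\Psi$ is continuous as a composition of continuous maps. Hence the restricted action is a continuous Banach representation.

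The main subtlety is the uniform bound in the first step: it has to control the action of $g^{-1}$ on $C(S^{n-1})$ in the sup norm, not merely pointwise. This is elementary because $[g\cdot\phi](v)=|g^Tv|\phi(g^Tv/|g^Tv|)$. Everything else is the standard argument for Banach representations, as in \cite{KnoerrPolynomiallocalfunctionals2025}.
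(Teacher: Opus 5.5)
Your argument is correct. It takes a more explicit route than the paper, which gives no argument at all: it invokes \cite{KnoerrPolynomiallocalfunctionals2025}*{Corollary~3.20} and quotes the closedness of $\Area^0(\R^n)$ from \cite{KnoerrPolynomiallocalfunctionals2025}*{Proposition~3.19}. You announce that you will use that corollary, but you actually prove the statement by hand. In your version the only input specific to area measures is the local bound $\|\pi(g)\Psi\|\le C_Q\|\Psi\|$ for $g$ in a compact set $Q\subset\GL(n,\R)$. Joint continuity is then the usual two-term splitting, and invariance of $\Area^0(\R^n)$ follows from $\pi(g)\pi(h)=\pi(gh)$. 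What this buys is a self-contained proof, and you can push it further. The same local bound gives closedness of $\Area^0(\R^n)$ by a standard $3\epsilon$ argument, so you need not quote that either; closedness is not part of Corollary~\ref{corollary:AreaBanachSpace}, as your first paragraph suggests. The local bound also comes more quickly than by your derivation. By Corollary~\ref{corollary:AreaBanachSpace}, the seminorm $\Psi\mapsto\sup_{L\subset RB_1(0),\|\psi\|_\infty\le M}|\Psi(L;\psi)|$ is continuous for the norm $\|\cdot\|$, hence dominated by a multiple of it. If you keep the Vandermonde step, use dilation factors in $[0,1]$, for instance $\Psi_i(K)=n^i\Psi_i(n^{-1}K)=n^i\sum_j c_{ij}\Psi(\tfrac{j}{n}K)$. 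Otherwise you must estimate $\Psi(jK)$ for $j\ge 2$ and $K\subset B_1(0)$ by $\|\Psi\|$, which is exactly the bound you are trying to establish.
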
	

	\subsection{$\GL(n,\R)$-smooth area measures}
		The following is a special case of \cite{KnoerrPolynomiallocalfunctionals2025}*{Corollary~3.17}
		\begin{proposition}
			For $\Psi\in\Area(\R^n)$, consider the map
			\begin{align}
				\label{eq:mapGroupAction}
				\begin{split}
				\GL(n,\R)&\rightarrow\Area(\R^n)\\
				g&\mapsto\pi(g)\Psi.
			\end{split}
			\end{align}
			The following are equivalent:
			\begin{enumerate}
				\item This map is smooth with respect to the uniform weak* topology.
				\item This map is smooth with respect to the uniform compact topology.
				\item This map is smooth with respect to the uniform strong topology.
			\end{enumerate}
		\end{proposition}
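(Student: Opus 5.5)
The plan is to prove the chain (3) $\Rightarrow$ (2) $\Rightarrow$ (1) $\Rightarrow$ (3). The first two implications hold because the uniform strong topology is finer than the uniform compact topology, which in turn is finer than the uniform weak* topology. Differentiability in the finer topology therefore gives differentiability in the coarser ones, with the same derivatives, and this holds for all orders.

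The substance is (1) $\Rightarrow$ (3). Assume $g\mapsto\pi(g)\Psi$ is smooth in the uniform weak* topology.

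First, for every $X\in\mathfrak{gl}(n,\R)$ the derivative $d\pi(X)\Psi$ exists as a weak* limit of $\frac{1}{t}(\pi(\exp tX)\Psi-\Psi)$. This limit is again an element of $\Area(\R^n)$, because weak* smoothness means the derivative lies in the space. Iterating, $d\pi(X_1)\cdots d\pi(X_m)\Psi\in\Area(\R^n)$ for all $m$.

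Second, I would upgrade the difference quotients to uniform strong convergence using the integral identity
\begin{align*}
\pi(\exp tX)\Psi-\Psi=\int_0^t\pi(\exp sX)\,d\pi(X)\Psi\,ds .
\end{align*}
This identity holds after pairing with each fixed $K$ and $\phi$, where it is a scalar $C^1$ identity. Two observations then control the integrand uniformly:
\begin{itemize}
\item By Banach--Steinhaus (uniform boundedness in $\|\cdot\|$, compare \autoref{corollary:AreaBanachSpace}), the family $\pi(\exp sX)d\pi(X)\Psi$ for $|s|\le 1$ is bounded in the norm $\|\cdot\|$. Here one uses that the orbit of a weak*-continuous vector restricted to a compact set is weak* bounded, hence norm bounded since $\Area(\R^n)$ is Banach.
\item Consequently $\|\pi(\exp tX)\Psi-\Psi\|\le C|t|$. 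Thus $\Psi$ is norm-Lipschitz along the group, and in particular strongly $\GL(n,\R)$-continuous, so $\Psi\in\Area^0(\R^n)$.
\end{itemize}
Applying the same reasoning to $d\pi(X)\Psi$, which is also weak* smooth, shows $d\pi(X)\Psi\in\Area^0(\R^n)$. By \autoref{proposition:Area0BanachRep}, $s\mapsto\pi(\exp sX)d\pi(X)\Psi$ is then norm continuous. The integral identity therefore holds as a Bochner/Riemann integral in the Banach space, and the difference quotient converges in norm to $d\pi(X)\Psi$.

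Inductively, all iterated derivatives exist in norm and are norm continuous. Strong smoothness of the orbit map then follows by the standard criterion for Banach representations: a vector is smooth if all iterated Lie derivatives exist and are continuous.

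The main obstacle is the uniform boundedness step, namely that weak* continuity of the orbit on compact sets yields norm bounds. I would handle it by viewing $\Psi'\mapsto(K,\phi)\mapsto\Psi'(K;\phi)$ and applying Banach--Steinhaus twice: once in $\phi$, via $\M(S^{n-1})$ bounds, and once over the compact family in $\K(\R^n)$. This mirrors \cite{KnoerrPolynomiallocalfunctionals2025}*{Lemma~3.2}.
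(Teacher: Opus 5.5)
Your argument is correct, but it takes a genuinely different route from the paper. The paper does not prove the proposition itself. It quotes it as a special case of Corollary~3.17 in \cite{KnoerrPolynomiallocalfunctionals2025}, a general result for spaces of weak*-continuous measure-valued functionals with a group action; the same framework covers, e.g., $\LV(\R^n)$ with its translation action.

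You instead give a self-contained proof inside $\Area(\R^n)$. The scalar fundamental theorem of calculus along one-parameter subgroups gives a norm-Lipschitz bound for the orbit. Hence a weak*-smooth vector and all its iterated derivatives lie in $\Area^0(\R^n)$, the difference quotients converge in norm, and the characterization of smooth vectors of a continuous Banach representation as the common domain of all iterated generators finishes the proof.

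The citation buys uniformity across the different spaces of functionals used in that series of papers. Your argument buys transparency: in particular it proves directly the remark after the proposition that $\GL(n,\R)$-smooth area measures are strongly $\GL(n,\R)$-continuous. Its cost is the Banach-representation criterion, which you should cite or prove. For instance, work in canonical coordinates of the second kind, using that the common domain of the $d\pi(X)$ is invariant because $\pi(\exp tX)\pi(h)=\pi(h)\pi(\exp t\,\mathrm{Ad}(h^{-1})X)$.

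Some minor points:
\begin{enumerate}
\item The norm bound on $\{\pi(\exp sX)\Phi\}_{|s|\le 1}$ needs no Banach--Steinhaus. The homogeneous decomposition, together with $g^{-1}B_1(0)\subset B_{\|g^{-1}\|}(0)$ and $\|g^{-1}\cdot\phi\|_\infty\le\|g^{-1}\|\,\|\phi\|_\infty$, gives $\|\pi(g)\Phi\|\le C(g)\|\Phi\|$ with $C$ locally bounded.
\item If you prefer uniform boundedness, a single application suffices, on $C(S^{n-1})$, to the functionals $\phi\mapsto[\pi(\exp sX)\Phi](K;\phi)$ indexed by $|s|\le 1$ and $K\subset B_1(0)$. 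The completeness that matters is that of $C(S^{n-1})$. Your stated reason, ``weak* bounded, hence norm bounded because $\Area(\R^n)$ is Banach'', is not a valid instance of that principle.
\item Passing from Lipschitz bounds along single one-parameter subgroups to continuity of the orbit map at the identity requires the constant to be uniform in $X$. You can get this, e.g., from the linearity of $X\mapsto d\pi(X)\Psi$ and the local bound on $C(g)$.
\end{enumerate}
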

		Consequently, $\Psi$ is $\GL(n,\R)$-smooth if it satisfies any of these equivalent conditions. We denote by $\Area^\infty(\R^n)$ the space of $\GL(n,\R)$-smooth area measures. Note that any $\GL(n,\R)$-smooth area measure is automatically strongly $\GL(n,\R)$-continuous, so $\Area^\infty(\R^n)$ is precisely the set of smooth vectors in the Banach representation $\Area^0(\R^n)$ of $\GL(n,\R)$. \cite{KnoerrPolynomiallocalfunctionals2025}*{Corollary~3.22} implies the following.		
		\begin{proposition}\label{proposition:DensitySmoothVectors}
			Let $W\subset\Area(\R^n)$ be a $\GL(n,\R)$-invariant subspace.
			\begin{enumerate}
				\item If $W$ is closed in the uniform weak* or uniform compact topology, then $W\cap \Area^\infty(\R^n)$ is sequentially dense in $W$ with respect to the uniform weak* or uniform compact topology, respectively.
				\item If $W\subset \Area^0(\R^n)$ and if $W$ is closed in the uniform strong topology, then $W\cap \Area^\infty(\R^n)$ is dense in $W$ with respect to the uniform strong topology.
			\end{enumerate}
		\end{proposition}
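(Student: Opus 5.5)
The plan is to derive Proposition~\ref{proposition:DensitySmoothVectors} from the standard Gårding-type smoothing argument: convolve with approximate identities on $\GL(n,\R)$. Fix a sequence $\rho_j\in C_c^\infty(\GL(n,\R))$ of nonnegative functions with $\int\rho_j\,dg=1$ (Haar measure) and supports shrinking to the identity. For $\Psi\in W$ define
\begin{align*}
	\pi(\rho_j)\Psi(K;\phi):=\int_{\GL(n,\R)}\rho_j(g)[\pi(g)\Psi](K;\phi)\,dg .
\end{align*}
The first step is to check that this defines an element of $\Area(\R^n)$. The integrand is continuous in $g$ for fixed $(K,\phi)$ by the separate continuity in Lemma~\ref{lemma:continuityPropertiesGroupAction}, and it is bounded on compact sets by the semi-norms $\|\cdot\|_{(C;B)}$. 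Weak* continuity in $K$, translation invariance and locality then pass through the integral. Locality holds because $h_{g^{-1}K}=h_{g^{-1}L}$ on $g^TU$-type open sets whenever $h_K=h_L$ on $U$.

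The second step is to show that $\pi(\rho_j)\Psi$ is $\GL(n,\R)$-smooth. Writing $\pi(h)\pi(\rho_j)\Psi=\int\rho_j(h^{-1}g)\pi(g)\Psi\,dg$ moves all derivatives in $h$ onto $\rho_j$. Difference quotients converge uniformly on compact $C\subset\K(\R^n)$ and for $\phi$ in finite (or compact) sets, since $\sup_{g\in\supp\rho_j,K\in C,\phi\in B}|\pi(g)\Psi(K;\phi)|<\infty$. This bound follows from $\pi(g)\Psi(K;\phi)=\Psi(g^{-1}K;g^{-1}\phi)$, the compactness of $\{g^{-1}K\}$ and the boundedness of $\{g^{-1}\phi\}$, using Lemma~3.2 of the cited work. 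By the preceding proposition it suffices to verify smoothness in the uniform weak* topology.

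The third step is to show that $\pi(\rho_j)\Psi\in W$ and that $\pi(\rho_j)\Psi\to\Psi$. For part (1), convergence in the uniform weak* (resp. compact) topology means $\sup_{K\in C}|\pi(\rho_j)\Psi(K;\phi)-\Psi(K;\phi)|\to 0$. This follows if $g\mapsto\pi(g)\Psi$ is continuous at the identity uniformly on $C$. That continuity is exactly the separate continuity in Lemma~\ref{lemma:continuityPropertiesGroupAction} for the uniform weak* case, and the joint continuity for the uniform compact case.

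Membership in $W$ is the main obstacle, since $W$ is only closed and invariant and the integral is a weak Pettis-type integral. I would argue by duality, using the Hahn--Banach theorem in the locally convex space. Suppose a continuous functional $\ell$ vanishes on $W$. Then $\ell(\pi(\rho_j)\Psi)=\int\rho_j(g)\ell(\pi(g)\Psi)\,dg=0$. The interchange is justified by approximating the integral with Riemann sums in $W$, which converge in the relevant topology by uniform continuity of the orbit map on $\supp\rho_j$. In the uniform weak* case this is delicate because only separate continuity is available. There one needs instead that the Riemann sums converge pointwise uniformly on compact $C$, using equicontinuity in $K$ from weak* continuity together with compactness. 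Sequential density follows because the $\rho_j$ form a sequence.

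For part (2), $W\subset\Area^0(\R^n)$ is a closed invariant subspace of a continuous Banach representation by Proposition~\ref{proposition:Area0BanachRep}. There the Bochner integral exists, lies in $W$, converges to $\Psi$ in norm, and is a smooth vector; this is the classical Gårding argument.
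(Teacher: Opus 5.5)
Your proposal is correct. The paper gives no argument here; it specializes a general density result from its companion paper (\cite{KnoerrPolynomiallocalfunctionals2025}*{Corollary~3.22}). What you wrote is a self-contained G\aa rding smoothing proof carried out directly in $\Area(\R^n)$. Its main merit is that you define $\pi(\rho_j)\Psi$ pointwise and check by hand that it is again continuous, locally determined and translation invariant. This removes any need for vector-valued integration or (quasi-)completeness of $W$, so closedness of $W$ enters only through Riemann sums.

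The uniform weak* case is not actually more delicate than the others. Continuity of the single orbit map $g\mapsto\pi(g)\Psi$, which is part of the separate continuity in \autoref{lemma:continuityPropertiesGroupAction}, already makes it uniformly continuous on the compact set $\supp\rho_j$ with respect to each semi-norm $\|\cdot\|_{(C;\{\phi\})}$. Hence the Riemann sums, which lie in $W$ by invariance, converge to $\pi(\rho_j)\Psi$ in every such semi-norm, and sequential closedness of $W$ gives $\pi(\rho_j)\Psi\in W$ at once, without Hahn--Banach. Even more simply, the uniform compact topology is finer, so a subspace closed in the uniform weak* topology is closed in the uniform compact topology. Sequential density there then implies sequential density in the uniform weak* topology.

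Finally, the bound in your second step holds uniformly for $\phi$ in bounded sets. It therefore yields smoothness in the uniform strong topology directly, without appealing to the equivalence of the three notions of smoothness.
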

		
		Note that \autoref{proposition:DensitySmoothVectors} shows in particular that $\Area^0(\R^n)$ is dense in $\Area(\R^n)$ with respect to both the uniform weak* and uniform compact topology, although it is closed in the uniform strong topology.

		We conclude this section with the following results on the $C(S^{n-1})$-module structure. In the following, we equip $C(S^{n-1})$ with an action of $\GL(n,\R)$ different from the one considered in the introduction: We consider elements in $C(S^{n-1})$ as $0$-homogeneous functions on $(\R^n)^*$ and consequently let $g\in \GL(n,\R)$ act on $\phi\in C(S^{n-1})$ by
		\begin{align*}
			[g\phi](v):=\phi\left(\frac{g^{T}v}{|g^{T}v|}\right),\quad v\in S^{n-1}.
		\end{align*}
		\begin{lemma}\label{lemma:continuityModuleStructure}
			The map
			\begin{align*}
				C(S^{n-1})\times \Area(\R^n)&\mapsto \Area(\R^n)\\
				(\phi,\Psi)&\mapsto \phi\bullet \Psi
			\end{align*}
			is $\GL(n,\R)$-equivariant and separately continuous with respect to the uniform weak* and uniform compact topology. It is jointly continuous with respect to the uniform strong topology.
		\end{lemma}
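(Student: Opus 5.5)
The plan is to verify three things in turn: well-definedness of $\phi\bullet\Psi$ as an element of $\Area(\R^n)$ together with equivariance, separate continuity in the two weaker topologies, and joint continuity in the uniform strong topology.

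\emph{Well-definedness.} For $\phi\in C(S^{n-1})$ and $\Psi\in\Area(\R^n)$, weak* continuity of $K\mapsto \Psi(K;\phi\cdot\eta)$ follows for every $\eta\in C(S^{n-1})$, because $\phi\eta$ is again continuous. Translation invariance is immediate. Local determinacy holds because $\Psi(K)|_U=\Psi(L)|_U$ gives $(\phi\Psi(K))|_U=(\phi\Psi(L))|_U$.

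\emph{Equivariance.} The group acts on the $1$-homogeneous side via $g\cdot$ and on the multiplier via the $0$-homogeneous action. Using $g^{-1}\cdot(\phi\eta)=(g^{-1}\phi)(g^{-1}\cdot\eta)$, which holds because the factor $|g^{-T}v|$ appears only once, we compute
\begin{align*}
[\pi(g)(\phi\bullet\Psi)](K;\eta)=\Psi(g^{-1}K;(g^{-1}\phi)\cdot(g^{-1}\cdot\eta))=[(g^{-1}\phi)\bullet\pi(g)\Psi](K;\eta).
\end{align*}
So the map is equivariant with the convention that $g$ acts on $C(S^{n-1})$ through $g^{-1}$ (or through the contragredient). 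I would state precisely which convention makes this literal equivariance hold; this bookkeeping is the only delicate point.

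\emph{Continuity in $\Psi$ for fixed $\phi$.} For a compact $C\subset\K(\R^n)$ and a bounded $B\subset C(S^{n-1})$,
\begin{align*}
\|\phi\bullet\Psi\|_{(C;B)}=\|\Psi\|_{(C;\phi B)},
\end{align*}
and $\phi B$ is finite, relatively compact, or bounded whenever $B$ is. Hence $\Psi\mapsto\phi\bullet\Psi$ is continuous in all three topologies.

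\emph{Continuity in $\phi$ for fixed $\Psi$.} By the Banach--Steinhaus bound $\|\Psi\|_{(C;\{\|\eta\|_\infty\le1\})}<\infty$ we get
\begin{align*}
\|\phi\bullet\Psi\|_{(C;B)}\le \|\Psi\|_{(C;\{\|\eta\|_\infty\le 1\})}\,\|\phi\|_\infty\sup_{\eta\in B}\|\eta\|_\infty.
\end{align*}
This gives continuity in $\phi$ for all three topologies.

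\emph{Joint continuity in the uniform strong topology.} Using the norm of \autoref{corollary:AreaBanachSpace}, we have $\|\phi\bullet\Psi\|\le\|\phi\|_\infty\|\Psi\|$, so the map is a bounded bilinear map between Banach spaces and hence jointly continuous.

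I expect no real obstacle here. The one step needing care is the equivariance convention. A secondary point, only a remark, is that joint continuity fails in the weaker topologies because $\phi B$ is not controlled uniformly in $\phi$; for the statement it suffices to record separate continuity.
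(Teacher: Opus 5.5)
Your well-definedness check and continuity arguments are correct and match the paper's proof. The paper calls separate continuity a direct consequence of the definitions, and your identity $\|\phi\bullet\Psi\|_{(C;B)}=\|\Psi\|_{(C;\phi B)}$ together with the uniform total-variation bound fills this in. Joint continuity in the uniform strong topology is exactly the paper's estimate $\|\phi\bullet\Psi\|\le\|\phi\|_\infty\|\Psi\|$.

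The equivariance step, which you singled out as the delicate point, is computed incorrectly. In $\pi(g)(\phi\bullet\Psi)$ only the test function is transformed, not the multiplier:
\[
[\pi(g)(\phi\bullet\Psi)](K;\eta)=[\phi\bullet\Psi](g^{-1}K;g^{-1}\cdot\eta)=\Psi\bigl(g^{-1}K;\phi\cdot(g^{-1}\cdot\eta)\bigr).
\]
Your expression $\Psi(g^{-1}K;(g^{-1}\phi)\cdot(g^{-1}\cdot\eta))=\Psi(g^{-1}K;g^{-1}\cdot(\phi\eta))$ is instead $[\phi\bullet\pi(g)\Psi](K;\eta)$. So your first equality tacitly replaces $\pi(g)(\phi\bullet\Psi)$ by $\phi\bullet\pi(g)\Psi$. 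The second equality also fails, because $[(g^{-1}\phi)\bullet\pi(g)\Psi](K;\eta)=\Psi(g^{-1}K;(g^{-2}\phi)\cdot(g^{-1}\cdot\eta))$. The identity $\pi(g)(\phi\bullet\Psi)=(g^{-1}\phi)\bullet\pi(g)\Psi$ you arrive at is false in general. Testing it against the correct identity below on the constant area measures $\Area_0(\R^n)\cong\M(S^{n-1})$ forces $g\phi=g^{-1}\phi$ for all $\phi$. Moreover, $g\mapsto(\phi\mapsto g^{-1}\phi)$ is not a left action.

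No change of convention is needed. Apply your correct rule $g^{-1}\cdot(\psi\eta)=(g^{-1}\psi)(g^{-1}\cdot\eta)$ with $\psi=g\phi$:
\[
[(g\phi)\bullet\pi(g)\Psi](K;\eta)=\Psi\bigl(g^{-1}K;g^{-1}\cdot((g\phi)\eta)\bigr)=\Psi\bigl(g^{-1}K;\phi\cdot(g^{-1}\cdot\eta)\bigr).
\]
Hence $\pi(g)(\phi\bullet\Psi)=(g\phi)\bullet\pi(g)\Psi$. This is literal equivariance for the $0$-homogeneous left action $[g\phi](v)=\phi(g^Tv/|g^Tv|)$ fixed just before the lemma.

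This exact form matters downstream. \autoref{corollary:moduleProductSmoothVectors} uses it to conclude that $\phi\bullet\Psi$ is $\GL(n,\R)$-smooth (resp.\ strongly $\GL(n,\R)$-continuous). That conclusion relies on $C^\infty(S^{n-1})$ (resp.\ $C(S^{n-1})$) consisting of smooth (resp.\ continuous) vectors for precisely this action.
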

		\begin{proof}
			The equivariance and separate continuity with respect to the given topologies are a simple consequence of the definition. Moreover, we have
			\begin{align*}
				\|\phi\bullet \Psi\|=\sup_{K\subset B_1(0),\|\psi\|_\infty\le1}|\Psi(K,\phi\cdot\psi)|\le \|\phi\|_\infty \|\Psi\|.
			\end{align*}
			Thus this map is continuous in the uniform strong topology.
		\end{proof}

		\begin{corollary}\label{corollary:moduleProductSmoothVectors}
			The map
			\begin{align*}
				C^\infty(S^{n-1})\times \Area^\infty(\R^n)&\mapsto \Area^\infty(\R^n)\\
				(\phi,\Psi)&\mapsto \phi\bullet \Psi
			\end{align*}
			is well defined. Similarly, we have a well defined and continuous map
			\begin{align*}
					C(S^{n-1})\times \Area^0(\R^n)&\mapsto \Area^0(\R^n)\\
					(\phi,\Psi)&\mapsto \phi\bullet \Psi
			\end{align*}
			with respect to the uniform strong topology.
		\end{corollary}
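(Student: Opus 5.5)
The plan is to use the $\GL(n,\R)$-equivariance of the module map from \autoref{lemma:continuityModuleStructure}, namely $\pi(g)(\phi\bullet\Psi)=(g\phi)\bullet(\pi(g)\Psi)$, where $g$ acts on $C(S^{n-1})$ by the $0$-homogeneous action. This identity is checked directly from the definitions, since $g^{-1}\cdot(\phi\psi)=(g^{-1}\phi)\cdot(g^{-1}\cdot\psi)$ for the $1$-homogeneous and $0$-homogeneous actions. Once it is in hand, the orbit map of $\phi\bullet\Psi$ factors as
\begin{align*}
	g\mapsto (g\phi,\pi(g)\Psi)\mapsto (g\phi)\bullet(\pi(g)\Psi).
\end{align*}

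For the second statement, take $\phi\in C(S^{n-1})$ and $\Psi\in\Area^0(\R^n)$. The map $g\mapsto g\phi$ is continuous into $C(S^{n-1})$ with the sup norm. This holds because $(g,v)\mapsto g^Tv/|g^Tv|$ is continuous and $\phi$ is uniformly continuous on the compact sphere, so uniformity holds for $g$ in compact neighborhoods. The map $g\mapsto\pi(g)\Psi$ is continuous in the uniform strong topology by definition of $\Area^0(\R^n)$. Joint continuity of the bilinear map in the uniform strong topology, via the bound $\|\phi\bullet\Psi\|\le\|\phi\|_\infty\|\Psi\|$, then gives continuity of the orbit map, so $\phi\bullet\Psi\in\Area^0(\R^n)$. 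Continuity of $(\phi,\Psi)\mapsto\phi\bullet\Psi$ on $C(S^{n-1})\times\Area^0(\R^n)$ is the restriction of \autoref{lemma:continuityModuleStructure}.

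For the first statement, take $\phi\in C^\infty(S^{n-1})$ and $\Psi\in\Area^\infty(\R^n)$. Since $\Psi$ is $\GL(n,\R)$-smooth, $g\mapsto\pi(g)\Psi$ is smooth into the Banach space $(\Area(\R^n),\|\cdot\|)$ by the proposition identifying smoothness in the three topologies. Next I would show that $g\mapsto g\phi$ is smooth as a map into the Banach space $C(S^{n-1})$ with the sup norm. The function $(g,v)\mapsto\phi(g^Tv/|g^Tv|)$ is smooth on $\GL(n,\R)\times S^{n-1}$, so all $g$-derivatives exist and are jointly continuous. Hence difference quotients converge uniformly in $v$ by compactness of $S^{n-1}$, which makes the map $C^\infty$ into $C(S^{n-1})$. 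Finally, $(\phi,\Psi)\mapsto\phi\bullet\Psi$ is a continuous bilinear map of Banach spaces, so composing it with two smooth maps gives a smooth map by the product rule. Therefore $g\mapsto\pi(g)(\phi\bullet\Psi)$ is smooth in the uniform strong topology, and $\phi\bullet\Psi\in\Area^\infty(\R^n)$.

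The main obstacle is the smoothness of $g\mapsto g\phi$ in the sup norm, which needs uniform control of derivatives, and the precise form of the equivariance identity. Both are routine, so I expect the argument to be short.
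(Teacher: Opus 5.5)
Your proposal is correct and follows the same route as the paper. The paper's proof also combines the $\GL(n,\R)$-equivariance $\pi(g)(\phi\bullet\Psi)=(g\phi)\bullet(\pi(g)\Psi)$, the joint continuity of $\bullet$ in the uniform strong topology, and the fact that smooth (resp.\ continuous) functions are smooth (resp.\ continuous) vectors for the action of $\GL(n,\R)$ on $C(S^{n-1})$; you simply make explicit the details the paper leaves implicit, namely the product rule for a continuous bilinear map and the uniform-in-$v$ control of $g\mapsto g\phi$.
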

		\begin{proof}
			This follows directly from the fact that this map is $\GL(n,\R)$-equivariant, bilinear, and continuous with respect to the uniform strong topology, together with the observation that $C^\infty(S^{n-1})$ is the set of smooth vectors of the natural representation of $\GL(n,\R)$ on $C(S^{n-1})$.\\
			Similarly, $C(S^{n-1})$ is a continuous representation of $\GL(n,\R)$, so in the second case, the image is contained in the subspace of strongly $\GL(n,\R)$-continuous area measures.
		\end{proof}
		\begin{corollary}\label{corollary:topDegreeStronglyCont}
			Every element in $\Area_{n-1}(\R^n)$ is strongly $\GL(n,\R)$-continuous.
		\end{corollary}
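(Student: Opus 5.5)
The plan is to combine the description of $\Area_{n-1}(\R^n)$ in \autoref{theorem:characterizationTopDegree} with the second part of \autoref{corollary:moduleProductSmoothVectors}. By \autoref{theorem:characterizationTopDegree}, every $\Psi\in\Area_{n-1}(\R^n)$ can be written as $\Psi=\phi\bullet S$ for some $\phi\in C(S^{n-1})$, where $S=S_{n-1}$ denotes the surface area measure. The continuous module product $C(S^{n-1})\times\Area^0(\R^n)\rightarrow\Area^0(\R^n)$ of \autoref{corollary:moduleProductSmoothVectors} then shows $\Psi\in\Area^0(\R^n)$, provided that $S\in\Area^0(\R^n)$. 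The proof therefore reduces to showing that the surface area measure is strongly $\GL(n,\R)$-continuous.

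For this we use Eq.~\eqref{eq:GroupActionSurfaceArea}, which gives $\pi(g)S=|\det g|^{-1}S$. The orbit map $g\mapsto \pi(g)S=|\det g|^{-1}S$ is a continuous scalar function times a fixed vector. It is therefore continuous, and in fact smooth, with respect to the uniform strong topology, since $\|\pi(g)S-\pi(h)S\|=\bigl||\det g|^{-1}-|\det h|^{-1}\bigr|\,\|S\|$ and $\|S\|<\infty$ by \autoref{corollary:AreaBanachSpace}. Hence $S\in\Area^\infty(\R^n)\subset\Area^0(\R^n)$.

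One could also argue directly, without appealing to \autoref{corollary:moduleProductSmoothVectors}. By the $\GL(n,\R)$-equivariance in \autoref{lemma:continuityModuleStructure}, $\pi(g)(\phi\bullet S)=(g\phi)\bullet\pi(g)S=|\det g|^{-1}(g\phi)\bullet S$, where $g\phi$ is the $0$-homogeneous action. Then
\begin{align*}
\|\pi(g)\Psi-\pi(h)\Psi\|\le \bigl\||\det g|^{-1}g\phi-|\det h|^{-1}h\phi\bigr\|_\infty\|S\|,
\end{align*}
using the estimate $\|\psi\bullet S\|\le\|\psi\|_\infty\|S\|$. This tends to $0$ as $g\to h$, because $\phi$ is uniformly continuous on $S^{n-1}$ and $v\mapsto g^Tv/|g^Tv|$ depends continuously on $g$, uniformly in $v$.

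The only point requiring care is the equivariance bookkeeping: one has to check that the $1$-homogeneous action in the definition of $\pi$ and the $0$-homogeneous action on the multiplier $\phi$ are compatible, so that $\pi(g)(\phi\bullet\Psi)=(g\phi)\bullet\pi(g)\Psi$. This is exactly the content of \autoref{lemma:continuityModuleStructure}, because $(g^{-1}\cdot(\phi\psi))=(g^{-1}\phi)\cdot(g^{-1}\cdot\psi)$ when the first factor is $0$-homogeneous and the second is $1$-homogeneous. Beyond that, the argument is routine.
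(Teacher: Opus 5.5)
Your proposal is correct and follows the paper's proof. Like the paper, you reduce via \autoref{theorem:characterizationTopDegree} and \autoref{corollary:moduleProductSmoothVectors} to the surface area measure, whose orbit $g\mapsto|\det g|^{-1}S_{n-1}$ is obviously continuous in the uniform strong topology. Your direct estimate via $\pi(g)(\phi\bullet S)=|\det g|^{-1}(g\phi)\bullet S$ is a correct explicit unpacking of that corollary.
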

		\begin{proof}
			Since any element in $\Area_{n-1}(\R^n)$ is given by $\phi\bullet S_{n-1}$ for $\phi\in C(S^{n-1})$ by \autoref{theorem:characterizationTopDegree}, \autoref{corollary:moduleProductSmoothVectors} implies that it is sufficient to show that $S_{n-1}$ is strongly $\GL(n,\R)$-continuous. This follows since $\pi(g)S_{n-1}=|\det(g)|^{-1}S_{n-1}$ for any $g\in\GL(n,\R)$, compare Eq.~\eqref{eq:GroupActionSurfaceArea}.
		\end{proof}

			We also obtain the following stronger version of \autoref{corollary:equivConstructionAreaSm}.
			\begin{corollary}
			\label{corollary:ConstructionSmoothAreaNormalCycle}
			The map 
			\begin{align*}
				\mathcal{J}^n(\R^n)^{tr}&\rightarrow\Area^\infty(\R^n)\\
				\tau&\mapsto \Phi_{i_T\tau}
			\end{align*}
			is well-defined and injective.
		\end{corollary}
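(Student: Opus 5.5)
Injectivity and well-definedness as a map into $\Area(\R^n)$ are already contained in \autoref{corollary:equivConstructionAreaSm}. It therefore only remains to show that $\Phi_{i_T\tau}$ is $\GL(n,\R)$-smooth. The plan is to transfer the smoothness of the orbit map on forms to the orbit map on area measures.

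By \autoref{corollary:equivConstructionAreaSm},
\begin{align*}
\pi(g)\Phi_{i_T\tau}=\sign(\det g)\,\Phi_{i_T(G_{g^{-1}}^*\tau)}.
\end{align*}
The factor $\sign(\det g)$ is locally constant, so it suffices to show that $g\mapsto \Phi_{i_T(G_{g^{-1}}^*\tau)}$ is smooth with respect to the uniform strong topology, i.e.\ the norm $\|\cdot\|$ from \autoref{corollary:AreaBanachSpace}.

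\textbf{Step 1: smoothness on the level of forms.} Since $\tau$ is translation invariant, $\tau$ is determined by its coefficient functions on $S^{n-1}$ in the frame $dx_i$, $dv_j$ (pulled back from $\R^n\times S^{n-1}$). The action $G_{g^{-1}}$ is smooth in $(g,x,v)$, and $T$ is a fixed smooth vector field. Hence $g\mapsto i_T G_{g^{-1}}^*\tau$ is a smooth map
\begin{align*}
\GL(n,\R)\rightarrow\Omega^{n-1}(\R^n\times S^{n-1})^{tr},
\end{align*}
where the target carries the $C^0$ norm $\|\cdot\|_\infty$. Moreover, all $g$-derivatives of this map are again continuous translation invariant forms, which are continuous in $g$ and locally uniformly bounded.

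\textbf{Step 2: continuity of $\omega\mapsto\Phi_\omega$.} Decompose $\omega$ into its bidegree components. By \autoref{proposition:boundSmoothArea},
\begin{align*}
\|\Phi_\omega\|\le C\sum_k \|\omega_k\|_\infty\, \sup_{K\subset B_1(0)} V_k(K)\le C'\|\omega\|_\infty .
\end{align*}
Thus $\omega\mapsto\Phi_\omega$ is a bounded linear map from translation invariant $(n-1)$-forms with the sup norm into the Banach space $\Area(\R^n)$. Note that $\Phi_\omega$ only needs $\omega$ continuous for the estimate. Alternatively, one keeps everything inside smooth forms, since the $g$-derivatives are obtained by applying Lie derivatives along the fundamental vector fields of the action, and these are smooth.

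\textbf{Step 3: composing.} A bounded linear map composed with a $C^\infty$ map into a normed space is $C^\infty$. Hence $g\mapsto\pi(g)\Phi_{i_T\tau}$ is smooth in the uniform strong topology, and so $\Phi_{i_T\tau}\in\Area^\infty(\R^n)$.

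The main point requiring care is Step 1: establishing Fréchet smoothness (not just pointwise smoothness) of $g\mapsto G_{g^{-1}}^*\tau$ in the sup norm. I would argue via Taylor expansion with a remainder that is uniform on the compact set $S^{n-1}\times(\text{compact neighborhood of } g)$. This works because the coefficients depend smoothly on $(g,v)$ jointly, and $S^{n-1}$ is compact.
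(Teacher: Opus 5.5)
Your proposal is correct and follows the paper's proof: equivariance from \autoref{corollary:equivConstructionAreaSm}, the norm bound from \autoref{proposition:boundSmoothArea}, and smoothness of the orbit map transferred through a continuous linear map into the Banach space $\Area(\R^n)$. The only difference is that the paper simply cites the fact that $\mathcal{J}^n(\R^n)^{tr}$ is a smooth Fr\'echet representation of $\GL(n,\R)$, whereas you check by hand that $g\mapsto i_T G_{g^{-1}}^*\tau$ is smooth in the sup norm.
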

		\begin{proof}
			Due to \autoref{corollary:equivConstructionAreaSm}, we only need to check that $\Phi_{i_T\tau}$ is a $\GL(n,\R)$-smooth area measure.	Note that the map
			\begin{align*}
				\mathcal{J}^n(\R^n)^{tr}&\rightarrow\Area(\R^n)\\
				\tau&\mapsto \Phi_{i_T\tau}
			\end{align*}
			is continuous with respect to the uniform strong topology due to \autoref{proposition:boundSmoothArea}. Since this map commutes with the natural operation of $\GL(n,\R)$ up to the locally constant function $g\mapsto \sign(\det g)$, $\mathcal{J}^n(\R^n)^{tr}$ is a smooth Fr\'echet representation of $\GL(n,\R)$, and the map is linear as well as continuous with respect to uniform strong topology on $\Area(\R^n)$, its image consists of $\GL(n,\R)$-smooth vectors in $\Area(\R^n)$. 
		\end{proof}

\section{Characterization of $\GL(n,\R)$-smooth area measures}\label{section:CharcSmoothArea}
	\subsection{Relation to local functionals on convex functions and vertical localization}
	As discussed in the introduction, the characterization of $\GL(n,\R)$-smooth area measures relies on a corresponding regularity result obtained in \cite{KnoerrPolynomiallocalfunctionals2025,KnoerrIntegralrepresentationpolynomial2026} for the following class of local functionals on convex functions:
	\begin{definition}
		We denote by $\LV(\R^n)$ the space of all maps $\Psi:\Conv(\R^n,\R)\rightarrow\M(\R^n)$ with the following properties:
		\begin{enumerate}
			\item $\Psi$ is continuous with respect to the weak* topology on $\M(\R^n)\cong C_c(\R^n)'$.
			\item $\Psi$ is locally determined: If $f,h\in\Conv(\R^n,\R)$ are two functions with $f|_U=h|_U$ on an open subset $U\subset\R^n$, then $\Psi(f)|_U=\Psi(h)|_U$.
			\item $\Psi$ is dually epi-translation invariant: $\Psi(f+\ell)=\Psi(f)$ for every $f\in\Conv(\R^n,\R)$, $\ell:\R^n\rightarrow\R$ affine.
		\end{enumerate}
	\end{definition}
	We equip $\LV(\R^n)$ and its subspaces with the topology induced from the semi-norms
	\begin{align*}
		\Psi\mapsto \sup_{f\in K}|\Psi(f;\phi)|
	\end{align*}
	where $K\subset\Conv(\R^n,\R)$ is a compact subset and $\phi\in C_c(\R^n)$, which we call the \emph{compact-to-weak*} topology. 
	
In \cite{KnoerrPolynomiallocalfunctionals2025}, a notion of support for elements in $\LV(\R^n)$ was introduced similar to the local vertical support considered in \autoref{section:HomDecompArea}. We will omit its precise definition and only state the following characterization (which may also serve as a definition).
	\begin{proposition}[\cite{KnoerrPolynomiallocalfunctionals2025}*{Corollary~4.17.}]
		\label{proposition:chacterizationLocSupp}
		For $\Psi\in \LV(\R^n)$, its local support  $\supploc\Psi$ is the unique minimum among all closed subsets $A\subset\R^n$ with the following properties:
		\begin{enumerate}
			\item $\supp\Psi(f)\subset A$ for all $f\in\Conv(\R^n,\R)$.
			\item If $f,h\in\Conv(\R^n,\R)$ satisfy $f=h$ on an open neighborhood of $A$, then $\Psi(f)=\Psi(h)$.
		\end{enumerate}
	\end{proposition}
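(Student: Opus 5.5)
The statement is the characterization of $\supploc\Psi$ for $\Psi\in\LV(\R^n)$ as the unique minimal closed set satisfying (1) and (2). I would prove it by transcribing the argument for \autoref{proposition:characterization_localVsupp} to convex functions. The local support is defined through a distribution attached to the homogeneous components of $\Psi$, exactly as $\vsupploc$ is built from $\widehat{\GW}$.

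\emph{Decomposition and distributions.} I first use the homogeneous decomposition of $\LV(\R^n)$ under $f\mapsto tf$; this decomposition comes from the valuation property, which holds by the analogue of \autoref{theorem:valuationProperty}. For each $k$-homogeneous component $\Psi_k$, polarization together with the Schwartz kernel argument of \autoref{corollary:modifiedGWArea} gives a distribution $\widehat{\GW}(\Psi_k)$ on $(\R^n)^{k+1}$. The locality argument of \autoref{proposition:SupportGWArea} shows that it is supported on the diagonal. I take $\supploc\Psi$ to be the union of the preimages of these supports under the diagonal embedding.

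\emph{$\supploc\Psi$ satisfies (1).} If $\phi\in C_c(\R^n)$ vanishes near $\supploc\Psi$, then $\widehat{\GW}(\Psi_k)[h_1\otimes\dots\otimes h_k\otimes\phi]=0$ for all $k$. Smooth strictly convex functions span a dense set on compacts, so $\Psi_k(f;\phi)=0$ for such $f$. Weak* continuity then gives $\supp\Psi(f)\subset\supploc\Psi$ for every $f\in\Conv(\R^n,\R)$.

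\emph{$\supploc\Psi$ satisfies (2).} Suppose $f=h$ near $A=\supploc\Psi$. I localize with a partition of unity and argue with approximants: the distribution vanishes on tensors in which one factor is supported away from $A$. Combining this with the polarization formula and multilinearity shows that $\Psi_k(f)=\Psi_k(h)$.

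\emph{Minimality.} Let $A$ be any closed set satisfying (1) and (2), and let $v\notin A$. Choose a neighborhood $U$ of $v$ disjoint from a neighborhood of $A$. Take test functions $\phi_1,\dots,\phi_{k+1}$ supported in $U$ and perturb $f_0=|x|^2$ to $f_0+\sum_j\lambda_j\phi_j$, which is still convex for small $\lambda$. By (2), $\Psi$ of the perturbation is independent of $\lambda$. Differentiating in $\lambda$ then kills $\widehat{\GW}(\Psi_k)$ near $(v,\dots,v)$, and (1) handles the case where the last factor alone is supported away from $A$. Hence $v\notin\supploc\Psi$, so $\supploc\Psi\subset A$.

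The main obstacle is the second step. The valuation property has to be obtained in the non-compact setting, and the polarization has to be extended to the multilinear distribution with the uniform estimates needed to pass from smooth strictly convex functions to arbitrary convex ones. The density of differences of smooth convex functions in $C^\infty_c$ is fine locally, but these estimates must be local on compacts.
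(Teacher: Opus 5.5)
Your proposal is correct and takes essentially the same approach as the paper. The paper gives no proof here: it imports the statement from \cite{KnoerrPolynomiallocalfunctionals2025}*{Corollary~4.17}, noting that it may serve as a definition. Your reconstruction follows the scheme the paper uses for $\vsupploc$ in \autoref{corollary:modifiedGWArea}, \autoref{proposition:SupportGWArea} and \autoref{proposition:characterization_localVsupp}: define the support through the diagonal supports of the Goodey--Weil-type distributions of the homogeneous components, then derive (1), (2) and minimality by smoothing and perturbation.

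One step needs to be stated more carefully. In the minimality step, apply (2) to each homogeneous component $\Psi_k$ before differentiating in $\lambda$. Differentiating $\Psi$ itself would pick up terms from the components of higher degree. Applying (2) to $\Psi_k$ is legitimate because $\Psi_k$ is a fixed linear combination of the maps $f\mapsto\Psi(jf)$, and $jf=jh$ near $A$ whenever $f=h$ there. With this, only $\widehat{\GW}(\Psi_k)$ appears, and property (1) is needed only for the constant component $\Psi_0\equiv\Psi(0)$.
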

	
	Let $S^n_-=\{v=(v_1,\dots,v_{n+1})\in S^{n}: v_{n+1}<0\}$ denote the negative open half sphere. We have a natural map $I:C_c(\R^n)\rightarrow C_c(S^n_-)$ defined by associating to $\phi\in C_c(\R^n)$ the $1$-homogeneous function on $\R^n\times \R$ given by
	\begin{align*}
		(y,t)\mapsto \begin{cases}
			0& t\ge 0,\\
			|t|\phi\left(\frac{y}{|t|}\right)& t<0,
		\end{cases}
	\end{align*}
	and letting $I(\phi)$ denote the restriction to $S^n$. 
	Consider the diffeomorphism
	\begin{align*}
		P:\R^n&\rightarrow S^n_-\\
		y&\mapsto \frac{(y,-1)}{\sqrt{1+|y|^2}}.
	\end{align*}
	Its inverse is given by $(y,t)\mapsto \frac{y}{|t|}$, so we have $[I(\phi)](y,t)= |t| [\phi\circ P^{-1}](y,t)$ for $\phi\in C_c(\R^n)$. In particular $[I^{-1}(\psi)]=\frac{1}{\sqrt{1+|y|^2}}[\psi\circ P](y)$ for $\psi\in C_c(S^n_-)$.

	Consider the homomorphism $H:\R^n\rightarrow \GL(n+1,\R)$ given by
	\begin{align*}
		x\mapsto \begin{pmatrix}
			I_n & 0\\
			-x^T & 1
		\end{pmatrix}.
	\end{align*}
	We will consider the group of translations of $\R^n$ as a subgroup of $\GL(n+1,\R)$ using this identification. Tracing through the definitions, we obtain the following.
	
	\begin{lemma}\label{lemma:equivariance1homExtension}
		The map $I:C_c(\R^n)\rightarrow C(S^n)$ is equivariant in the following sense: If $t_x:\R^n\rightarrow\R^n$ denotes the translation $y\mapsto y+x$, then
		\begin{align*}
			[I(\phi\circ t_x)](v,t)=|H(x)^T(v,t)|I(\phi)\left(\frac{H(x)^T(v,t)}{|H(x)^T(v,t)|}\right)\quad\text{for}~(v,t)\in S^{n},
		\end{align*}
	\end{lemma}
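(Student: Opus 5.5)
The plan is to verify the identity directly from the definition of $I$, using that $I(\phi)$ is the restriction to $S^n$ of a $1$-homogeneous function. Let $F_\phi:\R^n\times\R\rightarrow\C$ denote the $1$-homogeneous extension used in the definition of $I$, i.e.
\begin{align*}
	F_\phi(y,t)=\begin{cases}
		0& t\ge 0,\\
		|t|\phi\left(\frac{y}{|t|}\right)& t<0,
	\end{cases}
\end{align*}
so that $I(\phi)=F_\phi|_{S^n}$. Since $F_\phi(\lambda w)=\lambda F_\phi(w)$ for $\lambda>0$ and $H(x)^T$ is invertible (so $H(x)^T(v,t)\neq 0$), the right-hand side of the claimed identity is exactly $F_\phi(H(x)^T(v,t))$. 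Thus the statement reduces to the pointwise identity $F_{\phi\circ t_x}(v,t)=F_\phi(H(x)^T(v,t))$ for $(v,t)\in S^n$.

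Next I compute the transpose:
\begin{align*}
	H(x)^T=\begin{pmatrix}
		I_n & -x\\
		0 & 1
	\end{pmatrix},\qquad H(x)^T(v,t)=(v-tx,t).
\end{align*}
In particular the last coordinate is unchanged. So if $t\ge 0$, both sides vanish by definition of $F$. If $t<0$, then $-t=|t|$ and
\begin{align*}
	F_\phi(v-tx,t)=|t|\phi\left(\frac{v+|t|x}{|t|}\right)=|t|\phi\left(\frac{v}{|t|}+x\right)=|t|(\phi\circ t_x)\left(\frac{v}{|t|}\right)=F_{\phi\circ t_x}(v,t).
\end{align*}
This proves the identity on all of $S^n$.

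There is no real obstacle. The only points needing care are the sign convention $-t=|t|$ for $t<0$, and the use of the transpose $H(x)^T$ rather than $H(x)$; this matches the convention $[g\cdot\phi](v)=|g^Tv|\phi(g^Tv/|g^Tv|)$ from the introduction. One should also note that $I(\phi)$ is indeed continuous, with compact support in $S^n_-$: $\phi$ has compact support, so $F_\phi$ vanishes near $\{t=0\}\cap S^n$. Hence both sides are well-defined elements of $C(S^n)$.
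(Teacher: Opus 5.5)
Your proof is correct and is exactly the direct verification the paper has in mind when it says the lemma follows by tracing through the definitions. You use $1$-homogeneity to reduce to $F_{\phi\circ t_x}(v,t)=F_\phi(v-tx,t)$ and then check the cases $t\ge 0$ and $t<0$, which is all that is needed.
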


	\begin{corollary}\label{corollary:adjoint1HomFunction}
		Let $A\subset \R^n$ be compact. The adjoint $I^*:\M(S^n)\rightarrow \M(\R^n)$ restricts to a well defined and bijective map 
		\begin{align*}
			\{T\in \M(S^n):\supp T\subset P(A)\}\rightarrow \{\nu\in \M(\R^n):\supp \nu\subset A\}
		\end{align*}
		which is a topological isomorphism if both spaces are equipped with the weak* topology.
	\end{corollary}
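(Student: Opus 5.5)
The plan is to show that $I$, restricted to functions supported in a neighborhood of $A$, is a topological isomorphism onto functions supported in a neighborhood of $P(A)$, and then dualize. Fix a compact neighborhood $A'$ of $A$. Because $P:\R^n\rightarrow S^n_-$ is a diffeomorphism and $P(A)$ is a compact subset of the open half sphere, $P(A')$ is a compact neighborhood of $P(A)$ contained in $S^n_-$. From the formula
\begin{align*}
	[I(\phi)](y,t)=|t|\,[\phi\circ P^{-1}](y,t),\qquad [I^{-1}(\psi)](y)=\frac{1}{\sqrt{1+|y|^2}}[\psi\circ P](y),
\end{align*}
the weight $|t|$ is continuous and bounded away from $0$ on $P(A')$. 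Hence $I$ maps $C_{A'}(\R^n):=\{\phi\in C(\R^n):\supp\phi\subset A'\}$ bijectively onto $C_{P(A')}(S^n)$, with $\|I\phi\|_\infty\le\|\phi\|_\infty$ and $\|I^{-1}\psi\|_\infty\le\|\psi\|_\infty$. Here functions supported in $P(A')\subset S^n_-$ are extended by $0$ to all of $S^n$, which is consistent with the definition of $I$ via $t\ge0$.

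\emph{Well-definedness.} Let $T\in\M(S^n)$ with $\supp T\subset P(A)$, and define $I^*T(\phi):=T(I\phi)$ for $\phi\in C_c(\R^n)$. If $\supp\phi\cap A=\emptyset$, then $I\phi$ vanishes on $P(A)$, so $I^*T(\phi)=0$. Therefore $\supp I^*T\subset A$. Since $A$ is compact, $I^*T$ is a finite measure.

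\emph{Bijectivity.} Given $\nu$ with $\supp\nu\subset A$, define $T(\psi):=\nu(\chi\cdot I^{-1}(\psi|_{S^n_-}\chi'))$, where $\chi'\in C_c(S^n_-)$ equals $1$ near $P(A)$ and $\chi$ is a matching cutoff. More cleanly, set $T(\psi):=\nu\big(I^{-1}(\rho\psi)\big)$ for a cutoff $\rho\in C_c(S^n_-)$ with $\rho=1$ near $P(A)$. This $T$ is a bounded functional on $C(S^n)$, it is supported in $P(A)$, and it satisfies $I^*T=\nu$, because $\rho\,I\phi$ and $I\phi$ agree near $P(A)$. For injectivity, suppose $I^*T=0$. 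Any $\psi\in C(S^n)$ can be written near $P(A)$ as $\rho\psi=I(I^{-1}(\rho\psi))$, and $T(\psi)=T(\rho\psi)$ because $\supp T\subset P(A)$. Hence $T=0$. The choice of the cutoff does not affect $T$, for the same support reason.

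\emph{Topological isomorphism.} $I^*$ is continuous for the weak* topologies, since $\phi\mapsto T(I\phi)$ is evaluation at the fixed function $I\phi\in C(S^n)$. The inverse sends $\nu$ to $T$, and $T(\psi)=\nu(I^{-1}(\rho\psi))$ is evaluation of $\nu$ at the fixed function $I^{-1}(\rho\psi)\in C_c(\R^n)$. So the inverse is weak* continuous as well.

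The only point needing care is that supports behave correctly under $I$ and the cutoff. This works because $P(A)$ is compact inside the open half sphere and $|t|>0$ there, so no degeneration occurs at the equator $t=0$. That is also where compactness of $A$ is used.
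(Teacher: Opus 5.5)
Your proof is correct and follows essentially the same route as the paper: both arguments choose a cutoff $\rho\in C_c(S^n_-)$ with $\rho=1$ near $P(A)$, so that measures supported in $P(A)$ only see $C(S^n)$ through $C_c(S^n_-)$, where $I^*$ and its inverse are weighted pushforwards along $P$. Your explicit inverse $T(\psi)=\nu\big(I^{-1}(\rho\psi)\big)$ and your weak* continuity argument via evaluation at fixed test functions spell out what the paper leaves implicit; the preliminary discussion of the neighborhood $A'$ is never used and can be dropped.
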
	
	\begin{proof}
		This follows since $I^*$ and $(I^*)^{-1}$ are given by pushforwards along diffeomorphisms up to multiplication with a continuous function. The only thing to note is that the topology of $\{T\in \M(S^n):\supp T\subset P(A)\}$, where $\M(S^n)\cong C(S^n)'$ is equipped with the weak* topology, is the same as the topology of $\{T\in \M(S_-^n):\supp T\subset P(A)\}$ for $\M(S_-^n)=C_c(S^n_-)'$ equipped with the weak* topology. This is the case since we can choose $\psi\in C_c(S^n_-)$ with $\psi=1$ on the compact set $P(A)$ and multiply any function in $C(S^n)$ with $\psi$ without affecting the values of the measures evaluated in these functions.  
	\end{proof}
	
	Let us denote by $\LV_c(\R^n)$ the subspace of all $\Psi\in \LV(\R^n)$ such that $\supploc\Psi$ is compact. If $A$ is a compact set with $\supploc\Psi\subset A$ , then the support of $\Psi(h_{K}(\cdot,-1))$ is also contained in $A$ independent of $K\in\K(\R^n\times\R)$, compare \autoref{proposition:characterization_localVsupp}. In particular, \autoref{corollary:adjoint1HomFunction} shows that the map
	\begin{align*}
		T(\Psi):\K(\R^n\times\R)&\rightarrow \M(S^n)\\
		K&\mapsto (I^{-1})^*(\Psi(h_{K}(\cdot,-1)))
	\end{align*}
	is well defined and continuous. From the definition, we directly obtain that $T(\Psi)$ is locally determined. Moreover, \autoref{lemma:propertiesSupportFunction} shows that $T(\Psi)$ is translation invariant, so $T(\Psi)\in \Area(\R^n\times\R)$.

	\begin{theorem}\label{theorem:isomorphismToLocalFunctionals}
		The map $T:\LV_{c}(\R^n)\rightarrow\{\Psi\in\Area(\R^n\times\R):\vsupploc\Psi\subset S^{n}_-\}$ is bijective. For every compact subset $A\subset \R^n$, its restriction
		\begin{align*}
			T:&\{\Psi\in\LV(\R^n):\supploc\Psi\subset A\}\\
			&\rightarrow\{\Psi\in\Area(\R^n\times\R):\vsupploc\Psi\subset P(A)\}
		\end{align*}
		is a topological isomorphism with respect to the uniform weak* topology.
	\end{theorem}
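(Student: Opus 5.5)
The plan is to write down an explicit inverse of $T$ and then check that both maps respect supports and are continuous. Given $\Phi\in\Area(\R^n\times\R)$ with $\vsupploc\Phi\subset P(A)$ for a compact $A\subset\R^n$, define $S(\Phi):\Conv(\R^n,\R)\rightarrow\M(\R^n)$ as follows. Fix a compact neighborhood $A'$ of $A$ with $P(A')\subset S^n_-$. For $f\in\Conv(\R^n,\R)$, the function $(y,t)\mapsto |t|f(y/|t|)$ on the cone over $P(\mathrm{int}A')$ is convex and $1$-homogeneous. We extend it to the support function of a convex body $K_f\in\K(\R^n\times\R)$ agreeing with it on a neighborhood of the cone over $P(A)$.

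One concrete way to build $K_f$: take a convex function $\tilde f$ that equals $f$ near $A$, is finite everywhere, and grows at most linearly, for instance the maximum of finitely many affine minorants of $f$ chosen on a neighborhood of $A$, smoothed appropriately. Its perspective is then a support function, since its Legendre dual has bounded domain. With $K_f$ in hand, set
\begin{align*}
	S(\Phi)(f):=I^*(\Phi(K_f)).
\end{align*}
By \autoref{proposition:characterization_localVsupp} (property (2)), $\Phi(K_f)$ does not depend on the choice of $K_f$, since any two choices have support functions agreeing near $P(A)$. By property (1), $\supp\Phi(K_f)\subset P(A)$, so \autoref{corollary:adjoint1HomFunction} applies. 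Note that $h_{K_f}(y,-1)=f(y)$ near $A$.

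The next step is to verify that $S(\Phi)\in\LV(\R^n)$ with $\supploc S(\Phi)\subset A$, and that $S$ and $T$ are mutually inverse.
\begin{enumerate}
	\item Locality of $S(\Phi)$ comes from locality of $\Phi$ transported through the diffeomorphism $P$.
	\item Dual epi-translation invariance follows from \autoref{lemma:propertiesSupportFunction}(3): adding $\langle x,y\rangle+c$ to $f$ corresponds to translating $K_f$ by $(x,-c)$, up to a choice that agrees near $P(A)$.
	\item For the supports, we need to show $\supploc T(\Psi)$-type inclusions. That $\vsupploc T(\Psi)\subset P(A)$ when $\supploc\Psi\subset A$ follows by checking the two conditions of \autoref{proposition:characterization_localVsupp} directly from those of \autoref{proposition:chacterizationLocSupp}. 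The reverse inclusion for $S(\Phi)$ is checked the same way.
	\item $S\circ T=\mathrm{id}$ and $T\circ S=\mathrm{id}$ are then immediate from $h_{K_f}(\cdot,-1)=f$ near $A$ and the independence of the choice of $K_f$.
\end{enumerate}
Bijectivity onto $\{\vsupploc\Psi\subset S^n_-\}$ follows by taking unions over compact $A$, because the support is compact and contained in the open half sphere.

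The remaining and main obstacle is continuity of $S(\Phi)$ and the topological statement. For continuity of $S(\Phi)$, I would show that the construction $f\mapsto K_f$ can be made locally continuous near each $f_0$. Fix affine functions that work for $f_0$ on a neighborhood of $A'$. Locally uniform convergence $f_j\to f_0$ then allows modifications $K_{f_j}\to K_{f_0}$ in the Hausdorff metric, for example $h_{K_{f_j}}$ built as the maximum of $f_j$'s perspective near the cone and a fixed background support function, cut off compatibly. Weak* continuity then follows from continuity of $\Phi$ and \autoref{corollary:adjoint1HomFunction}.

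For the topological isomorphism, the semi-norms must be compared. Compact sets of convex bodies map under $K\mapsto h_K(\cdot,-1)$ to compact subsets of $\Conv(\R^n,\R)$, which gives continuity of $T$. Conversely, a compact set $\mathcal C\subset\Conv(\R^n,\R)$ is locally uniformly bounded with uniform Lipschitz bounds near $A'$. Hence $\{K_f:f\in\mathcal C\}$ can be chosen inside a fixed ball, and the Blaschke selection theorem makes its closure compact, which gives continuity of $S$. Test functions correspond via $I$ and $I^{-1}$, using a cutoff equal to $1$ on $P(A)$ as in the proof of \autoref{corollary:adjoint1HomFunction}. The delicate point throughout is making the extension $f\mapsto K_f$ uniformly bounded and continuous on compacts; this is where I expect the bulk of the work.
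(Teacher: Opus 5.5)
Your proposal follows the paper's architecture closely. You extend $\Phi$ through bodies $K_f$ with $h_{K_f}(\cdot,-1)=f$ near $A$. You get independence of the choice and the support inclusions from \autoref{proposition:characterization_localVsupp} and \autoref{proposition:chacterizationLocSupp}. You compare semi-norms by placing $\{K_f:f\in\mathcal{C}\}$ in a fixed ball and using the Blaschke selection theorem; the paper takes $K_f=\epi(f^*)\cap\{|y|\le 2c,\ |t|\le(2R+3)c\}$. Your identity $S\circ T=\mathrm{id}$, which follows from $h_{K_f}(\cdot,-1)=f$ near $A\supset\supploc\Psi$, is a valid substitute for the paper's injectivity argument. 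The paper instead uses that the functions $h_K(\cdot,-1)$ are dense in $\Conv(\R^n,\R)$.

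The step you leave open is weak* continuity of $S(\Phi)$; the paper imports it from \cite{Knoerrsupportduallyepi2021}*{Proposition~6.17}. The mechanism you propose, a locally continuous selection $f\mapsto K_f$ obtained by gluing, is not justified. A maximum of a perspective defined only near a cone with a background support function, followed by a cut-off, need not be a convex $1$-homogeneous function on $\R^{n+1}$. The mechanism is also unnecessary, because your last paragraph already closes the gap:
\begin{itemize}
\item If $f_j\to f$, apply your uniform bound to the compact set $\{f_j\}\cup\{f\}$. All $K_{f_j}$ then lie in a fixed ball, with $h_{K_{f_j}}(\cdot,-1)=f_j$ on a fixed open neighborhood $U$ of $A$.
\item Any subsequence has, by Blaschke, a further subsequence $K_{f_{j_l}}\to K$. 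Then $h_K(\cdot,-1)=f$ on $U$, so $h_K=h_{K_f}$ near $P(A)$, and $\Phi(K)=\Phi(K_f)$ by independence of the choice.
\item Continuity of $\Phi$ gives $\Phi(K_{f_{j_l}};I(\phi))\to\Phi(K_f;I(\phi))$. Hence the whole sequence converges.
\end{itemize}
No continuous selection is needed.

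Your construction of $K_f$ also needs repair. A maximum of finitely many affine minorants cannot reproduce a non-polyhedral $f$ near $A$. Moreover, the perspective of $\tilde f$ is defined only for $t<0$, so by itself it is not a support function. Instead take $\tilde f=\sup\{f(z)+\langle v,\cdot-z\rangle: z\in A',\ v\in\partial f(z)\}$, which equals $f$ on $A'$, and set $K_f=\epi(\tilde f^*)\cap\{s\le M\}$. This set is compact, since $\tilde f^*$ has bounded domain and $\tilde f^*\ge-\tilde f(0)$. For $M\ge\sup\{\langle z,v\rangle-f(z): z\in A',\ v\in\partial f(z)\}$ the maximizers are not cut off, so $h_{K_f}(y,-1)=f(y)$ on the interior of $A'$. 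All these bounds are uniform over a compact $\mathcal{C}$, which is exactly what both continuity arguments require.
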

	\begin{proof}
		Using the characterization of the supports in \autoref{proposition:characterization_localVsupp} and \autoref{proposition:chacterizationLocSupp}, it is easy to check that this map is well defined. Moreover, since functions of the form $h_K(\cdot,-1)$ form a dense subset of $\Conv(\R^n,\R)$ (compare, for example, \cite{Knoerrsupportduallyepi2021}*{Corollary~2.10}), this map is injective. Using \autoref{corollary:adjoint1HomFunction}, we may interpret the set on the right hand side as a subset of $\Val(\R^n\times\R,\M_{A}(\R^n))$, where $\M_{A}(\R^n)$ denotes the space of complex measures on $\R^n$ with support in $A$. Under this identification, we obtain functionals with vertical support contained in $P(A)$, so as in  \cite{Knoerrsupportduallyepi2021}*{Proposition~6.17}, we may extend any $\Phi\in\Area(\R^n\times\R)$ with $\vsupploc\Phi\subset P(A)$ to $\Conv(\R^n,\R)$ in the following way: For every $f\in\Conv(\R^n,\R)$, fix a convex body $K\in\K(\R^n\times\R)$ with $f=h_K(\cdot,-1)$ on a neighborhood of $A$ (such a body exists, compare \cite{Knoerrsupportduallyepi2021}*{Proposition 2.9}), and set 
		\begin{align*}
			\Psi(f):=(I^{-1})^*[\Phi(K_f)].
		\end{align*}
		Then $\Psi:\Conv(\R^n,\R)\rightarrow \M(\R^n)$ is well defined and continuous with respect to the weak* topology, compare \cite{Knoerrsupportduallyepi2021}*{Proposition~6.17}. Since this extension does not depend on the precise choice of bodies $K_f$, it is now easy to check that $\Psi$ is locally determined and dually epi-translation invariant. Moreover, the characterization in \autoref{proposition:chacterizationLocSupp} directly shows that $\supploc\Psi\subset A$. Thus the map $T$ is surjective.\\
		In order to see that this map is a topological isomorphism, note first that we may choose $\psi\in C_c(S^n_-)$ with $\psi= 1$ on a neighborhood of $P(A)$. Then for any $\Psi\in \LV(\R^n)$ with $\supp\Psi\subset A$ and $\phi\in C(S^{n-1})$
		\begin{align*}
			&\sup_{K\subset B_1(0)}|[T(\Psi)](K;\phi)|=\sup_{K\subset B_1(0)}|[T(\Psi)](K;\psi\phi)|\\
			=&\sup_{K\subset B_1(0)}|\Psi(h_K(\cdot, -1);I^{-1}(\psi\phi)|.
		\end{align*}
		Since the set $\{h_K(\cdot,-1):K\subset B_1(0)\}$ is compact in $\Conv(\R^n,\R)$, this shows that the restriction of $T$ is continuous. For the converse, let $C\subset\Conv(\R^n,\R)$ be a compact subset and choose $R>0$ such that $A\subset B_R(0)$. It then follows from \cite{Knoerrsupportduallyepi2021}*{Proposition~2.4} that
		\begin{align*}
			c:=\sup_{f\in C, x\in B_{R+1}(0)}|f(x)|<\infty.
		\end{align*}
		Set
		\begin{align*}
			K_f:=\epi(f^*)\cap\{(y,t)\in\R^n\times \R:|y|\le 2c,~|t|\le(2R+3)c	\},
		\end{align*}
		where $\epi(f^*)$ denotes the epi-graph of the Legendre transform of $f$. Then $K_f$ is a convex body and $f=h_{K_f}(\cdot,-1)$ on $B_{R+1}(0)$ by \cite{Knoerrsupportduallyepi2021}*{Proposition~2.9}. Moreover, the set $\{K_f:f\in C\}$ is compact in $\K(\R^{n+1})$ by the Blaschke Selection Theorem,  since it is closed and bounded. By construction, for $\phi\in C_c(\R^n)$ and $\Phi\in \Area(\R^n\times\R)$ with $\vsupploc\Phi\subset P(A)$,
		\begin{align*}
			[T^{-1}(\Phi)](f;\phi)=\Phi(K_f;I(\phi))
		\end{align*}
		for all $f\in C$. In particular,
		\begin{align*}
			\sup_{f\in C}|[T^{-1}(\Phi)](f;\phi)|=\sup_{K\in\{K_f:f\in C\}}|\Phi(K;I(\phi))|
		\end{align*}
		independent of $\Phi$, so $T^{-1}$ is continuous.
	\end{proof}
	We have a natural action of $\R^n$ on $\LV(\R^n)$ given as follows: For $\Psi\in\LV(\R^n)$ and $x\in\R^n$, let $\tilde{\pi}(x)\Psi\in\LV(\R^n)$ be defined by
	\begin{align*}
		[\tilde{\pi}(x)\Psi](f;\phi)=\Psi(f\circ t_x;\phi\circ t_x)
	\end{align*}
	for $f\in\Conv(\R^n,\R)$, $\phi\in C_c(\R^n)$, where $t_x:\R^n\rightarrow\R^n$ denotes the translation by $x\in\R^n$. Note that this action preserves the subspace of compactly supported local functionals. From the definition of $T$ and \autoref{lemma:equivariance1homExtension}, we directly obtain the following relation.
	\begin{lemma}\label{lemma:equivarianceT}
		For $\Psi\in \LV_c(\R^n)$ and $x\in\R^n$, we have
		\begin{align*}
			T(\tilde{\pi}(x)\Psi)=\pi(H(x))T(\Psi).
		\end{align*}
	\end{lemma}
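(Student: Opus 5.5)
The identity $T(\tilde{\pi}(x)\Psi)=\pi(H(x))T(\Psi)$ will be verified by evaluating both sides on an arbitrary $K\in\K(\R^n\times\R)$ and test function $\phi\in C(S^n)$. Both sides are measures supported in a compact subset of $S^n_-$: by Lemma~\ref{lemma:equivariance1homExtension}, $H(x)$ maps $P(A)$ to $P(A-x)$ (or its inverse, depending on conventions). It therefore suffices to test against functions of the form $\phi=I(\eta)$ with $\eta\in C_c(\R^n)$; a general $\phi$ can be multiplied by a cutoff $\psi\in C_c(S^n_-)$ equal to $1$ near the relevant compact set, as in the proof of Corollary~\ref{corollary:adjoint1HomFunction}.

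First, unwind the left-hand side. By definition of $T$,
\begin{align*}
	[T(\tilde{\pi}(x)\Psi)](K;I(\eta))=[\tilde{\pi}(x)\Psi](h_K(\cdot,-1);\eta)=\Psi(h_K(\cdot,-1)\circ t_x;\eta\circ t_x).
\end{align*}
Next, unwind the right-hand side. With $g=H(x)$,
\begin{align*}
	[\pi(g)T(\Psi)](K;I(\eta))=[T(\Psi)](g^{-1}K;g^{-1}\cdot I(\eta))=\Psi\big(h_{g^{-1}K}(\cdot,-1);I^{-1}(g^{-1}\cdot I(\eta))\big).
\end{align*}
Two identities then need checking. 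By Lemma~\ref{lemma:propertiesSupportFunction}(4), $h_{g^{-1}K}(y,-1)=h_K(g^{-T}(y,-1))$. Since $H(x)^{-1}=H(-x)$ and $H(-x)^T(y,-1)=(y+x,-1)$, this gives $h_{g^{-1}K}(\cdot,-1)=h_K(\cdot,-1)\circ t_x$. Similarly, Lemma~\ref{lemma:equivariance1homExtension} applied with the action $[g\cdot\phi](v)=|g^Tv|\phi(g^Tv/|g^Tv|)$ gives $H(-x)\cdot I(\eta)=I(\eta\circ t_x)$. Hence $I^{-1}(g^{-1}\cdot I(\eta))=\eta\circ t_x$, and the two expressions coincide.

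The main obstacle is bookkeeping of signs and transposes, namely whether $H(x)$ or $H(-x)$ realises the translation $t_x$. Lemma~\ref{lemma:equivariance1homExtension} states $I(\phi\circ t_x)=H(x)\cdot I(\phi)$ in the $1$-homogeneous action. If it is $H(x)$ rather than $H(-x)$ that appears, the same computation carries through with $x$ replaced consistently in both places, since the support-function identity transforms by the same matrix. One then confirms that the convention for $\pi$ (via $g^{-1}$) matches the convention for $\tilde\pi$ (via $t_x$), and adjusts if necessary.

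A final step handles general $\phi$. For $\phi\in C(S^n)$, both measures are supported in $P(\supploc\tilde\pi(x)\Psi)$, a compact subset of $S^n_-$. Such $\phi$ agrees near this set with $I(\eta)$ for $\eta=I^{-1}(\psi\phi)$, where $\psi$ is a cutoff, so equality on test functions of this form extends to all of $C(S^n)$.
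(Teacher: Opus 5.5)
Your route is exactly what the paper means by ``tracing through the definitions'': unwind both sides on $(K,I(\eta))$ using the definition of $T$, \autoref{lemma:propertiesSupportFunction}(4) and \autoref{lemma:equivariance1homExtension}. However, the two identities your verification rests on are both wrong by a sign, and the two errors cancel.

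First, $H(-x)^T=\begin{pmatrix} I_n & x\\ 0&1\end{pmatrix}$, so $H(-x)^T(y,-1)=(y-x,-1)$, not $(y+x,-1)$. Hence $h_{H(x)^{-1}K}(\cdot,-1)=h_K(\cdot,-1)\circ t_{-x}$. Second, \autoref{lemma:equivariance1homExtension} with $-x$ in place of $x$ gives $H(x)^{-1}\cdot I(\eta)=I(\eta\circ t_{-x})$, not $I(\eta\circ t_x)$. Carried out correctly, your computation yields
\begin{align*}
[\pi(H(x))T(\Psi)](K;I(\eta))=\Psi\bigl(h_K(\cdot,-1)\circ t_{-x};\eta\circ t_{-x}\bigr)=[T(\tilde{\pi}(-x)\Psi)](K;I(\eta)).
\end{align*}

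So the step where you ``adjust if necessary'' cannot be carried out. With the conventions fixed in the paper ($t_x(y)=y+x$, $\pi(g)$ acting through $g^{-1}$ and the $1$-homogeneous action, $H$ as given), the identity as literally stated fails for $x\neq0$. What the argument actually proves is
\begin{align*}
T(\tilde{\pi}(x)\Psi)=\pi(H(-x))T(\Psi)=\pi(H(x)^{-1})T(\Psi).
\end{align*}

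Here is a concrete check. Take $n=1$ and $\Psi(f;\phi)=\int_{\R}\phi_0\phi\,d\mu_f$, where $\mu_f$ is the Monge--Amp\`ere measure of $f$ (its distributional second derivative) and $\phi_0\in C_c(\R)$. Take $K=\mathrm{conv}\{(0,0),(1,0)\}$, so $h_K(y,-1)=\max(0,y)$ and $H(-x)K=\mathrm{conv}\{(0,0),(1,x)\}$. Then the left-hand side evaluated at $I(\eta)$ is $\phi_0(-x)\eta(0)$, while the right-hand side is $\phi_0(x)\eta(0)$.

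Your own support heuristic already detects the mismatch. The left side has local vertical support in $P(\supploc\Psi+x)$, the right side in $P(\supploc\Psi-x)$. So the claim in your last paragraph, that both measures live in $P(\supploc\tilde{\pi}(x)\Psi)$, is also false.

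The paper's one-line justification glosses over the same sign. This is harmless for the lemma's only use, \autoref{corollary:EquivalenceSmoothnessUnderT}: $x\mapsto -x$ is a diffeomorphism of $\R^n$, so smoothness of $x\mapsto\tilde{\pi}(x)\Psi$ follows equally from the corrected identity. You should state and prove the corrected formula (or redefine $\tilde{\pi}(x)$ via $t_{-x}$) rather than assert the literal one.
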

	 As in \cite{KnoerrPolynomiallocalfunctionals2025,KnoerrIntegralrepresentationpolynomial2026}, let us call $\Psi\in \LV(\R^n)$ smooth if the map
	\begin{align*}
		\R^n&\rightarrow\LV(\R^n)\\
		x&\mapsto \tilde{\pi}(x)\Psi
	\end{align*}
	is smooth with respect to the compact-to-weak* topology. The previous result has the following consequence.
	\begin{corollary}\label{corollary:EquivalenceSmoothnessUnderT}
		Let  $\Psi\in\LV_{c}(\R ^n)$. If $T(\Psi)\in \Area(\R ^n\times\R)$ is a $\GL(n+1,\R)$-smooth area measure, then $\Psi$ is a smooth local functional.
	\end{corollary}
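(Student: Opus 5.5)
The plan is to transport smoothness along the orbit map through $T^{-1}$, using the equivariance in \autoref{lemma:equivarianceT} and the topological isomorphism in \autoref{theorem:isomorphismToLocalFunctionals}.

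Fix a compact set $A\subset\R^n$ with $\supploc\Psi\subset A$. For $x\in\R^n$, the support of $\tilde{\pi}(x)\Psi$ is $t_x^{-1}(\supploc\Psi)=\supploc\Psi-x$. This follows from the characterization in \autoref{proposition:chacterizationLocSupp}, because $f\mapsto f\circ t_x$ maps neighborhoods of $A$ to neighborhoods of $A-x$. Consequently, for $x$ in a bounded open set $V\subset\R^n$, all functionals $\tilde\pi(x)\Psi$ have local support in a fixed compact set $A_V:=A-\overline{V}$. By \autoref{lemma:equivarianceT}, $T(\tilde{\pi}(x)\Psi)=\pi(H(x))T(\Psi)$. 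Hence on $V$ the orbit map factors as
\begin{align*}
	x\mapsto H(x)\mapsto \pi(H(x))T(\Psi)\mapsto T^{-1}\big(\pi(H(x))T(\Psi)\big)=\tilde\pi(x)\Psi,
\end{align*}
where $T^{-1}$ is the inverse of the restriction of $T$ to $\{\Phi\in\LV(\R^n):\supploc\Phi\subset A_V\}$.

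I would then check the regularity of each factor. The map $H$ is a smooth group homomorphism $\R^n\to\GL(n+1,\R)$. The middle map is smooth with respect to the uniform weak* topology, since $T(\Psi)$ is $\GL(n+1,\R)$-smooth by assumption. Its image for $x\in V$ lies in the subspace $\{\Phi\in \Area(\R^n\times\R):\vsupploc\Phi\subset P(A_V)\}$.

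By \autoref{theorem:isomorphismToLocalFunctionals}, $T^{-1}$ is a continuous linear isomorphism on this subspace with respect to the uniform weak* topology on the area side and the compact-to-weak* topology on the $\LV$ side. Composing a smooth curve (or map from $V$) with a continuous linear map preserves smoothness, with derivatives given by applying the linear map to the derivatives. Therefore $x\mapsto\tilde\pi(x)\Psi$ is smooth on $V$. Since $V$ was an arbitrary bounded open set, it is smooth on all of $\R^n$.

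The main obstacle is that the derivatives of the orbit map must also stay in the support-restricted subspace on which $T^{-1}$ is defined and continuous. The derivatives are limits of difference quotients of elements with $\vsupploc\subset P(A_V)$. So it suffices to know that this subspace is closed in the uniform weak* topology, which I would verify directly from \autoref{proposition:characterization_localVsupp}. Both defining conditions there, namely $\supp\Phi(K)\subset P(A_V)$ and $\Phi(K)=\Phi(L)$ whenever $h_K=h_L$ near $P(A_V)$, are preserved under pointwise weak* limits. One can also make the bound uniform by enlarging $A_V$ slightly.
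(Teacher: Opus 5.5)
Your proposal is correct and follows the paper's proof. Both write $\tilde\pi(x)\Psi=T^{-1}(\pi(H(x))T(\Psi))$ for $x$ in a bounded set on which all local supports stay in a fixed compact set, and then use that $T^{-1}$ is a topological isomorphism between the support-restricted subspaces; you additionally check that the derivatives remain in that closed subspace, which the paper leaves implicit. One harmless slip: $\supploc\tilde\pi(x)\Psi=\supploc\Psi+x$, not $\supploc\Psi-x$, so you should take $A_V:=A+\overline{V}$, and nothing else in your argument changes.
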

	\begin{proof}
		Due to \autoref{lemma:equivarianceT}, we have for $x\in\R^n$,	
		\begin{align*}
			\tilde{\pi}(x)\Psi=T^{-1}\left(\pi(H(x))T(\Psi)\right).
		\end{align*}
		It is sufficient to show that the right hand side is smooth on a neighborhood of $0$. If we fix a compact neighborhood $U$ of $0$, then $\tilde{\pi}(t_x)\Psi$ is supported on a compact set $A$ for every $x\in U$, so the area measure $\pi(H(t_x))T(\Psi)$ is supported on $P(A)\subset S^n_-$ for every $x\in U$. In particular, \autoref{theorem:isomorphismToLocalFunctionals} shows that the left hand side of this equation is smooth on $U$ if and only if $x\mapsto \pi(H(t_x))T(\Psi)$ is smooth on $U$, since $T^{-1}$ is a topological isomorphism between the relevant subspaces of functionals with support in $P(A)$ and $A$, respectively. If $T(\Psi)$ is a smooth area measure, this implies that $x\mapsto \tilde{\pi}(t_x)\Psi$ is smooth on a neighborhood of $0$. Thus $\Psi\in\LV_c(\R^n)$ is smooth. 
	\end{proof}
	
	\subsection{Representation by integration with respect to the normal cycle and proof of \autoref{maintheorem:IntegralRepresentation}}
	In \cite{KnoerrMongeAmpereoperators2024,KnoerrPolynomiallocalfunctionals2025,KnoerrIntegralrepresentationpolynomial2026}, the following construction of elements in $\LV(\R^n)$ was considered. Let us denote by $D(f)$ the differential cycle of $f\in\Conv(\R^n,\R)$ as introduced by Fu \cite{FuMongeAmperefunctions.1989}. This is an integral $n$-current on the cotangent bundle $T^*\R^n=\R^n\times(\R^n)^*$, which is given by integration over the graph of $df$ if $f$ is $C^2$. For any smooth differential form $\tau\in \Omega^n(T^*\R^n)$, we may associate to $f\in\Conv(\R^n,\R)$ the measure on $\R^n$ given by
	\begin{align*}
		\Psi_\tau(f;B) =D(f)[1_{\pi^{-1}(B)}\tau],
	\end{align*}
	where $\pi:\R^n\times(\R^n)^*\rightarrow\R^n$ denotes the projection onto the first factor. As shown in \cite{KnoerrMongeAmpereoperators2024}*{Theorem 4.10}, the map $\Psi_\tau:\Conv(\R^n,\R)\rightarrow\M(\R^n)$ is continuous with respect to the weak* topology and locally determined. Moreover, if $\tau$ is invariant with respect to translations in the second component of $\R^n\times(\R^n)^*$, then $\Psi_\tau$ is dually epi-translation invariant and therefore belongs to $\LV(\R^n)$. A characterization of this subspace was obtained in \cite{KnoerrIntegralrepresentationpolynomial2026}. Under the support restrictions relevant to our purposes, this result reads as follows.	
	\begin{theorem}[\cite{KnoerrIntegralrepresentationpolynomial2026}*{Proposition~3.24}]
		\label{theorem:equivalenceSmoothnessLV}
		The following are equivalent for $\Psi\in \LV_{c}(\R^n)$ and a compact and convex set $A\subset \R^n$:
		\begin{enumerate}
			\item $\Psi$ is smooth and $\supploc\Psi\subset A$
			\item For any basis $\tau_j$ of $\Lambda^n(\R^n\times(\R^{n})^*)^*$ there exist $\phi_j\in C_c^\infty(\R^n)$ with $\supp\phi_j\subset A$ such that
			\begin{align*}
				\Psi(f;B)=D(f)\left[1_{\pi^{-1}(B)}\sum_{j}\pi^*\phi_j\wedge \tau_j\right]
			\end{align*}
			for $f\in\Conv(\R^n,\R)$, $B\subset \R^n$ bounded Borel set.
		\end{enumerate}
	\end{theorem}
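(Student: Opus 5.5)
The direction (2)$\Rightarrow$(1) is elementary, and most of the work lies in (1)$\Rightarrow$(2).

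\textbf{The direction (2)$\Rightarrow$(1).} Put $\tau=\sum_j\pi^*\phi_j\wedge\tau_j$, where each $\tau_j$ has constant coefficients.
\begin{enumerate}
\item \emph{Support.} $\Psi_\tau(f)$ is supported in $\bigcup_j\supp\phi_j\subset A$. Since $D(f)$ over $\pi^{-1}(U)$ depends only on $f|_U$, two functions agreeing near $A$ give the same measure. By \autoref{proposition:chacterizationLocSupp} this yields $\supploc\Psi_\tau\subset A$.
\item \emph{Smoothness.} The translation $\tilde\pi(x)\Psi_\tau$ equals $\Psi_{\tau^x}$ with $\tau^x=\sum_j\pi^*(\phi_j\circ t_x)\wedge\tau_j$, because the constant forms $\tau_j$ are invariant under $(y,\xi)\mapsto(y+x,\xi)$. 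The map $x\mapsto\phi_j\circ t_x$ is smooth into $C^\infty_c(\R^n)$. The assignment $\tau\mapsto\Psi_\tau$ is continuous into the compact-to-weak* topology, since the mass of $D(f)$ over bounded sets is uniformly bounded for $f$ in a compact subset of $\Conv(\R^n,\R)$. Hence $\Psi_\tau$ is smooth.
\end{enumerate}

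\textbf{The direction (1)$\Rightarrow$(2).} I would mirror the strategy of \autoref{section:HomDecompArea}, working on $\R^n$ instead of $S^{n-1}$.
\begin{enumerate}
\item \emph{Homogeneous decomposition.} Decompose $\Psi=\sum_{k=0}^n\Psi_k$, using $\Psi(tf)=\sum_k t^k\Psi_k(f)$. This is obtained as in \autoref{theorem:homDecompArea}, by first showing that local determination plus continuity give the valuation property, as in \autoref{theorem:valuationProperty}. Each $\Psi_k$ is again smooth with local support in $A$.
\item \emph{Localized Goodey--Weil distribution.} Polarize $\Psi_k$ to get a distribution $\widehat{\GW}(\Psi_k)$ on $(\R^n)^{k+1}$. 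It has order at most $2$ in each of the first $k$ variables and order $0$ in the last one, exactly as in \autoref{corollary:modifiedGWArea}. The argument of \autoref{proposition:SupportGWArea} shows it is supported on the diagonal over $A$.
\item \emph{Structure on the diagonal.} A distribution supported on the diagonal is a finite sum of transversal derivatives, of order at most $2$ in each slot, applied to distributions on the diagonal. Dual epi-translation invariance kills derivatives of order $0$ and $1$ in the first $k$ slots. Smoothness under the diagonal translation action forces the coefficient distributions to be $C^\infty_c$ functions supported in $A$.
\item \emph{Resulting formula.} This yields, for $f\in C^2$,
\begin{align*}
\Psi_k(f;\phi)=\int_{\R^n}\phi(x)\,P_x\bigl(D^2f(x)\bigr)\,dx,
\end{align*}
where $P_x$ is a homogeneous polynomial of degree $k$ in the Hessian entries, with smooth compactly supported coefficients.
\end{enumerate}

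\textbf{Main obstacle.} The hardest step is to show that $P_x$ is necessarily a linear combination of $k\times k$ minors of the Hessian, possibly mixed with a fixed matrix, rather than an arbitrary polynomial. Exactly such combinations are integrals of $D(f)$ against forms $dx^I\wedge d\xi^J$ with $|J|=k$.

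I would first try the following. Continuity of $\Psi$ on $\Conv(\R^n,\R)$ in the topology of locally uniform convergence forces $f\mapsto\int\phi\,P_x(D^2f)$ to extend continuously from $C^2$ to all convex functions. The natural test functions are piecewise-linear, or more precisely convex piecewise-quadratic, approximations. Continuity then makes the polynomial a null Lagrangian in the Hessian, that is, weakly continuous. By the classical characterization of weakly continuous polynomials in the Hessian, these are combinations of minors.

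Once $\Psi_k$ is written as $\int\phi\,P_x(D^2f)$ with $P_x$ a combination of minors, I would express each minor through the constant forms $\tau_j$, with coefficients $\phi_j\in C^\infty_c$ supported in $A$. The identity then holds for $C^2$ functions, and I would extend it to all $f\in\Conv(\R^n,\R)$ by continuity of both sides. Summing over $k$ gives the representation in (2).
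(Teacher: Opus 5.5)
Your argument is correct in outline, but it is not the paper's route. In fact the paper does not prove this statement at all. It imports it from \cite{KnoerrIntegralrepresentationpolynomial2026}, where, as the remark following the statement indicates, it is formulated through the space $\MAVal(\R^n)$ of translation equivariant Monge--Amp\`ere operators. In that formulation, the question you single out as the main obstacle (which polynomials $P_x(D^2f)$ can occur) is answered by the known description of $\MAVal(\R^n)$ by constant-coefficient forms from \cite{KnoerrMongeAmpereoperators2024}. Smoothness under translations is then only needed to make the resulting coefficients depend smoothly on $x$.

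Your route is self-contained and analytic: Goodey--Weil distributions, the structure theorem for distributions supported on the diagonal, and null-Lagrangian algebra. In effect you re-prove the $\MAVal$ classification pointwise in $x$. The cost is that several steps stated in one line need real arguments. For example, you must pass the separate order bounds (order $2$ in each of the first $k$ slots, order $0$ in the last) to the transversal decomposition. This can be done with oscillatory test functions that have independent frequencies in each slot.

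Two steps in particular should be made precise.

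(i) The claim that smoothness forces the coefficients to be $C^\infty_c$ holds only because the last slot is tested against merely continuous $\phi$. On convex quadratics $q$, the difference $q\circ t_x-q$ is affine, so $\tilde\pi(x)$ acts only on $\phi$. Smoothness then says $x\mapsto\int\phi\circ t_x\,dT$ is smooth for every $\phi\in C_c(\R^n)$. Banach--Steinhaus shows that every derivative of $T$ is a measure, hence $T\in C^\infty_c$. If the test functions were only $\phi\in C^\infty_c$, this smoothness would be automatic for any distribution and would give nothing.

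(ii) Continuity on $\Conv(\R^n,\R)$ does not directly give weak continuity in the Sobolev sense of the classical theory. What it does give, and what suffices, is rank-one affinity. Take $D^2q=M$ positive definite and $g$ $1$-periodic with $\|g''\|_\infty$ small, and set $u_j=q+j^{-2}g(j\langle v,\cdot\rangle)$. Then $u_j$ is convex, $u_j\to q$ locally uniformly, and $D^2u_j=M+g''(j\langle v,\cdot\rangle)\,v\otimes v$. Continuity of $\Psi_k$ yields $\int_0^1 P_x(M+g''(s)\,v\otimes v)\,ds=P_x(M)$ for all such $g$. This forces $t\mapsto P_x(M+t\,v\otimes v)$ to be affine.

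The algebraic fact that polynomials on $\Sym^2(\R^n)$ which are affine along all lines $M+t\,v\otimes v$ are linear combinations of minors then completes your argument. This is the algebraic core of the Ball--Currie--Olver characterization.
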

	\begin{remark}
		For constant differential forms $\tau\in\Lambda^n(\R^n\times(\R^n)^*)^*$, the elements $\Psi_\tau$ can be identified with elements belonging to a certain space $\MAVal(\R^n)$ of translation equivariant Monge--Amp\`ere operators, see \cite{KnoerrMongeAmpereoperators2024}. \cite{KnoerrIntegralrepresentationpolynomial2026}*{Proposition~3.24} uses this identification in the statement of the result.
	\end{remark}
	
	Consider the map
	\begin{align*}
		Q:(\R^n\times \R)\times S^n_-&\rightarrow \R^n\times\R^n\cong T^*\R^n\\
		\left((y,s),(x,t)\right)&\mapsto \left(\frac{x}{|t|},y\right).
	\end{align*}		
	\begin{proposition}[\cite{KnoerrSmoothvaluationsconvex2024}*{Proposition 6.1}]
		\label{proposition:relation_differentialCycleNormalCycle}
		Let $K\in\K(\R^n\times\R)$. Then
		\begin{align*}
			Q_*\left[\nc(K)|_{(\R^n\times \R)\times S^n_-}\right]=(-1)^nD\left(h_K(\cdot,-1)\right)
		\end{align*}
	\end{proposition}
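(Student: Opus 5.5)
The plan is to verify the identity first for smooth convex bodies with strictly positive curvature by an explicit parametrization, and then to pass to arbitrary $K\in\K(\R^n\times\R)$ by approximation. Both sides are integral $n$-currents on $T^*\R^n$, so it suffices to test them against forms $\eta\in\Omega^n_c(\R^n\times\R^n)$.

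\emph{Well-definedness of the left-hand side.} $Q$ is not proper on $(\R^n\times\R)\times S^n_-$, since it forgets $s$ and the set $S^n_-$ is open. It is, however, proper on the support of $\nc(K)|_{(\R^n\times\R)\times S^n_-}$. Indeed, if $\supp\eta\subset B_R\times B_R$, then the points of $\nc(K)$ mapped into $\supp\eta$ have normals $v$ in the compact set $P(\overline{B_R})\subset S^n_-$, and their base points lie in the compact set $K$. So $Q_*$ of the restricted normal cycle is a well-defined current, and $\eta$ only sees a compact piece of $\nc(K)$ contained in $\R^{n+1}\times P(\overline{B_R})$.

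\emph{Smooth case.} Let $K$ be smooth with strictly positive Gauss curvature and put $f:=h_K(\cdot,-1)$, which is then smooth. As in the proof of \autoref{proposition:boundSmoothArea}, $\nc(K)$ is the image of
\begin{align*}
S^n\ni v\mapsto (\nabla h_K(v),v),
\end{align*}
where $\nabla h_K(v)=\nu_K^{-1}(v)$ by \autoref{lemma:SubdifferentialSupportFunction}. Restrict this to $S^n_-$ and reparametrize by $x\in\R^n$ via $v=P(x)$. Write $\nabla h_K=(\nabla_y h_K,\partial_s h_K)$. Since $\nabla h_K$ is $0$-homogeneous, for $x\in\R^n$ we have
\begin{align*}
\nabla f(x)=\nabla_y h_K(x,-1)=\nabla_y h_K(P(x)).
\end{align*}
Moreover $P(x)/|P(x)_{n+1}|=(x,-1)$, so the first component of $Q$ returns $x$. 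Hence
\begin{align*}
Q\bigl(\nabla h_K(P(x)),P(x)\bigr)=(x,df(x)).
\end{align*}
Thus $Q\circ G_K\circ P$ is exactly the graph map $x\mapsto (x,df(x))$, and it is a diffeomorphism onto the graph. Consequently $Q_*[\nc(K)|_{\dots}]=\pm D(f)$ as currents.

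\emph{The sign.} To fix the sign I compare orientations at a single convenient point, e.g. $K=B_1(0)$ and $v=-e_{n+1}$, i.e. $x=0$. There one writes down the tangent frame of $\nc(K)$ dictated by the orientation convention of the normal cycle (the frame $v,e_1,\dots,e_n$ of $\R^{n+1}$ being positively oriented, as in \autoref{proposition:boundSmoothArea}). One pushes this frame forward by $dQ$ and compares it with the standard orientation $\partial_{x_1},\dots,\partial_{x_n}$ of the graph of $df$. The reversal $v=-e_{n+1}$ together with the reordering needed to bring the frame into standard position should produce the factor $(-1)^n$. I expect this bookkeeping to be the most error-prone step, since it depends on the precise conventions for $\nc$ and $D$. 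Once checked at one point, connectedness of $S^n_-$ and of the space of smooth positively curved bodies propagates the sign to all of them.

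\emph{Approximation.} For general $K$, choose smooth bodies $K_j\to K$ with strictly positive curvature. On the one hand, $\nc(K_j)\to\nc(K)$ weakly as currents, since the masses are bounded by the Corollary following \autoref{proposition:boundSmoothArea} and the normal cycle is continuous in $K$. Because the relevant pieces of the normal cycles lie in a fixed compact set for all $j$, $Q_*$ of the restrictions converges as well when tested against a fixed $\eta$; a cutoff in $v$ equal to $1$ on $P(\overline{B_R})$ handles the restriction to the open set $S^n_-$. On the other hand, $h_{K_j}(\cdot,-1)\to h_K(\cdot,-1)$ locally uniformly, and Fu's differential cycle is weakly continuous under locally uniform convergence of convex functions. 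Passing to the limit on both sides gives the identity for all $K\in\K(\R^n\times\R)$.
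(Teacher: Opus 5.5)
Your reduction is sound as far as it goes. $Q$ is proper on the support of the restricted normal cycle. For $K$ smooth with positive curvature, $Q\circ G_K\circ P$ is exactly the graph map $x\mapsto(x,df(x))$. Weak continuity of $K\mapsto\nc(K)$, and of $f\mapsto D(f)$ under locally uniform convergence, then transfers the identity to all $K$. The paper gives no argument of its own: it imports the statement from \cite{KnoerrSmoothvaluationsconvex2024}, adjusting the sign to its orientation convention.

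\textbf{The gap is the sign.} It is the only convention-dependent content of the statement, and you leave it as something that ``should'' come out as $(-1)^n$. Your own reduction already determines it, with no need to vary $K$. Since $\nc(K)=(G_K)_*[S^n]$ and $Q\circ G_K\circ P=(\mathrm{id},df)$, one gets $Q_*[\nc(K)|_{(\R^n\times\R)\times S^n_-}]=\epsilon\,D(f)$. Here $\epsilon=\pm1$ records whether $P:\R^n\to S^n_-$ preserves orientation, which is independent of $K$.

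Now evaluate $\epsilon$ with the conventions this paper actually uses:
\begin{itemize}
\item $\nc(K)$ is oriented so that $(e_1,\dots,e_n)$ is positive in $T_vS^n$ iff $(v,e_1,\dots,e_n)$ is positive in $\R^{n+1}$. This is what makes $\Phi_{i_T\pi_1^*\vol}$ the positive surface area measure, cf.\ the proof of \autoref{proposition:boundSmoothArea}.
\item $D(f)$ is the graph of $df$ oriented by $\R^n$.
\end{itemize}
Then $dP_0(e_i)=e_i$ and $\det(-e_{n+1},e_1,\dots,e_n)=(-1)^{n+1}$, so $\epsilon=(-1)^{n+1}$, not $(-1)^n$.

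For $n=1$ you can see this by hand. As $x$ increases, $P(x)$ runs through the lower half circle from $(-1,0)$ to $(1,0)$, i.e.\ counterclockwise. So for the unit disk, $Q$ carries the counterclockwise normal cycle onto the graph of $f'(x)=x/\sqrt{1+x^2}$ traversed with $x$ increasing, which is $+D(f)$.

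An independent check gives the same sign. On the contact distribution, $Q^*(dy_1\wedge\dots\wedge dy_n)=(-1)^{n+1}|v_{n+1}|\,i_T\pi_1^*\vol$, where $y_j$ are the fiber coordinates on $T^*\R^n$. Comparing positivity of $S_n(K)$ with positivity of the Monge--Amp\`ere measure $\det D^2f\,dx$ then yields $(-1)^{n+1}$.

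So the check you propose would not confirm the stated factor. To prove the statement as written, you must specify an orientation convention (for $\nc$, for $\R^{n+1}$, or for $D$) under which $(-1)^n$ holds, and then carry out the computation rather than anticipate its outcome. The remark after the statement signals that exactly such a convention is at stake. For the paper's purposes the discrepancy is harmless, since in \autoref{corollary:smoothAreaHemisphere} the sign is absorbed into $\omega$.
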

	\begin{remark}
		The formula in \cite{KnoerrSmoothvaluationsconvex2024}*{Proposition 6.1} differs from the one given above by a sign, which is a consequence of a choice of orientation different from the standard orientation on $\R^{n+1}$.
	\end{remark}
	\begin{corollary}\label{corollary:smoothAreaHemisphere}
		If $\Psi\in\Area(\R^n)$ is $\GL(n,\R)$-smooth and such that $\vsupploc\Psi$ is contained in an open hemisphere, then there exists a differential form $\omega\in \Omega^{n-1}(\R^n\times S^{n-1})^{tr}$ such that $\Psi=\Phi_\omega$.
	\end{corollary}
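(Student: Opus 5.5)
The plan is to transport the problem to local functionals on convex functions via the isomorphism $T$ of \autoref{theorem:isomorphismToLocalFunctionals}, apply the regularity result \autoref{theorem:equivalenceSmoothnessLV} there, and pull the resulting differential form back to the sphere bundle using \autoref{proposition:relation_differentialCycleNormalCycle}. To match the notation of Section 4.1, I work in dimension $n+1$ and rewrite the statement there. After applying a rotation, which preserves $\GL$-smoothness and maps the class of forms $\Phi_\omega$ to itself (both actions are compatible via \autoref{corollary:equivConstructionAreaSm} and Lefschetz-type reductions), I may assume that $\vsupploc\Psi$ is a compact subset of $S^n_-$. Then $\vsupploc\Psi\subset P(A)$ for some compact set $A\subset\R^n$, which I may enlarge to a compact convex set, e.g. a ball.

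\textbf{Step 1.} By \autoref{theorem:isomorphismToLocalFunctionals}, $\Psi=T(\Psi')$ for some $\Psi'\in\LV_c(\R^n)$ with $\supploc\Psi'\subset A$. By \autoref{corollary:EquivalenceSmoothnessUnderT}, $\Psi'$ is a smooth local functional.

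\textbf{Step 2.} \autoref{theorem:equivalenceSmoothnessLV} then gives a representation
\begin{align*}
	\Psi'(f;B)=D(f)\left[1_{\pi^{-1}(B)}\tau\right],\quad \tau=\sum_j\pi^*\phi_j\wedge\tau_j,
\end{align*}
with $\phi_j\in C^\infty_c(\R^n)$ supported in $A$.

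\textbf{Step 3.} Unwinding the definition of $T$, for $K\in\K(\R^{n+1})$ and $\phi\in C(S^n)$,
\begin{align*}
	\Psi(K;\phi)=\Psi'\bigl(h_K(\cdot,-1);I^{-1}(\phi\psi)\bigr),
\end{align*}
where $\psi\in C_c(S^n_-)$ equals $1$ near $P(A)$. Inserting \autoref{proposition:relation_differentialCycleNormalCycle}, this becomes
\begin{align*}
	\Psi(K;\phi)=(-1)^n\int_{\nc(K)}Q^*\bigl(\pi^*(I^{-1}(\phi\psi))\,\tau\bigr).
\end{align*}
Since $\pi\circ Q=P^{-1}\circ\pi_2$ on $(\R^n\times\R)\times S^n_-$ and $I^{-1}(\phi\psi)=\bigl((1+|y|^2)^{-1/2}\,\phi\psi\bigr)\circ P$, the integrand equals $\pi_2^*\phi\wedge\tilde\omega$. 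Here
\begin{align*}
	\tilde\omega=(-1)^n\,\pi_2^*\bigl(\psi\cdot\rho\bigr)\,Q^*\tau,
\end{align*}
with $\rho$ a smooth positive function on $S^n_-$. Because the $\phi_j$ have compact support in $A$, the pulled-back coefficients $\phi_j\circ P^{-1}$ are compactly supported in $S^n_-$. Therefore $\tilde\omega$ extends by zero to a smooth form on $\R^{n+1}\times S^n$. It is translation invariant, since $Q$ does not depend on the $s$-coordinate and depends on $y$ only through the constant-coefficient forms $\tau_j$, which are invariant under translations in the cotangent fibre. Finally, $\tilde\omega$ has degree $n$, which is the top degree on $\nc$ in dimension $n+1$, as required.

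\textbf{Step 4.} Since $P$ is a diffeomorphism and $\psi=1$ near $P(A)$, the factor $\psi$ can be dropped on the relevant support. This yields $\Psi=\Phi_{\tilde\omega}$.

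I expect the main obstacle to be Step 3. It requires careful bookkeeping to show that the measure-level identity holds for all bounded Borel sets, or equivalently for all continuous $\phi$, and not only for $\phi$ supported in $S^n_-$. This follows because $\vsupploc\Psi\subset P(A)$ forces $\supp\Psi(K)\subset P(A)$ by \autoref{proposition:characterization_localVsupp}. A second delicate point is verifying translation invariance of $\tilde\omega$ under shifts in the full $\R^{n+1}$, including the $s$-direction, together with its smooth extension across the equator of $S^n$. The compact support of the $\phi_j$ handles the extension.
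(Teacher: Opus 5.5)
Your proposal is correct and follows the paper's proof essentially step for step: reduce to $\vsupploc\Psi$ compact in the lower hemisphere, pull back along $T$ (\autoref{theorem:isomorphismToLocalFunctionals}), get smoothness from \autoref{corollary:EquivalenceSmoothnessUnderT}, apply \autoref{theorem:equivalenceSmoothnessLV}, then transport the form with $Q^*$ via \autoref{proposition:relation_differentialCycleNormalCycle} and extend it by zero. Your extra bookkeeping (enlarging $A$ to a convex set as \autoref{theorem:equivalenceSmoothnessLV} requires, and keeping the smooth density $\rho=|t|$ coming from $I^{-1}$, which the paper leaves to ``tracing through the definitions'') is, if anything, slightly more careful than the written proof.
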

	\begin{proof}
		Without loss of generality, we may assume that $\vsupploc\Psi$ is a compact subset of $S^{n-1}_-$. From \autoref{theorem:isomorphismToLocalFunctionals}, we obtain $\tilde{\Psi}\in \LV_c(\R^{n-1})$ with $T(\tilde{\Psi})=\Psi$. Then $\tilde{\Psi}$ is smooth by \autoref{corollary:EquivalenceSmoothnessUnderT}, and so \autoref{theorem:equivalenceSmoothnessLV} implies that there exists a differential form $\tilde{\omega}\in \Omega^*_c(\R^{n-1})\otimes\Lambda^*((\R^{n-1})^*)^*$ of degree $n-1$ such that $\tilde{\Psi}(f;\phi)=D(f)[\pi^*\phi\wedge \tilde{\omega}]$. In particular, we may extend the differential form $(-1)^nQ^*\tilde{\omega}\in \Omega^{n-1}(\R^n\times S^{n-1}_-)^{tr}$ by $0$ to $\R^n\times S^{n-1}$ to obtain a well defined smooth differential form $\omega\in \Omega^{n-1}(\R^n\times S^{n-1})^{tr}$ with support contained in $\R^n\times S^{n-1}_-$. Tracing through the definitions, \autoref{proposition:relation_differentialCycleNormalCycle} shows that $\Psi=T(\tilde{\Psi})=\Phi_{\omega}$, which completes the proof.
	\end{proof}

	\begin{proof}[Proof of \autoref{maintheorem:IntegralRepresentation}]
		If $\Psi\in \Area^\infty(\R^n)$, choose a finite open cover of $S^{n-1}$ by open hemispheres and fix a partition of unity $\phi_1,\dots,\phi_N\in C^\infty(S^{n-1})$ subordinate to this cover. Then
		\begin{align*}
			\Psi=\sum_{j=1}^N \phi_j\bullet \Psi,
		\end{align*}
		where $\vsupploc(\phi_j\bullet\Psi)$ is contained in an open hemisphere. Moreover, $\phi_j\bullet\Psi$ is $\GL(n,\R)$-smooth as well by \autoref{corollary:moduleProductSmoothVectors}, so \autoref{corollary:smoothAreaHemisphere} shows that there exists a differential form $\omega_j\in \Omega^{n-1}(\R^n\times S^{n-1})^{tr}$ such that $\phi_j\bullet \Psi=\Phi_{\omega_j}$. Since this holds for all $1\le j\le N$, we obtain $\Psi=\Phi_{\sum_{j=1}^N\omega_j}$.
	\end{proof}
	
	Combining \autoref{maintheorem:IntegralRepresentation} and \autoref{corollary:ConstructionSmoothAreaNormalCycle}, we obtain the following.
	\begin{corollary}\label{corollary:PsiIsomorphism}
		The map
		\begin{align*}
			\mathcal{J}^{k+1,n-1-k}(\R^n)^{tr}&\rightarrow \Area_k^\infty(\R^n)\\
			\tau&\mapsto \Phi_{i_T\tau}
		\end{align*}
		is an isomorphism of vector spaces.
	\end{corollary}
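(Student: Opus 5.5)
Since \autoref{corollary:ConstructionSmoothAreaNormalCycle} already gives a well-defined injective linear map $\mathcal{J}^n(\R^n)^{tr}\to\Area^\infty(\R^n)$, $\tau\mapsto\Phi_{i_T\tau}$, two things remain. First, the restriction to $\mathcal{J}^{k+1,n-1-k}(\R^n)^{tr}$ takes values in $\Area_k^\infty(\R^n)$. Second, it is surjective onto $\Area_k^\infty(\R^n)$.

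For the degree statement, let $\tau\in\mathcal{J}^{k+1,n-1-k}(\R^n)^{tr}$. Then $i_T\tau$ has bidegree $(k,n-1-k)$, since $T$ has vanishing $S^{n-1}$-component. Under the dilation $x\mapsto tx$ one has $\nc(tK)=(m_t)_*\nc(K)$ with $m_t(x,v)=(tx,v)$. A translation invariant form of bidegree $(k,n-1-k)$ pulls back under $m_t$ to $t^k$ times itself, and $\pi_2^*\phi$ is unchanged. Hence $\Phi_{i_T\tau}(tK)=t^k\Phi_{i_T\tau}(K)$, so the image lies in $\Area_k(\R^n)\cap\Area^\infty(\R^n)=\Area^\infty_k(\R^n)$.

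For surjectivity, let $\Psi\in\Area_k^\infty(\R^n)$. By \autoref{maintheorem:IntegralRepresentation} there is $\omega\in\Omega^{n-1}(\R^n\times S^{n-1})^{tr}$ with $\Psi=\Phi_\omega$. Decompose $\omega=\sum_j\omega_j$ by bidegree, with $\omega_j$ of bidegree $(j,n-1-j)$. By the scaling argument above, $\Phi_{\omega_j}$ is $j$-homogeneous. Uniqueness of the homogeneous decomposition in \autoref{theorem:homDecompArea} then gives $\Psi=\Phi_{\omega_k}$.

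Next apply \autoref{corollary:RelationJtoKernel} to the class $[\omega_k]\in\Omega^{k,n-1-k}(\R^n\times S^{n-1})^{tr}/I_{k,n-1-k}$. It yields $\tau\in\mathcal{J}^{k+1,n-1-k}(\R^n)^{tr}$ with $i_T\tau-\omega_k\in I_{k,n-1-k}$. Normal cycles annihilate the ideal generated by $\alpha$ and $d\alpha$. The wedge with $\pi_2^*\phi$ preserves this ideal, so $\Phi_{i_T\tau}=\Phi_{\omega_k}=\Psi$.

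The only delicate point is the bidegree bookkeeping. One must check that $i_T$ maps $(k+1,n-1-k)$-forms to $(k,n-1-k)$-forms, i.e. that the Reeb field $T|_{(x,v)}=(v,0)$ is purely horizontal. One must also check that the scaling really produces homogeneity degree equal to the $x$-degree, which is what justifies discarding the other components. Injectivity is inherited directly from \autoref{corollary:ConstructionSmoothAreaNormalCycle}.
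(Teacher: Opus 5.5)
Your proof is correct and follows the paper's approach. The paper justifies this corollary in one line, by combining \autoref{maintheorem:IntegralRepresentation} with \autoref{corollary:ConstructionSmoothAreaNormalCycle}. Your argument spells out the steps that line leaves implicit: the bidegree and homogeneity matching via $\nc(tK)=(m_t)_*\nc(K)$, and the replacement of $\omega_k$ by $i_T\tau$ modulo the ideal $\langle\alpha,d\alpha\rangle$ using \autoref{corollary:RelationJtoKernel}.
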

	
	In order to obtain \autoref{maincorollary:InvariantAreaMeasures} from this description, we need to show that every continuous $G$-equivariant area measure is $\GL(n,\R)$-smooth. This requires some some additional background for representations of $\GL(,n,\R)$ and its compact subgroups, and we refer to \cite{SepanskiCompactLiegroups2007} for a general overview and to \cite{WarnerHarmonicanalysissemi1972} as a general reference. Recall that a representation of $\GL(n,\R)$ on a locally convex vector spaces $V$ is called continuous if the map
	\begin{align*}
		\GL(n,\R)\times V&\rightarrow V\\
		(g,v)&\mapsto g\cdot v
	\end{align*}
	is jointly continuous. For a compact subgroup $G\subset \GL(n,\R)$, let us denote by $V[\pi]$ the $\pi$-isotypical component of an irreducible representation $(E_\pi,\pi)$ of $G$, i.e. the sum of all subrepresentations of $V$ that are isomorphic to $E_\pi$. Note that since $G$ is compact, every irreducible representation of $G$ is finite dimensional. Let $V^\infty$ denote the subspace of $\GL(n,\R)$-smooth vectors in $V$. If $V$ is quasi-complete, then $V[\pi]\cap V^\infty$ is a dense subspace of $V[\pi]$, compare \cite{WarnerHarmonicanalysissemi1972}*{Theorem~4.4.3.1}. If $\dim V[\pi]$ is finite dimensional for every irreducible representation of $G$, then $V$ is called a $G$-admissible representation. In this case, $V[\pi]\cap V^\infty=V[\pi]$, which implies that every $G$-finite element in $V$ is $\GL(n,\R)$-smooth, where $v\in V$ is called $G$-finite if its orbit under $G$ spans a finite dimensional subspace.
	
	\begin{proposition}\label{proposition:admissibility}
		Let $G\subset \SO(n)$ be a compact subgroup that operates transitively on $S^{n-1}$. Then $\Area(\R^n)$ is a $G$-admissible representation of $\GL(n,\R)$. In particular, any $G$-finite element in $\Area(\R^n)$ is $\GL(n,\R)$-smooth
	\end{proposition}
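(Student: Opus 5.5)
The plan is to show that every isotypical component $\Area(\R^n)[\pi]$ is finite dimensional. I will first bound the dimension of its intersection with the $\GL(n,\R)$-smooth vectors, using the description of $\Area^\infty(\R^n)$ by differential forms. Then I will show that this intersection is dense in the whole isotypical component. The last claim of the proposition then follows from the general discussion preceding the statement: $V[\pi]\cap V^\infty=V[\pi]$ whenever $V[\pi]$ is finite dimensional, and every $G$-finite vector lies in a finite sum of isotypical components. By the homogeneous decomposition in \autoref{theorem:homDecompArea}, which is $\GL(n,\R)$-invariant, it suffices to treat each $\Area_k(\R^n)$ separately.

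\emph{Step 1 (smooth vectors).} By \autoref{corollary:PsiIsomorphism}, $\tau\mapsto\Phi_{i_T\tau}$ is a linear isomorphism $\mathcal{J}^{k+1,n-1-k}(\R^n)^{tr}\to\Area_k^\infty(\R^n)$. By \autoref{corollary:equivConstructionAreaSm} it is $\GL(n,\R)$-equivariant up to the factor $\sign(\det g)$, which equals $1$ on $G\subset\SO(n)$. So it is $G$-equivariant, and $\Area_k^\infty(\R^n)[\pi]\cong\mathcal{J}^{k+1,n-1-k}(\R^n)^{tr}[\pi]$.

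Translation invariant forms are exactly the smooth sections over $S^{n-1}$ of the $G$-homogeneous bundle with fiber $J^{k+1,n-1-k}|_{(0,v)}$. Since $G$ acts transitively, $S^{n-1}\cong G/H$ with $H$ the stabilizer of $v$, and this bundle is $G\times_H J^{k+1,n-1-k}|_{(0,v)}$. Frobenius reciprocity then shows that $E_\pi$ occurs in the space of smooth sections with multiplicity at most $\dim\mathrm{Hom}_H(E_\pi,J^{k+1,n-1-k}|_{(0,v)})<\infty$. Hence $\Area_k^\infty(\R^n)[\pi]$ is finite dimensional.

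\emph{Step 2 (density).} Let $P_\pi\Psi:=\dim(E_\pi)\int_G\overline{\chi_\pi(g)}\,\pi(g)\Psi\,dg$ be the projection onto the $\pi$-isotypical component. I will check that $P_\pi$ is a well-defined continuous map on $\Area_k(\R^n)$ with respect to the uniform compact topology, with image $\Area_k(\R^n)[\pi]$. The key point is that, for fixed $K$ and $\phi$, the integrand $\Psi(g^{-1}K;g^{-1}\cdot\phi)$ only involves the compact sets $\{g^{-1}K\}$ and $\{g^{-1}\cdot\phi:g\in G\}\subset C(S^{n-1})$. Continuity in the integration variable comes from the joint continuity in \autoref{lemma:continuityPropertiesGroupAction}. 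Moreover, $P_\pi$ maps $\Area^\infty$ into $\Area^\infty\cap\Area_k(\R^n)[\pi]$, since it is an average of the operators $\pi(g)$.

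By \autoref{proposition:DensitySmoothVectors}, applied to $W=\Area_k(\R^n)$, which is closed, the space $\Area^\infty_k(\R^n)$ is dense in $\Area_k(\R^n)$ for the uniform compact topology. Applying $P_\pi$, the finite dimensional space $\Area_k^\infty(\R^n)[\pi]$ is dense in $\Area_k(\R^n)[\pi]$. A finite dimensional subspace of a Hausdorff locally convex space is closed, so the two coincide.

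\emph{Main obstacle.} The delicate point is Step 2: verifying that the vector-valued integral defining $P_\pi$ makes sense in $\Area(\R^n)$ and is continuous in the uniform compact topology. One cannot simply appeal to quasi-completeness here. If this becomes problematic, I would instead define $P_\pi\Psi$ pointwise by $(K,\phi)\mapsto\int_G\overline{\chi_\pi}\,\Psi(g^{-1}K;g^{-1}\cdot\phi)\,dg$. I would then check directly that the result is continuous, locally determined and translation invariant. Continuity of this pointwise definition follows from the uniform bounds on compact sets in \cite{KnoerrPolynomiallocalfunctionals2025}*{Lemma~3.2}.
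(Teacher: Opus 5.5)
Your proposal is correct. Step~1 is the paper's argument: the $\pi$-isotypical component of $\mathcal{J}^{n}(\R^n)^{tr}$ is finite dimensional by induced representations and Frobenius reciprocity, and this is transported to $\Area^\infty(\R^n)$ by \autoref{corollary:PsiIsomorphism}. You also rightly note that the sign character is trivial on $G\subset\SO(n)$.

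You differ from the paper in Step~2. The paper obtains density of $\Area(\R^n)[\pi]\cap\Area^\infty(\R^n)$ in $\Area(\R^n)[\pi]$ directly from \cite{WarnerHarmonicanalysissemi1972}*{Theorem~4.4.3.1}. That theorem applies because $\Area(\R^n)$ \emph{is} quasi-complete in the uniform compact topology \cite{KnoerrPolynomiallocalfunctionals2025}*{Lemma~3.8}. So, contrary to your remark, quasi-completeness is available and is exactly the input the paper uses; with it, your vector-valued integral $P_\pi\Psi$ would also exist directly.

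You instead combine the global density of $\Area_k^\infty(\R^n)$ in $\Area_k(\R^n)$ from \autoref{proposition:DensitySmoothVectors} with an isotypical projection defined pointwise. This works. Continuity of $P_\pi$ in the uniform compact topology holds because $\{g^{-1}K:g\in G,K\in C\}$ is compact and $\{g^{-1}\cdot\phi:g\in G,\phi\in B\}$ is relatively compact. The identities $P_\pi^2=P_\pi$, $P_\pi=\mathrm{id}$ on $\Area_k(\R^n)[\pi]$, and $P_\pi\Psi\in\Area_k(\R^n)[\pi]$ follow from Schur orthogonality computed pointwise with Fubini.

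Your route is more hands-on: it avoids the quasi-completeness lemma and Warner's theorem, at the cost of checking these properties by hand. One step needs more than ``it is an average of the $\pi(g)$'', namely that $P_\pi$ preserves $\Area^\infty(\R^n)$. You can get this in either of two ways:
\begin{itemize}
\item Compute the integral in the Banach representation $\Area^0(\R^n)$ of \autoref{proposition:Area0BanachRep}, where $(h,g)\mapsto\pi(hg)\Psi$ is smooth.
\item Note that $P_\pi\Phi_{i_T\tau}=\Phi_{i_T(P_\pi\tau)}$ with $P_\pi\tau\in\mathcal{J}^n(\R^n)^{tr}$ (since $\alpha$ and $d\alpha$ are $\SO(n)$-invariant), and this is smooth by \autoref{corollary:ConstructionSmoothAreaNormalCycle}.
\end{itemize}
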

	\begin{proof}
		Recall that $\Area(\R^n)$ is a continuous representation of $\GL(n,\R)$ with respect to the uniform compact topology by \autoref{lemma:continuityPropertiesGroupAction}. Since $\Area(\R^n)$ is quasi-complete with respect to the uniform compact topology (compare \cite{KnoerrPolynomiallocalfunctionals2025}*{Lemma 3.8}), the subspace
		\begin{align}
			\label{eq:Admissibility}
			(\Area(\R^n))[\pi]\cap \Area^\infty(\R^n)\subset 
			(\Area(\R^n))[\pi]
		\end{align}
		is dense in any isotypical component. Since $G$ operates transitively on the unit sphere, the $\pi$-isotypical component of $\mathcal{J}^n(\R^n)^{tr}\subset \Omega^n(\R^n\times S^{n-1})^{tr}$ is finite dimensional, as $\mathcal{J}^n(\R^n)^{tr}$ may be identified with a representation induced from a finite dimensional representation of the stabilizer of any point $(0,v)\in \R^n\times S^{n-1}$ in $G$, which is $G$-admissible by Frobenius reciprocity (see, e.g., \cite{SepanskiCompactLiegroups2007}*{Theorem~7.47}). \autoref{corollary:PsiIsomorphism} thus implies that $(\Area(\R^n))[\pi]\cap \Area^\infty(\R^n)$ is finite dimensional and, consequently, its closure is finite dimensional as well. Thus we have equality in Eq.~\eqref{eq:Admissibility}, so $\Area(\R^n)$ is in particular $G$-admissible and every $G$-finite element in $\Area(\R^n)$ is $\GL(n,\R)$-smooth.
	\end{proof}
	
	\begin{proof}[Proof of \autoref{maincorollary:InvariantAreaMeasures}]
			This follows directly from \autoref{maintheorem:IntegralRepresentation} and \autoref{proposition:admissibility} since $\Area(\R^n)^G$ is the isotypical component corresponding to the trivial representation of $G$.
	\end{proof}
	
	\begin{corollary}
		Let $G\subset \SO(n)$ be a compact subgroup. Then $\Area(\R^n)^G$ is finite dimensional if and only if $G$ operates transitively on the unit sphere.
	\end{corollary}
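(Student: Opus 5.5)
The \emph{if} direction is the proof of \autoref{maincorollary:InvariantAreaMeasures} run again. If $G$ acts transitively on $S^{n-1}$, then every $\Psi\in\Area(\R^n)^G$ is $G$-finite. By \autoref{proposition:admissibility} it is therefore $\GL(n,\R)$-smooth. Hence $\Area(\R^n)^G$ is the isotypical component of the trivial representation of $G$, and it is finite dimensional by the admissibility established there. Equivalently, \autoref{corollary:PsiIsomorphism} identifies $\Area(\R^n)^G$ with $(\mathcal{J}^n(\R^n)^{tr})^G$. A translation invariant $G$-invariant form is determined by its value at a single point $(0,v)$, and that value is a vector in the finite dimensional fiber $J^n|_{(0,v)}$ fixed by the stabilizer $G_v$. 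So the space is finite dimensional.

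For the \emph{only if} direction, suppose $G$ does not act transitively on $S^{n-1}$. I will produce an infinite dimensional family in $\Area(\R^n)^G$, and it suffices to do so in degree $0$. By \autoref{maintheorem:DirectClassificationResults}, $\Area_0(\R^n)\cong\M(S^{n-1})$ via $\Psi\mapsto\Psi(0)$. Constant functionals are trivially locally determined, translation invariant and continuous. Such a functional is $G$-equivariant exactly when the measure $\nu=\Psi(0)$ is $G$-invariant. It therefore suffices to show that the space of $G$-invariant finite measures on $S^{n-1}$ is infinite dimensional.

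Since $G$ is compact, its orbits are compact and the orbit space $S^{n-1}/G$ is a compact Hausdorff space. It has more than one point because the action is not transitive. For an orbit $Gv$, pushing forward Haar measure under $g\mapsto gv$ gives a $G$-invariant probability measure $\nu_v$ supported on $Gv$. The main point is that $S^{n-1}/G$ is in fact infinite. It is connected as the continuous image of the connected space $S^{n-1}$, assuming $n\ge2$. A connected Hausdorff space with at least two points is infinite, since a finite Hausdorff space is discrete and hence disconnected. So there are infinitely many distinct orbits $Gv_1,Gv_2,\dots$. The measures $\nu_{v_i}$ have pairwise disjoint supports and are therefore linearly independent. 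This gives infinitely many linearly independent elements of $\Area_0(\R^n)^G$, which proves the claim.

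The case $n=1$ needs a separate check. There $S^0$ consists of two points and $\M(S^0)$ is finite dimensional, so the equivalence should be read for $n\ge2$ (for $n=1$ both sides are finite dimensional anyway). I expect no real obstacle. The only step needing care is the infinitude of the orbit space, which the connectedness argument handles.
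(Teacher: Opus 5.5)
Your proposal is correct and follows the paper's proof. The \emph{if} direction applies \autoref{proposition:admissibility} to the isotypical component of the trivial representation, and the \emph{only if} direction uses $\Area_0(\R^n)^G\cong\M(S^{n-1})^G$; your connectedness argument for the existence of infinitely many orbits supplies a detail the paper leaves implicit. Your $n=1$ caveat is apt: there $\Area(\R^1)\cong\M(S^0)$ is finite dimensional while $G=\{1\}$ is not transitive on $S^0$, so the statement genuinely needs $n\ge 2$.
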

	\begin{proof}
		If $G$ operates transitively on the unit sphere, then $\Area(\R^n)^G$ is finite dimensional by \autoref{proposition:admissibility} since this is the isotypical component of $\Area(\R^n)$ corresponding to the trivial representation. On the other hand, if $G$ does not operate transitively on the unit sphere, then $\Area_0(\R^n)^G\cong \M(S^{n-1})^G$ is infinite dimensional.
	\end{proof}

\section{Applications}\label{section:Applications}
	
	\subsection{Invariant $C(S^{n-1})$-submodules}
	We are going to obtain \autoref{maintheorem:IrreducibleModule} from the following result for $\GL(n,\R)$-smooth area measures.
	
	\begin{theorem}\label{theorem:IrreducibilityModuleSmoothCase}
		Let $A\subset \Area^\infty_k(\R^n)$ be a $\GL(n,\R)$-invariant $C^\infty(S^{n-1})$-module. Then $A=\Area^\infty_k(\R^n)$.
	\end{theorem}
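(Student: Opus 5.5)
Presumably the statement intends $A$ to be nontrivial; if $A=0$ there is nothing to prove. The plan is to transport the problem to differential forms via \autoref{corollary:PsiIsomorphism} and then apply \autoref{theorem:IrreducibilityDiffForms}. Consider the isomorphism
\begin{align*}
	F:\mathcal{J}^{k+1,n-1-k}(\R^n)^{tr}\rightarrow \Area_k^\infty(\R^n),\quad \tau\mapsto \Phi_{i_T\tau},
\end{align*}
and set $\tilde{A}:=F^{-1}(A)$. It suffices to show that $\tilde{A}$ is a nontrivial $\GL(n,\R)$-invariant $C^\infty(S^{n-1})$-submodule of $\mathcal{J}^{k+1,n-1-k}(\R^n)^{tr}$. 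Then \autoref{theorem:IrreducibilityDiffForms} (with $k+1$ in place of $k$, so $1\le k+1\le n$) gives $\tilde{A}=\mathcal{J}^{k+1,n-1-k}(\R^n)^{tr}$, and hence $A=\Area_k^\infty(\R^n)$ because $F$ is surjective.

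Nontriviality of $\tilde{A}$ is immediate from the injectivity of $F$. For $\GL(n,\R)$-invariance, we use \autoref{corollary:equivConstructionAreaSm}: $F((G_{g^{-1}})^*\tau)=\sign(\det g)\,\pi(g)F(\tau)$. Since $A$ is a $\GL(n,\R)$-invariant linear subspace, the sign factor is harmless, and $F^{-1}(A)$ is closed under $\tau\mapsto (G_{g^{-1}})^*\tau$ for all $g$. This is exactly the $\GL(n,\R)$-action on forms used in \autoref{theorem:IrreducibilityDiffForms} (via the equivariant bundle $J^{k,n-k}$). The bidegree $(k+1,n-1-k)$ is preserved because $G_g$ is a product map, and $\mathcal{J}$ is preserved by \eqref{eq:groupActionAlpha} together with the corresponding formula for $d\alpha$.

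For the module structure, let $\psi\in C^\infty(S^{n-1})$ and $\tau\in\mathcal{J}^{k+1,n-1-k}(\R^n)^{tr}$. The form $\pi_2^*\psi\cdot\tau$ is again translation invariant of the same bidegree and still satisfies $\alpha\wedge(\psi\tau)=d\alpha\wedge(\psi\tau)=0$, since multiplication by a function commutes with wedging. Moreover $i_T(\psi\tau)=\psi\, i_T\tau$, so
\begin{align*}
	F(\psi\tau)(K;\phi)=\int_{\nc(K)}\pi_2^*\phi\wedge\pi_2^*\psi\, i_T\tau=\Phi_{i_T\tau}(K;\psi\phi)=[\psi\bullet F(\tau)](K;\phi).
\end{align*}
Thus $F(\psi\tau)=\psi\bullet F(\tau)$, and $\tilde{A}$ is a $C^\infty(S^{n-1})$-module because $A$ is one.

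The only point requiring care is the matching of group actions. One must confirm that the action under which \autoref{theorem:IrreducibilityDiffForms} was proved (pullback by $G_{g}$, combined with the $C^\infty(S^{n-1})$-module structure through $\pi_2$) is the one intertwined by $F$ up to sign. The action of $\GL(n,\R)$ on functions in the module structure, namely the $0$-homogeneous one from \autoref{lemma:continuityModuleStructure}, is compatible because $\pi_2\circ G_g$ realizes that action. Since invariance of $A$ under $\pi(g)$ for all $g$ transfers to invariance of $\tilde A$ under all pullbacks, the precise normalization of the intertwining is irrelevant; I expect no genuine obstacle beyond this check.
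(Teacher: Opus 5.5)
Your proposal is correct and matches the paper's proof: you transport $A$ through the isomorphism $\tau\mapsto\Phi_{i_T\tau}$ of \autoref{corollary:PsiIsomorphism}, which is $C^\infty(S^{n-1})$-linear and intertwines the $\GL(n,\R)$-actions up to the character $g\mapsto\sign(\det g)$ (\autoref{corollary:equivConstructionAreaSm}), and then apply \autoref{theorem:IrreducibilityDiffForms}; your explicit checks supply details the paper leaves implicit. One small correction: for $A=0$ the conclusion is false rather than vacuous, so nontriviality is a genuine implicit hypothesis, which is how the paper uses the result in \autoref{theorem:IrreducibilityRefined}.
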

	\begin{proof}
		Since $\Area_k^\infty(\R^n)\cong \mathcal{J}^{k+1,n-1-k}(\R^n)^{tr}$ as a $C^\infty(S^{n-1})$-module and as the isomorphism commutes with the action of $\GL(n,\R)$ on both spaces up to the tensor product with the $1$-dimensional representation of $\GL(n,\R)$ given by $g\mapsto \sign(\det g)$, compare \autoref{corollary:equivConstructionAreaSm} and \autoref{corollary:PsiIsomorphism}, the claim follows directly from \autoref{theorem:IrreducibilityDiffForms}.
	\end{proof}		
	The following is a refined version of  \autoref{maintheorem:IrreducibleModule}.	
	\begin{theorem}\label{theorem:IrreducibilityRefined}
		Let $W\subset \Area_k(\R^n)$ be a nontrivial $\GL(n,\R)$-invariant $C(S^{n-1})$-module. Then the following holds:
		\begin{enumerate}
			\item $W$ is dense in $\Area_k(\R^n)$ with respect to the uniform weak* and uniform compact topology.
			\item If $W\subset \Area_k^0(\R^n)$, then $W$ is dense in $\Area_k^0(\R^n)$ with respect to the uniform strong topology.
		\end{enumerate}
	\end{theorem}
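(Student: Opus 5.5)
The strategy is to reduce to \autoref{theorem:IrreducibilityModuleSmoothCase} by passing to closures and smooth vectors. Let $\overline{W}$ denote the closure of $W$ in $\Area_k(\R^n)$ with respect to the uniform weak* topology (respectively the uniform compact topology, or the uniform strong topology in case (2)).

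\emph{Invariance of the closure.} First I check that $\overline{W}$ is again a $\GL(n,\R)$-invariant $C(S^{n-1})$-submodule. This follows from \autoref{lemma:continuityPropertiesGroupAction} and \autoref{lemma:continuityModuleStructure}: for fixed $g$ and $\phi$, the maps $\Psi\mapsto\pi(g)\Psi$ and $\Psi\mapsto\phi\bullet\Psi$ are continuous in each of the three topologies. In case (2) I also use that $\Area_k^0(\R^n)$ is closed in the uniform strong topology, so $\overline{W}\subset\Area^0_k(\R^n)$.

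\emph{Nontrivial smooth part.} Next I apply \autoref{proposition:DensitySmoothVectors} to the closed invariant subspace $\overline{W}$. It shows that $\overline{W}\cap\Area^\infty(\R^n)$ is dense in $\overline{W}$ in the relevant topology. Since $W\neq0$, this intersection is a nontrivial subspace of $\Area_k^\infty(\R^n)$, and it is $\GL(n,\R)$-invariant. It is also a $C^\infty(S^{n-1})$-module: by \autoref{corollary:moduleProductSmoothVectors}, $\phi\bullet\Psi$ is smooth for smooth $\phi$ and smooth $\Psi$, and it lies in $\overline{W}$ by the previous step.

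\emph{Conclusion.} The statement of \autoref{theorem:IrreducibilityModuleSmoothCase} omits the word "nontrivial", but nontriviality is clearly needed and is supplied here. That theorem then gives $\overline{W}\cap\Area^\infty(\R^n)=\Area_k^\infty(\R^n)$, so $\Area_k^\infty(\R^n)\subset\overline{W}$. Applying \autoref{proposition:DensitySmoothVectors} once more, now to $W'=\Area_k(\R^n)$, which is closed and invariant, shows that $\Area^\infty_k(\R^n)$ is dense in $\Area_k(\R^n)$ for the uniform weak* and uniform compact topologies. Hence $\overline{W}=\Area_k(\R^n)$ in those topologies. For (2), the same proposition applied to $\Area_k^0(\R^n)$, which is closed in the strong topology, gives density of $\Area_k^\infty(\R^n)$ in $\Area_k^0(\R^n)$, and therefore $\overline{W}=\Area^0_k(\R^n)$.

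The main point of care is keeping the topologies straight. Closedness of $\overline{W}$ under the module action and under the group action must hold in the same topology in which \autoref{proposition:DensitySmoothVectors} is applied. This is why case (2) requires $W\subset\Area_k^0(\R^n)$: the group action is only continuous in the strong topology on the strongly continuous part.
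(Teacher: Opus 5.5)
Your proposal is correct and follows the paper's proof essentially step by step: you pass to the closure (still a $\GL(n,\R)$-invariant $C(S^{n-1})$-module, and in case (2) still inside $\Area_k^0(\R^n)$), use \autoref{proposition:DensitySmoothVectors} to obtain a nontrivial invariant $C^\infty(S^{n-1})$-module of smooth vectors, apply \autoref{theorem:IrreducibilityModuleSmoothCase}, and conclude from the density of $\Area_k^\infty(\R^n)$. Your remark that \autoref{theorem:IrreducibilityModuleSmoothCase} tacitly requires nontriviality is also correct.
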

	\begin{proof}
		The argument is identical for all three topologies, the only difference is that the closure of $\Area_k^\infty(\R^n)$ with respect to the uniform strong topology is the (in general proper) subspace $\Area_k^0(\R^n)\subset \Area_k(\R^n)$, which leads to the additional restrictions in the second case. Let us fix one of these topologies. Due to \autoref{lemma:continuityPropertiesGroupAction} and \autoref{lemma:continuityModuleStructure}, the closure of $W$ with respect to the chosen topology is again a $\GL(n,\R)$-invariant $C(S^{n-1})$-module. Without loss of generality, we may thus assume that $W$ is closed in the chosen topology. For the uniform strong topology, this closure is still contained in $\Area^0_k(\R^n)$ since $\Area^0_k(\R^n)$ is closed in the uniform strong topology due to \cite{KnoerrPolynomiallocalfunctionals2025}*{Proposition~3.19}.\\
		By \autoref{proposition:DensitySmoothVectors}, $\Area_k^\infty(\R^n)\cap W$ is dense in $W$ with respect to the chosen topology. By construction, $\Area_k^\infty(\R^n)\cap W$ is a nontrivial $\GL(n,\R)$-invariant $C^\infty(S^{n-1})$-module, so \autoref{theorem:IrreducibilityModuleSmoothCase} implies that $\Area_k^\infty(\R^n)\subset W$. Since $\Area_k^\infty(\R^n)$ is dense in $\Area_k(\R^n)$ with respect to the uniform weak* and uniform compact topology as well as dense in $\Area^0_k(\R^n)$ with respect to the uniform strong topology, this implies the claim.
	\end{proof}

	\subsection{Finite linear combinations of mixed area measures and McMullen's Conjecture}\label{section:FiniteComb}
	
	Recall that we denote the $1$-homogeneous extension of a real-valued function $h\in C^\infty(S^{n-1})$, by $\tilde{h}:\R^n\rightarrow \R$. In \autoref{section:ConstructionSmoothArea}, we considered the vector field $X_h$ on $\R^n\times S^{n-1}$ given by
	\begin{align*}
		X_h|_{(x,v)}:=\sum_{j=1}^n \frac{\partial \tilde{h}}{\partial v_i}(v) \frac{\partial }{\partial x_i}.
	\end{align*}
	In other words $X_h|_{(x,v)}=(\nabla \tilde{h}(v),0)\in T_{(x,v)}(\R^n\times S^{n-1})\cong \R^n\oplus v^\perp$. Since $\tilde{h}$ is $1$-homogeneous, this vector field may also be characterized as the unique vector field such that
	\begin{align*}
		&i_{X_h}\alpha=\pi_2^*h, &&i_{X_h}d\alpha=-d\pi_2^*h.
	\end{align*}
	If $\mathcal{L}_{X_h}$ denotes the Lie derivative with respect to $X_h$, this implies $\mathcal{L}_{X_h}\alpha=0$ and $\mathcal{L}_{X_{h}}d\alpha=0$. Moreover, for $\omega\in \Omega^{k,l}(\R^n\times S^{n-1})^{tr}$, $\mathcal{L}_{X_h}\omega\in \Omega^{k-1,l+1}(\R^n\times S^{n-1})^{tr}$. In particular, we obtain a well defined map 
	\begin{align*}
		\mathcal{L}_{X_h}:\mathcal{J}^{k,n-k}(\R^n)^{tr}\rightarrow\mathcal{J}^{k-1,n-k+1}(\R^n)^{tr}.
	\end{align*}
	The flow of this vector field admits the following simple description: Consider the map
	\begin{align*}
		\phi_{h}:\R^n\times S^{n-1} &\rightarrow \R^n\times S^{n-1}\\
		(x,v)&\mapsto (x+\nabla\tilde{h}(v),v).
	\end{align*}
	Then $\phi_h$ is a contactomorphism and the flow of $X_h$ is given by $t\mapsto \phi_{th}$. In particular, $\frac{d}{dt}\phi_{th}|_{t=0}=X_h$. Moreover, since the maps $\phi_{h_1}$ and $\phi_{h_2}$ commute for $h_1,h_2\in C^\infty(S^{n-1})$, so do the Lie derivatives with respect to the corresponding vector fields.

	\begin{corollary}\label{corollary:MixedSurfaceAreameasureDiffForm}
		If $L\in\K(\R^n)$ is a smooth convex body with strictly positive Gauss curvature, then for all $\omega\in\Omega^{n-1}(\R^n\times S^{n-1})^{tr}$ and $K\in\K(\R^n)$, 
		\begin{align*}
			\frac{d}{dt}\Big|_0\Phi_\omega(K+tL)=\Phi_{\mathcal{L}_{X_{h_L}}\omega}(K).
		\end{align*}
		In particular, there is a constant $c_k$ such that for $L_1,\dots,L_{n-k-1}\in\K(\R^n)$ smooth with strictly positive Gauss curvature,
		\begin{align*}
			\int_{S^{n-1}}\phi S(K[k],L_1,\dots,L_{n-k-1})=c_k\int_{\nc(K)}\pi_2^*\phi \wedge\mathcal{L}_{X_{h_{L_1}}}\dots\mathcal{L}_{X_{h_{L_{n-k-1}}}}i_T\pi_1^*\vol.
		\end{align*}
	\end{corollary}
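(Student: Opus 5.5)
The plan is to compare normal cycles directly. For $K\in\K(\R^n)$ and $L$ smooth with strictly positive Gauss curvature, the support set of $K+tL$ in direction $v$ is $F(K,v)+t\nabla\tilde h_L(v)$. Indeed, $h_{K+tL}=h_K+th_L$, $h_L$ is differentiable, and so $\partial h_{K+tL}(v)=\partial h_K(v)+t\nabla\tilde h_L(v)$ by \autoref{lemma:SubdifferentialSupportFunction}. Together with \autoref{corollary:RelationSubdiffNormalCycle}, this shows that, as a set, $\nc(K+tL)=\phi_{th_L}(\nc(K))$. Since $\phi_{th_L}$ is an orientation preserving diffeomorphism isotopic to the identity, and the normal cycle is characterized as an integral current (for instance by approximating $K$ with smooth bodies of positive curvature and using continuity of $\nc$ in the flat topology), I expect the equality of currents $\nc(K+tL)=(\phi_{th_L})_*\nc(K)$.

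From this I get
\begin{align*}
\Phi_\omega(K+tL;\phi)=\int_{\nc(K)}\phi_{th_L}^*(\pi_2^*\phi\wedge\omega)=\int_{\nc(K)}\pi_2^*\phi\wedge\phi_{th_L}^*\omega,
\end{align*}
because $\phi_{th_L}$ preserves the second coordinate. Differentiating at $t=0$ and using that $t\mapsto\phi_{th_L}$ is the flow of $X_{h_L}$ gives $\frac{d}{dt}|_0\phi_{th_L}^*\omega=\mathcal{L}_{X_{h_L}}\omega$. Differentiation may be exchanged with integration since the mass of $\nc(K)$ is finite, the relevant forms are translation invariant, and the $t$-dependence is smooth and uniform on compact sets. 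This proves the first claim for all $\phi\in C(S^{n-1})$.

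For the second claim, apply the first to $\omega=i_T\pi_1^*\vol$, or more precisely to $\kappa_{n-1}$. We have $\Phi_{\kappa_{n-1}}=\omega_1 S_{n-1}$, and $\kappa_{n-1}$ agrees with $i_T\pi_1^*\vol$ up to a constant and modulo multiples of $\alpha$, which vanish on normal cycles. Use multilinearity: $S_{n-1}(K+\sum_j t_jL_j)$ is a polynomial in the $t_j$, and the coefficient of $t_1\cdots t_{n-k-1}$ in its $k$-homogeneous-in-$K$ part is $\binom{n-1}{k}\frac{(n-1-k)!}{\dots}$ times $S(K[k],L_1,\dots,L_{n-k-1})$ by the definition of mixed area measures. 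Applying $\partial_{t_1}\cdots\partial_{t_{n-k-1}}|_0$ and iterating the first part is legitimate because $\Phi_{\mathcal{L}_{X_{h_{L_j}}}\omega}$ is again of the form $\Phi_{\omega'}$ with $\omega'$ translation invariant, and the Lie derivatives commute. The iterated form has bidegree $(k,n-1-k)$ and hence produces a $k$-homogeneous measure, so $c_k$ collects the combinatorial factor and $\omega_1$.

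The main obstacle is justifying $\nc(K+tL)=(\phi_{th_L})_*\nc(K)$ as currents, including orientation, for nonsmooth $K$. I would first verify it for $K$ smooth with positive curvature, where both sides are graphs over $S^{n-1}$ of $v\mapsto \nu_K^{-1}(v)+t\nabla\tilde h_L(v)$. I would then pass to the limit using weak continuity of the normal cycle and continuity of pushforward under a fixed smooth map.
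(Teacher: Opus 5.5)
Your proposal is correct and follows essentially the same route as the paper: the identity $\nc(K+tL)=(\phi_{th_L})_*\nc(K)$, differentiation of $\phi_{th_L}^*\omega$ under the integral over $\nc(K)$ to obtain $\mathcal{L}_{X_{h_L}}\omega$, and then iteration starting from $S_{n-1}=\Phi_{i_T\pi_1^*\vol}$ to reach the mixed area measures. You actually give more detail than the paper, which states the pushforward identity without the subdifferential argument or the smooth-approximation step you sketch for it.
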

	\begin{proof}
		If $L$ is a smooth convex body with strictly positive Gauss curvature, then 
		\begin{align*}
			\nc(K+L)=(\phi_{h_L})_*\nc(K)\quad\text{for}~K\in\K(\R^n),
		\end{align*}
		so in particular
		\begin{align*}
			\frac{d}{dt}\Big|_0\Phi_\omega(K+tL;\phi)=&\frac{d}{dt}\Big|_0\int_{\nc(K)}\pi_2^*\phi \wedge \phi_{th_L}^*\omega
			=\int_{\nc(K)}\pi_2^*\phi \wedge \mathcal{L}_{X_{h_L}}\omega\\
			=&\Phi_{\mathcal{L}_{X_{h_L}}\omega}(K;\phi),
		\end{align*}
		where we may interchange integration and differentiation since $\nc(K)$ is compact. The last equation follows inductively using that $S_{n-1}=\Phi_{i_T\pi_1^*\vol}$.
	\end{proof}
	
	For a positive definite symmetric endomorphism $A\in\Sym^2(\R^n)$, let $\mathcal{E}_A$ denote the ellipsoid with support function $h_{\mathcal{E}_A}(v)=\sqrt{\langle v,Av\rangle}$. Fix an orthonormal basis on $\R^n$ and let $E_{ij}\in \Sym^2(\R^n)$ denote the matrix whose coordinate representation has $1$ as its $(i,j)$th and $(j,i)$th entry and $0$ everywhere else. If $t>0$ is small enough, the matrix $A_{ij}(t)=Id_n+tE_{ij}$ is positive definite and consequently defines an ellipsoid in $\R^n$. For $h\in C^2(S^{n-1})$, we denote by $D^2h(v)$ the restriction of the Hessian of its $1$-homogeneous extension to $T_vS^{n-1}$.
	\begin{lemma}[\cite{KnoerrSmoothvaluationsconvex2025}*{Lemma 3.3}]
		\label{lemma:ExistenceEllipsoidsSpan}
		There exists $t\ge 1$ such that $D^2h_{\mathcal{E}_{A_{ij}(t)}}(v)$, $1\le i\le j\le n$, and $D^2h_{\mathcal{E}_{Id_n}}(v)$ span $\Sym^2 (T_vS^{n-1})$ for all $v\in S^{n-1}$.
	\end{lemma}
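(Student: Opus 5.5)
The plan is to compute $D^2h_{\mathcal{E}_A}(v)$ explicitly, reduce the spanning property to a statement about a finite family of symmetric matrices on $v^\perp$ depending continuously on $(t,v)$, and then use compactness of $S^{n-1}$. I do not yet see how to obtain the stated range $t\ge 1$ from this argument; that issue is addressed at the end.

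\textbf{Step 1 (Hessian formula).} For $A$ positive definite, write $h_A(v)=\sqrt{\langle v,Av\rangle}$ and let $P_v$ be the orthogonal projection onto $v^\perp=T_vS^{n-1}$. The full Hessian of the $1$-homogeneous extension is
\begin{align*}
	\nabla^2 h_A(v)=\frac{A}{h_A(v)}-\frac{(Av)(Av)^T}{h_A(v)^3}.
\end{align*}
Restricting to $v^\perp$ gives
\begin{align*}
	D^2h_A(v)=\frac{P_vAP_v}{h_A(v)}-\frac{(P_vAv)(P_vAv)^T}{h_A(v)^3}.
\end{align*}
For $A=Id_n$ this is $D^2h_{\mathcal{E}_{Id_n}}(v)=P_v$. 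For $A=A_{ij}(t)=Id_n+tE_{ij}$ we have $P_vAv=tP_vE_{ij}v$. Hence
\begin{align*}
	D^2h_{\mathcal{E}_{A_{ij}(t)}}(v)=\frac{P_v+tP_vE_{ij}P_v}{h(v)}-t^2\frac{w_{ij}w_{ij}^T}{h(v)^3},\qquad w_{ij}:=P_vE_{ij}v,
\end{align*}
where $h(v)=\sqrt{1+t\langle v,E_{ij}v\rangle}$.

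\textbf{Step 2 (reduction).} Since $P_v$ belongs to the family, the span in question equals the span of $P_v$ together with the matrices
\begin{align*}
	M_{ij}(t,v):=P_vE_{ij}P_v-t\,\frac{w_{ij}w_{ij}^T}{h(v)^2}.
\end{align*}
To see this, subtract $P_v/h(v)$ from each $D^2h_{\mathcal{E}_{A_{ij}(t)}}(v)$ and multiply by $h(v)/t$.

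At $t=0$ these matrices become $P_vE_{ij}P_v$. The map $E\mapsto P_vEP_v$ from $\Sym^2(\R^n)$ onto $\Sym^2(v^\perp)$ is surjective, and the $E_{ij}$ form a basis of $\Sym^2(\R^n)$. So at $t=0$ the family spans $\Sym^2(v^\perp)$ for every $v$.

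\textbf{Step 3 (uniformity).} To make Step 2 quantitative, I would work in a local orthonormal frame of $v^\perp$, which is available locally on $S^{n-1}$. In such a frame, spanning is equivalent to some $\binom{n}{2}\times\binom{n}{2}$ minor of the coefficient matrix of the family being nonzero. The relevant quantity is the largest absolute value among these minors, which does not depend on the choice of frame up to a bounded factor. This quantity is continuous in $(t,v)$ on the region where all $A_{ij}(t)$ are positive definite, and it is strictly positive at $t=0$. By compactness of $S^{n-1}$, there is a $t_0>0$ such that the family spans $\Sym^2(v^\perp)$ for all $v$ and all $0<t\le t_0$.

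\textbf{Main obstacle: the range $t\ge 1$.} For $i\ne j$, the matrix $Id_n+tE_{ij}$ has eigenvalues $1\pm t$, so it is positive definite exactly for $t<1$. Step 3 therefore naturally produces small $t$, whereas the statement asks for $t\ge 1$. The first thing I would try is to show that the parameter convention of \cite{KnoerrSmoothvaluationsconvex2025} is equivalent to the one written here up to a positive rescaling. The tool would be the identity $D^2h_{sA}=\sqrt{s}\,D^2h_A$, which shows the spanning property is invariant under positive rescaling of each matrix. For instance, under the reparametrization $t\,Id_n+E_{ij}=t(Id_n+t^{-1}E_{ij})$, the condition $t\ge 1$ would correspond to the small-parameter regime handled in Steps 1--3.

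If instead the parametrization is taken literally, then the statement can only concern $t$ at which all $A_{ij}(t)$ are positive definite. In that case I would try to extend the spanning property from $(0,t_0]$ to the remaining admissible values of $t$. The approach would be to clear the factor $h(v)^2$ from the minors, so that they become polynomials in $t$ for each fixed $v$, and then argue via real-analyticity in $t$. This is the step whose success I am least confident about, since analyticity controls each $v$ separately rather than uniformly in $v$.
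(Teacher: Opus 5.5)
Your Steps 1--3 are a complete and correct proof of the lemma in the only form in which it makes sense, which is also the form in which the paper uses it. The Hessian formula is right, and so is the reduction to $M_{ij}(t,v)=P_vE_{ij}P_v-t\,w_{ij}w_{ij}^T/h(v)^2$. This family extends continuously to $t=0$, where it spans $\Sym^2(v^\perp)$. Openness of the spanning condition and compactness of $S^{n-1}$ then give some $t_0\in(0,1)$ such that every $t\in(0,t_0]$ works for all $v$. The paper only cites this lemma and gives no proof of its own to compare against; your perturbation-plus-compactness argument is the standard way to prove it.

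Step 3 can be simplified. All the matrices lie in $\Sym^2(v^\perp)\subset\Sym^2(\R^n)$, a subspace of dimension $\binom{n}{2}$. So spanning simply means that the family has rank $\binom{n}{2}$ in the fixed space $\Sym^2(\R^n)$. Lower semicontinuity of rank then gives openness directly, without local frames or minors.

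Your ``main obstacle'' is not a gap in your argument but a misprint in the statement. The paper fixes the convention $A_{ij}(t)=Id_n+tE_{ij}$, and just before the lemma it remarks that this matrix is positive definite only for small $t>0$. Indeed, at $v=(e_i-e_j)/\sqrt{2}$ one has $\langle v,A_{ij}(t)v\rangle=1-t$. So for $i\neq j$ and $t\ge 1$, the function $h_{\mathcal{E}_{A_{ij}(t)}}$ is either undefined or not differentiable there, and the literal claim cannot hold. The intended content is the existence of some $t>0$. Only the spanning property of the resulting $\binom{n+1}{2}+1$ ellipsoids is used later, in \autoref{proposition:J_finitelyGeneratedEllipsoids} and \autoref{maintheorem:finiteMixedArea}.

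You can therefore drop both fallback plans. Guessing a different parameter convention is unnecessary, because the text fixes it. The analyticity extension to all admissible $t$ would prove more than the existence statement needs, and it still could never reach $t\ge 1$.
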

		
	\begin{proposition}\label{proposition:J_finitelyGeneratedEllipsoids}
		Let $\mathcal{E}_1,\dots, \mathcal{E}_N\in\K(\R^n)$ be ellipsoids such that $D^2h_{\mathcal{E}_j}(v)$, $1\le j\le N$, span $\Sym^2 (T_vS^{n-1})$ for all $v\in S^{n-1}$. Then for $0\le k\le n-1$,
		$\mathcal{J}^{n-k,k}(\R^n)^{tr}$ is generated as a $C^\infty(S^{n-1})$-module by the forms
		\begin{align}
			\label{eq:generatorsModule}
			\mathcal{L}_{X_{h_{\mathcal{E}_{j_1}}}}\dots\mathcal{L}_{X_{h_{\mathcal{E}_{j_k}}}}\pi_1^*\vol,\quad 1\le j_1,\dots,j_k\le N.
		\end{align}
	\end{proposition}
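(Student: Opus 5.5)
The strategy is to reduce the statement to a pointwise spanning problem and settle that with the irreducibility result \autoref{proposition:JIrreducibleStablizer}; the local-to-global step then runs exactly as in the proof of \autoref{theorem:IrreducibilityDiffForms}.

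First I check that the forms in \eqref{eq:generatorsModule} lie in $\mathcal{J}^{n-k,k}(\R^n)^{tr}$. The form $\pi_1^*\vol$ is translation invariant of bidegree $(n,0)$. Both $\alpha\wedge\pi_1^*\vol$ and $d\alpha\wedge\pi_1^*\vol$ would have $x$-degree $n+1$, so they vanish, and hence $\pi_1^*\vol\in\mathcal{J}^{n,0}(\R^n)^{tr}$. Each $\mathcal{L}_{X_h}$ maps $\mathcal{J}^{m,n-m}(\R^n)^{tr}$ to $\mathcal{J}^{m-1,n-m+1}(\R^n)^{tr}$, as noted above, so the iterated Lie derivatives of order $k$ lie in $\mathcal{J}^{n-k,k}(\R^n)^{tr}$.

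Next I compute $\mathcal{L}_{X_h}$ explicitly on translation invariant forms. Since $X_h=(\nabla\tilde h(v),0)$ has only $x$-components whose coefficients depend only on $v$, Cartan's formula gives the following:
\begin{itemize}
\item $\mathcal{L}_{X_h}f=0$ for functions $f$ of $v$;
\item $\mathcal{L}_{X_h}dv_j=d(X_hv_j)=0$;
\item $\mathcal{L}_{X_h}dx_i=d(\partial_i\tilde h)=\sum_j\partial_j\partial_i\tilde h\,dv_j$.
\end{itemize}
Hence $\mathcal{L}_{X_h}$ acts pointwise as the algebraic derivation $D_{A}$ of $\Lambda^*(\R^n\oplus v^\perp)^*$ with $A=D^2h(v)$. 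This derivation replaces a $dx$-factor by a $dv$-factor via $A$; restricted to $T_vS^{n-1}$, only the Hessian on $v^\perp$ matters. Therefore the value of the generator in \eqref{eq:generatorsModule} at $(0,v)$ is $D_{A_1}\cdots D_{A_k}\vol$ with $A_i=D^2h_{\mathcal{E}_{j_i}}(v)$. The expression is multilinear in the $A_i$, and the $D^2h_{\mathcal{E}_j}(v)$ span $\Sym^2(T_vS^{n-1})$ by assumption. So the span $S_v$ of these values equals the span of all $D_{A_1}\cdots D_{A_k}\vol|_{(0,v)}$ with $A_i\in\Sym^2(v^\perp)$ arbitrary.

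The key step, and the one I expect to be the main obstacle, is to show $S_v=J^{n-k,k}|_{(0,v)}$. For $g\in G_v\cong\GL(v^\perp)$, the computation in the proof of \autoref{proposition:JIrreducibleStablizer} shows that $dG_g$ acts by $(gw_1,g^{-T}w_2)$ on the contact plane. One then checks that $G_g^*\circ D_A\circ (G_g^{-1})^*=D_{g'Ag'^T}$ for the appropriate $g'\in\{g,g^{-1},g^{T},\dots\}$, and that $G_g^*\vol|_{(0,v)}=\det(g)\vol|_{(0,v)}$. Since $\Sym^2(v^\perp)$ is preserved by $A\mapsto g'Ag'^T$, it follows that $S_v$ is a $G_v$-invariant subspace. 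It is nonzero: taking $A_i=\mathrm{Id}_{v^\perp}$ gives a nonzero multiple of the form with $k$ factors $dx$ replaced by the corresponding $dv$, which is visibly nonzero, since $k\le n-1=\dim v^\perp$. By \autoref{proposition:JIrreducibleStablizer}, $J^{n-k,k}|_{(0,v)}$ is irreducible under $G_v$, so $S_v$ is the whole fiber. The care needed here is in the exact sign and transpose bookkeeping for the equivariance of $D_A$; only invariance of the span is needed, not any precise formula.

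Finally, since all forms are translation invariant, their values at $(0,v)$, $v\in S^{n-1}$, determine them. For each $v$ I pick finitely many generators whose values at $(0,v)$ form a basis of the fiber. By continuity they remain a basis on a neighborhood $U_v$, and the coefficients of any $\tau\in\mathcal{J}^{n-k,k}(\R^n)^{tr}$ in this local frame are smooth on $U_v$. Using compactness of $S^{n-1}$ and a subordinate partition of unity, exactly as in the proof of \autoref{theorem:IrreducibilityDiffForms}, I write $\tau$ as a $C^\infty(S^{n-1})$-linear combination of the forms in \eqref{eq:generatorsModule}.
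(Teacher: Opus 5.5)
Your proof is correct, but it is organized differently from the paper's.

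The paper argues by induction on $k$. For an arbitrary ellipsoid $\mathcal{E}$ it writes $D^2h_{\mathcal{E}}=\sum_j f_j D^2h_{\mathcal{E}_j}$ with smooth $f_j$. It then combines the Leibniz rule, the commutation of the $\mathcal{L}_{X_h}$ with each other and with multiplication by functions of $v$, and the induction hypothesis for $\mathcal{J}^{n-k+1,k-1}(\R^n)^{tr}$. This shows that the module generated by the forms \eqref{eq:generatorsModule} contains $\mathcal{L}_{X_{h_{\mathcal{E}}}}(\mathcal{J}^{n-k+1,k-1}(\R^n)^{tr})$ for every ellipsoid $\mathcal{E}$. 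Since $\GL(n,\R)$ permutes ellipsoids, these sets span a nontrivial $\GL(n,\R)$-invariant submodule, and the global \autoref{theorem:IrreducibilityDiffForms} finishes the argument.

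You instead work directly in the fibre. On translation invariant forms, $\mathcal{L}_{X_h}$ is the pointwise algebraic derivation $D_{D^2h(v)}$. Hence the span of the generators at $(0,v)$ is the span of all $D_{A_1}\cdots D_{A_k}\vol$ with $A_i\in\Sym^2(v^\perp)$. This span is invariant under the stabilizer, because $G_g^*\circ D_A\circ (G_g^{-1})^*=D_{g^{-1}Ag^{-T}}$ for $g\in G_v$; so your undetermined $g'$ is $g^{-1}$. It is nonzero, as $A_i=\mathrm{Id}$ shows. \autoref{proposition:JIrreducibleStablizer} then gives the full fibre, and the local-frame/partition-of-unity step globalizes. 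In effect, you inline the proof of \autoref{theorem:IrreducibilityDiffForms} with the stabilizer $G_v$ in place of the full group.

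Your route avoids the induction and the commutation of Lie derivatives. It also avoids choosing the coefficients $f_j$ smoothly, which the paper asserts without comment. It makes visible that only the pointwise spanning of the Hessians is used, with no auxiliary $\GL(n,\R)$-invariant family of bodies. The paper's route reuses an already-established global statement, and it records the structural fact that the generated module contains a $\GL(n,\R)$-invariant submodule.
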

	\begin{proof}
		We use induction on $k$, where for $k=0$, we have $\mathcal{J}^{n,0}(\R^n)^{tr}=C^\infty(S^{n-1})\pi_1^*\vol$, so there is nothing to show. Assume that $k\ge 1$ and that the claim holds for $\mathcal{J}^{n-k+1,k-1}(\R^n)^{tr}$. Let us denote the $C^\infty(S^{n-1})$-module generated by the forms in Eq.~\eqref{eq:generatorsModule} by $A$. We claim that $A$ contains a $\GL(n,\R)$-invariant submodule. First note that the Lie derivatives $\mathcal{L}_{X_{h_{\mathcal{E}_j}}}$ commute with the multiplication with elements of $C(S^{n-1})$ since $\mathcal{L}_{X_{h_{\mathcal{E}_j}}}f=0$ for $f\in C^\infty(S^{n-1})$. It is therefore sufficient to show that
		\begin{align*}
			\mathcal{L}_{X_{h_{\mathcal{E}}}}(\mathcal{J}^{n-k+1,k-1}(\R^n)^{tr})\subset A
		\end{align*}
		for every ellipsoid $\mathcal{E}\in \K(\R^n)$, since 
		\begin{align*}
			\bigcup_{\mathcal{E}\in\K(\R^n)~\text{ellipsoid}}\mathcal{L}_{X_{h_{\mathcal{E}}}}(\mathcal{J}^{n-k+1,k-1}(\R^n)^{tr})
		\end{align*}
		spans a $\GL(n,\R)$-invariant $C^\infty(S^{n-1})$-module of $\mathcal{J}^{n-k,k}(\R^n)^{tr}$. \\
		Note that
		\begin{align*}
			\mathcal{L}_{X_{h_{\mathcal{E}}}}dx_l=di_{X_{h_{\mathcal{E}}}}dx_l=d\left(\frac{\partial h_{\mathcal{E}}(v)}{\partial v_l}\right)=\sum_{i=1}^n[D^2h_{\mathcal{E}}(v)]_{il}dv_i.
		\end{align*}
		Since $D^2h_{\mathcal{E}_j}$, $1\le j\le N$, span $\Sym^2(T_vS^{n-1})$ at each point $v\in S^{n-1}$, we find smooth functions $f_j\in C^\infty(S^{n-1})$ such that 
		\begin{align*}
			D^2h_{\mathcal{E}}=\sum_{j=1}^N f_j\cdot D^2h_{\mathcal{E}_j}.
		\end{align*}
		In particular, 
		\begin{align*}
			\mathcal{L}_{X_{h_{\mathcal{E}}}}dx_l=\sum_{i=1}^n\sum_{j=1}^N f_j\cdot [D^2h_{\mathcal{E}_j}]_{il}dv_i=\sum_{j=1}^Nf_j\mathcal{L}_{X_{h_{\mathcal{E}_j}}}dx_l.
		\end{align*}
		Since this holds for arbitrary $1\le l\le n$, the Leibniz rule implies
		\begin{align*}
			\mathcal{L}_{X_{h_{\mathcal{E}}}}\pi_1^*\vol =\sum_{j=1}^Nf_j\mathcal{L}_{X_{h_{\mathcal{E}_j}}}\pi_1^*\vol.
		\end{align*}
		As $k\ge 1$, the induction assumption implies that $\mathcal{J}^{n-k+1,k-1}(\R^n)^{tr}$ coincides with the $C^\infty(S^{n-1})$-module generated by $\mathcal{L}_{X_{h_{\mathcal{E}_{j_1}}}}\dots\mathcal{L}_{X_{h_{\mathcal{E}_{j_{k-1}}}}}\pi_1^*\vol$ for $1\le j_1,\dots,j_{k-1}\le n$, so it is sufficient to show that
		\begin{align*}
				\mathcal{L}_{X_{h_{\mathcal{E}}}}\left(\mathcal{L}_{X_{h_{\mathcal{E}_{j_1}}}}\dots\mathcal{L}_{X_{h_{\mathcal{E}_{j_{k-1}}}}}\pi_1^*\vol\right)\in A.
		\end{align*}
		However, since the Lie derivatives commute, we have
		\begin{align*}
			&\mathcal{L}_{X_{h_{\mathcal{E}}}}\left(\mathcal{L}_{X_{h_{\mathcal{E}_{j_1}}}}\dots\mathcal{L}_{X_{h_{\mathcal{E}_{j_{k-1}}}}}\pi_1^*\vol\right)=\mathcal{L}_{X_{h_{\mathcal{E}_{j_1}}}}\dots\mathcal{L}_{X_{h_{\mathcal{E}_{j_{k-1}}}}}\mathcal{L}_{X_{h_{\mathcal{E}}}}\pi_1^*\vol\\
			=&\sum_{j=1}^Nf_j\mathcal{L}_{X_{h_{\mathcal{E}_{j_1}}}}\dots\mathcal{L}_{X_{h_{\mathcal{E}_{j_{k-1}}}}}\mathcal{L}_{X_{h_{\mathcal{E}_j}}}\pi_1^*\vol\in A,
		\end{align*}
		where we used again that the Lie derivatives commute with the $C^\infty(S^{n-1})$-module structure. We thus see that $A$ contains a nontrivial $\GL(n,\R)$-invariant $C^\infty(S^{n-1})$-module of $\mathcal{J}^{n-k,k}(\R^n)^{tr}$, so \autoref{theorem:IrreducibilityDiffForms} shows that $A=\mathcal{J}^{n-k,k}(\R^n)^{tr}$.
	\end{proof}
	
	\begin{proof}[Proof of \autoref{maintheorem:finiteMixedArea}]
		If $\Psi\in \Area^\infty_k(\R^n)$, then there exists $\tau\in\mathcal{J}^{k+1,n-1-k}(\R^n)^{tr}$ such that $\Psi=\Phi_{i_T\tau}$ due to \autoref{corollary:PsiIsomorphism}. If we let $\mathcal{E}_1,\dots,\mathcal{E}_{N}$, $N=\binom{n+1}{2}+1$, denote ellipsoids as in \autoref{lemma:ExistenceEllipsoidsSpan}, then we may write
		\begin{align*}
			\tau=\sum_{1\le j_1,\dots,j_{n-k-1}\le N} g_{j_1,\dots,g_{j_{n-k-1}}}	\mathcal{L}_{X_{h_{\mathcal{E}_{j_1}}}}\dots\mathcal{L}_{X_{h_{\mathcal{E}_{j_{n-k-1}}}}}\pi_1^*\vol
		\end{align*} for suitable $g_{j_1},\dots,g_{j_{n-k-1}}\in C^\infty(S^{n-1})$ due to \autoref{proposition:J_finitelyGeneratedEllipsoids}. Since $\alpha\wedge\pi_1^*\vol=0$, we have $\pi_1^*\vol=\alpha\wedge i_T\pi_1^*\vol$ and as $\mathcal{L}_{X_{h_{\epsilon_j}}}\alpha=0$, this implies
		\begin{align*}
			i_T\tau\equiv&\sum_{1\le j_1,\dots,j_{n-k-1}\le N} g_{j_1,\dots,g_{j_{n-k-1}}}	\mathcal{L}_{X_{h_{\mathcal{E}_{j_1}}}}\dots\mathcal{L}_{X_{h_{\mathcal{E}_{j_{n-k-1}}}}}i_T\pi_1^*\vol\quad \mod \alpha.
		\end{align*}
		Since the normal cycle vanishes on multiples of $\alpha$, the claim follows directly from \autoref{corollary:MixedSurfaceAreameasureDiffForm}.
	\end{proof}
	As mentioned in the introduction, the previous description of the space $\mathcal{J}^n(\R^n)^{tr}$ may be used to obtain McMullen's Conjecture from the following description of $\GL(n,\R)$-smooth valuations in terms of differential forms. This description was originally obtained by Alesker in \cite{AleskerTheoryvaluationsmanifolds.2006}*{Theorem 5.2.1} as a consequence of the Irreducibilty Theorem \cite{AleskerDescriptiontranslationinvariant2001}. Recently, Hofstätter and the author found a different proof exploiting a corresponding regularity result for valuations on convex functions from \cite{KnoerrPaleyWienerSchwartz2025}. In particular, this proof does not require the Irreducibility Theorem, see \cite{HofstaetterKnoerrLocalizationvaluationsAleskers2025}*{Theorem A}.
	\begin{theorem}\label{theorem:SmoothValDiffForm}
		Let $0\le k\le n-1$. The following are equivalent for $\mu\in\Val_k(\R^n,\C)$:
		\begin{enumerate}
			\item $\mu$ is $\GL(n,\R)$-smooth, i.e. the map
				\begin{align*}
					\GL(n,\R)&\rightarrow\Val(\R^n,\C)\\
					g&\mapsto \left[K\mapsto \mu(g^{-1}K)\right]
				\end{align*}
				is smooth with respect to the natural Banach topology on $\Val(\R^n,\C)$.
			\item There exists a differential form $\omega\in \Omega^{k,n-1-k}(\R^n\times S^{n-1})^{tr}$ such that $\mu(K)=\int_{\nc(K)}\omega$.
		\end{enumerate}
	\end{theorem}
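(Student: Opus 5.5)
The implication (2)$\Rightarrow$(1) I would prove exactly as \autoref{corollary:ConstructionSmoothAreaNormalCycle}. The map $\omega\mapsto\mu_\omega$, $\mu_\omega(K)=\int_{\nc(K)}\omega$, is continuous from $\Omega^{k,n-1-k}(\R^n\times S^{n-1})^{tr}$ into the Banach space $\Val_k(\R^n,\C)$, by the mass bound in \autoref{proposition:boundSmoothArea} (take $\phi\equiv1$). It is $\GL(n,\R)$-equivariant up to the character $\sign(\det g)$, because $(G_{g^{-1}})_*\nc(K)=\sign(\det g)\nc(g^{-1}K)$. Since the space of translation invariant forms is a smooth Fréchet representation, its image consists of smooth vectors.

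For (1)$\Rightarrow$(2) I would copy the localization scheme of \autoref{section:CharcSmoothArea}, replacing area measures by scalar valuations and $\LV(\R^n)$ by the space $\mathrm{VConv}(\R^n)$ of continuous, dually epi-translation invariant valuations on $\Conv(\R^n,\R)$.

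\emph{Step 1 (localizing $\mu$).} Since $\Val_k(\R^n,\C)$ has no $C(S^{n-1})$-module structure, I would localize through the Goodey--Weil distribution. By \autoref{proposition:SupportValuations}, $\GW(\mu)$ is supported on the diagonal of $(S^{n-1})^k$. For a partition of unity $\phi_1,\dots,\phi_N$ subordinate to a cover of $S^{n-1}$ by open hemispheres, I would define $\mu_j$ by requiring
\begin{align*}
\GW(\mu_j)=(\phi_j\circ\pi^{(1)})\cdot\GW(\mu),
\end{align*}
where $\pi^{(1)}:(S^{n-1})^k\to S^{n-1}$ is the projection onto the first factor. I would then check four things: that this distribution is the Goodey--Weil distribution of an element of $\Val_k(\R^n,\C)$; that $\mu=\sum_j\mu_j$; that $\vsupp\mu_j\subset\supp\phi_j$; and that $\mu_j$ is again $\GL(n,\R)$-smooth. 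The last point would use an equivariance and continuity argument as in \autoref{corollary:moduleProductSmoothVectors}, since multiplication by a smooth function is continuous on the relevant distributions and compatible with the action of $\GL(n,\R)$.

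\emph{Step 2 (passing to convex functions).} For $\nu\in\Val_k(\R^{n},\C)$ with $\vsupp\nu$ a compact subset of $S^{n-1}_-$, I would set $\tilde\nu(f)=\nu(K_f)$, where $K_f\in\K(\R^n)$ satisfies $f=h_{K_f}(\cdot,-1)$ near $P^{-1}(\vsupp\nu)$, as in \cite{Knoerrsupportduallyepi2021}*{Proposition~6.17}. This identifies $\nu$ with an element of $\mathrm{VConv}(\R^{n-1})$ with compact support. Exactly as in \autoref{theorem:isomorphismToLocalFunctionals}, I expect this identification to be a topological isomorphism on the subspaces with supports in $P(A)$ and $A$, respectively. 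As in \autoref{lemma:equivarianceT}, it intertwines translations on $\R^{n-1}$ with the unipotent subgroup $H(\R^{n-1})\subset\GL(n,\R)$. Consequently, the argument of \autoref{corollary:EquivalenceSmoothnessUnderT} shows that $\tilde\nu$ is translation-smooth whenever $\nu$ is $\GL(n,\R)$-smooth.

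\emph{Step 3 (regularity and pulling back).} I would apply the regularity result for smooth valuations on convex functions from \cite{KnoerrPaleyWienerSchwartz2025}. It represents $\tilde\nu(f)=D(f)[\tilde\omega]$ with a form $\tilde\omega$ that is compactly supported in the base variable and translation invariant in the fibre variable. Pulling back along $Q$ and using \autoref{proposition:relation_differentialCycleNormalCycle}, extending by zero off the hemisphere, gives $\omega_j$ with $\mu_j=\mu_{\omega_j}$. Summing over $j$ then yields (2). If needed, I would project the form onto the bidegree $(k,n-1-k)$ component, which is allowed because $\mu$ is $k$-homogeneous.

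I expect Step 1 to be the main obstacle: showing that the localized distribution really comes from a continuous translation invariant valuation, and that smoothness survives the localization. The approach I would try first is to build $\mu_j$ directly on smooth bodies with positive curvature, by pairing $\GW(\mu)$ with $h_K\otimes\cdots\otimes h_K$ multiplied by $\phi_j$ in one slot. I would then extend by continuity, bounding the result via the estimate in \autoref{corollary:estimateGW} applied to the smooth vector $\mu$.
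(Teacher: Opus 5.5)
Your direction (2)$\Rightarrow$(1) is fine; it is the argument of \autoref{corollary:ConstructionSmoothAreaNormalCycle}. Your plan for (1)$\Rightarrow$(2) is the route of \cite{HofstaetterKnoerrLocalizationvaluationsAleskers2025}, which the paper cites rather than proving the theorem itself (the other cited route is Alesker's, via the Irreducibility Theorem). But Step 1 carries the actual content, and it fails as you formulate it: multiplying $\GW(\mu)$ by a cutoff in one slot does not give the Goodey--Weil distribution of a translation invariant valuation.

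Take $k=1$ and $\mu=V_1$. Then $\GW(\mu)$ is a constant multiple of spherical Lebesgue measure $\sigma$. Since $h_{K+x}=h_K+\langle\cdot,x\rangle$, translation invariance of $\mu$ says exactly that $\GW(\mu)$ annihilates the linear functions $\langle\cdot,x\rangle$. The distribution $\phi_j\GW(\mu)$ does not. If $\phi_j\ge0$, $\phi_j\ne0$, is supported in the hemisphere $\{\langle v,u\rangle>0\}$, then $\int\phi_j(v)\langle v,u\rangle\,d\sigma(v)>0$, so $K\mapsto\phi_j\GW(\mu)[h_K]$ changes under translation by $u$. Hence your $\mu_j$ is not in $\Val_1(\R^n,\C)$, and Step 2 cannot be applied: translation invariance is precisely what becomes dual epi-translation invariance there.

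The correct localization of $V_1$ is a multiple of $K\mapsto\int\phi_j\,dS_1(K)$. Its Goodey--Weil distribution is a multiple of $(\Delta_{S^{n-1}}\phi_j+(n-1)\phi_j)\sigma$, where $\Delta_{S^{n-1}}$ is the spherical Laplacian. So the cutoff must be corrected by derivative terms, and for a general smooth $\mu$ there is no recipe for this at the level of distributions unless a normal-cycle-type representation is already available. For $k\ge2$ your distribution is also not symmetric, so it cannot literally be a Goodey--Weil distribution.

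Your fallback does not repair this. \autoref{corollary:estimateGW} bounds $\GW(\mu)$ in terms of $\|h_K\|_{C^2(S^{n-1})}$. That quantity is not controlled along smooth bodies converging in the Hausdorff metric to a non-smooth body, and applying the estimate to a $\GL(n,\R)$-smooth vector does not improve the $C^2$ dependence. So you get no continuity on $\K(\R^n)$, and nothing about the valuation property or translation invariance of the extension.

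This is exactly where scalar valuations differ from area measures. In the paper, the $C(S^{n-1})$-module structure localizes while preserving continuity, locality, translation invariance and, by \autoref{corollary:moduleProductSmoothVectors}, smoothness. $\Val_k(\R^n,\C)$ has no such structure, so it needs a genuinely different localization procedure; that is the new ingredient of \cite{HofstaetterKnoerrLocalizationvaluationsAleskers2025}.

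Finally, the paper remarks that this route does not by itself produce translation invariant forms. Unless you verify that the regularity theorem of \cite{KnoerrPaleyWienerSchwartz2025} gives forms invariant in the fibre variable, you need \cite{BernigBroeckerValuationsmanifoldsRumin2007} at the end to make $\omega$ translation invariant.
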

	\begin{remark}
		The result in \cite{HofstaetterKnoerrLocalizationvaluationsAleskers2025} is stated without the translation invariance condition on the differential forms (since the proof does not provide this property). That such a form can always be chosen to be translation invariant follows from results by Bernig and Bröcker \cite{BernigBroeckerValuationsmanifoldsRumin2007} (see also the discussion in \cite{HofstaetterKnoerrLocalizationvaluationsAleskers2025}*{Section~6.2}).
	\end{remark}
	The following was obtained in \cite{KnoerrSmoothvaluationsconvex2025}*{Theorem 3.4} as a consequence of Alesker's Irreducibility Theorem \cite{AleskerDescriptiontranslationinvariant2001}. We give a direct proof of this result using the characterization of $\GL(n,\R)$-smooth valuations in \autoref{theorem:SmoothValDiffForm}.	
	\begin{corollary}\label{corollary:finiteRepSmoothValMixedSurface}
			There exist $N:=\binom{n+1}{2}+1$ ellipsoids $\mathcal{E}_{1},\dots,\mathcal{E}_{N}$ such that for every $\GL(n,\R)$-smooth valuation $\mu\in\Val_k(\R^n,\C)$, $0\le k\le n-1$, there exist functions $g_{\alpha}\in C^\infty(S^{n-1})$, $\alpha\in \mathbb{N}^N$ with $|\alpha|=n-k-1$, such that
		\begin{align*}
			\mu(K)=\sum_{\substack{\alpha\in \mathbb{N}^N\\|\alpha|=n-k-1}}\int_{S^{n-1}}g_{\alpha}dS_{n-1}(K[k],\mathcal{E}_{1}[\alpha_1],\dots,\mathcal{E}_{N}[\alpha_N]).
		\end{align*}
	\end{corollary}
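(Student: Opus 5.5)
Starting from \autoref{theorem:SmoothValDiffForm}, I would write a $\GL(n,\R)$-smooth $\mu\in\Val_k(\R^n,\C)$ as $\mu(K)=\int_{\nc(K)}\omega$ with $\omega\in\Omega^{k,n-1-k}(\R^n\times S^{n-1})^{tr}$. The aim is to turn $\omega$ into an element of $\mathcal{J}^{k+1,n-1-k}(\R^n)^{tr}$ and then invoke \autoref{proposition:J_finitelyGeneratedEllipsoids}.

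\emph{Step 1: replace $\omega$ by an element of $\mathcal{J}$.} By \autoref{corollary:RelationJtoKernel} there is $\tau\in\mathcal{J}^{k+1,n-1-k}(\R^n)^{tr}$ with $i_T\tau-\omega\in I_{k,n-1-k}$. The normal cycle vanishes on the ideal generated by $\alpha$ and $d\alpha$, so
\begin{align*}
\mu(K)=\int_{\nc(K)}i_T\tau=\Phi_{i_T\tau}(K;1).
\end{align*}

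\emph{Step 2: expand in ellipsoid generators.} Take $\mathcal{E}_1,\dots,\mathcal{E}_N$ from \autoref{lemma:ExistenceEllipsoidsSpan}, with $N=\binom{n+1}{2}+1$. Applying \autoref{proposition:J_finitelyGeneratedEllipsoids} with $n-k-1$ in place of $k$ gives
\begin{align*}
\tau=\sum_{j_1,\dots,j_{n-k-1}} g_{j_1\dots j_{n-k-1}}\,\mathcal{L}_{X_{h_{\mathcal{E}_{j_1}}}}\cdots\mathcal{L}_{X_{h_{\mathcal{E}_{j_{n-k-1}}}}}\pi_1^*\vol
\end{align*}
with $g\in C^\infty(S^{n-1})$. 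We have $\pi_1^*\vol=\alpha\wedge i_T\pi_1^*\vol$, and the Lie derivatives annihilate $\alpha$ and commute with multiplication by functions of $v$. Hence, as in the proof of \autoref{maintheorem:finiteMixedArea},
\begin{align*}
i_T\tau\equiv\sum g_{j_1\dots j_{n-k-1}}\,\mathcal{L}_{X_{h_{\mathcal{E}_{j_1}}}}\cdots\mathcal{L}_{X_{h_{\mathcal{E}_{j_{n-k-1}}}}}i_T\pi_1^*\vol \mod \alpha.
\end{align*}

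\emph{Step 3: identify mixed area measures.} By \autoref{corollary:MixedSurfaceAreameasureDiffForm}, each term integrates over $\nc(K)$ against $\pi_2^*g$ to
\begin{align*}
c_k\int_{S^{n-1}} g\, dS(K[k],\mathcal{E}_{j_1},\dots,\mathcal{E}_{j_{n-k-1}}).
\end{align*}
Ellipsoids are smooth with strictly positive curvature, so that corollary applies. The mixed area measure is symmetric in its arguments, so terms with the same multiset $\{j_1,\dots,j_{n-k-1}\}$ can be grouped by the multi-index $\alpha$ with $|\alpha|=n-k-1$. Absorbing the constant $c_k$ into the coefficients defines $g_\alpha$ and gives the claimed formula.

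The only delicate point is bookkeeping. One must check that $\omega$ has the bidegree $(k,n-1-k)$ matching $\mathcal{J}^{k+1,n-1-k}$, which \autoref{theorem:SmoothValDiffForm} provides. One must also check that $c_k\neq0$ does not matter, since it is simply absorbed. No further obstacle is expected.
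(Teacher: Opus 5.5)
Your proposal is correct and follows the paper's proof step for step: \autoref{theorem:SmoothValDiffForm} gives $\omega$, then \autoref{corollary:RelationJtoKernel} replaces $\omega$ by $i_T\tau$ with $\tau\in\mathcal{J}^{k+1,n-1-k}(\R^n)^{tr}$, and then \autoref{proposition:J_finitelyGeneratedEllipsoids} and \autoref{corollary:MixedSurfaceAreameasureDiffForm} are applied exactly as in the proof of \autoref{maintheorem:finiteMixedArea}. One small correction: that corollary is stated as $\int\phi\, dS(K[k],L_1,\dots)=c_k\int_{\nc(K)}(\dots)$, the reverse of how you quoted it, so you do need $c_k\neq 0$ to pass from the form integral to the mixed area measure; this holds because $c_k^{-1}=(n-1)!/k!$ is the multinomial coefficient of $t_1\cdots t_{n-k-1}S(K[k],L_1,\dots,L_{n-k-1})$ in the expansion of $S_{n-1}(K+\sum_j t_jL_j)$.
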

	\begin{proof}
		Since $\mu$ is a $\GL(n,\R)$-smooth valuation, \autoref{theorem:SmoothValDiffForm} shows that there exists a smooth differential form $\omega\in \Omega^{k,n-k-1}(\R^n\times S^{n-1})^{tr}$ such that $\mu(K)=\int_{\nc(K)}\omega$. Since $\nc(K)$ vanishes on multiples of $\alpha$ and $d\alpha$, we may again apply \autoref{corollary:RelationJtoKernel} and assume that $\omega=i_T\tau$ for some $\tau\in \mathcal{J}^{k+1,n-k-1}(\R^n)^{tr}$. Now the claim follows as in the proof of \autoref{maintheorem:finiteMixedArea} from \autoref{corollary:MixedSurfaceAreameasureDiffForm} and 
		 \autoref{proposition:J_finitelyGeneratedEllipsoids}.
	\end{proof}
	In particular, \autoref{theorem:SmoothValDiffForm} implies McMullen's Conjecture:
	\begin{corollary}\label{corollary:finitieCombMixedVolumes}
		Every $\GL(n,\R)$-smooth valuation in $\Val(\R^n,\C)$ is a finite linear combination of mixed volumes. In particular, mixed volumes span a dense subspace of $\Val(\R^n,\C)$.
	\end{corollary}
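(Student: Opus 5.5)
By \autoref{theorem:McMullenHomDecomp}, the $\GL(n,\R)$-smooth valuations decompose into their homogeneous components, and each component is again $\GL(n,\R)$-smooth, since the projections onto the components are continuous and $\GL(n,\R)$-equivariant. It therefore suffices to treat $\mu\in\Val_k(\R^n,\C)$ with $\mu$ smooth. For $k=n$, \autoref{theorem:HadwigerVolume} gives $\mu=c\vol$, which is a mixed volume, so we may assume $0\le k\le n-1$.

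In that range, \autoref{corollary:finiteRepSmoothValMixedSurface} represents $\mu$ as a finite sum of the form $\int_{S^{n-1}}g_\alpha\,dS(K[k],\mathcal{E}_1[\alpha_1],\dots,\mathcal{E}_N[\alpha_N])$ with $g_\alpha\in C^\infty(S^{n-1})$. The key step is to write each smooth density $g_\alpha$ as a difference of support functions. By \cite{SchneiderConvexbodiesBrunn2014}*{Lemma 1.7.8}, $g_\alpha+c\,h_{B_1(0)}$ is the support function of a convex body $M_\alpha$ for $c$ large enough. Substituting this and using the identity $\int h_L\,dS(K_1,\dots,K_{n-1})=nV(L,K_1,\dots,K_{n-1})$ gives
\begin{align*}
\int g_\alpha\, dS(K[k],\dots)=n\big(V(M_\alpha,K[k],\mathcal{E}\dots)-c\,V(B_1(0),K[k],\mathcal{E}\dots)\big).
\end{align*}
Each term on the right is a mixed volume of the form $K\mapsto V(K[k],L_{k+1},\dots,L_n)$. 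Complex-valued densities are handled by splitting $g_\alpha$ into real and imaginary parts. Hence every smooth $\mu$ is a finite linear combination of mixed volumes.

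For the density statement, \autoref{proposition:DensitySmoothVectors} is stated for area measures, so I would instead invoke the standard fact that smooth vectors of the continuous Banach representation $\Val(\R^n,\C)$ of $\GL(n,\R)$ are dense. This follows by convolving the orbit map with smooth approximate identities on $\GL(n,\R)$, and gives the claim.

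The only delicate point is ensuring that the density statement uses the Banach topology on $\Val$, where continuity of the action is classical. The rest is bookkeeping.
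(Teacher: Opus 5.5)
Your proposal is correct and follows essentially the same argument as the paper. You reduce to smooth homogeneous valuations, apply \autoref{corollary:finiteRepSmoothValMixedSurface}, write each density $g_\alpha$ as a difference of support functions via Schneider's Lemma~1.7.8, and use $\int h_L\,dS(\cdots)=nV(L,\cdots)$, with the density claim coming from density of $\GL(n,\R)$-smooth vectors. Your extra bookkeeping fills in points the paper leaves implicit: the degree-$n$ component via Hadwiger, complex-valued $g_\alpha$, and the G\aa rding-type justification of density in the Banach representation $\Val(\R^n,\C)$.
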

	\begin{proof}
		Since $\GL(n,\R)$-smooth valuations are dense in $\Val_k(\R^n,\C)$, it is sufficient to show that every $\GL(n,\R)$-smooth valuation is a finite linear combination of mixed volumes. If $\mu\in\Val_k(\R^n,\C)$ is a smooth valuation given as in \autoref{corollary:finiteRepSmoothValMixedSurface} by
		\begin{align*}
				\mu(K)=\sum_{\substack{\alpha\in \mathbb{N}^N\\|\alpha|=n-k-1}}\int_{S^{n-1}}g_{\alpha}dS_{n-1}(K[k],\mathcal{E}_{1}[\alpha_1],\dots,\mathcal{E}_{N}[\alpha_N])
		\end{align*}
		for suitable ellipsoids $\mathcal{E}_j$, $1\le j\le N$, and $g_\alpha\in C^\infty(S^{n-1})$, then we may write $g_\alpha=h_{K_\alpha}-h_{L_\alpha}$ for smooth convex bodies $K_\alpha, L_\alpha\in \K(\R^n)$  with strictly positive Gauss curvature, compare \cite{SchneiderConvexbodiesBrunn2014}*{Lemma~1.7.8}. Since
		\begin{align*}
			\int_{S^{n-1}}h_LdS_{n-1}(K[k],\mathcal{E}_{1}[\alpha_1],\dots,\mathcal{E}_{N}[\alpha_N])=nV(K[k],L,\mathcal{E}_{1}[\alpha_1],\dots,\mathcal{E}_{N}[\alpha_N]),
		\end{align*}
		where $V:(\K^n)^n\rightarrow\R$ denotes the mixed volume, the claim follows.
	\end{proof}
	
	\subsection{Regularity and locality properties of strongly $\GL(n,\R)$-continuous area measures}
	\label{section:LocalityCondition}
	As mentioned in the introduction, the area measures $S_k$, $0\le k\le n-1$, were originally characterized by Schneider. Since his conditions differ slightly from the ones we impose in \autoref{def:Area} and \autoref{maincorollary:InvariantAreaMeasures}, we include a short discussion of the differences. 
	\begin{theorem}[Schneider \cite{SchneiderKinematischeBeruhrmaefur1975}*{Theorem~2}]
		\label{theorem:SchneiderArea}
		Let $\Psi:\K(\R^n)\rightarrow\M(S^{n-1})$ be a map with the following properties:
		\begin{enumerate}
			\item $\Psi$ is translation invariant.
			\item $\Psi$ is $\SO(n)$-equivariant.
			\item $\Psi$ is continuous with respect to the weak* topology.
			\item $\Psi(\cdot; S^{n-1})$ is a valuation.
			\item If $B\subset S^{n-1}$ is a Borel set and $K,L\in\K(\R^n)$ are convex bodies such that $F(K,u)=F(L,u)$ for all $u\in B$, then $\Psi(K;B)=\Psi(L;B)$.
		\end{enumerate}
		Then there exist $c_0,\dots,c_{n-1}\in \C$ such that $\Psi=\sum_{k=0}^{n-1}c_k S_k$.
	\end{theorem}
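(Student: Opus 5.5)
The plan is to reduce Schneider's theorem to \autoref{maincorollary:InvariantAreaMeasures} for $G=\SO(n)$, and then identify the $\SO(n)$-invariant forms modulo the ideal generated by $\alpha$ and $d\alpha$.

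\emph{Step 1: $\Psi\in\Area(\R^n)$.} Continuity and translation invariance are assumed, so it remains to show that $\Psi$ is locally determined. Suppose $h_K|_U=h_L|_U$ for an open $U\subset S^{n-1}$. By homogeneity the support functions agree on an open cone around $U$. \autoref{lemma:SubdifferentialSupportFunction} then gives $F(K,u)=\partial h_K(u)=\partial h_L(u)=F(L,u)$ for all $u\in U$. Property (5), applied to every Borel set $B\subset U$, yields $\Psi(K)|_U=\Psi(L)|_U$. Hence $\Psi\in\Area(\R^n)^{\SO(n)}$. Note that property (4) is not needed: by \autoref{theorem:valuationProperty} it is automatic.

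\emph{Step 2: differential forms.} By \autoref{maincorollary:InvariantAreaMeasures}, $\Psi=\Phi_\omega$ for some $\SO(n)$-invariant $\omega\in\Omega^{n-1}(\R^n\times S^{n-1})^{tr}$. By \autoref{theorem:homDecompArea} and the bidegree decomposition, we may treat each homogeneous component separately. So we may take $\omega$ of bidegree $(k,n-1-k)$ (averaging over $\SO(n)$ preserves bidegree and invariance). A translation and $\SO(n)$-invariant form is determined by its value at $(0,v)$, which is an $\SO(n-1)$-invariant element of $\Lambda^{k}(\R v\oplus v^\perp)^*\otimes\Lambda^{n-1-k}(v^\perp)^*$. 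Modulo $\alpha$ this reduces to $\Lambda^{k}(v^\perp)^*\otimes\Lambda^{n-1-k}(v^\perp)^*$. Classical invariant theory for $\SO(n-1)$ acting on $W\oplus W$ shows that these invariants are generated by $d\alpha|_H$ together with the forms $\kappa_k|_H$. Here the $\kappa_k$ arise from the volume form of $v^\perp$, i.e.\ from the $\varepsilon$-tensor, distributed between the two factors. A little care is needed for $n-1=2$, and more generally when the $\varepsilon$-tensor allows extra invariants; for $\SO(n-1)$ with $n-1\geq 2$ the only additional invariants use one $\varepsilon$, which produces exactly $\kappa_k$.

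Alternatively, one can avoid invariant theory by passing through \autoref{corollary:RelationJtoKernel}. The primitive part $P\Lambda^{k,n-1-k}(v^\perp\times v^\perp)^*$ is an irreducible $\GL(v^\perp)$-module. Restricted to $\SO(n-1)$, it contains the invariant $i_T\kappa$-type element with multiplicity one. This multiplicity-one claim is the main point to verify, and I would check it via the identification with $P\Lambda^{k,n-1-k}(W\times W^*)^*$ and the Lefschetz decomposition.

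\emph{Step 3: conclusion.} Since the normal cycle annihilates the ideal generated by $\alpha$ and $d\alpha$, we get $\omega\equiv c_k\kappa_k$ modulo that ideal. Hence $\Psi_k=c_k'S_k$, because $S_k=\frac{1}{(n-k)\omega_{n-k}}\Phi_{\kappa_k}$. Summing over $k$ gives the theorem. The main obstacle is the one-dimensionality of the $\SO(n-1)$-invariants in the primitive space, which I expect to follow from a branching argument, or by direct computation in low dimensions.
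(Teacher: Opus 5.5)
Your proposal is correct and follows the route the paper indicates: condition (5) gives local determinacy, so $\Psi\in\Area(\R^n)^{\SO(n)}$. Then \autoref{maincorollary:InvariantAreaMeasures} for $G=\SO(n)$, combined with the Bernig--Br\"{o}cker kernel description, reduces everything to $\SO(n-1)$-invariants of bidegree $(k,n-1-k)$ on the contact plane modulo $d\alpha$. The invariant count you flag as the main obstacle is already settled by your first route: via the Hodge star, $\Lambda^k(v^\perp)^*\otimes\Lambda^{n-1-k}(v^\perp)^*\cong\Lambda^k(v^\perp)^*\otimes\Lambda^k(v^\perp)^*$, and its $\SO(n-1)$-invariants are spanned by $\kappa_k|_H$, together with $(d\alpha|_H)^k$ exactly when $n-1=2k$, so you do not need the multiplicity-one argument.
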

	We already saw in \autoref{section:RelationValuationProperty} that (4) holds automatically for continuous locally determined functionals $\Psi:\K(\R^n)\rightarrow\M(S^{n-1})$, so the main difference between \autoref{theorem:SchneiderArea} and \autoref{def:Area}/\autoref{maincorollary:InvariantAreaMeasures} is the notion of locality in (5). It follows from the relation of the subdifferential of support functions to support sets (compare \autoref{section:preliminaries}) that locally determined functionals are precisely those maps $\Psi:\K(\R^n)\rightarrow\M(S^{n-1})$ satisfying condition (5) in \autoref{theorem:SchneiderArea} for all open subsets of $S^{n-1}$. In particular, any map $\Psi:\K(\R^n)\rightarrow \M(S^{n-1})$ satisfying (5) is automatically locally determined in the sense of \autoref{def:Area}. The main technical problem of condition (5) in \autoref{theorem:SchneiderArea} is that it is not stable under weak* convergence, which greatly limits any type of approximation argument. In more functional analytic terms, spaces of functionals of this type tend to lack good completeness properties, which underlie our approach. For locally determined functionals, these problems do not occur, which is why we use this slightly weaker notion of locality.\\
	
	Note that it is not clear whether arbitrary elements in $\Area(\R^n)$ satisfy condition (5) in \autoref{theorem:SchneiderArea} for general Borel sets, although we do not have a counterexample. However, we show below that strongly $\GL(n,\R)$-continuous area measures satisfy this stronger locality property. The proof relies on the stability of this property with respect to approximation in the strong topology on $\M(S^{n-1})$. We need the following lemma.
	
	\begin{lemma}\label{lemma:UniformConvergenceOperatorNorm}
		If $(\Psi_j)_j$ is a sequence in $\Area_k(\R^n)$ that converges to $\Psi\in\Area_k(\R^n)$ in the uniform strong topology, then for any bounded Borel function $f:S^{n-1}\rightarrow\C$, we have
		\begin{align*}
			\lim\limits_{j\rightarrow\infty}\Psi_j(K;f)=\Psi(K;f)
		\end{align*}
		for any $K\in\mathcal{K}(\R^n)$. Moreover, the convergence is uniform on bounded subsets of $\K(\R^n)$.
	\end{lemma}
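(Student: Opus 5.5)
The plan is to pass from continuous test functions to bounded Borel functions through the total variation norm on $\M(S^{n-1})$. Recall from \autoref{corollary:AreaBanachSpace} that the uniform strong topology on $\Area(\R^n)$ is induced by the norm
\begin{align*}
	\|\Psi\|=\sup_{K\subset B_1(0),\|\phi\|_\infty\le 1}|\Psi(K;\phi)|.
\end{align*}
For fixed $K$, the supremum of $|\Psi(K;\phi)|$ over $\phi\in C(S^{n-1})$ with $\|\phi\|_\infty\le 1$ is exactly the total variation $|\Psi(K)|(S^{n-1})$, by the Riesz representation theorem. So convergence in the uniform strong topology means
\begin{align*}
	\sup_{K\subset B_1(0)}|\Psi_j(K)-\Psi(K)|(S^{n-1})\to 0.
\end{align*}
For any bounded Borel function $f$ and any complex measure $\nu$ we have $|\nu(f)|\le \|f\|_\infty|\nu|(S^{n-1})$. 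Taking $\nu=\Psi_j(K)-\Psi(K)$ gives
\begin{align*}
	\sup_{K\subset B_1(0)}|\Psi_j(K;f)-\Psi(K;f)|\le \|f\|_\infty\|\Psi_j-\Psi\|\to 0.
\end{align*}

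It remains to pass from $B_1(0)$ to an arbitrary bounded set, and this is where homogeneity enters. Because everything lies in $\Area_k(\R^n)$ and the elements are translation invariant, any bounded family of bodies can be handled by scaling. If every $K$ in the family lies in some ball $B_R(x)$, then $R^{-1}(K-x)\subset B_1(0)$. Using $k$-homogeneity and translation invariance of both $\Psi_j$ and $\Psi$,
\begin{align*}
	\Psi_j(K)-\Psi(K)=R^k\bigl[\Psi_j(R^{-1}(K-x))-\Psi(R^{-1}(K-x))\bigr].
\end{align*}
The total variation of this difference is therefore at most $R^k\|\Psi_j-\Psi\|$, uniformly over the family. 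Combined with the estimate above, this gives uniform convergence on bounded subsets, and pointwise convergence for each $K$ is a special case.

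The one step that needs care is the identification of the operator norm on $C(S^{n-1})'$ with the total variation norm. Once that is in place, the bound for Borel $f$ is immediate, since integration against a complex measure is controlled by total variation times the sup norm. Without that identification, the uniform strong norm, which only sees continuous functions, would give no control over Borel $f$. This is standard, so no real obstacle is expected.
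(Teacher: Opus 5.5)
Your proposal is correct and follows the same route as the paper. The supremum over continuous test functions equals the total variation, so the uniform strong norm controls integrals of bounded Borel functions, and translation invariance together with $k$-homogeneity rescale any bounded family into $B_1(0)$ at the cost of a factor $R^k$. Your enclosing ball $B_R(x)$ is actually slightly more careful than the paper's factor $\left(\frac{\diam(K)}{2}\right)^k$, because a convex body of diameter $d$ need not lie in a ball of radius $d/2$; an equilateral triangle is an example.
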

	\begin{proof}
		Note that since any finite Borel measure on $S^{n-1}$ is a Radon measure, any $\nu\in\M(S^{n-1})$ satisfies
		\begin{align*}
			\sup_{f\in C(S^{n-1}):\|f\|_\infty\le 1}|\nu(f)|=
			\sup_{f:S^{n-1}\rightarrow\C~\text{measurable},~\|f\|_\infty\le 1}|\nu(f)|.
		\end{align*}
		Thus, for any bounded Borel function $f$,
		\begin{align*}
			|\Psi_j(K;f)-\Psi(K;f)|\le \|\Psi_j-\Psi\|\cdot\left(\frac{\diam(K)}{2}\right)^k \|f\|_\infty,
		\end{align*}
		which converges uniformly to $0$ on bounded subsets of $\K(\R^n)$.
	\end{proof}
	
	\begin{proposition}
		Let $\Psi\in\Area^0(\R^n)$, $B\subset S^{n-1}$ a Borel set, and $K,L\in\K(\R^n)$ two convex bodies such that $F(K,u)=F(L,u)$ for all $u\in B$. Then $\Psi(K;B)=\Psi(L;B)$.
	\end{proposition}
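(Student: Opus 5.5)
Since $\Area^\infty(\R^n)$ is dense in $\Area^0(\R^n)$ in the uniform strong topology (\autoref{proposition:DensitySmoothVectors}(2) with $W=\Area^0(\R^n)$), I would first prove the statement for smooth area measures and then pass to the limit. Take $\Psi_j\in\Area^\infty(\R^n)$ with $\Psi_j\rightarrow\Psi$ in the uniform strong topology. After splitting into homogeneous components (the projections are continuous in this topology), \autoref{lemma:UniformConvergenceOperatorNorm} applied to $f=1_B$ gives $\Psi_j(K;B)\rightarrow\Psi(K;B)$ and $\Psi_j(L;B)\rightarrow\Psi(L;B)$. So it suffices to prove $\Psi_j(K;B)=\Psi_j(L;B)$ for each $j$. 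This is why strong continuity is needed: weak* approximation cannot be evaluated on indicator functions of Borel sets.

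For smooth $\Psi$, \autoref{maintheorem:IntegralRepresentation} gives $\omega\in\Omega^{n-1}(\R^n\times S^{n-1})^{tr}$ with $\Psi=\Phi_\omega$. By the approximation argument above applied to bounded Borel functions, we get
\begin{align*}
\Psi(K;B)=\int_{\nc(K)}\pi_2^*1_B\wedge\omega=\nc(K)\llcorner\pi_2^{-1}(B)\,[\omega].
\end{align*}
Here the right-hand side is the restriction of the integral current $\nc(K)$ (a Lipschitz rectifiable current of finite mass) to the Borel set $\pi_2^{-1}(B)$. By \autoref{corollary:RelationSubdiffNormalCycle}, $\nc(K)\cap\pi_2^{-1}(B)=\{(x,u):u\in B,\ x\in F(K,u)\}$, and the assumption $F(K,u)=F(L,u)$ for $u\in B$ makes this set equal to $\nc(L)\cap\pi_2^{-1}(B)$. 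Call this common set $S$.

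The remaining and main step is to show that the two rectifiable currents agree on $S$, not merely their supports. Both $\nc(K)$ and $\nc(L)$ are integer multiplicity rectifiable $(n-1)$-currents carried by $(n-1)$-rectifiable sets with multiplicity one. At $\mathcal{H}^{n-1}$-almost every point of $S$, the approximate tangent planes of $\nc(K)$ and $\nc(L)$ agree, since both coincide with the approximate tangent plane of $S$ at density points. It then remains to check that the orientations agree. For this I would use that the orientation of a normal cycle is determined pointwise by a canonical rule: the tangent plane is a Lagrangian subspace of the contact plane, transverse to the fiber, and it is oriented compatibly with the standard orientation of $\R^n$, for instance via the condition $\alpha\wedge\xi>0$ against the orientation of $\R^n\times S^{n-1}$, where $\xi$ is the oriented tangent $(n-1)$-vector. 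This rule depends only on the oriented tangent plane data, so the orientations match and $\nc(K)\llcorner\pi_2^{-1}(B)=\nc(L)\llcorner\pi_2^{-1}(B)$.

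If this measure-theoretic step proves delicate, a fallback is the local parametrisation of normal cycles by the Lipschitz map $(x,u)\mapsto x+u$ onto $\partial(K+B_1(0))$, which is bi-Lipschitz. In this picture $S$ corresponds to the same subset of $\partial(K+B_1)$ and of $\partial(L+B_1)$, and the currents agree there as pushforwards of $\mathcal{H}^{n-1}$ restricted to the same set with the same orientation.
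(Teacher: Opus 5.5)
Your proposal is correct and takes essentially the same route as the paper: approximate $\Psi\in\Area^0(\R^n)$ by elements of $\Area^\infty(\R^n)$ in the uniform strong topology, pass to the limit on $1_B$ via \autoref{lemma:UniformConvergenceOperatorNorm}, and get the smooth case from \autoref{maintheorem:IntegralRepresentation} together with the set identity $\nc(K)\cap\pi_2^{-1}(B)=\nc(L)\cap\pi_2^{-1}(B)$ from \autoref{corollary:RelationSubdiffNormalCycle}. Your extra argument that the two currents themselves (and not only their supports) agree on $\pi_2^{-1}(B)$ fills in a step the paper leaves implicit; for the orientation, rely on your fallback through the bi-Lipschitz map $(x,u)\mapsto x+u$ onto $\partial(K+B_1(0))$, because the condition ``$\alpha\wedge\xi>0$'' is too vague as stated to fix the orientation.
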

	\begin{proof}
		Due to the description of the normal cycle in Eq.~\eqref{eq:DefNormalCycle}, \autoref{lemma:SubdifferentialSupportFunction} and \autoref{corollary:RelationSubdiffNormalCycle}, the claim holds if $\Psi$ is $\GL(n,\R)$-smooth. If $\Psi\in \Area^0(\R^n)$, we may choose a sequence $(\Psi_j)_j$ in $\Area^\infty(\R^n)$ converging to $\Psi$ in the uniform strong topology by \autoref{proposition:DensitySmoothVectors}. \autoref{lemma:UniformConvergenceOperatorNorm} applied to the indicator function of $B$ shows that
		\begin{align*}
			\Psi(K;B)=\lim\limits_{j\rightarrow\infty}\Psi_j(K;B)=\lim\limits_{j\rightarrow\infty}\Psi_j(L;B)=\Psi(L;B).
		\end{align*}
	\end{proof}
	
	We use a similar argument to obtain the following regularity property.
	\begin{lemma}\label{lemma:AbsoluteContWRTLebesgue}
		Let $\Psi\in \Area^0(\R^n)$, $K\in\K(\R^n)$ a smooth convex body with strictly positive Gauss curvature. Then $\Psi(K)$ is absolutely continuous with respect to the spherical Lebesgue measure.
	\end{lemma}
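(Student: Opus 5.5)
The argument runs in parallel with the previous proposition: first treat $\GL(n,\R)$-smooth area measures through their representation by differential forms, then pass to $\Area^0(\R^n)$ by approximation in the uniform strong topology, using \autoref{lemma:UniformConvergenceOperatorNorm} to control the measures on arbitrary Borel sets.

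\emph{Smooth case.} Let $\Psi\in\Area^\infty(\R^n)$. By \autoref{maintheorem:IntegralRepresentation} there is $\omega\in\Omega^{n-1}(\R^n\times S^{n-1})^{tr}$ with $\Psi=\Phi_\omega$. Since $K$ is smooth with strictly positive Gauss curvature, $\nc(K)$ is the graph of $G_K(v)=(\nu_K^{-1}(v),v)$, exactly as in the proof of \autoref{proposition:boundSmoothArea}. Hence
\begin{align*}
	\Psi(K;\phi)=\int_{S^{n-1}}\phi\, G_K^*\omega .
\end{align*}
Here $G_K^*\omega$ is a continuous (indeed smooth) $(n-1)$-form on $S^{n-1}$, that is, a continuous density times $\vol_{S^{n-1}}$. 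Therefore $\Psi(K)$ has a continuous density, and in particular it is absolutely continuous with respect to the spherical Lebesgue measure.

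\emph{General case.} For $\Psi\in\Area^0(\R^n)$, \autoref{proposition:DensitySmoothVectors}(2) (applied to $W=\Area^0(\R^n)$, which is closed and $\GL(n,\R)$-invariant) gives a sequence $\Psi_j\in\Area^\infty(\R^n)$ with $\Psi_j\to\Psi$ in the uniform strong topology. Let $N\subset S^{n-1}$ be a Borel set of spherical Lebesgue measure zero. For any Borel $B\subset N$, the smooth case gives $\Psi_j(K;1_B)=0$ for all $j$. By \autoref{lemma:UniformConvergenceOperatorNorm} applied to the bounded Borel function $1_B$, we get $\Psi(K;B)=\lim_j\Psi_j(K;B)=0$. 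Thus the total variation of $\Psi(K)$ vanishes on $N$, so $\Psi(K)\ll \vol_{S^{n-1}}$.

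Equivalently, $\Psi_j(K)\to\Psi(K)$ in total variation norm, and the space of absolutely continuous measures (a copy of $L^1(S^{n-1})$) is closed in $\M(S^{n-1})$ with respect to that norm. The only point needing care is that strong-topology convergence controls $\Psi_j(K)-\Psi(K)$ in total variation and not merely weakly*. This is exactly the content of \autoref{lemma:UniformConvergenceOperatorNorm}, which uses the homogeneous scaling $\Psi_j(K)-\Psi(K)$ bounded by $\|\Psi_j-\Psi\|$ times a power of $\diam(K)$. Weak* approximation alone would not suffice, which is why the hypothesis $\Psi\in\Area^0(\R^n)$ is needed.
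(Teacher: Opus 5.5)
Your proposal is correct and follows essentially the same route as the paper: the smooth case via \autoref{maintheorem:IntegralRepresentation} and the pullback $G_K^*\omega$, then approximation by $\GL(n,\R)$-smooth area measures in the uniform strong topology, which gives convergence of $\Psi_j(K)$ to $\Psi(K)$ in total variation and therefore preserves absolute continuity. You spell out the total-variation step more carefully than the paper does. One small point: \autoref{lemma:UniformConvergenceOperatorNorm} is stated for $\Area_k(\R^n)$, but for fixed $K$ you do not need homogeneity, since the defining seminorm $\|\cdot\|_{(\{K\};\{\|\phi\|_\infty\le 1\})}$ of the uniform strong topology already controls the total variation of $\Psi_j(K)-\Psi(K)$.
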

	\begin{proof}
		If $\Psi$ is $\GL(n,\R)$-smooth, this follows directly from \autoref{maintheorem:IntegralRepresentation}, since $\Psi(K)$ is given by the pullback of a smooth differential form by the smooth map
		\begin{align*}
			G_K:S^{n-1}&\rightarrow \nc(K)\\
			v&\mapsto (\nu_K^{-1}(v),v),
		\end{align*}
		where $\nu_K:\partial K\rightarrow S^{n-1}$ denotes the Gauss map. In the general case, we may approximate $\Psi\in\Area^0(\R^n)$ in the uniform strong topology by a sequence of $\GL(n,\R)$-smooth area measures by \autoref{proposition:DensitySmoothVectors}, so $\Psi(K)$ is the limit of absolutely continuous measures in the strong topology and thus also absolutely continuous with respect to the spherical Lebesgue measures.
	\end{proof}
	Note that the previous result implies in particular that the example considered in \autoref{remark:NotStronglyCont} is not strongly $\GL(n,\R)$-continuous.

	\subsection{Injectivity of moment maps}\label{section:momentMaps}
	In this section, we consider an extension of certain injectivity results from \cite{BernigDualareameasures2019}. We will be interested in the following maps:\\
	
	The first variation is the map $\delta:\Val^\infty(\R^n,\C)\rightarrow\Area^\infty(\R^n)$ characterized by
	\begin{align*}
		\frac{d}{dt}\Big|_0\mu(K+tL)=\int_{S^{n-1}}h_Ld(\delta\mu(K)).
	\end{align*}
	We refer to \cite{Wannerermoduleunitarilyinvariant2014} for an interpretation in terms of differential forms. The first moment map is the map $M^1:\Area(\R^n)\rightarrow \Val(\R^n,\C)\otimes (\R^n)^*_\C$ characterized by
	\begin{align*}
		\left\langle [M^1(\Psi)](K),x\right\rangle= \int_{S^{n-1}}h_{\{x\}}(v)d\Psi(K;v)\quad\text{for}~x\in\R^n.
	\end{align*}
	We similarly define the higher order moment maps $M^r:\Area(\R^n)\rightarrow \Val(\R^n,\C)\otimes \Sym^r(\R^n)_\C$ by
	\begin{align*}
		[M^r(\Psi)](K)=\int_{S^{n-1}}v^{\otimes r}d\Psi(K;v).
	\end{align*}
	
	The following result was established in \cite{BernigDualareameasures2019}*{Theorem~1} for area measures given by integration with respect to the normal cycle. Due to \autoref{maintheorem:IntegralRepresentation}, it holds in fact for $\GL(n,\R)$-smooth area measures.
	\begin{theorem}[\cite{BernigDualareameasures2019}*{Theorem~1}]
		\label{theorem:MomentMapInjectiveSmoothCase}
		\begin{enumerate}
			\item The kernel of the first moment map $M^1: \Area^\infty_k(\R^n)\rightarrow\Val_k(\R^n,\C)\otimes (\R^n)^*_\C$ equals the image of the first variation map $\delta: \Val^\infty_{k+1}(\R^n,\C)\rightarrow\Area_k^\infty(\R^n)$ for each $0\le k\le n-1$.
			\item The higher moment maps 
			\begin{align*}
				M^r:\Area_k^\infty(\R^n)\rightarrow \Val_k(\R^n,\C)\otimes \Sym^r(\R^n)_\C,
			\end{align*} $r\ge 2$, $1\le k\le n-1$, are injective.
		\end{enumerate}
	\end{theorem}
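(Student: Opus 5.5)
The plan is to transfer Bernig's result from \cite{BernigDualareameasures2019}*{Theorem~1} to $\Area^\infty_k(\R^n)$ by means of \autoref{maintheorem:IntegralRepresentation}. Bernig proves both statements for the space of smooth area measures, namely the maps $K\mapsto\Phi_\omega(K)$ with $\omega\in\Omega^{n-1}(\R^n\times S^{n-1})^{tr}$ of bidegree $(k,n-1-k)$. The task therefore reduces to checking that this space is exactly $\Area^\infty_k(\R^n)$. We also need the maps $M^r$ and $\delta$ appearing in Bernig's statement to agree with the ones defined above.

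The first step is to prove the identification. By \autoref{maintheorem:IntegralRepresentation}, every $\Psi\in\Area^\infty(\R^n)$ has the form $\Phi_\omega$. Decomposing $\omega$ into bidegrees $(j,n-1-j)$, the bidegree-$(j,n-1-j)$ part defines a $j$-homogeneous area measure, since $\nc(tK)$ is the image of $\nc(K)$ under $(x,v)\mapsto(tx,v)$. By uniqueness of the homogeneous decomposition (\autoref{theorem:homDecompArea}), any $\Psi\in\Area^\infty_k(\R^n)$ is then $\Phi_\omega$ with $\omega$ of bidegree $(k,n-1-k)$. Conversely, \autoref{corollary:RelationJtoKernel} and \autoref{corollary:ConstructionSmoothAreaNormalCycle} show that every such $\Phi_\omega$ equals $\Phi_{i_T\tau}$ for some $\tau\in\mathcal{J}^{k+1,n-1-k}(\R^n)^{tr}$, modulo forms in the ideal generated by $\alpha$ and $d\alpha$, which the normal cycle annihilates. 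Hence $\Phi_\omega$ is $\GL(n,\R)$-smooth. This is \autoref{corollary:PsiIsomorphism}.

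The second step is to match the operators. For the moment maps there is nothing to check: $M^r(\Phi_\omega)(K)$ is obtained by integrating the polynomial $v^{\otimes r}$ against $\Phi_\omega(K)$, which is exactly Bernig's definition. The domain of $\delta$ is $\Val^\infty_{k+1}(\R^n,\C)$. By \autoref{theorem:SmoothValDiffForm}, these valuations are precisely those given by integration of translation-invariant forms over the normal cycle, and this is the class Bernig uses. The first variation is characterized intrinsically by the displayed derivative formula, so it agrees with Bernig's; its differential-form description is the one in \cite{Wannerermoduleunitarilyinvariant2014}, which is also consistent with \autoref{corollary:MixedSurfaceAreameasureDiffForm}.

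The main obstacle I expect is conventions rather than substance. Bernig's notion of smooth area measure and his first variation may use a different orientation or normalization for the normal cycle, possibly with a form representative of $\delta\mu$ that differs by an element of the ideal generated by $\alpha$ and $d\alpha$. Signs and constants of this kind do not affect kernels, images or injectivity, so the statements carry over verbatim. I would check this by comparing how both sides are defined on smooth bodies with positive curvature: there both reduce to pullbacks along $G_K$, as in the proof of \autoref{proposition:boundSmoothArea}, and this determines the functionals by continuity.
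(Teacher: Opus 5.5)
Your proposal is correct and follows the paper's argument. The paper likewise takes Bernig's Theorem~1 for area measures given by integration over the normal cycle and transfers it to $\Area^\infty_k(\R^n)$ via \autoref{maintheorem:IntegralRepresentation}, in its graded form \autoref{corollary:PsiIsomorphism}; your extra checks on bidegrees, on the domain of $\delta$ via \autoref{theorem:SmoothValDiffForm}, and on sign and normalization conventions are details the paper leaves implicit.
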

	We obtain the following extension to general continuous area measures.
	\begin{theorem}\label{theorem:MomentMapContinuousCase}
		\begin{enumerate}
			\item The kernel of $M^1:\Area_k(\R^n)\rightarrow \Val_k(\R^n,\C)\otimes (\R^n)^*_\C$ is the closure of the image of $\delta:\Val^{\infty}_{k+1}(\R^n,\C)\rightarrow\Area_k(\R^n)$ with respect to the uniform weak* or uniform compact topology for each $0\le k\le n-1$.
			\item The higher moment maps 
			\begin{align*}
				M^r:\Area_k(\R^n)\rightarrow \Val_k(\R^n,\C) \otimes  \Sym^r(\R^n)_\C,
			\end{align*}$r\ge 2$, $1\le k\le n-1$, are injective.
		\end{enumerate}
	\end{theorem}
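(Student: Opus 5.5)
The plan is to reduce both statements to the smooth case, \autoref{theorem:MomentMapInjectiveSmoothCase}, using the density of $\GL(n,\R)$-smooth vectors in closed invariant subspaces (\autoref{proposition:DensitySmoothVectors}). First I check that the moment maps are $\GL(n,\R)$-equivariant for suitable actions on the targets. On $\Val_k(\R^n,\C)\otimes\Sym^r(\R^n)_\C$ one lets $g$ act on valuations by $\mu\mapsto\mu(g^{-1}\cdot)$ and on the tensor factor by the induced linear action. For the function $v\mapsto v^{\otimes r}$ this is not literally true, because only the $1$-homogeneous case $r=1$ is compatible with the action on $C(S^{n-1})$. I will therefore phrase the argument through kernels. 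I use that $M^r$ is continuous from $\Area_k(\R^n)$ with the uniform weak* topology to $\Val_k(\R^n,\C)\otimes\Sym^r$. This holds because each component $\Psi\mapsto\Psi(\cdot;v_{i_1}\cdots v_{i_r})$ is continuous for the uniform weak* topology: it is precisely one of the defining seminorms, with a polynomial test function. Hence $\ker M^r$ is closed in both the uniform weak* and uniform compact topologies.

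For (2), the issue is $\GL(n,\R)$-invariance of $\ker M^r$. I would replace $\ker M^r$ by $W:=\{\Psi\in\Area_k(\R^n): \Psi(K;p)=0 \text{ for all } K \text{ and all } p \text{ restrictions of homogeneous polynomials of degree } r\}$. This space equals $\ker M^r$, since $v^{\otimes r}$ has exactly the degree-$r$ monomials as coordinates. Restrictions of homogeneous degree-$r$ polynomials are not preserved by the $1$-homogeneous action of the introduction when $r\neq 1$. To handle this, I use that on $S^{n-1}$ the restriction of a degree-$r$ homogeneous polynomial agrees with that of $|v|^{2j}$ times it. So the space of restrictions of all polynomials of degree $\le r$ with parity $r$ equals the span of the restrictions of degree-$r$ homogeneous polynomials. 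I then need invariance under $g\cdot\phi(v)=|g^Tv|\phi(g^Tv/|g^Tv|)$. This fails in general, and it is the main obstacle. The fallback is to use the second action $[g\phi](v)=\phi(g^Tv/|g^Tv|)$ together with \autoref{lemma:continuityModuleStructure}, which may also fail.

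If invariance cannot be arranged, I would take a different route for (2). Let $\Psi\in\ker M^r$. Mollify by convolution over $\SO(n)$ and over a neighbourhood of the identity in $\GL(n,\R)$. Convolution with smooth compactly supported functions on the group maps $\Area$ into $\Area^\infty$, and $\SO(n)$-averaging commutes with $M^r$ up to a rotation of the $\Sym^r$ factor. So $\SO(n)$-convolutions $\Psi_j=\int_{\SO(n)}\rho_j(g)\pi(g)\Psi\,dg$ stay in $\ker M^r$, because $\SO(n)$ preserves $|v|$ and acts linearly on $v^{\otimes r}$. However, $\SO(n)$-convolution alone does not produce $\GL(n,\R)$-smooth vectors. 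The smoothing needs the transitive compact group only through admissibility: by \autoref{proposition:admissibility}, $\SO(n)$-finite vectors are $\GL(n,\R)$-smooth. I would therefore project $\Psi$ onto its $\SO(n)$-isotypical components. Each projection is a convolution over $\SO(n)$, hence stays in $\ker M^r$, and each is $\GL(n,\R)$-smooth. By \autoref{theorem:MomentMapInjectiveSmoothCase}(2), every isotypical component vanishes. Since the isotypical components of a continuous quasi-complete representation of a compact group determine the vector, $\Psi=0$.

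For (1) the same isotypical argument applies. $\ker M^1$ is closed and $\SO(n)$-invariant, and its $\SO(n)$-finite part consists of smooth vectors lying in the image of $\delta$ by \autoref{theorem:MomentMapInjectiveSmoothCase}(1). The $\SO(n)$-finite vectors of $\ker M^1$ are dense in it (Peter--Weyl for continuous representations on quasi-complete spaces). This gives $\ker M^1\subset\overline{\delta(\Val^\infty_{k+1})}$. The reverse inclusion follows because $M^1\circ\delta=0$ on smooth valuations and $\ker M^1$ is closed. Since $v$ is $1$-homogeneous, $M^1$ is even fully $\GL(n,\R)$-equivariant, but the argument does not need this.
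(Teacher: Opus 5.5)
Your argument is correct. For (2) it is the paper's proof. $\ker M^r$ is closed and $\SO(n)$-invariant, and the action on it is continuous for the quasi-complete uniform compact topology. Its $\SO(n)$-finite vectors are $\GL(n,\R)$-smooth by \autoref{proposition:admissibility}, and \autoref{theorem:MomentMapInjectiveSmoothCase} forces them to vanish. The paper argues by contradiction: a nonzero closed invariant subspace contains a nonzero $\SO(n)$-finite vector. You argue via isotypical projections, which rests on the same fact. Your preliminary attempts to make $\ker M^r$ invariant under $\GL(n,\R)$ fail for exactly the reason you identify and can simply be dropped. Also make explicit that the isotypical projection of $\Psi$ is $\SO(n)$-finite. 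It is a convolution with $d_\pi\overline{\chi_\pi}$, whose translates span a finite-dimensional space, and this is what lets you invoke admissibility.

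For (1) you take a different route from the paper. The paper uses that $M^1$ is fully $\GL(n,\R)$-equivariant, because $v\mapsto\langle x,v\rangle$ is $1$-homogeneous. Hence $\ker M^1$ is a closed $\GL(n,\R)$-invariant subspace. Then \autoref{proposition:DensitySmoothVectors} gives density of $\GL(n,\R)$-smooth vectors in $\ker M^1$ directly in each of the two topologies, so no admissibility is needed. You instead rerun the $\SO(n)$-finite/admissibility argument from (2). This treats both parts uniformly, but it imports the finite-dimensionality of isotypic components (Frobenius reciprocity), which the paper's proof of (1) does not need. It also gives density only in the uniform compact topology. That suffices, but you should say why. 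The closure of $\delta(\Val^\infty_{k+1}(\R^n,\C))$ in the finer uniform compact topology is contained in its closure in the uniform weak* topology. That closure in turn lies in $\ker M^1$, since $M^1$ is uniform weak* continuous and vanishes on the image of $\delta$. So both closures equal $\ker M^1$.
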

	\begin{proof} 
		Note that all moment maps are continuous with respect to the uniform weak* and uniform compact topology.\\
		The first moment map is in addition $\GL(n,\R)$-equivariant. In particular, its kernel is a $\GL(n,\R)$-invariant subspace that is closed with respect to the uniform weak* and uniform compact topology. By \autoref{proposition:DensitySmoothVectors}, $\GL(n,\R)$-smooth area measures are dense in $\ker M^1$ with respect to both topologies, so the claim follows from \autoref{theorem:MomentMapInjectiveSmoothCase}.\\
		Now let $l\ge 2$ and assume that $M^l$ is not injective. Since $M^l$ is continuous with respect to the uniform compact topology as well as $\SO(n)$-equivariant, its kernel $\ker M^l$ is a closed and $\SO(n)$-invariant subspace. In particular, $\ker M^l$ is a continuous representation of $\SO(n)$ with respect to the uniform compact topology,  which is quasi-complete (compare \cite{KnoerrPolynomiallocalfunctionals2025}*{Lemma 3.8}), so it contains a nontrivial $\SO(n)$-finite vector $\Psi\ne 0$. By \autoref{proposition:admissibility}, $\Psi$ is a $\GL(n,\R)$-smooth area measure, so \autoref{theorem:MomentMapInjectiveSmoothCase} implies $\Psi=0$, which is a contradiction. Thus $M^l$ must be injective.
	\end{proof}
	
\section{Remarks and open questions}\label{section:remarks}
	We close this article with some remarks on some future applications of these results to the integral geometry of area measures, as well as some open questions.\\
	
	A simple consequence of \autoref{maintheorem:IntegralRepresentation} is that the space $\Area^\infty(\R^n)$ is spanned by elements of the form $\phi\bullet S_{n-1}(\cdot+L)$ for $\phi\in C^\infty(S^{n-1})$ and $L\in\K(\R^n)$ smooth with strictly positive Gauss curvature. Similar to the convolution of smooth translation invariant valuations on convex bodies introduced by Bernig and Fu \cite{BernigFuConvolutionconvexvaluations2006}, this description gives rise to a product structure on $\Area^\infty(\R^n)$ by setting 
	\begin{align*}
		[\phi_1\bullet S_{n-1}(\cdot+L_1)]*[\phi_2	\bullet S_{n-1}(\cdot+L_2)]:=(\phi_1\phi_2)\bullet S_{n-1}(\cdot+L_1+L_2).
	\end{align*}
	It is not difficult to see that this is well defined and shares many properties of the convolution of smooth valuation. In fact, it is an extension of the $\Val^\infty(\R^n,\C)$-module structure on $\Area^\infty(\R^n)$ introduced by Wannerer \cite{Wannerermoduleunitarilyinvariant2014} and closely related to Bernig's convolution of so-called smooth dual area measures \cite{BernigDualareameasures2019}. We plan to discuss the precise relations and applications of this convolution product elsewhere.\\
	
	Note that we have a natural map $\mathrm{glob}:\Area_k(\R^n)\rightarrow\Val_k(\R^n,\C)$, $\Psi\mapsto \Psi(\cdot;S^{n-1})$, called the globalization map (compare \cite{WannererIntegralgeometryunitary2014,Wannerermoduleunitarilyinvariant2014}). This map has dense image, which contains the space of $\GL(n,\R)$-smooth valuations. 
	\begin{question}
		Is the map $\mathrm{glob}:\Area_k(\R^n)\rightarrow\Val(\R^n,\C)$ surjective? Is its restriction to the space of strongly $\GL(n,\R)$-continuous area measures surjective?
	\end{question}
	A similar questions concerns the corresponding space of continuous curvature measures, which were recently investigated by Schuhmacher and Wannerer \cite{SchuhmacherWannererTranslationinvariantcurvature2026} motivated by previous work by Weil \cite{WeilIntegralgeometrytranslation2015,WeilIntegralgeometrytranslation2017} (under the terminology of local functionals and their kernels).
	\begin{question}
		Does every element $\Psi\in \Area(\R^n)$ have the property that $\Psi(K;B)=\Psi(L,B)$ for all convex bodies $K,L\in\K(\R^n)$ and all Borel sets $B\subset S^{n-1}$ such that $F(K,u)=F(L,u)$ for $u\in B$?
	\end{question}
	The density criterion in \autoref{maintheorem:IrreducibleModule} uses both the action of $\GL(n,\R)$ and the module structure. From a purely representation theoretic perspective, this is not satisfactory, in particular, since \autoref{maintheorem:IntegralRepresentation} shows that $\Area(\R^n)$ has to be a rather simple representation of $\GL(n,\R)$. In particular, it is not difficult to see that $\Area_k(\R^n)$ has finite length (compare \autoref{theorem:MomentMapContinuousCase}). We can decompose $\Area_k(\R^n)=\Area_k^+(\R^n)\oplus\Area_k^-(\R^n)$ into even/odd area measures with respect of the action of $-Id_n\in\GL(n,\R)$, however, each of these spaces contains the kernel of the first moment map as a proper $\GL(n,\R)$-invariant subspace, so these spaces are not themselves irreducible.
	\begin{question}
		What is the length of $\Area_k(\R^n)$ as a $\GL(n,\R)$-module?
	\end{question}
	In \cite{SchuhmacherWannererTranslationinvariantcurvature2026}, Schuhmacher and Wannerer conjecture that the corresponding spaces of continuous translation invariant curvature measures should have length $2$. It seems to be reasonable to expect a similar answer in the case of continuous area measures.
\bibliographystyle{abbrv}
\bibliography{../../library/library.bib}

\begin{bibdiv}
\begin{biblist}

\bib{AbardiaEvequozBernigAdditivekinematicformulas2021}{article}{
      author={Abardia-Ev\'equoz, Judit},
      author={Bernig, Andreas},
       title={Additive kinematic formulas for flag area measures},
        date={2021},
        ISSN={0025-5831,1432-1807},
     journal={Math. Ann.},
      volume={381},
      number={3-4},
       pages={1615\ndash 1652},
         url={https://doi.org/10.1007/s00208-021-02264-w},
      review={\MR{4333426}},
}

\bib{AbardiaEvequozEtAlFlagareameasures2019}{article}{
      author={Abardia-Ev\'equoz, Judit},
      author={Bernig, Andreas},
      author={Dann, Susanna},
       title={Flag area measures},
        date={2019},
        ISSN={0025-5793,2041-7942},
     journal={Mathematika},
      volume={65},
      number={4},
       pages={958\ndash 989},
         url={https://doi.org/10.1112/s0025579319000226},
      review={\MR{3984044}},
}

\bib{AleskerP.McMullensconjecture2000}{article}{
      author={Alesker, Semyon},
       title={On {P}. {M}c{M}ullen's conjecture on translation invariant
  valuations},
        date={2000},
        ISSN={0001-8708},
     journal={Adv. Math.},
      volume={155},
      number={2},
       pages={239\ndash 263},
         url={https://doi.org/10.1006/aima.2000.1918},
      review={\MR{1794712}},
}

\bib{AleskerDescriptiontranslationinvariant2001}{article}{
      author={Alesker, Semyon},
       title={Description of translation invariant valuations on convex sets
  with solution of {P}. {M}c{M}ullen's conjecture},
        date={2001},
        ISSN={1016-443X},
     journal={Geom. Funct. Anal.},
      volume={11},
      number={2},
       pages={244\ndash 272},
         url={https://doi.org/10.1007/PL00001675},
      review={\MR{1837364}},
}

\bib{AleskerTheoryvaluationsmanifolds.2006}{article}{
      author={Alesker, Semyon},
       title={Theory of valuations on manifolds. {I}. {L}inear spaces},
        date={2006},
        ISSN={0021-2172},
     journal={Israel J. Math.},
      volume={156},
       pages={311\ndash 339},
         url={https://doi.org/10.1007/BF02773837},
      review={\MR{2282381}},
}

\bib{AleskerIntroductiontheoryvaluations2018}{book}{
      author={Alesker, Semyon},
       title={Introduction to the theory of valuations},
      series={CBMS Regional Conference Series in Mathematics},
   publisher={Conference Board of the Mathematical Sciences, Washington, DC; by
  the American Mathematical Society, Providence, RI},
        date={2018},
      volume={126},
        ISBN={978-1-4704-4359-7},
      review={\MR{3820854}},
}

\bib{AleskerFuTheoryvaluationsmanifolds.2008}{article}{
      author={Alesker, Semyon},
      author={Fu, Joseph H.~G.},
       title={Theory of valuations on manifolds. {III}. {M}ultiplicative
  structure in the general case},
        date={2008},
        ISSN={0002-9947},
     journal={Trans. Amer. Math. Soc.},
      volume={360},
      number={4},
       pages={1951\ndash 1981},
         url={https://doi.org/10.1090/S0002-9947-07-04489-3},
      review={\MR{2366970}},
}

\bib{BernigHadwigertypetheorem2009}{article}{
      author={Bernig, Andreas},
       title={A {H}adwiger-type theorem for the special unitary group},
        date={2009},
        ISSN={1016-443X},
     journal={Geom. Funct. Anal.},
      volume={19},
      number={2},
       pages={356\ndash 372},
         url={https://doi.org/10.1007/s00039-009-0008-4},
      review={\MR{2545241}},
}

\bib{BernigDualareameasures2019}{article}{
      author={Bernig, Andreas},
       title={Dual area measures and local additive kinematic formulas},
        date={2019},
        ISSN={0046-5755,1572-9168},
     journal={Geom. Dedicata},
      volume={203},
       pages={85\ndash 110},
         url={https://doi.org/10.1007/s10711-019-00427-3},
      review={\MR{4027586}},
}

\bib{BernigBroeckerValuationsmanifoldsRumin2007}{article}{
      author={Bernig, Andreas},
      author={Br\"{o}cker, Ludwig},
       title={Valuations on manifolds and {R}umin cohomology},
        date={2007},
        ISSN={0022-040X},
     journal={J. Differential Geom.},
      volume={75},
      number={3},
       pages={433\ndash 457},
      review={\MR{2301452}},
}

\bib{BernigFuConvolutionconvexvaluations2006}{article}{
      author={Bernig, Andreas},
      author={Fu, Joseph H.~G.},
       title={Convolution of convex valuations},
        date={2006},
        ISSN={0046-5755},
     journal={Geom. Dedicata},
      volume={123},
       pages={153\ndash 169},
         url={https://doi.org/10.1007/s10711-006-9115-7},
      review={\MR{2299731}},
}

\bib{BernigFuHermitianintegralgeometry2011}{article}{
      author={Bernig, Andreas},
      author={Fu, Joseph H.~G.},
       title={Hermitian integral geometry},
        date={2011},
        ISSN={0003-486X},
     journal={Ann. of Math. (2)},
      volume={173},
      number={2},
       pages={907\ndash 945},
         url={https://doi.org/10.4007/annals.2011.173.2.7},
      review={\MR{2776365}},
}

\bib{BernigSolanesKinematicformulasquaternionic2017}{article}{
      author={Bernig, Andreas},
      author={Solanes, Gil},
       title={Kinematic formulas on the quaternionic plane},
        date={2017},
        ISSN={0024-6115},
     journal={Proc. Lond. Math. Soc. (3)},
      volume={115},
      number={4},
       pages={725\ndash 762},
         url={https://doi.org/10.1112/plms.12050},
      review={\MR{3716941}},
}

\bib{BorelSomeremarksLie1949a}{article}{
      author={Borel, Armand},
       title={Some remarks about {L}ie groups transitive on spheres and tori},
        date={1949},
        ISSN={0002-9904},
     journal={Bull. Amer. Math. Soc.},
      volume={55},
       pages={580\ndash 587},
         url={https://doi.org/10.1090/S0002-9904-1949-09251-0},
      review={\MR{29915}},
}

\bib{CannasdaSilvaLecturessymplecticgeometry2001}{book}{
      author={Cannas~da Silva, Ana},
       title={Lectures on symplectic geometry},
      series={Lecture Notes in Mathematics},
   publisher={Springer-Verlag, Berlin},
        date={2001},
      volume={1764},
        ISBN={3-540-42195-5},
         url={https://doi.org/10.1007/978-3-540-45330-7},
      review={\MR{1853077}},
}

\bib{FuMongeAmperefunctions.1989}{article}{
      author={Fu, Joseph H.~G.},
       title={Monge-{A}mp\`ere functions. {I}},
        date={1989},
        ISSN={0022-2518},
     journal={Indiana Univ. Math. J.},
      volume={38},
      number={3},
       pages={711\ndash 743},
         url={https://doi.org/10.1512/iumj.1989.38.38035},
      review={\MR{1017333}},
}

\bib{FuAlgebraicintegralgeometry2014}{incollection}{
      author={Fu, Joseph H.~G.},
       title={Algebraic integral geometry},
        date={2014},
   booktitle={Integral geometry and valuations},
      series={Adv. Courses Math. CRM Barcelona},
   publisher={Birkh\"{a}user/Springer},
     address={Basel},
       pages={47\ndash 112},
      review={\MR{3330875}},
}

\bib{GaskproofSchwartzskernel1961}{article}{
      author={Gask, H.},
       title={A proof of {S}chwartz's kernel theorem},
        date={1960},
        ISSN={0025-5521},
     journal={Math. Scand.},
      volume={8},
       pages={327\ndash 332},
         url={https://doi.org/10.7146/math.scand.a-10614},
      review={\MR{125438}},
}

\bib{GoodeyWeilDistributionsvaluations1984}{article}{
      author={Goodey, Paul},
      author={Weil, Wolfgang},
       title={Distributions and valuations},
        date={1984},
        ISSN={0024-6115},
     journal={Proc. London Math. Soc. (3)},
      volume={49},
      number={3},
       pages={504\ndash 516},
         url={https://doi.org/10.1112/plms/s3-49.3.504},
      review={\MR{759301}},
}

\bib{HadwigerVorlesungenuberInhalt1957}{book}{
      author={Hadwiger, Hugo},
       title={Vorlesungen \"{u}ber {I}nhalt, {O}berfl\"{a}che und
  {I}soperimetrie},
   publisher={Springer},
     address={Berlin-G\"{o}ttingen-Heidelberg},
        date={1957},
      review={\MR{0102775}},
}

\bib{HofstaetterKnoerrLocalizationvaluationsAleskers2025}{article}{
      author={Hofst\"atter, Georg~C.},
      author={Knoerr, Jonas},
       title={Localization of valuations and {A}lesker's irreducibility
  theorem},
        date={2025},
     journal={arXiv:2510.25698},
         url={https://arxiv.org/abs/2510.25698},
}

\bib{HuybrechtsComplexgeometry2005}{book}{
      author={Huybrechts, Daniel},
       title={Complex geometry},
    subtitle={An introduction},
      series={Universitext},
   publisher={Springer-Verlag},
     address={Berlin},
        date={2005},
        ISBN={3-540-21290-6},
      review={\MR{2093043}},
}

\bib{Knoerrsupportduallyepi2021}{article}{
      author={Knoerr, Jonas},
       title={The support of dually epi-translation invariant valuations on
  convex functions},
        date={2021},
        ISSN={0022-1236},
     journal={J. Funct. Anal.},
      volume={281},
      number={5},
       pages={Paper No. 109059, 52},
         url={https://doi.org/10.1016/j.jfa.2021.109059},
      review={\MR{4252807}},
}

\bib{KnoerrMongeAmpereoperators2024}{article}{
      author={Knoerr, Jonas},
       title={Monge-{A}mp\`ere operators and valuations},
        date={2024},
        ISSN={0944-2669,1432-0835},
     journal={Calc. Var. Partial Differential Equations},
      volume={63},
      number={4},
       pages={Paper No. 89, 34},
         url={https://doi.org/10.1007/s00526-024-02698-5},
      review={\MR{4728223}},
}

\bib{KnoerrSmoothvaluationsconvex2024}{article}{
      author={Knoerr, Jonas},
       title={Smooth valuations on convex functions},
        date={2024},
        ISSN={0022-040X,1945-743X},
     journal={J. Differential Geom.},
      volume={126},
      number={2},
       pages={801\ndash 835},
         url={https://doi.org/10.4310/jdg/1712344223},
      review={\MR{4728707}},
}

\bib{KnoerrPaleyWienerSchwartz2025}{article}{
      author={Knoerr, Jonas},
       title={A {P}aley--{W}iener--{S}chwartz theorem for smooth valuations on
  convex functions},
        date={2025},
     journal={arXiv:2505.22464},
         url={https://arxiv.org/abs/2505.22464},
}

\bib{KnoerrPolynomiallocalfunctionals2025}{article}{
      author={Knoerr, Jonas},
       title={Polynomial local functionals on convex functions},
        date={2025},
     journal={arXiv:2512.15402},
         url={https://arxiv.org/abs/2512.15402},
}

\bib{KnoerrSmoothvaluationsconvex2025}{article}{
      author={Knoerr, Jonas},
       title={Smooth valuations on convex bodies and finite linear combinations
  of mixed volumes},
        date={2025},
     journal={Proc. London Math. Soc.},
      volume={130},
      number={6},
       pages={e70057},
  url={https://londmathsoc.onlinelibrary.wiley.com/doi/abs/10.1112/plms.70057},
}

\bib{KnoerrIntegralrepresentationpolynomial2026}{article}{
      author={Knoerr, Jonas},
       title={Integral representation of polynomial local functionals on convex
  functions},
        date={2026},
     journal={arXiv:2604.24485},
         url={https://arxiv.org/abs/2604.24485},
}

\bib{KotrbatyWannererIntegralgeometryoctonionic2025}{article}{
      author={Kotrbat\'y, Jan},
      author={Wannerer, Thomas},
       title={Integral geometry on the octonionic plane},
        date={2025},
        ISSN={0022-2518,1943-5258},
     journal={Indiana Univ. Math. J.},
      volume={74},
      number={2},
       pages={407\ndash 465},
         url={https://doi.org/10.1512/iumj.2025.74.60161},
      review={\MR{4915448}},
}

\bib{McMullenValuationsEulertype1977}{article}{
      author={McMullen, Peter},
       title={Valuations and {E}uler-type relations on certain classes of
  convex polytopes},
        date={1977},
        ISSN={0024-6115},
     journal={Proc. London Math. Soc. (3)},
      volume={35},
      number={1},
       pages={113\ndash 135},
         url={https://doi.org/10.1112/plms/s3-35.1.113},
      review={\MR{448239}},
}

\bib{McMullenContinuoustranslationinvariant1980}{article}{
      author={McMullen, Peter},
       title={Continuous translation-invariant valuations on the space of
  compact convex sets},
        date={1980},
        ISSN={0003-889X},
     journal={Arch. Math. (Basel)},
      volume={34},
      number={4},
       pages={377\ndash 384},
         url={https://doi.org/10.1007/BF01224974},
      review={\MR{593954}},
}

\bib{McMullenWeaklycontinuousvaluations1983}{article}{
      author={McMullen, Peter},
       title={Weakly continuous valuations on convex polytopes},
        date={1983},
        ISSN={0003-889X,1420-8938},
     journal={Arch. Math. (Basel)},
      volume={41},
      number={6},
       pages={555\ndash 564},
         url={https://doi.org/10.1007/BF01198585},
      review={\MR{731639}},
}

\bib{MontgomerySamelsonTransformationgroupsspheres1943}{article}{
      author={Montgomery, Deane},
      author={Samelson, Hans},
       title={Transformation groups of spheres},
        date={1943},
        ISSN={0003-486X},
     journal={Ann. of Math. (2)},
      volume={44},
       pages={454\ndash 470},
         url={https://doi.org/10.2307/1968975},
      review={\MR{8817}},
}

\bib{RockafellarConvexanalysis1997}{book}{
      author={Rockafellar, R.~Tyrrell},
       title={Convex analysis},
      series={Princeton Landmarks in Mathematics},
   publisher={Princeton University Press},
     address={Princeton, NJ},
        date={1997},
        ISBN={0-691-01586-4},
        note={Reprint of the 1970 original, Princeton Paperbacks},
      review={\MR{1451876}},
}

\bib{SchneiderKinematischeBeruhrmaefur1975}{article}{
      author={Schneider, Rolf},
       title={Kinematische {B}er\"uhrma\ss e{} f\"ur konvexe {K}\"orper},
        date={1975},
        ISSN={0025-5858,1865-8784},
     journal={Abh. Math. Sem. Univ. Hamburg},
      volume={44},
       pages={12\ndash 23},
         url={https://doi.org/10.1007/BF02992942},
      review={\MR{394447}},
}

\bib{SchneiderConvexbodiesBrunn2014}{book}{
      author={Schneider, Rolf},
       title={Convex bodies: the {B}runn-{M}inkowski theory},
     edition={expanded},
      series={Encyclopedia of Mathematics and its Applications},
   publisher={Cambridge University Press},
     address={Cambridge},
        date={2014},
      volume={151},
        ISBN={978-1-107-60101-7},
      review={\MR{3155183}},
}

\bib{SchuhmacherWannererTranslationinvariantcurvature2026}{article}{
      author={Schuhmacher, Jakob},
      author={Wannerer, Thomas},
       title={Translation invariant curvature measures of convex bodies},
        date={2026},
     journal={arXiv:2601.14193},
      eprint={2601.14193},
         url={https://arxiv.org/abs/2601.14193},
}

\bib{SepanskiCompactLiegroups2007}{book}{
      author={Sepanski, Mark~R},
       title={Compact {L}ie groups},
      series={Graduate Texts in Mathematics},
   publisher={Springer},
     address={New York},
        date={2007},
      volume={235},
        ISBN={978-0-387-30263-8; 0-387-30263-8},
         url={https://doi.org/10.1007/978-0-387-49158-5},
      review={\MR{2279709}},
}

\bib{WannererIntegralgeometryunitary2014}{article}{
      author={Wannerer, Thomas},
       title={Integral geometry of unitary area measures},
        date={2014},
        ISSN={0001-8708},
     journal={Adv. Math.},
      volume={263},
       pages={1\ndash 44},
         url={https://doi.org/10.1016/j.aim.2014.06.005},
      review={\MR{3239133}},
}

\bib{Wannerermoduleunitarilyinvariant2014}{article}{
      author={Wannerer, Thomas},
       title={The module of unitarily invariant area measures},
        date={2014},
        ISSN={0022-040X},
     journal={J. Differential Geom.},
      volume={96},
      number={1},
       pages={141\ndash 182},
      review={\MR{3161388}},
}

\bib{WarnerHarmonicanalysissemi1972}{book}{
      author={Warner, Garth},
       title={Harmonic analysis on semi-simple {L}ie groups. {I}},
      series={Die Grundlehren der mathematischen Wissenschaften},
   publisher={Springer-Verlag, New York-Heidelberg},
        date={1972},
      volume={Band 188},
      review={\MR{498999}},
}

\bib{WeilIntegralgeometrytranslation2015}{article}{
      author={Weil, Wolfgang},
       title={Integral geometry of translation invariant functionals, {I}:
  {T}he polytopal case},
        date={2015},
        ISSN={0196-8858,1090-2074},
     journal={Adv. in Appl. Math.},
      volume={66},
       pages={46\ndash 79},
         url={https://doi.org/10.1016/j.aam.2015.03.001},
      review={\MR{3328022}},
}

\bib{WeilIntegralgeometrytranslation2017}{article}{
      author={Weil, Wolfgang},
       title={Integral geometry of translation invariant functionals, {II}:
  {T}he case of general convex bodies},
        date={2017},
        ISSN={0196-8858,1090-2074},
     journal={Adv. in Appl. Math.},
      volume={83},
       pages={145\ndash 171},
         url={https://doi.org/10.1016/j.aam.2016.09.005},
      review={\MR{3573222}},
}

\end{biblist}
\end{bibdiv}

\Addresses
 \end{document}